\documentclass[nosumlimits,twoside]{amsart}
\usepackage{amsfonts, amsmath, amssymb}
\usepackage[bookmarksnumbered,plainpages,driverfallback=dvipdfm,backref=page]{hyperref}
\usepackage[english]{babel}
\usepackage{orcidlink}
\usepackage{graphicx}
\usepackage{float}
\usepackage{srcltx}
\usepackage[all]{xy}
\usepackage{version}
\usepackage[T1]{fontenc}
\usepackage{xcolor}
\usepackage{enumerate}
\usepackage[normalem]{ulem}
\usepackage{bm}
\usepackage{latexsym}
\usepackage[2emode]{psfrag}
\usepackage{yhmath}
\usepackage{array}
\usepackage{dsfont}
\usepackage{mathrsfs}
\usepackage{amssymb}
\usepackage{tikz-cd}
\usepackage{comment}
\usetikzlibrary{calc,babel}

\newtheorem{theorem}{\sc Theorem}[section]
\newtheorem{proposition}[theorem]{\sc Proposition}

\newtheorem{lemma}[theorem]{\sc Lemma}
\newtheorem{corollary}[theorem]{\sc Corollary}
\theoremstyle{definition}
\newtheorem{definition}[theorem]{\sc Definition}

\theoremstyle{remark}
\newtheorem{remark}[theorem]{\sc Remark}

\newenvironment{invisible}{{\noindent\sc \colorbox{yellow}{Invisible:}\;}\color{gray}}{\medskip}
\excludeversion{invisible}

\tikzset{
  curve/.style={
    settings={#1},
    to path={
      (\tikztostart)
      .. controls ($(\tikztostart)!\pv{pos}!(\tikztotarget)!\pv{height}!270:(\tikztotarget)$)
      and ($(\tikztostart)!1-\pv{pos}!(\tikztotarget)!\pv{height}!270:(\tikztotarget)$)
      .. (\tikztotarget)\tikztonodes
    },
  },
  settings/.code={%
    \tikzset{quiver/.cd,#1}%
    \def\pv##1{\pgfkeysvalueof{/tikz/quiver/##1}}%
  },
  quiver/.cd,
  pos/.initial=0.35,
  height/.initial=0,
}

\allowdisplaybreaks
\excludeversion{invisible?}
\excludeversion{proof?}
\newcommand{\id}{\mathrm{Id}}

\newcommand{\Cc}{\mathcal{C}}

\newcommand{\Mm}{\mathcal{M}}
\newcommand{\mm}{\mathfrak{M}}

\newcommand{\Rr}{\mathcal{R}}

\newcommand{\ot}{\otimes}

\newcommand{\Hopf}{\mathsf{Hopf}_{\mathrm{coc}}(\Mm)}

\newcommand{\rd}[1]{{\color{blue}{#1}}}

{
\left\{\begin{aligned}}
{\end{aligned}
\right.
}

\title{On the semi-abelianness of cocommutative Hopf monoids}

\author{Andrea Sciandra~\orcidlink{0009-0008-0447-287X}}
\address{%
\parbox[b]{\linewidth} {Département de Mathématiques, Université Libre de Bruxelles, Boulevard du Triomphe, B-1050 Bruxelles, Belgium}}
\email{andrea.sciandra@ulb.be}
\urladdr{\url{www.andreasciandra.com}}
\author{Zhenbang Zuo~\orcidlink{0009-0008-9013-0365}}
\address{%
\parbox[b]{\linewidth}{University of Turin, Department of Mathematics ``G. Peano'', via
Carlo Alberto 10, I-10123 Torino, Italy}}
\email{zhenbang.zuo@edu.unito.it}
\date{}
\keywords{Cocommutative Hopf monoids, Semi-abelian categories, Newman's bijection}
\subjclass[2020]{Primary 18E13, 16T05; Secondary 18M05, 18G50}

\begin{document}

\maketitle

\begin{abstract}
    By providing a suitable generalization of Newman's bijective correspondence known for cocommutative Hopf algebras, we prove that the category of cocommutative Hopf monoids in any abelian symmetric monoidal category is semi-abelian, once faithful (co)flatness conditions are satisfied. This result unifies and generalizes the semi-abelianness of cocommutative Hopf algebras and of cocommutative color Hopf algebras known up to now. As a consequence of the semi-abelianness, the category of cocommutative Hopf monoids is also action representable. Finally, we prove that abelian objects in the category of cocommutative Hopf monoids coincide exactly with commutative and cocommutative Hopf monoids, which form so an abelian category. 
\end{abstract}

\tableofcontents

\section{Introduction}

Cocommutative Hopf algebras are similar to groups in various aspects, as already noted in \cite{Yana1,Yana2} where some classical isomorphism theorems for groups were proven in the Hopf algebra setting. In fact, cocommmutative Hopf algebras coincide with the category of internal groups in the cartesian monoidal category of cocommutative coalgebras and this implies that a Split Short Five Lemma holds for them. Moreover, they are proven to form a semi-abelian category by M. Gran, F. Sterck, and J. Vercruysse in \cite{GSV}, extending a previous result obtained in \cite{GKV} where the base field was assumed of zero characteristic. The semi-abelianness of the category of cocommutative Hopf algebras can be seen as a non-commutative generalization of Takeuchi’s result asserting that commutative and cocommutative Hopf algebras over a field form an abelian category \cite{Takeuchi}, that extends its finite
dimensional version due to Grothendieck.

Semi-abelian categories were introduced in \cite{JMT} to capture typical algebraic properties valid for groups, rings and algebras. They provide a good categorical framework for a meaningful treatment of radical and commutator theory, and
of (co)homology theory of non-abelian structures. They also present natural notions of semi-direct product \cite{BournJanelidze}, internal action \cite{BJK2} and crossed module \cite{Janelidze}. The classical examples of semi-abelian categories include groups, Lie algebras, rings (not necessarily unital) and commutative $\mathbb{C}^*$-algebras. 

The semi-abelianness of cocommutative Hopf algebras was then extended to cocommutative color Hopf algebras in \cite{AS}, i.e.\ cocommutative Hopf monoids in the category of vector spaces graded over an abelian group $G$, when the latter is finitely generated and the characteristic of the base field is different from 2 (not needed if the cardinality of $G$ is finite and odd). This opened the question of how far it would be possible to extend the result, by considering cocommutative Hopf monoids in an arbitrary braided monoidal category.

In this paper, we prove that the category $\Hopf$ of cocommutative Hopf monoids in any abelian symmetric monoidal category $(\Mm,\ot,\mathbf{1},\sigma)$ is exact in the sense of \cite{Barr}, i.e.\ it is a regular category and any equivalence relation inside it is a kernel pair, once $(\Mm,\ot,\mathbf{1},\sigma)$ satisfies some faithful (co)flatness conditions. Since $\Hopf=\mathsf{Grp}(\mathsf{Comon}_{\mathrm{coc}}(\Mm))$ is also protomodular in the sense of \cite{Bournproto}, it is semi-abelian once it has binary coproducts. The main result of this paper concerns the regularity of the category $\Hopf$, under the aforementioned conditions on the category $(\Mm,\ot,\mathbf{1},\sigma)$. In the case of cocommutative Hopf algebras, this result was proven in \cite{GSV} using Newman's Theorem \cite{Newman}: for any cocommutative Hopf algebra $A$, there is a bijective correspondence between the set of its Hopf subalgebras and the set of quotient left $A$-module coalgebras. More explicilty, given a Hopf subalgebra $i:K\to A$ and a quotient left $A$-module coalgebra $\pi:A\to Q$, the mutual inverse bijections $\phi_{A}$ and $\psi_{A}$ are defined by
\begin{equation}\label{bijectionNewman}
\phi_{A}(i):A\to A/AK^{+},\quad \psi_{A}(\pi):A^{\mathrm{co}Q}:=\{x\in A\ |\ (\pi\ot\mathrm{Id}_{A})\Delta_{A}(x)=\pi(1_{A})\ot x\}\to A.
\end{equation}
To obtain the regularity of $\Hopf$, we extend the aforementioned result to this more general setting. Once the category $\Hopf$ is semi-abelian, we also prove that it is action representable and that the abelian category of abelian objects in $\Hopf$ coincides with the category of commutative and cocommutative Hopf monoids. \medskip

\noindent The organization of the paper is as follows. First, in Section \ref{sec:prelimiaries}, we recall some notions and facts that are useful througout the paper. In Section \ref{sec:pointed,limits,protomodularity}, we show that $\Hopf$ is pointed (Lemma \ref{lem:Hopfpointed}) and finitely complete (Proposition \ref{prop:Hopfcocfintcomplete}). Moreover, it is also protomodular (Proposition \ref{prop:protomodularity}) as it coincides with the category $\mathsf{Grp}(\mathsf{Comon}_{\mathrm{coc}}(\Mm))$ of internal groups in the finitely complete category $\mathsf{Comon}_{\mathrm{coc}}(\Mm)$. In Section \ref{sec:colimits}, we construct coequalizers in the category $\Hopf$ (Proposition \ref{prop:coequalizerHopf}), which are used in Section \ref{sec:NewmanTheorem} to prove a bijective correspondence, for a given $A$ in $\Hopf$, between a class of subobjects of $A$ in $\Hopf$ and a class of quotients of $A$ in $\mathsf{Comon}_{\mathrm{coc}}(_{A}\Mm)$ (Theorem \ref{thm:NewmanforM}). This result generalizes Newman's Theorem for cocommutative Hopf algebras obtained in \cite{Newman} and its extension for cocommutative color Hopf algebras proven in \cite{AS}, to cocommutative Hopf monoids in arbitrary (abelian) braided monoidal categories. The bijective correspondence restricts to kernels in $\Hopf$ and quotients in $\mathsf{Comon}_{\mathrm{coc}}(_{A}\Mm)$ which are regular epimorphisms in $\Hopf$ (Corollary \ref{cor:bijectivecorrespondence}). In fact, as proven in Theorem \ref{cor:normalobjects}, kernels in $\Hopf$ are equivalently described as normal monomorphisms in the sense of Definition \ref{def:normal}. In Section \ref{sec:regularity}, we use the generalized Newman Theorem to prove that $\Hopf$ is regular, once the abelian symmetric monoidal category $(\Mm,\ot,\mathbf{1},\sigma)$ satisfies the ``faithful coflatness condition'' (Definition \ref{deffaithcoflat}) and the ``faithful flatness condition'' (Definition \ref{def:faithflatcat}). More precisely, we obtain the regular epimorphism-monomorphism factorization for any morphism in $\Hopf$ (Proposition \ref{prop:factmorp2}) which allows us to prove that regular epimorphisms and monomorphisms in $\Hopf$ coincide with morphisms in $\Hopf$ which are epimorphisms and monomorphisms in $\Mm$, respectively (Corollary \ref{cor:regepimono}). Using this, we obtain that regular epimorphisms are stable under pullbacks in $\Hopf$ (Proposition \ref{cor:stability}). In Section \ref{sec:semiabelian}, we conclude that $\Hopf$ is exact, hence semi-abelian once it has binary coproducts (Theorem \ref{thm:semiabelian}). Finally, we prove that the category of abelian objects in $\Hopf$ coincides with the category $\mathsf{Hopf}_{\mathrm{coc,com}}(\Mm)$ of commutative and cocommutative Hopf monoids in $\Mm$ (Proposition \ref{prop:abelianobjects}) and we show that the category $\Hopf$ is action representable, once $(\Mm,\ot,\mathbf{1},\sigma)$ is closed monoidal (Proposition \ref{prop:actionreprestable}).

\bigskip

\noindent\textit{Notations and conventions}. The identity morphism for an object $X$ in a category $\Mm$ will be denoted by $\mathrm{Id}_{X}$ or $1_{X}$. Given a morphism $f:A\to B$ in $\Mm$, the kernel of $f$ in $\Mm$ will be denoted by $\mathsf{ker}(f):\mathsf{Ker}(f)\to A$ and the cokernel of $f$ by $\mathsf{coker}(f):B\to \mathsf{Coker}(f)$. A monoidal category will be denoted by $(\Mm,\ot,\mathbf{1})$. Comforted by the MacLane Coherence theorem, we shall consistently be sloppy on associativity and unit constraints. Given a monoidal category $(\Mm,\ot,\mathbf{1})$, we denote the categories of monoids and comonoids in $\Mm$ by $\mathsf{Mon}(\Mm)$ and $\mathsf{Comon}(\Mm)$, respectively. Given a braided monoidal category $(\Mm,\ot,\mathbf{1},\sigma)$, we denote by $\mathsf{Bimon}(\Mm)$ and $\mathsf{Hopf}(\Mm)$ the categories of bimonoids and Hopf monoids in $\Mm$, respectively, and by 
$\mathsf{Comon}_{\mathrm{coc}}(\Mm)$ the category of 
cocommutative comonoids in $\Mm$, i.e.\ comonoids $(C,\Delta,\varepsilon)$ such that $\sigma_{C,C}\Delta=\Delta$. The antipode of an object $A$ in $\mathsf{Hopf}(\Mm)$ will be denoted by $S_A$. If $(A,m,u)$ is an object in $\mathsf{Mon}(\Mm)$, we denote by ${}_A\Mm$ (resp.\ $\Mm_A$) the category of left (resp.\ right) $A$-modules and left (resp.\ right)
$A$-linear morphisms in $\Mm$. If $(C,\Delta,\varepsilon)$ is an object in $\mathsf{Comon}(\Mm)$, we denote by ${}^{C}\!\Mm$ (resp.\ $\Mm^{C}$) the category of left (resp.\ right) $C$-comodules and left (resp.\ right) $C$-colinear morphisms in $\Mm$. Given a cartesian monoidal category $(\Mm,\times,\mathbf{1})$, the category of internal groups in $\Mm$ will be denoted by $\mathsf{Grp}(\Mm)$.

\section{Preliminaries}\label{sec:prelimiaries}

Here we recall some preliminary notions and results that will be useful throughout the paper. For basic notions about category theory we refer the reader to \cite{Borceux} and \cite{MacLane-book}.

\begin{definition}[{\cite{JMT}}]\label{def:semi-abeliancategory}
    A category $\Cc$ is \textit{semi-abelian} if: 
\begin{itemize}
    \item[1)] $\Cc$ is \textit{pointed}, i.e.\ it has a zero object (an object which is both initial and terminal); \medskip
    \item[2)] $\Cc$ is (Barr)-\textit{exact}, i.e.\ the following two facts hold:\medskip
    \begin{itemize}
        \item[i.] $\Cc$ is \textit{regular}: it has finite limits, any morphism factorizes as a regular epimorphism (i.e.\ a coequalizer of a pair of morphisms) followed by a monomorphism, and regular epimorphisms are stable under pullbacks, \medskip
        \item[ii.] any equivalence relation in $\Cc$ is a kernel pair, i.e.\ a pullback of a morphism along itself;
    \end{itemize}\medskip
    \item[3)] $\Cc$ is (Bourn)-\textit{protomodular}: since $\Cc$ is pointed and finitely complete, this is equivalent to the validity of the Split Short Five Lemma in $\Cc$ (see e.g.\ \cite[Proposition 3.1.2]{BB}); \medskip
    \item[4)] $\Cc$ has binary coproducts.
\end{itemize}
\end{definition}

A semi-abelian category $\Cc$ has automatically coequalizers for any pair of morphisms, see e.g.\ \cite[Proposition 5.1.3]{BB}, so it is finitely cocomplete. When $\Cc$ is pointed, regular, and protomodular, it is called \textit{homological}, see e.g. \cite[Definition 4.1.1]{BB}. \medskip

Let $(\Mm,\ot,\mathbf{1})$ be a monoidal category with coequalizers, $(A,m,u)$ be an object in $\mathsf{Mon}(\Mm)$, and $X\in {}_A\Mm$, $M\in \Mm_{A}$ with structure morphisms $\mu_X:A\otimes X\to X$ and $\nu_M:M\otimes A\to M$, respectively. 
Then, $(M\otimes _AX,q_{M,X})$ is defined to be the coequalizer of the pair of morphisms
$(\nu_M\otimes\mathrm{Id}_X,\mathrm{Id}_M\otimes \mu_X)$ in $\Mm$:
\begin{equation}\label{def:balancedtensor}
\begin{tikzcd}
	M\ot A\ot X && M\ot X & M\ot_{A}X
	\arrow[shift left, from=1-1, to=1-3,"\nu_{M}\otimes\mathrm{Id}_X"]
	\arrow[shift right, from=1-1, to=1-3,"\mathrm{Id}_M\otimes \mu_X"']
	\arrow[from=1-3, to=1-4,"q_{M,X}"]
\end{tikzcd}
\end{equation}
This construction provides functors $M\ot_{A}(-):{}_{A}\Mm\to\Mm$ and $(-)\ot_{A}X:\Mm_{A}\to\Mm$ for any $M$ in $\Mm_{A}$ and any $X$ in $_{A}\Mm$. For any morphism $f:X\to Y$ in $_{A}\Mm$ and object $M$ in $\Mm_{A}$, $\mathrm{Id}_{M}\otimes_A f:M\otimes _AX\to M\otimes_AY$ is the unique morphism in $\Mm$ such that $(\mathrm{Id}_{M}\otimes_A f)q_{M,X}=q_{M,Y} (\id_M\otimes f)$.
\begin{invisible}
    Indeed, $q^A_{M,Y}(\id_M\otimes f)(\id_M\otimes \mu^A_X)=q^A_{M,Y}(\id_M\otimes \mu^A_Y)(\id_M\otimes\id_A\otimes f)=q^A_{M,Y}(\nu^A_M\otimes\id_Y)(\id_M\otimes\id_A \otimes f)=q^A_{M,Y}(\nu^A_M\otimes f)=q^A_{M,Y}(\id_M\otimes f)(\nu^A_M\otimes\id_X)$.
\end{invisible}
Similarly, given $g:M\to N$ in $\Mm_A$ and $Y$ in ${}_A\Mm$, $g\otimes_A \mathrm{Id}_{Y}:M\otimes _AY\to N\otimes _A Y$ is the unique morphism in $\Mm$ such that $(g\otimes_A \mathrm{Id}_{Y})q_{M,Y}=q_{N,Y}(g\otimes\id_Y)$.
For $M\in\Mm_A$ and $Y\in {}_A\Mm$, there are canonical (natural) isomorphisms $\Upsilon_M$, $\Upsilon'_Y$ in $\Mm$:
\begin{itemize}
    \item $\Upsilon_M:M\otimes_AA\to M$, uniquely determined by $\Upsilon_Mq_{M,A}=\nu_M$; 
    \item $\Upsilon'_Y:A\otimes_AY\to Y$, uniquely determined by  $\Upsilon'_Yq_{A,Y}=\mu_Y$. 
\end{itemize}
One can check that $\Upsilon^{-1}_M=q_{M,A}(\id_M\otimes u_A)$ and $\Upsilon'^{-1}_Y=q_{A,Y}(u_A\otimes \id_Y)$. 

Let $(\Mm,\otimes,\mathbf{1})$ be a monoidal category with equalizers and $(C,\Delta,\varepsilon)$ be an object in $\mathsf{Comon}(\Mm)$. Recall from e.g.\ \cite[Definition 2.2.1]{A97} that, given a right $C$-comodule $(V, \rho_V)$  and a left $C$-comodule $(W,\lambda_W)$ in $\Mm$, their \textit{cotensor product} over $C$ in $\Mm$ is defined to be the equalizer $(V\square_C W, e_{V,W})$ of the pair of morphisms $(\rho_V\otimes \mathrm{Id}_{W}, \mathrm{Id}_{V}\otimes \lambda_W)$ in $\Mm$: 
\begin{equation}\label{def:cotensor}
\begin{tikzcd}
	V\square_CW & V\otimes W && V\ot C\ot W
	\arrow[from=1-1, to=1-2,"e_{V,W}"]
	\arrow[shift left, from=1-2, to=1-4,"\rho_{V}\otimes\mathrm{Id}_W"]
	\arrow[shift right, from=1-2, to=1-4,"\mathrm{Id}_V\otimes \lambda_W"']
\end{tikzcd}
\end{equation}
This construction provides functors $V\square_{C}(-):{}^{C}\Mm\to\Mm$ and $(-)\square_{C}W:\Mm^{C}\to\Mm$ for any $V$ in $\Mm^{C}$ and any $W$ in $^{C}\Mm$.
For any morphism $f:X\to Y$ in $^{C}\Mm$ and object $V$ in $\Mm^{C}$, $V\square_Cf:V\square_C X\to V\square_C Y$ is the unique morphism in $\Mm$ such that $e_{V,Y}(V\square_C f)=(\id_V\otimes f) e_{V,X}$. Similarly, for any morphism $g:M\to N$ in $\Mm^C$ and object $Y$ in ${}^C\Mm$, $g\square_CY: M\square_CY\to N\square_CY$ is the unique morphism in $\Mm$ such that $e_{N,Y}(g\square_CY)=(g\otimes\id_Y)e_{M,Y}$.
For $M\in\Mm^{C}$ and $Y\in{}^{C}\Mm$, we have the canonical (natural) isomorphisms $\Lambda_M$, $\Lambda'_{Y}$ in $\Mm$: 
\begin{itemize}
    \item $\Lambda _M:M\to M\square_CC$, uniquely determined by $\rho_M= e_{M,C} \Lambda_M$;
    \item $\Lambda'_Y: Y\to C\square_CY$, uniquely determined by the property $\lambda_Y=e_{C,Y}\Lambda'_Y$.
\end{itemize}
One can easily check that $\Lambda_M^{-1}=(\id_M\otimes\varepsilon_{C})e_{M,C}$ and $(\Lambda'_Y)^{-1}=(\varepsilon_{C}\otimes\id_Y)e_{C,Y}$. 

In this paper, we will usually deal with \textit{abelian monoidal categories} $(\Mm,\ot,\mathbf{1})$, i.e.\ monoidal categories which are also abelian and such that the functors $M\ot(-):\Mm\to\Mm$ and $(-)\ot M:\Mm\to\Mm$ are additive and exact, for any $M$ in $\Mm$. We will say that $\ot$ preserves equalizers and coequalizers meaning that the latter are preserved by $M\ot (-)$ and $(-)\ot M$, for any $M\in\Mm$. We recall that, since $\ot$ preserves coequalizers in $\Mm$, the category $(_{A}\Mm_{A},\ot_{A},A)$ is a monoidal category such that $\ot_{A}$ preserves coequalizers in $_{A}\Mm_{A}$, for any object $A$ in $\mathsf{Mon}(\Mm)$, see e.g.\ \cite[Theorem 1.12]{AMS}. Dually, since $\ot$ preserves equalizers, the category $({}^{C}\Mm^{C},\square_{C},C)$ is a monoidal category such that $\square_{C}$ preserves equalizers in ${}^{C}\Mm^{C}$, for any object $C$ in $\mathsf{Comon}(\Cc)$. 
\begin{invisible}
Given $V$ in ${}^{C}\Mm^{C}$ and $W$ in $\!^{C}\Mm^{C}$, we recall that $V\square_{C}W$ is an object in $\Mm^{C}$ with comodule structure determined as the unique morphism $\rho_{V\square_{C}W}:V\square_{C}W\to(V\square_{C}W)\ot C$ in $\Mm$ such that $(e_{V,W}\ot\id_{C})\rho_{V\square_{C}W}=(\id_{V}\ot\rho_{W})e_{V,W}$, so that $e_{V,W}$ is in $\Mm^{C}$. In fact, since $\ot$ preserves the equalizers, we have that the following diagram is an equalizer in $\Mm$
\[
\begin{tikzcd}
	(V\square_CW)\ot C && V\otimes W\ot C && V\ot C\ot W\ot C
	\arrow[from=1-1, to=1-3,"e_{V,W}\ot\id_{C}"]
	\arrow[shift left, from=1-3, to=1-5,"\rho_{V}\otimes\mathrm{Id}_W\ot\id_{C}"]
	\arrow[shift right, from=1-3, to=1-5,"\mathrm{Id}_V\otimes \lambda_W\ot\id_{C}"']
\end{tikzcd}
\]
Since $W$ is a bicomodule we have
\[
\begin{split}
(\rho_{V}\otimes\mathrm{Id}_W\ot\id_{C})(\id_{V}\ot\rho_{W})e_{V,W}&=(\id_{V\ot C}\ot\rho_{W})(\rho_{V}\otimes\mathrm{Id}_W)e_{V,W}=(\id_{V\ot C}\ot\rho_{W})(\id_{V}\ot\lambda_{W})e_{V,W}\\&=(\id_{V}\ot\lambda_{V}\ot\id_{C})(\id_{V}\ot\rho_{W})e_{V,W}
\end{split}
\]
there exists a unique morphism $\rho_{V\square_{C}W}:V\square_{C}W\to(V\square_{C}W)\ot C$ in $\Mm$ such that $(e_{V,W}\ot\id_{C})\rho_{V\square_{C}W}=(\id_{V}\ot\rho_{W})e_{V,W}$. Similarly, $\lambda_{V\square_{C}W}$ is uniquely determined such that $e_{V,W}$ becomes a morphism in $^{C}\Mm$.
\end{invisible}

Recall that for a pointed category $\Mm$, the kernel and the cokernel of a morphism in $\Mm$ are defined as the equalizer and the coequalizer of the morphism with the zero morphism, respectively. 
For a pointed monoidal category $(\Mm,\ot,\mathbf{1})$ with cokernels that are preserved by $M\ot(-)$ and $(-)\ot M$ for any $M\in \Mm$, an \textit{ideal} of an object $(A,m,u)$ in $\mathsf{Mon}(\Mm)$ is a pair $(I,i)$
where $I$ is an object in $_{A}\Mm_{A}$ and $i:I\to A$
is a morphism in $_{A}\Mm_{A}$ (where the $A$-bimodule structure of $A$ is given by $m_{A}$) which is a monomorphism in $\Mm$. 
Note that the object $\mathsf{Coker}(i)$ has a unique structure in $\mathsf{Mon}(\Mm)$ such that $\pi:=\mathsf{coker}(i)$ is a morphism in $\mathsf{Mon}(\Mm)$. Given an object $(C,\Delta,\varepsilon)$ in $\mathsf{Comon}(\Mm)$ and a monomorphism $i:I\to C$ in $\Mm$, 
we recall that $(I,i)$ is said to be a \textit{two-sided coideal} of $C$ if 
$\varepsilon_{C}i=0$ and $(\pi\ot\pi)\Delta_{C} i=0$, where $\pi:=\mathsf{coker}(i)$, so there exist unique morphisms $\varepsilon_{\mathsf{Coker}(i)}:\mathsf{Coker}(i)\to\mathbf{1}$ and $\Delta_{\mathsf{Coker}(i)}:\mathsf{Coker}(i)\to \mathsf{Coker}(i)\ot \mathsf{Coker}(i)$ in $\Mm$ such that $\pi$ is a morphism in $\mathsf{Comon}(\Mm)$. Given a braided monoidal category $(\Mm,\ot,\mathbf{1},\sigma)$, an object $B$ in $\mathsf{Bimon}(\Mm)$ and a monomorphism $i:I\to B$ in $\Mm$, we say that $(I,i)$ is a \textit{bi-ideal} of $B$ if $(I,i)$ is an ideal and a two-sided coideal of $B$. Given an object $H$ in $\mathsf{Hopf}(\Mm)$ and a monomorphism $i:I\to H$ in $\Mm$, we say that $(I,i)$ is a \textit{Hopf ideal} of $H$ if $(I,i)$ is a bi-ideal of $H$ and $\pi S_{H}i=0$, where $\pi:=\mathsf{coker}(i)$, so that there exists a unique morphism $S_{\mathsf{Coker}(i)}:\mathsf{Coker}(i)\to \mathsf{Coker}(i)$ in $\Mm$ such that $S_{\mathsf{Coker}(i)}\pi=\pi S_{H}$. Given a Hopf ideal $(I,i)$, the object $\mathsf{Coker}(i)$ has a unique structure in $\mathsf{Hopf}(\Mm)$ such that $\pi$ is a morphism in $\mathsf{Hopf}(\Mm)$.


We also recall that, for any morphism $f:A\to B$ in an abelian category $\Mm$, we have the following factorization diagram (the so-called image factorization)
\[\begin{tikzcd}
	\mathsf{Ker}(f) & A & B & \mathsf{Coker}(f) \\
	& \mathsf{Coker}(\mathsf{ker}(f)) & \mathsf{Ker}(\mathsf{coker}(f))
	\arrow[from=1-1, to=1-2,"\mathsf{ker}(f)"]
	\arrow[from=1-2, to=1-3, "f"]
	\arrow[from=1-2, to=2-2,"\mathsf{coker}(\mathsf{ker}(f))"']
	\arrow[from=1-3, to=1-4,"\mathsf{coker}(f)"]
	\arrow[from=2-3, to=1-3,"\mathsf{ker}(\mathsf{coker}(f))"']
	\arrow["\cong"{description}, from=2-2, to=2-3]
\end{tikzcd}\]
so that we can write $f=\mathsf{ker}(\mathsf{coker}(f))\mathsf{coker}(\mathsf{ker}(f))$. \medskip

Finally, we recall that, for a braided monoidal category $(\Mm,\ot,\mathbf{1},\sigma)$, the categories $\mathsf{Mon}(\Mm)$ and $\mathsf{Comon}(\Mm)$ are monoidal with $\ot$ and $\mathbf{1}$ (and the same constraints). 
We have the following equivalences of categories
\begin{equation}\label{equivalencescat}
\mathsf{Mon}(\mathsf{Comon}(\Mm))\cong\mathsf{Bimon}(\Mm)\cong\mathsf{Comon}(\mathsf{Mon}(\Mm)), \ \mathsf{Comon}(\mathsf{Comon}(\Mm))\cong\mathsf{Comon}_{\mathrm{coc}}(\Mm),
\end{equation}
see e.g.\ \cite[page 12]{AM}. 
Note that the monoidal categories $(\mathsf{Mon}(\Mm),\ot,\mathbf{1})$ and $(\mathsf{Comon}(\Mm),\ot,\mathbf{1})$ may fail to be braided, and the category $\mathsf{Bimon}(\Mm)$ may fail to be monoidal. However, if $\sigma$ is a \textit{symmetry}, i.e.\ $\sigma^{-1}_{A,B}=\sigma_{B,A}$ for all objects $A,B$ in $\Mm$, then $\sigma_{A,B}$ is a morphism of monoids and comonoids. It follows that both $(\mathsf{Mon}(\Mm),\ot,\mathbf{1},\sigma)$ and $(\mathsf{Comon}(\Mm),\ot,\mathbf{1},\sigma)$ are symmetric monoidal categories. Iterating these results and applying \eqref{equivalencescat}, one can deduce that $(\mathsf{Bimon}(\Mm),\ot,\mathbf{1},\sigma)$ and $(\mathsf{Comon}_{\mathrm{coc}}(\Mm),\ot,\mathbf{1},\sigma)$ are symmetric monoidal categories as well. Moreover, if $(\Mm,\ot,\mathbf{1},\sigma)$ is a symmetric monoidal category, then also $(\mathsf{Hopf}(\Mm),\ot,\mathbf{1},\sigma)$ and $(\Hopf,\ot,\mathbf{1},\sigma)$ are symmetric monoidal, see e.g.\ \cite[page 12]{AM}. \medskip

In the sequel, we will see that the hypotheses that the braided monoidal category $(\Mm,\ot,\mathbf{1},\sigma)$ is abelian and symmetric are crucial in proving that $\Hopf$ is semi-abelian. Given the abelian symmetric monoidal category $(\mathsf{Vec}_{\Bbbk},\ot_{\Bbbk},\Bbbk,\tau)$ of $\Bbbk$-vector spaces, where $\tau$ is the canonical flip map or, more generally, $(\mathsf{Vec}_{G},\ot_{\Bbbk},\Bbbk,\sigma)$ of $G$-graded vector spaces with $G$ a finitely generated abelian group and $\mathrm{char}(\Bbbk)\not=2$ (not needed if $G$ is finite of odd cardinality), we already know that the category $\mathsf{Hopf}_{\mathrm{coc}}(\mathsf{Vec}_{\Bbbk})=\mathsf{Hopf}_{\Bbbk,\mathrm{coc}}$ of cocommutative Hopf algebras over an arbitrary field $\Bbbk$ and, more generally, the category $\mathsf{Hopf}_{\mathrm{coc}}(\mathsf{Vec}_{G})$ of cocommutative color Hopf algebras are semi-abelian as proven in \cite{GSV} and \cite{AS}, respectively. 

\section{Pointedness, limits, and protomodularity}\label{sec:pointed,limits,protomodularity}

First, the following result shows that the category $\Hopf$ is pointed. 

\begin{lemma}\label{lem:Hopfpointed}
Let $(\Mm,\ot,\mathbf{1},\sigma)$ be a braided monoidal category. The category $\mathsf{Bimon}(\Mm)$ is pointed, with zero object $\mathbf{1}$. Consequently, $\mathsf{Hopf}(\Mm)$ and $\Hopf$ are also pointed.
\end{lemma}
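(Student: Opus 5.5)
The plan is to show directly that the unit object $\mathbf{1}$, with its canonical structures, is both initial and terminal in $\mathsf{Bimon}(\Mm)$. First I equip $\mathbf{1}$ with the bimonoid structure $m_{\mathbf{1}}=\Delta_{\mathbf{1}}^{-1}$, the unit constraint $\mathbf{1}\ot\mathbf{1}\cong\mathbf{1}$ (suppressed by our conventions), and $u_{\mathbf{1}}=\varepsilon_{\mathbf{1}}=\mathrm{Id}_{\mathbf{1}}$. Compatibility of multiplication and comultiplication reduces to coherence, using that $\sigma_{\mathbf{1},\mathbf{1}}=\mathrm{Id}$ (a standard consequence of the hexagon axioms). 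Moreover $S_{\mathbf{1}}=\mathrm{Id}_{\mathbf{1}}$ is an antipode, and $\mathbf{1}$ is trivially cocommutative. Hence $\mathbf{1}$ lies in $\mathsf{Hopf}(\Mm)$ and in $\Hopf$.

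For \emph{initiality}, let $B$ be any bimonoid. I take $u_B:\mathbf{1}\to B$ as the candidate morphism.
\begin{itemize}
\item[(a)] It is a monoid map: $m_B(u_B\ot u_B)=u_B$ by the unit axiom.
\item[(b)] It is a comonoid map: $\Delta_Bu_B=u_B\ot u_B$ and $\varepsilon_Bu_B=\mathrm{Id}_{\mathbf{1}}$, which are exactly the bimonoid axioms expressing that the unit is a coalgebra map.
\item[(c)] Uniqueness: any monoid morphism $f:\mathbf{1}\to B$ must preserve units, so $f=fu_{\mathbf{1}}=u_B$.
\end{itemize}

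\emph{Terminality} is the dual argument. The counit $\varepsilon_B:B\to\mathbf{1}$ is a comonoid map by the counit axioms, and a monoid map by the bimonoid axioms $\varepsilon_Bm_B=\varepsilon_B\ot\varepsilon_B$ and $\varepsilon_Bu_B=\mathrm{Id}$. Any comonoid morphism $g:B\to\mathbf{1}$ satisfies $g=\varepsilon_{\mathbf{1}}g=\varepsilon_B$, which gives uniqueness. Therefore $\mathbf{1}$ is a zero object of $\mathsf{Bimon}(\Mm)$.

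For the consequences, $\mathsf{Hopf}(\Mm)$ and $\Hopf$ are full subcategories of $\mathsf{Bimon}(\Mm)$, since bimonoid morphisms between Hopf monoids automatically commute with antipodes, and in any case morphisms are defined as bimonoid morphisms. Because both subcategories contain $\mathbf{1}$, the zero object restricts to each of them.

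There is no real obstacle here. The only point needing care is the coherence check that $\mathbf{1}$ is a (cocommutative) Hopf monoid, i.e.\ $\sigma_{\mathbf{1},\mathbf{1}}=\mathrm{Id}$ together with the unit identifications; I will cite this as standard rather than compute it.
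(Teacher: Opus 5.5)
Your proposal is correct and follows the paper's proof. Both prove uniqueness via $g=\varepsilon_{\mathbf{1}}g=\varepsilon_B$ for maps into $\mathbf{1}$ and $f=fu_{\mathbf{1}}=u_B$ for maps out of $\mathbf{1}$, and then pass to the full subcategories $\mathsf{Hopf}(\Mm)$ and $\mathsf{Hopf}_{\mathrm{coc}}(\Mm)$, which contain $\mathbf{1}$. The only difference is that you also write out the existence checks (that $u_B$ and $\varepsilon_B$ are bimonoid morphisms, and that $\mathbf{1}$ is a cocommutative Hopf monoid), which the paper leaves implicit.
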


\begin{proof}
     Clearly, the unit object $\mathbf{1}$ of $\Mm$ is an object in $\mathsf{Hopf}(\Mm)$. For any object $H$ in $\mathsf{Bimon}(\Mm)$ and any morphism $f:H \to \mathbf{1}$ in $\mathsf{Bimon}(\Mm)$, we have $\varepsilon_{H} = \varepsilon_{\mathbf{1}}f = f$. This means the counit $\varepsilon_{H}:H\to\mathbf{1}$ is the unique morphism in $\mathsf{Bimon}(\Mm)$ from $H$ to $\mathbf{1}$ (in fact, also the unique morphism in $\mathsf{Comon}(\Mm)$). Similarly, the unit $u_{H}:\mathbf{1}\to H$ is the unique morphism in $\mathsf{Bimon}(\Mm)$ from $\mathbf{1}$ to $H$ (in fact, also the unique morphism in $\mathsf{Mon}(\Mm)$). Therefore, $\mathbf{1}$ is a zero object for the category $\mathsf{Bimon}(\Mm)$. The same can be deduced for $\mathsf{Hopf}(\Mm)$ and $\Hopf$.
\end{proof}

Now, we study limits in the category $\Hopf$. Let $(\Mm,\ot,\mathbf{1},\sigma)$ be a symmetric monoidal category with equalizers such that $(-)\ot X$ and $X\ot(-)$ preserve them, for any $X\in\Mm$. The next goal it to show that the category $\Hopf$ is finitely complete.

\subsection{Binary products in $\Hopf$}\label{subsect:binprod}
\addtocontents{toc}{\protect\setcounter{tocdepth}{1}}
As recalled in Section \ref{sec:prelimiaries}, since $(\Mm,\ot,\mathbf{1},\sigma)$ is a symmetric monoidal category, the category $(\Hopf,\ot,\mathbf{1},\sigma)$ is also a symmetric monoidal category. Given $A,B$ in $\Hopf$, it is known that the following
\begin{equation}\label{def:binaryproduct}
(A\ot B,\pi_{A}:=\mathrm{Id}_{A}\ot\varepsilon_{B},\pi_{B}:=\varepsilon_{A}\ot\mathrm{Id}_{B})
\end{equation}
is the binary product of $A$ and $B$ in $\Hopf$, see e.g.\ \cite[Proposition 1.2]{Sterck} where the result is stated for the category $\mathsf{Bimon}_{\mathrm{coc}}(\Mm)$. \begin{invisible}
We include the proof for the sake of completeness. Given $f:C\to A$ and $g:C\to B$ in $\Hopf$, if there exists a morphism $\phi:C\to A\ot B$ in $\Hopf$ such that $\pi_{A}\phi=f$ and $\pi_{B}\phi=g$, then it is unique. Indeed, since $\phi$ must be in $\mathsf{Comon}(\Mm)$, so $(\phi\ot\phi)\Delta_{C}=\Delta_{A\ot B}\phi$ is satisfied, we get
\[
\begin{split}
\phi&=(\mathrm{Id}_{A}\ot\varepsilon_{B}\ot\varepsilon_{A}\ot\mathrm{Id}_{B})(\mathrm{Id}_{A}\ot\sigma_{A,B}\ot\mathrm{Id}_{B})(\Delta_{A}\ot\Delta_{B})\phi=(\pi_{A}\ot\pi_{B})\Delta_{A\ot B}\phi=(\pi_{A}\phi\ot\pi_{B}\phi)\Delta_{C}\\&=(f\ot g)\Delta_{C}.
\end{split}
\]
The morphism $\phi:=(f\ot g)\Delta_{C}:C\to A\ot B$ is in $\Hopf$ since $f,g$ and $\Delta_{C}$ are morphisms in $\Hopf$. We observe that $\Delta_{C}$ is in $\Hopf$ since $C$ is cocommutative. In fact, we have
\[
\begin{split}
(\Delta_{C}\ot\Delta_{C})\Delta_{C}&=(\mathrm{Id}_{C}\ot\Delta_{C}\ot\mathrm{Id}_{C})(\mathrm{Id}_{C}\ot\Delta_{C})\Delta_{C}=(\mathrm{Id}_{C}\ot\sigma_{C,C}\Delta_{C}\ot\mathrm{Id}_{C})(\mathrm{Id}_{C}\ot\Delta_{C})\Delta_{C}\\&=(\mathrm{Id}_{C}\ot\sigma_{C,C}\ot\mathrm{Id}_{C})(\mathrm{Id}_{C}\ot\Delta_{C}\ot\mathrm{Id}_{C})(\mathrm{Id}_{C}\ot\Delta_{C})\Delta_{C}\\&=(\mathrm{Id}_{C}\ot\sigma_{C,C}\ot\mathrm{Id}_{C})(\Delta_{C}\ot\Delta_{C})\Delta_{C}=\Delta_{C\ot C}\Delta_{C}.
\end{split}
\]
We have that
\[
\pi_{A}\phi=(\mathrm{Id}_{A}\ot\varepsilon_{B})(f\ot g)\Delta_{C}=f(\mathrm{Id}_{C}\ot\varepsilon_{C})\Delta_{C}=f, \quad \pi_{B}\phi=(\varepsilon_{A}\ot\mathrm{Id}_{B})(f\ot g)\Delta_{C}=g.
\]
Therefore, \eqref{def:binaryproduct} is the binary product of $A$ and $B$ in $\Hopf$. 
\end{invisible}
In fact, for two morphisms $f:C\to A$ and $g:C\to B$ in $\Hopf$, the unique morphism $\langle f,g\rangle:C\to A\ot B$ in $\Hopf$ such that $\pi_{A}\langle f,g\rangle=f$ and $\pi_{B}\langle f,g\rangle=g$ is given by $\langle f,g\rangle:=(f\ot g)\Delta_{C}$ and we observe that this morphism is in $\Hopf$ since $C$ is cocommutative. Note that this binary product construction can not be generalized to the non-cocommutative case. In fact, suppose that (\ref{def:binaryproduct}) is a binary product in $\mathsf{Hopf}(\Mm)$. Then, for any object $A$ in $\mathsf{Hopf}(\Mm)$, the morphism $\langle\mathrm{Id}_{A},\mathrm{Id}_{A}\rangle=\Delta_{A}$ is in $\mathsf{Hopf}(\Mm)$. This implies that $A$ is cocommutative.

\begin{remark}\label{cartesian of Hopf}
Recall that a monoidal category $(\Mm,\ot,\mathbf{1})$ is said to be \textit{cartesian} if the tensor product of two objects coincides with their binary product and the unit object is a terminal object.
Since the unit object of $\Hopf$ is a terminal object, the monoidal category $(\Hopf,\ot,\mathbf{1})$ is cartesian. We point out that also the monoidal category $(\mathsf{Comon}_{\mathrm{coc}}(\Mm),\ot,\mathbf{1})$ is cartesian, for any symmetric monoidal category $(\Mm,\ot,\mathbf{1},\sigma)$, see e.g.\ \cite[Corollary 2.24]{KM}. 
\end{remark}

The category $\mathsf{Comon}_{\mathrm{coc}}(\Mm)$ could not have equalizers. This happens under some suitable assumptions on $\Mm$, as a consequence of the following result:

\begin{proposition}[{cf.\ dual of \cite[Theorem 2.3]{Porstequalizer}}]\label{lem2.3Porst}
Let $(\Mm,\otimes,\mathbf{1})$ be a monoidal category such that $\Mm$ has equalizers which are preserved by the functors $X\ot(-)$ and $(-)\ot X$, for any $X$ in $\Mm$. For any pair of morphisms $\alpha, \beta: A\to X$ in $\mathsf{Comon}(\Mm)$, consider the following equalizer in $\Mm$:
\begin{equation}\label{eq:Lambda}
\begin{tikzcd}
	E & A &&&&& A\ot X\ot A
	\arrow[from=1-1, to=1-2, "e"]
	\arrow[shift left, from=1-2, to=1-7, "\Lambda_{\alpha}:=(\mathrm{Id}_{A}\ot\alpha\ot\mathrm{Id}_{A})(\Delta_{A}\ot\mathrm{Id}_{A})\Delta_{A}"]
	\arrow[shift right, from=1-2, to=1-7, "\Lambda_{\beta}:=(\mathrm{Id}_{A}\ot\beta\ot\mathrm{Id}_{A})(\Delta_{A}\ot\mathrm{Id}_{A})\Delta_{A}"']
\end{tikzcd}
\end{equation}
Then, $E$ carries a (unique) comonoid structure such that $e$ becomes a morphism in $\mathsf{Comon}(\Mm)$ and 
\[\begin{tikzcd}
	E & A & X
	\arrow[from=1-1, to=1-2, "e"]
	\arrow[shift left, from=1-2, to=1-3, "\alpha"]
	\arrow[shift right, from=1-2, to=1-3, "\beta"']
\end{tikzcd}\]
is an equalizer in $\mathsf{Comon}(\Mm)$. In particular, the category $\mathsf{Comon}(\Mm)$ has equalizers. 
\end{proposition}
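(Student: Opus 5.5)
The plan is to build $\Delta_E$ and $\varepsilon_E$ by universal properties, using that $\ot$ preserves equalizers. Set $\varepsilon_E:=\varepsilon_A e$. For $\Delta_E$, consider $(e\ot e):E\ot E\to A\ot A$. Since $\ot$ preserves equalizers, $E\ot E$ is the joint equalizer in $\Mm$ of the two pairs $(\Lambda_\alpha\ot\id_A,\Lambda_\beta\ot\id_A)$ and $(\id_A\ot\Lambda_\alpha,\id_A\ot\Lambda_\beta)$. To see this, write $e\ot e=(e\ot\id_A)(\id_E\ot e)$ and apply preservation twice; this uses that in $\Mm$ the monomorphisms $e\ot\id$ and $\id\ot e$ compose as an intersection of equalizers. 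Hence it suffices to check two identities:
\[(\Lambda_\alpha\ot\id_A)\Delta_A e=(\Lambda_\beta\ot\id_A)\Delta_A e,\qquad (\id_A\ot\Lambda_\alpha)\Delta_A e=(\id_A\ot\Lambda_\beta)\Delta_A e.\]
Both follow from coassociativity. For the first, $(\Lambda_\alpha\ot\id_A)\Delta_A$ equals $(\id\ot\alpha\ot\id\ot\id)\Delta^{(3)}_A$, where $\Delta^{(3)}_A$ denotes the threefold iterated comultiplication. Applying $\id_A\ot\id_X\ot\Delta_A$ to $\Lambda_\alpha$ gives exactly this same morphism. Therefore
\[(\Lambda_\alpha\ot\id_A)\Delta_A e=(\id_{A\ot X}\ot\Delta_A)\Lambda_\alpha e=(\id_{A\ot X}\ot\Delta_A)\Lambda_\beta e=(\Lambda_\beta\ot\id_A)\Delta_A e.\]
The second identity is symmetric, using $\Delta_A\ot\id$ on the left factor. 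This gives a unique $\Delta_E$ with $(e\ot e)\Delta_E=\Delta_A e$.

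Coassociativity and counitality of $(E,\Delta_E,\varepsilon_E)$ then follow because $e\ot e\ot e$ and $e$ are monomorphisms, which holds since $\ot$ preserves equalizers and hence monos. For example, $(e\ot e\ot e)(\Delta_E\ot\id)\Delta_E=(\Delta_A\ot\id)\Delta_A e$, and symmetrically for the other bracketing. Likewise $e(\varepsilon_E\ot\id)\Delta_E=(\varepsilon_A\ot\id)\Delta_A e=e$. So $e$ is a comonoid morphism, and the structure is unique because $e\ot e$ is monic.

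Next, $\alpha e=\beta e$ is obtained by applying $\varepsilon_A\ot\id_X\ot\varepsilon_A$ to $\Lambda_\alpha e=\Lambda_\beta e$, using counitality. For the universal property, let $f:C\to A$ in $\mathsf{Comon}(\Mm)$ satisfy $\alpha f=\beta f$. Then
\[\Lambda_\alpha f=(\id\ot\alpha\ot\id)(f\ot f\ot f)\Delta^{(2)}_C=(f\ot\alpha f\ot f)\Delta^{(2)}_C,\]
and the same holds with $\beta$. These coincide, so $f=eg$ for a unique $g:C\to E$ in $\Mm$. It remains to check that $g$ is a comonoid morphism. We have $(e\ot e)(g\ot g)\Delta_C=(f\ot f)\Delta_C=\Delta_A f=(e\ot e)\Delta_E g$, and $e\ot e$ is monic. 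Counit compatibility is immediate.

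The main obstacle is the first step: justifying that $e\ot e$ is the equalizer of the two pairs simultaneously, so that the factorization of $\Delta_A e$ through $E\ot E$ is legitimate. This is exactly where the preservation of equalizers by $X\ot(-)$ and $(-)\ot X$ is used.
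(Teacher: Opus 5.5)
Your proof is correct and essentially the intended one. The paper gives no argument of its own here, only a reference to the dual of Porst's result, and your two-step factorization (first through $e\ot\mathrm{Id}_A$, then through $\mathrm{Id}_E\ot e$) is exactly the dual of how the paper builds $m_{\mathsf{Coeq}}$ in Lemma \ref{lem:pihopf}. Two small points to tighten: the ``joint equalizer'' step needs $e\ot\mathrm{Id}_{A\ot X\ot A}$ to be monic in order to pass from $h=(e\ot\mathrm{Id}_A)h_1$ to $(\mathrm{Id}_E\ot\Lambda_\alpha)h_1=(\mathrm{Id}_E\ot\Lambda_\beta)h_1$, and $e\ot e$ and $e\ot e\ot e$ are monic because $e$ is itself an equalizer (so its tensors with identities are equalizers), not because tensoring preserves arbitrary monomorphisms.
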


\begin{remark}
    The equalizer \eqref{eq:Lambda} is a coreflexive equalizer, i.e.\ an equalizer of parallel morphisms that have common retraction (in this case $\varepsilon_{A}\ot\varepsilon_{X}\ot\mathrm{Id}_{A}$). In fact, for the previous result to hold, it is sufficient that $\Mm$ has coreflexive equalizers which are preserved by the functors $X\ot(-)$ and $(-)\ot X$, for any $X$ in $\Mm$. The same assumption is sufficient for Corollary \ref{lemma:limitscocombim}, Corollary \ref{cor:comoncocfincomplete}, Proposition \ref{prop:Hopfcocfintcomplete} and Proposition \ref{prop:protomodularity}.
\end{remark}

Under the cocommutativity assumption, the form of the equalizers can be simplified.

\begin{corollary}\label{lemma:limitscocombim}
    Let $(\Mm,\ot,\mathbf{1},\sigma)$ be a braided monoidal category such that $\Mm$ has equalizers and the functors $X\ot(-)$ and $(-)\ot X$ preserve them, for any $X$ in $\Mm$. Then, the category $\mathsf{Comon}_{\mathrm{coc}}(\Mm)$ has equalizers. In particular, this happens when $(\Mm,\ot,\mathbf{1},\sigma)$ is an abelian braided monoidal category. 
    
    More explicitly, given morphisms $\alpha,\beta:A\to X$ in $\mathsf{Comon}_{\mathrm{coc}}(\Mm)$, the equalizer of the pair $(\alpha,\beta)$ in $\mathsf{Comon}_{\mathrm{coc}}(\Mm)$ is given by the equalizer of the pair $((\alpha\ot\mathrm{Id}_{A})\Delta_{A},(\beta\ot\mathrm{Id}_{A})\Delta_{A})$ in $\Mm$.
\end{corollary}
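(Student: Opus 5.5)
Our strategy is to deduce the statement from Proposition \ref{lem2.3Porst}. We then show two things: the equalizer of the two triple morphisms coincides with the equalizer of the simpler pair, and cocommutativity passes to the equalizer.

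First, let $\alpha,\beta:A\to X$ be morphisms in $\mathsf{Comon}_{\mathrm{coc}}(\Mm)$. Let $(E',e')$ be the equalizer in $\Mm$ of $\lambda_\alpha:=(\alpha\ot\id_A)\Delta_A$ and $\lambda_\beta:=(\beta\ot\id_A)\Delta_A$. We compare it with the equalizer $(E,e)$ of $(\Lambda_\alpha,\Lambda_\beta)$ from \eqref{eq:Lambda}. By coassociativity,
\[\Lambda_\alpha=(\id_A\ot\alpha\ot\id_A)(\id_A\ot\Delta_A)\Delta_A=(\id_A\ot\lambda_\alpha)\Delta_A,\]
and similarly for $\beta$. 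Hence $\Lambda_\alpha e'=\Lambda_\beta e'$ will follow once we know $\Delta_A e'$ factors through $A\ot E'$.

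Conversely, applying $\varepsilon_A\ot\id_X\ot\id_A$ to $\Lambda_\alpha$ gives $\lambda_\alpha$, so $\Lambda_\alpha e=\Lambda_\beta e$ implies $\lambda_\alpha e=\lambda_\beta e$. Thus $E\le E'$ as subobjects of $A$.

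For the reverse inclusion I plan to use cocommutativity: $\Lambda_\alpha$ can be rewritten using the braiding as $(\id_A\ot\lambda_\alpha)\Delta_A$ up to braiding isomorphisms. Write $\Lambda_\alpha=(\id_A\ot\alpha\ot\id_A)(\id_A\ot\Delta_A)\Delta_A$. Using $\sigma\Delta=\Delta$ on the outer comultiplication, I hope to move the free leg of $A$ to the right. This would give $\Lambda_\alpha=\theta\,(\lambda_\alpha\ot\id_A)\Delta_A$ for a suitable isomorphism $\theta$ built from $\sigma$. Equivalently, by cocommutativity and coassociativity, $\Lambda_\alpha$ equals $(\lambda_\alpha\ot \id_A)$ composed with $\Delta_A$, after rearranging with braidings.

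Now $(\lambda_\alpha\ot\id_A)\Delta_A e'$ and $(\lambda_\beta\ot\id_A)\Delta_A e'$ must be compared, which again requires $\Delta_A e'$ to land in $E'\ot A$. To get this, I use that $(-)\ot A$ preserves equalizers, so $e'\ot\id_A$ is the equalizer of $\lambda_\alpha\ot\id_A$ and $\lambda_\beta\ot\id_A$. Checking $(\lambda_\alpha\ot \id_A)\Delta_A e'=(\lambda_\beta\ot\id_A)\Delta_A e'$ reduces by coassociativity to
\[(\alpha\ot\Delta_A)\Delta_A e'=(\id_X\ot\Delta_A)\lambda_\alpha e',\]
which holds because $\lambda_\alpha e'=\lambda_\beta e'$. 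Hence $\Delta_A e'$ factors through $E'\ot A$, and the displayed equality gives $\Lambda_\alpha e'=\Lambda_\beta e'$ after the braiding rearrangement. Therefore $E'\le E$, and so $E\cong E'$ as subobjects of $A$.

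Finally, cocommutativity of $E$: since $e$ is a comonoid morphism and a monomorphism, and $e\ot e$ is a monomorphism (tensor preserves equalizers, so it preserves monos), we have
\[(e\ot e)\sigma\Delta_E=\sigma(e\ot e)\Delta_E=\sigma\Delta_A e=\Delta_Ae=(e\ot e)\Delta_E.\]
Hence $\sigma\Delta_E=\Delta_E$. Since $\mathsf{Comon}_{\mathrm{coc}}(\Mm)$ is a full subcategory of $\mathsf{Comon}(\Mm)$, $E$ is the equalizer there as well. In the abelian case all hypotheses hold, since tensor is exact.

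The main obstacle is the braiding rearrangement expressing $\Lambda_\alpha$ through $\lambda_\alpha$ on the correct side. One has to be careful that a single $\sigma$ (not $\sigma^{-1}$) suffices, which uses cocommutativity in the form $\sigma_{A,A}\Delta_A=\Delta_A$.
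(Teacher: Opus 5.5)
Your proposal is correct and follows essentially the paper's route: reduce to Proposition~\ref{lem2.3Porst}, transfer cocommutativity along $e\ot e$ (which is a monomorphism because $e$ is itself an equalizer and $\ot$ preserves equalizers), and use $\sigma_{A,A}\Delta_A=\Delta_A$ together with naturality of $\sigma$ to compare $\Lambda_\alpha$ with $(\alpha\ot\mathrm{Id}_A)\Delta_A$. The rearrangement you only hoped for does hold, namely $\Lambda_\alpha=(\sigma_{X,A}\ot\mathrm{Id}_A)(\mathrm{Id}_X\ot\Delta_A)(\alpha\ot\mathrm{Id}_A)\Delta_A$; since $\mathrm{Id}_X\ot\Delta_A$ is a split monomorphism, this gives both inclusions at once, mirroring the paper's identity $(\mathrm{Id}_A\ot\sigma_{X,A})\Lambda_\alpha=(\Delta_A\ot\mathrm{Id}_X)(\mathrm{Id}_A\ot\alpha)\Delta_A$, so your detour of factoring $\Delta_Ae'$ through $E'\ot A$ is unnecessary.
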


\begin{proof}
    By Proposition \ref{lem2.3Porst}, the equalizer of the pair $(\alpha,\beta)$ in $\mathsf{Comon}(\Mm)$ is given by the equalizer \eqref{eq:Lambda} of the pair $(\Lambda_{\alpha},\Lambda_{\beta})$ in $\Mm$. To obtain that the latter is also the equalizer of $(\alpha,\beta)$ in $\mathsf{Comon}_{\mathrm{coc}}(\Mm)$, it suffices to show that $E$ is in $\mathsf{Comon}_{\mathrm{coc}}(\Mm)$. Since $\sigma_{A,A}\Delta_{A}=\Delta_{A}$, we have
\[
(e\ot e)\sigma_{E,E}\Delta_{E}=\sigma_{A,A}(e\ot e)\Delta_{E}=\sigma_{A,A}\Delta_{A}e=\Delta_{A}e=(e\ot e)\Delta_{E},
\]
hence $\sigma_{E,E}\Delta_{E}=\Delta_{E}$ since $e\ot e$ is a monomorphism in $\Mm$. Thus, the category $\mathsf{Comon}_{\mathrm{coc}}(\Mm)$ has equalizers as well. Moreover, the equalizer of the pair $(\Lambda_{\alpha},\Lambda_{\beta})$ in $\Mm$ is isomorphic to the equalizer of the pair $((\mathrm{Id}_{A}\ot\sigma_{X,A})\Lambda_{\alpha},(\mathrm{Id}_{A}\ot\sigma_{X,A})\Lambda_{\beta})$ in $\Mm$, since $\mathrm{Id}_{A}\ot\sigma_{X,A}$ is an isomorphism in $\Mm$. In addition, since $A$ is cocommutative, we get
\[
\begin{split}
(\mathrm{Id}_{A}\ot\sigma_{X,A})\Lambda_\alpha&=(\mathrm{Id}_{A}\ot\sigma_{X,A})(\mathrm{Id}_{A}\ot\alpha\ot\mathrm{Id}_{A})(\Delta_{A}\ot\mathrm{Id}_{A})\Delta_{A}\\&=(\mathrm{Id}_{A}\ot \mathrm{Id}_{A}\ot\alpha)(\mathrm{Id}_{A}\ot\sigma_{A,A})(\mathrm{Id}_{A}\ot\Delta_A)\Delta_A\\&=(\mathrm{Id}_{A}\ot \mathrm{Id}_{A}\ot\alpha)(\mathrm{Id}_{A}\ot\Delta_A)\Delta_A\\&=(\Delta_{A}\ot\mathrm{Id}_{X})(\mathrm{Id}_{A}\ot\alpha)\Delta_{A}.
\end{split}
\]
Therefore, since $\Delta_{A}\ot\mathrm{Id}_{X}$ has retraction $\varepsilon_{A}\ot\mathrm{Id}_{A}\ot\mathrm{Id}_{X}$ in $\Mm$, the equalizer of $((\mathrm{Id}_{A}\ot\sigma_{X,A})\Lambda_{\alpha},(\mathrm{Id}_{A}\ot\sigma_{X,A})\Lambda_{\beta})$ in $\Mm$ is isomorphic to the equalizer of  $((\mathrm{Id}_{A}\ot\alpha)\Delta_{A},(\mathrm{Id}_{A}\ot\beta)\Delta_{A})$ in $\Mm$, and then also to that of the pair $((\alpha\ot\mathrm{Id}_{A})\Delta_{A},(\beta\ot\mathrm{Id}_{A})\Delta_{A})$ in $\Mm$.
\end{proof}

Recall that a category is finitely complete if and only if it has a terminal object, binary products and equalizers, see e.g. \cite[Proposition 2.8.2]{Borceux}. As recalled in Remark \ref{cartesian of Hopf}, $(\mathsf{Comon}_{\mathrm{coc}}(\Mm),\ot,\mathbf{1})$ is cartesian monoidal when $(\Mm,\ot,\mathbf{1},\sigma)$ is symmetric. By Corollary \ref{lemma:limitscocombim}, one obtains the following result.

\begin{corollary}\label{cor:comoncocfincomplete}
    Let $(\Mm,\ot,\mathbf{1},\sigma)$ be a symmetric monoidal category which has equalizers that are preserved by $X\ot(-)$ and $(-)\ot X$, for any $X\in\Mm$. Then, the category $\mathsf{Comon}_{\mathrm{coc}}(\Mm)$ is finitely complete. In particular, this happens when $(\Mm,\ot,\mathbf{1},\sigma)$ is an abelian symmetric monoidal category.
\end{corollary}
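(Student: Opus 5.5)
The plan is to use the characterization recalled just before the statement: a category is finitely complete if and only if it has a terminal object, binary products and equalizers. I would check these three ingredients separately for $\mathsf{Comon}_{\mathrm{coc}}(\Mm)$.

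\emph{Terminal object.} I would take $\mathbf{1}$, with $\Delta_{\mathbf{1}}$ and $\varepsilon_{\mathbf{1}}$ the unit isomorphisms. Cocommutativity holds because $\sigma_{\mathbf{1},\mathbf{1}}=\mathrm{Id}$ under the coherence identifications. For any comonoid $C$, a comonoid morphism $f:C\to\mathbf{1}$ satisfies $f=\varepsilon_{\mathbf{1}}f=\varepsilon_C$, exactly as in the proof of Lemma \ref{lem:Hopfpointed}. Conversely, $\varepsilon_C$ is itself a comonoid morphism, so it is the unique map $C\to\mathbf{1}$.

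\emph{Binary products.} Here I would invoke Remark \ref{cartesian of Hopf}: since $\sigma$ is a symmetry, $(\mathsf{Comon}_{\mathrm{coc}}(\Mm),\ot,\mathbf{1})$ is cartesian. Concretely, the product is $(A\ot B,\ \mathrm{Id}_A\ot\varepsilon_B,\ \varepsilon_A\ot\mathrm{Id}_B)$ and the pairing is $\langle f,g\rangle=(f\ot g)\Delta_C$. The only points needing care are the following.
\begin{itemize}
\item $A\ot B$ is again cocommutative. This uses that $\sigma$ is symmetric: the braiding on the tensor comonoid is compatible because $\sigma_{B,A}\sigma_{A,B}=\mathrm{Id}$.
\item $\Delta_C$ is a comonoid morphism $C\to C\ot C$. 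This follows from cocommutativity, by the same computation sketched for $\Hopf$ in Subsection \ref{subsect:binprod}.
\item Uniqueness of the pairing follows from $\phi=(\pi_A\phi\ot\pi_B\phi)\Delta_C$, which holds for any comonoid morphism $\phi$.
\end{itemize}

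\emph{Equalizers.} These are given directly by Corollary \ref{lemma:limitscocombim}, whose hypotheses (equalizers in $\Mm$ preserved by $X\ot(-)$ and $(-)\ot X$) are exactly those assumed here. Combining the three ingredients gives finite completeness.

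For the final sentence of the statement, I would note that in an abelian monoidal category, in the sense fixed in Section \ref{sec:prelimiaries}, equalizers exist: they are kernels of differences. The functors $X\ot(-)$ and $(-)\ot X$ are additive and exact, so they preserve kernels and hence equalizers.

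The main obstacle, to the extent there is one, is verifying cocommutativity of $A\ot B$ and the comonoid-morphism property of $\Delta_C$ via the symmetry. Since this is already covered by the cited cartesianness result (and \cite[Corollary 2.24]{KM}), I would simply quote it rather than redo the diagram chase.
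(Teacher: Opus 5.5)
Your proposal is correct and follows the same route as the paper: finite completeness is obtained from a terminal object, binary products and equalizers. The first two come from the cartesianness of $(\mathsf{Comon}_{\mathrm{coc}}(\Mm),\ot,\mathbf{1})$ for symmetric $\sigma$ (Remark \ref{cartesian of Hopf}, \cite[Corollary 2.24]{KM}), and the equalizers come from Corollary \ref{lemma:limitscocombim}. Your added details are accurate, namely uniqueness of the pairing via $\phi=(\pi_A\phi\ot\pi_B\phi)\Delta_C$ and the abelian case via additivity and exactness of $X\ot(-)$ and $(-)\ot X$.
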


Now, we turn our attention to $\Hopf$. As observed in Remark \ref{cartesian of Hopf}, $(\Hopf,\ot,\mathbf{1})$ is cartesian monoidal. 
We then have the following result concerning limits in $\Hopf$:

\begin{proposition}\label{prop:Hopfcocfintcomplete}
Let $(\Mm,\ot,\mathbf{1},\sigma)$ be a symmetric monoidal category with equalizers which are preserved by $X\ot(-)$ and $(-)\ot X$, for any $X\in\Mm$. Then, the category $\mathsf{Hopf}_{\mathrm{coc}}(\Mm)$ is finitely complete. In particular, this happens when $(\Mm,\ot,\mathbf{1},\sigma)$ is an abelian symmetric monoidal category.
\end{proposition}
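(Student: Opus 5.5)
Since a category is finitely complete as soon as it has a terminal object, binary products and equalizers, and $\Hopf$ already has the first two (the terminal object $\mathbf{1}$ from Lemma \ref{lem:Hopfpointed} and the binary products \eqref{def:binaryproduct}), it remains to construct equalizers in $\Hopf$. Given $\alpha,\beta:A\to X$ in $\Hopf$, I would start from the equalizer $e:E\to A$ of the pair $((\alpha\ot\mathrm{Id}_{A})\Delta_{A},(\beta\ot\mathrm{Id}_{A})\Delta_{A})$ in $\Mm$. By Corollary \ref{lemma:limitscocombim}, $E$ carries a unique cocommutative comonoid structure making $e$ the equalizer of $(\alpha,\beta)$ in $\mathsf{Comon}_{\mathrm{coc}}(\Mm)$. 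I then want to endow $E$ with a Hopf monoid structure making $e$ a morphism in $\Hopf$, and check the universal property there.

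For this I would use the identification $\Hopf=\mathsf{Grp}(\mathsf{Comon}_{\mathrm{coc}}(\Mm))$ in the cartesian category $(\mathsf{Comon}_{\mathrm{coc}}(\Mm),\ot,\mathbf{1})$ (Remark \ref{cartesian of Hopf}). Abstractly, the equalizer of two internal group homomorphisms is a subgroup object, and internal groups are closed under limits. Concretely, the steps are as follows.

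(i) The morphism $m_{A}(e\ot e):E\ot E\to A$ is a comonoid morphism. Moreover, $\alpha m_A(e\ot e)=m_X(\alpha e\ot\alpha e)=m_X(\beta e\ot\beta e)=\beta m_A(e\ot e)$, since $\alpha,\beta$ are monoid maps and $\alpha e=\beta e$. The universal property in $\mathsf{Comon}_{\mathrm{coc}}(\Mm)$ (here $E\ot E$ is cocommutative) then gives a unique comonoid morphism $m_E$ with $e m_E=m_A(e\ot e)$.

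(ii) The unit $u_A:\mathbf{1}\to A$ satisfies $\alpha u_A=u_X=\beta u_A$, which yields $u_E$.

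(iii) The composite $S_Ae$ is a comonoid morphism, because the antipode of a cocommutative Hopf monoid is a comonoid morphism; this is where the symmetry is used. Since $\alpha S_A=S_X\alpha$ and $\beta S_A=S_X\beta$, we get $\alpha S_Ae=\beta S_Ae$, which yields $S_E$.

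The monoid axioms, the bimonoid compatibility and the antipode identities for $E$ all follow by composing with $e$. Here $e$ is a monomorphism in $\Mm$, and so are $e\ot e$ and $e\ot e\ot e$, since $\ot$ preserves equalizers and hence monomorphisms. So each identity can be checked after postcomposition with $e$, where it reduces to the corresponding identity in $A$.

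For the universal property, let $f:B\to A$ in $\Hopf$ satisfy $\alpha f=\beta f$. Then $f$ factors uniquely through $e$ in $\mathsf{Comon}_{\mathrm{coc}}(\Mm)$ via some $g$, and $g$ is a monoid morphism because $e$ is monic: for instance $e m_E(g\ot g)=m_A(f\ot f)=f m_B=e g m_B$, and similarly for the units. So $g$ lies in $\Hopf$, since morphisms of bimonoids between Hopf monoids automatically preserve antipodes. The main point of care is step (iii), namely that $S_A$ is a comonoid morphism, which needs cocommutativity and $\sigma$ (anti-comultiplicativity $\Delta S=(S\ot S)\sigma\Delta$ combined with $\sigma\Delta=\Delta$). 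The rest is routine diagram chasing. In the abelian case the hypotheses hold because $\ot$ is exact.
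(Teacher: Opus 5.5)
Your proof is correct and is essentially the paper's argument made explicit. The paper combines finite completeness of $\mathsf{Comon}_{\mathrm{coc}}(\Mm)$ (Corollary \ref{cor:comoncocfincomplete}) with $\Hopf=\mathsf{Grp}(\mathsf{Comon}_{\mathrm{coc}}(\Mm))$ and the general fact that internal groups in a finitely complete category form a finitely complete category; your lifting of $m_A$, $u_A$, $S_A$ along the comonoid equalizer is exactly what that fact unpacks to (compare Lemma \ref{lem:eqkernel}).

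One small correction. Step (iii) does not use the symmetry: the antipode is anti-comultiplicative in any braided monoidal category, so cocommutativity alone gives $\Delta_A S_A=(S_A\ot S_A)\Delta_A$. The symmetry is genuinely needed for the binary products and for $E\ot E$ to be cocommutative in step (i).
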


\begin{proof}
By Corollary \ref{cor:comoncocfincomplete}, we know that $\mathsf{Comon}_{\mathrm{coc}}(\Mm)$ is finitely complete. Since $\Hopf=\mathsf{Grp}(\mathsf{Comon}_{\mathrm{coc}}(\Mm))$, see e.g.\ \cite[Remark 3.3]{Porst}, we obtain that $\Hopf$ is finitely complete, see e.g. \cite[Exercise 3.1.2]{Bourn-book}. 
\end{proof}

\subsection{Equalizers in $\Hopf$}\label{subsec:equalizers} 
\addtocontents{toc}{\protect\setcounter{tocdepth}{1}}
Suppose that the braided monoidal category $(\Mm,\ot,\mathbf{1},\sigma)$ has equalizers which are preserved by $X\ot(-)$ and $(-)\ot X$, for any $X\in\Mm$. 

By Proposition \ref{prop:Hopfcocfintcomplete} and \cite[Proposition 2.8.2]{Borceux}, we know that $\Hopf$ has equalizers for any pair of morphisms. Moreover, since the forgetful functor $\Hopf=\mathsf{Grp}(\mathsf{Comon}_{\mathrm{coc}}(\Mm))\to\mathsf{Comon}_{\mathrm{coc}}(\Mm)$ preserves limits, we can write down the explicit description of equalizers and kernels in $\Hopf$. Note that the symmetric assumption for the braided monoidal category $(\Mm,\ot,\mathbf{1},\sigma)$ in Proposition \ref{prop:Hopfcocfintcomplete} is used for the binary products, while it is not necessary for the equalizers. We recall the construction of equalizers and kernels in $\Hopf$. In fact, the construction of equalizers in $\mathsf{Bimon}_{\mathrm{coc}}(\Mm)$ is the same, which is given in e.g.\ \cite{Sterck}.

\begin{lemma}\label{lem:eqkernel}
For morphisms $f,g:A\to B$ in $\Hopf$, the equalizer of the pair $(f,g)$ in $\Hopf$ is given by the equalizer of the pair $((g\ot\mathrm{Id})\Delta_{A},(f\ot\mathrm{Id})\Delta_{A})$ in $\Mm$. Consequently, the kernel of $f$ in $\Hopf$, which we denote by $\mathsf{hker}(f):\mathsf{HKer}(f)\to A$, is the equalizer of the pair $((f\ot\mathrm{Id}_{A})\Delta_{A},u_{B}\ot\mathrm{Id}_{A})$ in $\Mm$.
\end{lemma}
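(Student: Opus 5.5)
The plan is to reduce the statement to Corollary \ref{lemma:limitscocombim}, using that the forgetful functor $U:\Hopf=\mathsf{Grp}(\mathsf{Comon}_{\mathrm{coc}}(\Mm))\to\mathsf{Comon}_{\mathrm{coc}}(\Mm)$ creates limits; this is the standard behaviour of internal group objects. Given $f,g:A\to B$ in $\Hopf$, I first form the equalizer $e:E\to A$ of $(Uf,Ug)$ in $\mathsf{Comon}_{\mathrm{coc}}(\Mm)$. By Corollary \ref{lemma:limitscocombim}, $e$ is the equalizer in $\Mm$ of $((f\ot\mathrm{Id}_A)\Delta_A,(g\ot\mathrm{Id}_A)\Delta_A)$. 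The order of the pair in the statement is irrelevant, since an equalizer is symmetric in its two arrows.

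The main step is to show that $E$ inherits a Hopf monoid structure making $e$ a morphism in $\Hopf$, and that $e$ is then the equalizer in $\Hopf$. I will argue via the cartesian structure rather than by direct computation.

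\emph{Unit.} The unit $u_A$ is a comonoid morphism $\mathbf{1}\to A$ equalizing $f$ and $g$, because $fu_A=u_B=gu_A$. It therefore factors uniquely through $e$.

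\emph{Multiplication.} By Remark \ref{cartesian of Hopf}, $E\ot E$ is the product in $\mathsf{Comon}_{\mathrm{coc}}(\Mm)$, and $m_A(e\ot e):E\ot E\to A$ is a comonoid morphism. It equalizes $f$ and $g$, since
\[
f m_A(e\ot e)=m_B(fe\ot fe)=m_B(ge\ot ge)=g m_A(e\ot e).
\]
So it factors as $e\,m_E$ for a unique $m_E$.

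\emph{Antipode.} In the same way, $S_A e$ is a comonoid morphism; this uses that $S_A$ is a comonoid morphism for cocommutative $A$. It equalizes $f$ and $g$ because $fS_A=S_Bf$ and $gS_A=S_Bg$, so it factors as $eS_E$.

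The associativity, unit and antipode axioms for $E$ then follow by postcomposing with $e$, which is a monomorphism in $\Mm$ (it is an equalizer there), and using the corresponding axioms in $A$. For the antipode axiom one uses $\Delta_A e=(e\ot e)\Delta_E$. Thus $E\in\Hopf$ and $e$ is a morphism in $\Hopf$.

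For the universal property, take $h:C\to A$ in $\Hopf$ with $fh=gh$. It factors through $e$ in $\mathsf{Comon}_{\mathrm{coc}}(\Mm)$ via a unique $h'$. Then $h'$ is multiplicative and unital, which again is checked by composing with the monomorphism $e$. Hence $h'$ is a morphism in $\Hopf$, and it is unique.

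For the kernel, I specialize to $g=u_B\varepsilon_A$, the zero morphism of the pointed category (Lemma \ref{lem:Hopfpointed}). Then
\[
(g\ot\mathrm{Id}_A)\Delta_A=(u_B\ot\mathrm{Id}_A)(\varepsilon_A\ot\mathrm{Id}_A)\Delta_A=u_B\ot\mathrm{Id}_A
\]
under the identification $A\cong\mathbf{1}\ot A$, which yields the stated description of $\mathsf{hker}(f)$. The only genuinely delicate point is justifying that all the relevant maps ($m_A(e\ot e)$, $S_Ae$) are comonoid morphisms. Citing $\Hopf=\mathsf{Grp}(\mathsf{Comon}_{\mathrm{coc}}(\Mm))$ handles this cleanly.
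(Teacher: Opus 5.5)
Your proof is essentially the paper's: form the equalizer $e$ of $(f,g)$ in $\mathsf{Comon}_{\mathrm{coc}}(\Mm)$ via Corollary \ref{lemma:limitscocombim}, factor $u_A$, $m_A(e\ot e)$ and $S_Ae$ through it, verify the Hopf axioms and the universal property by cancelling the monomorphism $e$, and specialize to $g=u_B\varepsilon_A$ for the kernel. The only difference is where you factor: you factor through $e$ in the comonoid category, which makes $m_E$ and $u_E$ comonoid morphisms and so gives bimonoid compatibility for free, but as phrased via Remark \ref{cartesian of Hopf} this needs $\sigma$ to be a symmetry; the paper factors through the equalizer in $\Mm$, so it also covers the merely braided setting of this subsection (you can recover this by factoring in $\mathsf{Comon}(\Mm)$ instead, where $e$ is also the equalizer by Proposition \ref{lem2.3Porst} and the proof of Corollary \ref{lemma:limitscocombim}).
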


\begin{invisible}
\begin{proof}
Denote $i:K\to A$ by the equalizer of the pair $(f,g)$ in $\mathsf{Comon}_{\mathrm{coc}}(\Mm)$ as in Corollary \ref{lemma:limitscocombim}, we claim that $K$ is in $\Hopf$ and it is the equalizer of the pair $(f,g)$ in $\Hopf$. In fact, since
\[
fm_{A}(i\ot i)=m_{B}(f\ot f)(i\ot i)=m_{B}(g\ot g)(i\ot i)=gm_{A}(i\ot i), \qquad fu_{A}=u_{B}=gu_{A},
\]
there exist unique morphisms $m_{K}:K\ot K\to K$ and $u_{K}:\mathbf{1}\to K$ in $\Mm$ such that $m_{A}(i\ot i)=im_{K}$ and $iu_{K}=u_{A}$. One can verify that $(K,m_{K},u_{K},\Delta_{K},\varepsilon_{K})$ is in $\mathsf{Bimon}_{\mathrm{coc}}(\Mm)$. Moreover, since $fS_{A}i=S_{B}fi=S_{B}gi=gS_{A}i$, there exists a unique morphism $S_{K}:K\to K$ in $\Mm$ such that $S_{A}i=iS_{K}$. One can verify that $(K,m_{K},u_{K},\Delta_{K},\varepsilon_{K},S_{K})$ is in $\Hopf$. Furthermore, for any morphism $l:C\to A$ in $\Hopf$ such that $fl=gl$, there exists a unique morphism $p:C\to K$ in $\mathsf{Comon}_{\mathrm{coc}}(\Mm)$ such that $ip=l$. We have
\[
ipu_{C}=lu_{C}=u_{A}=iu_{K},\quad ipm_{C}=lm_{C}=m_{A}(l\ot l)=m_{A}(ip\ot ip)=im_{K}(p\ot p).
\]
Since $i$ is a monomorphism in $\Mm$, we obtain that $p$ is a morphism in $\mathsf{Mon}(\Mm)$. Hence, $p$ is the unique morphism in $\Hopf$ such that $ip=l$, then $i:K\to A$ is the equalizer of the pair $(f,g)$ in $\Hopf$. Therefore, the equalizer of $(f,g)$ in $\Hopf$ is given by the equalizer of $((g\ot\mathrm{Id})\Delta_{A},(f\ot\mathrm{Id})\Delta_{A})$ in $\Mm$. 

By Lemma \ref{lem:Hopfpointed}, the zero morphism from $A$ to $B$ is $u_B \varepsilon_A:A \to B$. Hence, the kernel of $f$ in $\Hopf$ is the equalizer of the pair $(u_{B}\ot\mathrm{Id}_{A},(f\ot\mathrm{Id}_{A})\Delta_{A})$ in $\Mm$.
\end{proof}
\end{invisible}

\begin{remark}
We recall that a category is \textit{locally presentable} if and only if it is \textit{accessible} and complete, if and only if it is accessible and cocomplete, see e.g.\ \cite[Corollary 2.47]{AR} (we refer the reader to \cite{AR} for the definitions of local presentability and accessibility of a category). By \cite[Proposition 4.1.1]{Porst} we know that $\Hopf$ is always accessible for any symmetric monoidal category $(\Mm,\ot,\mathbf{1},\sigma)$. Therefore, $\Hopf$ is locally presentable if and only if it is complete, if and only if it is cocomplete.  
\end{remark}

\begin{remark}\label{rmk:comonlocpres}
If $(\Mm,\ot,\mathbf{1},\sigma)$ is a symmetric monoidal category such that $\Mm$ is locally presentable (this is called \textit{admissible monoidal} in \cite{Porst3}), we get that $\mathsf{Comon}(\Mm)$ is locally presentable by \cite[Proposition 2.9]{HLFV}. Thus, $\mathsf{Comon}_{\mathrm{coc}}(\Mm)=\mathsf{Comon}(\mathsf{Comon}(\Mm))$ is also locally presentable. In particular, $\mathsf{Comon}_{\mathrm{coc}}(\Mm)$ is (finitely) complete and so $\Hopf$ is finitely complete, see e.g.\ \cite[Exercise 3.1.2]{Bourn-book}.
\end{remark}

\subsection{Protomodularity}

Under the same assumptions on $\Mm$ of the previous results, we get that $\Hopf$ is (Bourn)-protomodular.

\begin{proposition}\label{prop:protomodularity}
    Let $(\Mm,\ot,\mathbf{1},\sigma)$ be a symmetric monoidal category with equalizers which are preserved by $X\ot(-)$ and $(-)\ot X$, for any $X\in\Mm$. Then, the category $\Hopf$ is (Bourn)-protomodular. In particular, this happens when $(\Mm,\ot,\mathbf{1},\sigma)$ is an abelian symmetric monoidal category.
\end{proposition}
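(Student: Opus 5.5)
The plan is to use the identification $\Hopf=\mathsf{Grp}(\mathsf{Comon}_{\mathrm{coc}}(\Mm))$, already invoked in the proof of Proposition \ref{prop:Hopfcocfintcomplete} (see \cite[Remark 3.3]{Porst}). We then appeal to the classical fact that the category of internal groups in any finitely complete category is protomodular; see e.g.\ \cite{Bournproto} or \cite{BB}. By Corollary \ref{cor:comoncocfincomplete}, the hypotheses on $\Mm$ guarantee that $\mathsf{Comon}_{\mathrm{coc}}(\Mm)$ is finitely complete. Its cartesian product is $\ot$ and its terminal object is $\mathbf{1}$ (Remark \ref{cartesian of Hopf}). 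So the internal-group framework applies, and protomodularity follows.

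For a self-contained argument, I would instead verify the Split Short Five Lemma directly. This criterion is legitimate because $\Hopf$ is pointed (Lemma \ref{lem:Hopfpointed}) and finitely complete (Proposition \ref{prop:Hopfcocfintcomplete}). The strategy proceeds in the following steps.
\begin{enumerate}[1)]
\item Take a split extension $K\xrightarrow{k}A\underset{s}{\overset{p}{\rightleftarrows}}B$, where $k=\mathsf{hker}(p)$ and $ps=\mathrm{Id}_B$.
\item Construct the comparison morphism $\varphi:=m_A(k\ot s):K\ot B\to A$. It has candidate inverse $\psi:=(q\ot p)\Delta_A$, where $q:A\to K$ is the factorization of $m_A(\mathrm{Id}_A\ot S_As p)\Delta_A$ through $k$. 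This factorization exists because $m_A(\mathrm{Id}_A\ot S_As p)\Delta_A$ lands in the equalizer describing $\mathsf{HKer}(p)$ in Lemma \ref{lem:eqkernel}, using cocommutativity and $ps=\mathrm{Id}_B$.
\item Observe that the inverse is a morphism in $\mathsf{Comon}_{\mathrm{coc}}(\Mm)$, since it is built from comonoid maps in a cartesian category.
\item Given a morphism of split extensions that is an isomorphism on kernels and on quotients, transport it through these isomorphisms of comonoids. This shows it is an isomorphism in $\mathsf{Comon}_{\mathrm{coc}}(\Mm)$, hence in $\Hopf$, since a bijective monoid morphism with monoid-morphism inverse is automatic.
\end{enumerate}

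This is exactly the group-theoretic proof carried out internally in the cartesian category $\mathsf{Comon}_{\mathrm{coc}}(\Mm)$. That is why citing the general internal-group result is cleaner.

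The main obstacle is only bookkeeping: checking that the internal group structure in the cartesian category $(\mathsf{Comon}_{\mathrm{coc}}(\Mm),\ot,\mathbf{1})$ matches the Hopf structure. Concretely, the multiplication, unit and inverse are $m$, $u$ and $S$, and the diagonal is $\Delta$. This matching is provided by \cite{Porst}. The abelian case follows because exactness of $X\ot(-)$ and $(-)\ot X$ gives preservation of equalizers.
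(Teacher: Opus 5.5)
Your proposal is correct and follows the paper's own proof. It identifies $\Hopf$ with $\mathsf{Grp}(\mathsf{Comon}_{\mathrm{coc}}(\Mm))$, takes finite completeness of $\mathsf{Comon}_{\mathrm{coc}}(\Mm)$ from Corollary \ref{cor:comoncocfincomplete}, and invokes protomodularity of internal groups in a finitely complete category (the paper cites \cite[Proposition 3.24]{BG}). Your optional direct verification of the Split Short Five Lemma, via the comonoid isomorphism $m_A(k\ot s)\colon K\ot B\to A$ with inverse $(q\ot p)\Delta_A$, also checks out; it is simply that general argument carried out explicitly.
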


\begin{proof}
Recall that $\mathsf{Grp}(\Cc)$ is protomodular if $\Cc$ has finite limits, see \cite[Proposition 3.24]{BG}. By Corollary \ref{cor:comoncocfincomplete}, we know that $\mathsf{Comon}_{\mathrm{coc}}(\Mm)$ is finitely complete. Since $\Hopf=\mathsf{Grp}(\mathsf{Comon}_{\mathrm{coc}}(\Mm))$, we get that $\Hopf$ is protomodular.
\end{proof}

By Remark \ref{rmk:comonlocpres} and \cite[Proposition 3.24]{BG}, we also obtain:

\begin{corollary}
Let $(\Mm,\ot,\mathbf{1},\sigma)$ be a symmetric monoidal category such that $\Mm$ is locally presentable. Then, $\Hopf$ is (Bourn)-protomodular.
\end{corollary}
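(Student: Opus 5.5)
The statement to prove is the last corollary: if $\Mm$ is locally presentable, then $\Hopf$ is (Bourn)-protomodular. The plan is to reduce it to the same argument as in Proposition \ref{prop:protomodularity}. The only difference is that finite completeness of $\mathsf{Comon}_{\mathrm{coc}}(\Mm)$ now has to come from local presentability, rather than from $\ot$ preserving equalizers.

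First, I will invoke Remark \ref{rmk:comonlocpres}. Since $(\Mm,\ot,\mathbf{1},\sigma)$ is symmetric monoidal and $\Mm$ is locally presentable, \cite[Proposition 2.9]{HLFV} gives that $\mathsf{Comon}(\Mm)$ is locally presentable. It is again symmetric monoidal with the same tensor product and symmetry, as recalled in Section \ref{sec:prelimiaries}, so the same result applies once more. Using \eqref{equivalencescat}, this shows that $\mathsf{Comon}_{\mathrm{coc}}(\Mm)\cong\mathsf{Comon}(\mathsf{Comon}(\Mm))$ is locally presentable. In particular it is complete, hence finitely complete.

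Second, I will use the identification $\Hopf=\mathsf{Grp}(\mathsf{Comon}_{\mathrm{coc}}(\Mm))$ from \cite[Remark 3.3]{Porst]}. This requires the cartesian structure of $\mathsf{Comon}_{\mathrm{coc}}(\Mm)$ (Remark \ref{cartesian of Hopf}), which holds for any symmetric monoidal $\Mm$ and needs no exactness assumptions. Third, I will apply \cite[Proposition 3.24]{BG}: internal groups in a finitely complete category form a protomodular category. This gives the claim.

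The only point needing care is the first step, namely that the cited local presentability result may be iterated. This requires $\mathsf{Comon}(\Mm)$ to satisfy the same hypotheses as $\Mm$, that is, to be symmetric monoidal and locally presentable. Both properties are recorded above, so I expect no real obstacle; the rest is a direct citation.
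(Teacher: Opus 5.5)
Your proposal is correct and is essentially the paper's argument. The paper also gets finite completeness of $\mathsf{Comon}_{\mathrm{coc}}(\Mm)\cong\mathsf{Comon}(\mathsf{Comon}(\Mm))$ from Remark \ref{rmk:comonlocpres} (iterating \cite[Proposition 2.9]{HLFV}), and then concludes via $\Hopf=\mathsf{Grp}(\mathsf{Comon}_{\mathrm{coc}}(\Mm))$ and \cite[Proposition 3.24]{BG}, with your check that $\mathsf{Comon}(\Mm)$ is again symmetric monoidal and locally presentable being exactly what the remark leaves implicit.
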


\begin{remark}
    As recalled in Section \ref{sec:prelimiaries}, for a pointed finitely complete category $\Cc$, the protomodularity is equivalent to the validity of the Split Short Five Lemma in $\Cc$. Hence, by Proposition \ref{prop:protomodularity}, the Split Short Five Lemma holds in $\Hopf$. This could also be proven explicitly using \cite[Theorem 3.10.4]{HS}, given for arbitrary braided monoidal categories, which goes back to the preprint version of \cite{Bespalov} (see also \cite{BeDr}).
\end{remark}

We recall that, in any pointed finitely complete protomodular category $\Cc$, regular epimorphisms are exactly those morphisms $f$ in $\Cc$ such that $f=\mathsf{coker}(\mathsf{ker}(f))$, see e.g.\ \cite[Proposition 3.1.23]{BB}, so regular epimorphisms coincide with cokernels in $\Cc$. Hence, we obtain the following result:

\begin{corollary}\label{coeqequalcoker}
    Let $(\Mm,\ot,\mathbf{1},\sigma)$ be a symmetric monoidal category with equalizers which are preserved by $X\ot(-)$ and $(-)\ot X$, for any $X\in\Mm$. Regular epimorphisms coincide with cokernels in $\Hopf$.
\end{corollary}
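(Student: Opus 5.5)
The claim is a direct consequence of the structural facts already established, so I will only chain them together. By Lemma \ref{lem:Hopfpointed}, $\Hopf$ is pointed with zero object $\mathbf{1}$. By Proposition \ref{prop:Hopfcocfintcomplete}, it is finitely complete under the stated hypotheses on $(\Mm,\ot,\mathbf{1},\sigma)$. By Proposition \ref{prop:protomodularity}, it is protomodular. In particular, kernels exist in $\Hopf$: by Lemma \ref{lem:eqkernel}, the kernel of $f$ is $\mathsf{hker}(f)$, the equalizer of $f$ and the zero morphism $u_B\varepsilon_A$. So $\Hopf$ is a pointed, finitely complete, protomodular category, and the general result recalled just before the statement, \cite[Proposition 3.1.23]{BB}, applies: in such a category a morphism $f$ is a regular epimorphism if and only if $f=\mathsf{coker}(\mathsf{ker}(f))$.

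I then check the two inclusions. Suppose first that $f$ is a cokernel in $\Hopf$, say $f=\mathsf{coker}(k)$ for some $k$. Then $f$ is by definition the coequalizer of the pair $(k,0)$, so it is a regular epimorphism. This direction needs no hypotheses at all.

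Conversely, suppose $f$ is a regular epimorphism in $\Hopf$. By the cited proposition, $f=\mathsf{coker}(\mathsf{hker}(f))$, so $f$ is a cokernel (of its own kernel). Hence the two classes of morphisms coincide.

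The only point that needs any care is making sure the hypotheses of \cite[Proposition 3.1.23]{BB} are genuinely met. Its content is that, in a pointed protomodular category, a regular epimorphism is the cokernel of its kernel; this uses the fact that protomodularity makes every regular epimorphism normal with respect to its kernel pair, and the fact that the relevant kernels exist. Both are supplied by finite completeness, which is Proposition \ref{prop:Hopfcocfintcomplete}. No existence of cokernels in general is needed, since we only ever take the cokernel of $\mathsf{hker}(f)$, which is asserted to be $f$ itself.
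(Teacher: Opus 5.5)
Your proposal is correct and follows exactly the paper's route: pointedness (Lemma \ref{lem:Hopfpointed}), finite completeness (Proposition \ref{prop:Hopfcocfintcomplete}) and protomodularity (Proposition \ref{prop:protomodularity}), combined with \cite[Proposition 3.1.23]{BB}. Spelling out the trivial inclusion (a cokernel is the coequalizer of $(k,0)$) is a harmless addition to what the paper leaves implicit.
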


\begin{proof}
    This follows since $\Hopf$ is pointed (Lemma \ref{lem:Hopfpointed}), finitely complete (Proposition \ref{prop:Hopfcocfintcomplete}) and protomodular (Proposition \ref{prop:protomodularity}).
\end{proof}

Since we know the explicit form of equalizers and binary products in $\Hopf$, we can compute pullbacks in $\Hopf$ \cite[Proposition 2.8.2]{Borceux}. We describe them for the sake of completeness. 


\subsection{Pullbacks in $\Hopf$}\label{subsec:pullbacksHopf}
Let $f:A\to C$ and $g:B \to C$ be morphisms in $\Hopf$. The pullback object of $A$ and $B$ over $C$ is given by the equalizer in $\Hopf$ of the pair $(f\pi_{A},g\pi_{B})$, 
where $(A\ot B,\pi_{A},\pi_{B})$ is the binary product of $A$ and $B$ in $\Hopf$ constructed in \eqref{def:binaryproduct} (see e.g. the proof of \cite[Proposition 2.8.2]{Borceux}). We know that the equalizer of $(f\pi_{A},g\pi_{B})$ in $\Hopf$ is given by the equalizer of the pair $((f\pi_{A}\ot\mathrm{Id}_{A\ot B})\Delta_{A\ot B},(g\pi_{B}\ot\mathrm{Id}_{A\ot B})\Delta_{A\ot B})$ in $\Mm$. We compute
\[
\begin{split}
(f\pi_{A}\ot\mathrm{Id}_{A\ot B})\Delta_{A\ot B}&=(f(\mathrm{Id}_{A}\ot\varepsilon_{B})\ot\mathrm{Id}_{A\ot B})(\mathrm{Id}_{A}\ot\sigma_{A,B}\ot\mathrm{Id}_{B})(\Delta_{A}\ot\Delta_{B})\\&=(f\ot\mathrm{Id}_{A\ot B})(\mathrm{Id}_{A\ot A}\ot\varepsilon_{B}\ot\mathrm{Id}_{B})(\Delta_{A}\ot\Delta_{B})\\&=(f\ot\mathrm{Id}_{A\ot B})(\Delta_{A}\ot\mathrm{Id}_{B})
\end{split}
\]
and, similarly, 
\[
\begin{split}
(g\pi_{B}\ot\mathrm{Id}_{A\ot B})\Delta_{A\ot B}&=(g(\varepsilon_{A}\ot\mathrm{Id}_{B})\ot\mathrm{Id}_{A\ot B})(\mathrm{Id}_{A}\ot\sigma_{A,B}\ot\mathrm{Id}_{B})(\Delta_{A}\ot\Delta_{B})\\&=(g\ot\mathrm{Id}_{A\ot B})(\sigma_{A,B}\ot\mathrm{Id}_{B})(\mathrm{Id}_{A}\ot\Delta_{B})\\&=(\sigma_{A,C}\ot\mathrm{Id}_{B})(\mathrm{Id}_{A}\ot g\ot\mathrm{Id}_{B})(\mathrm{Id}_{A}\ot\Delta_{B}),
\end{split}
\]
so the pullback object of the pair $(f,g)$ in $\Hopf$ is given by the equalizer of the pair $((f\ot\mathrm{Id}_{A})\Delta_{A}\ot\mathrm{Id}_{B},(\sigma_{A,C}\ot\mathrm{Id}_{B})(\mathrm{Id}_{A}\ot(g\ot\mathrm{Id}_{B})\Delta_{B})$ in $\Mm$. Since $\sigma_{A,C}\ot\mathrm{Id}_{B}$ has inverse in $\Mm$ given by $\sigma_{C,A}\ot\mathrm{Id}_{B}$ and $\sigma_{C,A}(f\ot\mathrm{Id}_{A})\Delta_{A}=(\mathrm{Id}_{A}\ot f)\sigma_{A,A}\Delta_{A}=(\mathrm{Id}_{A}\ot f)\Delta_{A}$, the previous equalizer is also the equalizer of the pair $((\mathrm{Id}_{A}\ot f)\Delta_{A}\ot\mathrm{Id}_{B},\mathrm{Id}_{A}\ot(g\ot\mathrm{Id}_{B})\Delta_{B})$ in $\Mm$. Therefore, the pullback object is the cotensor product $A\square_{C}B$ described as in \eqref{def:cotensor}, for $\rho_{A}:=(\mathrm{Id}_{A}\ot f)\Delta_{A}$ and $\lambda_{B}:=(g\ot\mathrm{Id}_{B})\Delta_{B}$.

\section{Coequalizers in $\Hopf$}\label{sec:colimits}

In this section, $(\Mm,\ot,\mathbf{1},\sigma)$ will denote a braided monoidal category with equalizers and coequalizers that are preserved by $\ot$. We construct coequalizers in $\Hopf$, which will be used to prove the regularity of $\Hopf$. More generally, we describe coequalizers in $\mathsf{Hopf}(\Mm)$. \medskip

Given two parallel morphisms $f,g:A\to B$ in $\mathsf{Hopf}(\Mm)$, we consider the morphisms $\phi_{f},\phi_{g}:B\ot A\ot B\to B$ in $\Mm$ defined by
\begin{equation}\label{def:defphi}
\phi_f:=m_{B}(m_{B}\ot\mathrm{Id}_{B})(\mathrm{Id}_{B}\ot f\ot\mathrm{Id}_{B}),\qquad \phi_g:=m_{B}(m_{B}\ot\mathrm{Id}_{B})(\mathrm{Id}_{B}\ot g\ot\mathrm{Id}_{B})
\end{equation}
and we define $\pi:=\mathsf{coeq}(\phi_f,\phi_g):B\to\mathsf{Coeq}(\phi_f,\phi_g)$ in $\Mm$.
Observe that 
\[
\begin{split}
\phi_{f}(u_{B}\ot\mathrm{Id}_{A}\ot u_{B})&=m_{B}(m_{B}\ot\mathrm{Id}_{B})(\mathrm{Id}_{B}\ot f\ot\mathrm{Id}_{B})(u_{B}\ot\mathrm{Id}_{A}\ot u_{B})\\&=m_{B}(m_{B}\ot\mathrm{Id}_{B})(u_{B}\ot\mathrm{Id}_{B}\ot u_{B})f=f,
\end{split}
\]
which implies $\pi f=\pi\phi_{f}(u_{B}\ot\mathrm{Id}_{A}\ot u_{B})=\pi\phi_{g}(u_{B}\ot\mathrm{Id}_{A}\ot u_{B})=\pi g$. 

\begin{lemma}\label{lem:pihopf}
The morphism $\pi$ is in $\mathsf{Hopf}(\Mm)$.
\end{lemma}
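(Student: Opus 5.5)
The plan is to equip $Q:=\mathsf{Coeq}(\phi_f,\phi_g)$ with a Hopf monoid structure, one piece at a time. Each structure map is obtained from the universal property of the coequalizer $\pi$ in $\Mm$, using that $\ot$ preserves coequalizers. Once each map is induced in this way, the Hopf axioms for $Q$, and the fact that $\pi$ preserves all the structure, follow automatically. The reason is that $\pi$, and also $\pi\ot\pi$ and $\pi\ot\pi\ot\pi$, are epimorphisms in $\Mm$. Indeed, $\pi\ot\pi=(\pi\ot\mathrm{Id})(\mathrm{Id}\ot\pi)$ is a composite of coequalizers, since $\ot$ preserves coequalizers. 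So each axiom can be checked after precomposing with the appropriate power of $\pi$, where it reduces to the corresponding axiom for $B$.

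\textbf{Monoid structure.} I will show that $\pi$ is a map of monoids by exhibiting $\pi=\mathsf{coker}$-type quotient by a two-sided "ideal relation". Concretely:
\begin{itemize}
\item $\pi m_B(\phi_f\ot\mathrm{Id}_B)=\pi m_B(\phi_g\ot\mathrm{Id}_B)$. This holds because $m_B(\phi_f\ot\mathrm{Id}_B)=\phi_f(\mathrm{Id}_B\ot\mathrm{Id}_A\ot m_B)$ by associativity, and likewise for $g$.
\item The analogous identity holds on the left.
\end{itemize}
Since $\pi\ot B$ is the coequalizer of $(\phi_f\ot\mathrm{Id}_B,\phi_g\ot\mathrm{Id}_B)$, the map $\pi m_B$ factors through $\pi\ot\mathrm{Id}_B$. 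Then, using $B\ot\pi$, it factors further through $\pi\ot\pi$, which yields $m_Q$. We set $u_Q:=\pi u_B$.

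\textbf{Comonoid structure.} This is the more delicate step. We need $\pi$-compatibility, namely
\[(\pi\ot\pi)\Delta_B\phi_f=(\pi\ot\pi)\Delta_B\phi_g \quad\text{and}\quad \varepsilon_B\phi_f=\varepsilon_B\phi_g.\]
The second is immediate, since $\varepsilon_B$ is multiplicative and $\varepsilon_B f=\varepsilon_A=\varepsilon_B g$. For the first, I will use that $m_B$, $f$ and $g$ are comonoid maps. This expresses $\Delta_B\phi_f$ as $(\phi_f\ot\phi_f)$ precomposed with the comultiplication of $B\ot A\ot B$, where the braiding $\sigma$ enters. The expected obstacle is then to pass from $\pi\phi_f\ot\pi\phi_f$ to $\pi\phi_g\ot\pi\phi_g$. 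Coequalizing in each factor separately is not enough, because both factors share the same input from $\Delta_{B\ot A\ot B}$.

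To handle this, I would use the fact already established that $\pi$ is a monoid map with $\pi f=\pi g$. This gives
\[\pi\phi_f=m_Q(m_Q\ot\mathrm{Id})(\pi\ot\pi f\ot\pi)=m_Q(m_Q\ot\mathrm{Id})(\pi\ot\pi g\ot\pi)=\pi\phi_g.\]
Since this identity holds on the nose, it can be applied separately in each tensor factor. Hence $(\pi\phi_f\ot\pi\phi_f)=(\pi\phi_g\ot\pi\phi_g)$, which proves the claim. The map $(\pi\ot\pi)\Delta_B$ therefore factors as $\Delta_Q\pi$, and $\varepsilon_B$ factors as $\varepsilon_Q\pi$.

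\textbf{Antipode.} We need $\pi S_B\phi_f=\pi S_B\phi_g$. Since $S_B$ is anti-multiplicative (involving $\sigma$) and commutes with $f$ and $g$, we can write $S_B\phi_f$ as a product of $S_B$, $S_Bf=fS_A$ and $S_B$ with braidings. Applying $\pi$ and the same trick with $m_Q$ and $\pi f=\pi g$ gives the equality. This yields $S_Q$ with $S_Q\pi=\pi S_B$.

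Finally, the bimonoid compatibility, the coassociativity and the antipode equations for $Q$ all follow by precomposing with the epimorphisms $\pi^{\ot n}$, as explained at the start. Hence $Q\in\mathsf{Hopf}(\Mm)$ and $\pi$ is a morphism in $\mathsf{Hopf}(\Mm)$.
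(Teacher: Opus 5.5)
Your proof is correct and follows essentially the same route as the paper. It induces $m_Q$ in two steps from the coequalizers $\pi\ot\mathrm{Id}_B$ and $\mathrm{Id}_Q\ot\pi$, obtains $\Delta_Q$, $\varepsilon_Q$, $S_Q$ from the universal property of $\pi$ using multiplicativity of $\pi$ and $\pi f=\pi g$, and checks all axioms against the epimorphisms $\pi^{\ot n}$.

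The one point to correct is your diagnosis of the comonoid step, where there is no obstacle. The identity $\pi\phi_f=\pi\phi_g$ is the defining property of the coequalizer $\pi$, and $\Delta_B\phi_f=(\phi_f\ot\phi_f)\Delta_{B\ot A\ot B}$ holds because $\phi_f$ is a composite of comonoid morphisms. So $(\pi\ot\pi)\Delta_B\phi_f=(\pi\phi_f\ot\pi\phi_f)\Delta_{B\ot A\ot B}=(\pi\ot\pi)\Delta_B\phi_g$ follows directly by functoriality of $\ot$; the shared input from $\Delta_{B\ot A\ot B}$ is irrelevant. Your re-derivation of $\pi\phi_f=\pi\phi_g$ through $m_Q$ is therefore redundant, though not wrong.
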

 
\begin{proof}
 Since $(-)\ot B$ preserves coequalizers in $\Mm$, we have that $\pi\ot\mathrm{Id}_{B}$ is the coequalizer of the pair $(\phi_{f}\ot\mathrm{Id}_{B},\phi_{g}\ot\mathrm{Id}_{B})$ in $\Mm$. We compute
\[
\begin{split}
    \pi m_{B}(\phi_{f}\ot\mathrm{Id}_{B})&=\pi m_{B}(m_{B}\ot\mathrm{Id}_{B})(m_{B}\ot\mathrm{Id}_{B\ot B})(\mathrm{Id}_{B}\ot f\ot\mathrm{Id}_{B\ot B})\\&=\pi m_{B}(\mathrm{Id}_{B}\ot m_{B})(m_{B}\ot\mathrm{Id}_{B\ot B})(\mathrm{Id}_{B}\ot f\ot\mathrm{Id}_{B\ot B})\\&=\pi m_{B}(m_{B}\ot\mathrm{Id}_{B})(\mathrm{Id}_{B}\ot f\ot\mathrm{Id}_{B})(\mathrm{Id}_{B}\ot\mathrm{Id}_{A}\ot m_{B})\\&=\pi m_{B}(m_{B}\ot\mathrm{Id}_{B})(\mathrm{Id}_{B}\ot g\ot\mathrm{Id}_{B})(\mathrm{Id}_{B}\ot\mathrm{Id}_{A}\ot m_{B})\\&=\pi m_{B}(\phi_{g}\ot\mathrm{Id}_{B}),
\end{split}
\]
so there exists a unique morphism $\psi:\mathsf{Coeq}(\phi_{f},\phi_{g})\ot B\to\mathsf{Coeq}(\phi_{f},\phi_{g})$ in $\Mm$ such that $\psi(\pi\ot\mathrm{Id}_{B})=\pi m_{B}$. Moreover, since $\mathsf{Coeq}(\phi_{f},\phi_{g})\ot(-)$ preserves coequalizers in $\Mm$, we have that $\mathrm{Id}_{\mathsf{Coeq}}\ot\pi$ is the coequalizer of the pair $(\mathrm{Id}_{\mathsf{Coeq}}\ot\phi_{f},\mathrm{Id}_{\mathsf{Coeq}}\ot\phi_{g})$ in $\Mm$. Then, since
\[
\begin{split}
\pi m_{B}(\mathrm{Id}_{B}\ot\phi_f)&=\pi m_{B}(\mathrm{Id}_{B}\ot m_{B})(\mathrm{Id}_{B}\ot m_{B}\ot\mathrm{Id}_{B})(\mathrm{Id}_{B\ot B}\ot f\ot\mathrm{Id}_{B})\\&=\pi m_{B}(m_{B}\ot\mathrm{Id}_{B})(m_{B}\ot\mathrm{Id}_{B\ot B})(\mathrm{Id}_{B\ot B}\ot f\ot\mathrm{Id}_{B})\\&=\pi m_{B}(m_{B}\ot\mathrm{Id}_{B})(\mathrm{Id}_{B}\ot f\ot\mathrm{Id}_{B})(m_{B}\ot\mathrm{Id}_{A\ot B})\\&=\pi m_{B}(m_{B}\ot\mathrm{Id}_{B})(\mathrm{Id}_{B}\ot g\ot\mathrm{Id}_{B})(m_{B}\ot\mathrm{Id}_{A\ot B})\\&=\pi m_{B}(\mathrm{Id}_{B}\ot\phi_g),
\end{split}
\]
we obtain
\[
\begin{split}
\psi(\mathrm{Id}_{\mathsf{Coeq}}\ot\phi_f)(\pi\ot\mathrm{Id}_{B\ot A\ot B})&=\psi(\pi\ot\mathrm{Id}_{B})(\mathrm{Id}_{B}\ot\phi_f)=\pi m_{B}(\mathrm{Id}_{B}\ot\phi_f)=\pi m_{B}(\mathrm{Id}_{B}\ot\phi_g)\\&=\psi(\mathrm{Id}_{\mathsf{Coeq}}\ot\phi_g)(\pi\ot\mathrm{Id}_{B\ot A\ot B}).
\end{split}
\]
Since $\pi\ot\mathrm{Id}_{B\ot A\ot B}$ is an epimorphism in $\Mm$, we get $\psi(\mathrm{Id}_{\mathsf{Coeq}}\ot\phi_f)=\psi(\mathrm{Id}_{\mathsf{Coeq}}\ot\phi_g)$. Hence, there exists a unique morphism $m_{\mathsf{Coeq}}:\mathsf{Coeq}(\phi_{f},\phi_{g})\ot\mathsf{Coeq}(\phi_{f},\phi_{g})\to\mathsf{Coeq}(\phi_{f},\phi_{g})$ in $\Mm$ such that $m_{\mathsf{Coeq}}(\mathrm{Id}_{\mathsf{Coeq}}\ot\pi)=\psi$. By defining $u_{\mathsf{Coeq}}:=\pi u_{B}$, one can prove that $(\mathsf{Coeq}(\phi_{f},\phi_{g}),m_{\mathsf{Coeq}},u_{\mathsf{Coeq}})$ is in $\mathsf{Mon}(\Mm)$ and $\pi m_{B}=\psi(\pi\ot\mathrm{Id}_{B})=m_{\mathsf{Coeq}}(\pi\ot\pi)$ and $\pi u_{B}=u_{\mathsf{Coeq}}$, i.e.\ $\pi$ is in $\mathsf{Mon}(\Mm)$.

Since
\[
\begin{split}
&\Delta_{B}\phi_{f}=\Delta_{B}m_{B}(m_{B}\ot\mathrm{Id}_{B})(\mathrm{Id}_{B}\ot f\ot\mathrm{Id}_{B})\\
= &(m_{B}\ot m_{B})(\mathrm{Id}_{B}\ot\sigma_{B,B}\ot\mathrm{Id}_{B})(\Delta_{B}\otimes\Delta_{B})(m_{B}\ot\mathrm{Id}_{B})(\mathrm{Id}_{B}\ot f\ot\mathrm{Id}_{B})\\
=&(m_{B}\ot m_{B})(\mathrm{Id}_{B}\ot\sigma_{B,B}\ot\mathrm{Id}_{B})(m_{B}\ot m_{B}\ot\mathrm{Id}_{B\ot B})(\mathrm{Id}_{B}\ot\sigma_{B,B}\ot\mathrm{Id}_{B}\ot\mathrm{Id}_{B\ot B})\\
&(\Delta_{B}\ot\Delta_{B}\ot\Delta_{B})(\mathrm{Id}_{B}\ot f\ot\mathrm{Id}_{B})\\
=&(m_{B}\ot m_{B})(\mathrm{Id}_{B}\ot\sigma_{B,B}\ot\mathrm{Id}_{B})(m_{B}\ot m_{B}\ot\mathrm{Id}_{B\ot B})(\mathrm{Id}_{B}\ot\sigma_{B,B}\ot\mathrm{Id}_{B}\ot\mathrm{Id}_{B\ot B})\\
&(\Delta_{B}\ot(f\ot f)\Delta_{A}\ot\Delta_{B}),
\end{split}
\]
we obtain that
\begin{align*}
&\hspace{0.5cm}(\pi\ot\pi)\Delta_{B}\phi_{f}\\
&=
(\pi\ot\pi)(m_{B}\ot m_{B})(\mathrm{Id}_{B}\ot\sigma_{B,B}\ot\mathrm{Id}_{B})(m_{B}\ot m_{B}\ot\mathrm{Id}_{B\ot B})(\mathrm{Id}_{B}\ot\sigma_{B,B}\ot\mathrm{Id}_{B}\ot\mathrm{Id}_{B\ot B})\\&\hspace{0.5cm}(\Delta_{B}\ot(f\ot f)\Delta_{A}\ot\Delta_{B})\\&=(m_{\mathsf{Coeq}}\ot m_{\mathsf{Coeq}})(\pi\ot\pi\ot\pi\ot\pi)(\mathrm{Id}_{B}\ot\sigma_{B,B}\ot\mathrm{Id}_{B})(m_{B}\ot m_{B}\ot\mathrm{Id}_{B\ot B})\\&\hspace{0.5cm}(\mathrm{Id}_{B}\ot\sigma_{B,B}\ot\mathrm{Id}_{B\ot B\ot B})(\Delta_{B}\ot(f\ot f)\Delta_{A}\ot\Delta_{B})\\&=(m_{\mathsf{Coeq}}\ot m_{\mathsf{Coeq}})(\mathrm{Id}_{\mathsf{Coeq}}\ot\sigma_{\mathsf{Coeq},\mathsf{Coeq}}\ot\mathrm{Id}_{\mathsf{Coeq}})(\pi\ot\pi\ot\pi\ot\pi)(m_{B}\ot m_{B}\ot\mathrm{Id}_{B\ot B})\\&\hspace{0.5cm}(\mathrm{Id}_{B}\ot\sigma_{B,B}\ot\mathrm{Id}_{B\ot B\ot B})(\Delta_{B}\ot(f\ot f)\Delta_{A}\ot\Delta_{B})\\&=(m_{\mathsf{Coeq}}\ot m_{\mathsf{Coeq}})(\mathrm{Id}_{\mathsf{Coeq}}\ot\sigma_{\mathsf{Coeq},\mathsf{Coeq}}\ot\mathrm{Id}_{\mathsf{Coeq}})(m_{\mathsf{Coeq}}\ot m_{\mathsf{Coeq}}\ot\mathrm{Id}_{\mathsf{Coeq}\ot \mathsf{Coeq}})\\&\hspace{0.5cm}(\pi\ot\pi\ot\pi\ot\pi\ot\pi\ot\pi)(\mathrm{Id}_{B}\ot\sigma_{B,B}\ot\mathrm{Id}_{B}\ot\mathrm{Id}_{B\ot B})(\Delta_{B}\ot(f\ot f)\Delta_{A}\ot\Delta_{B})\\&=(m_{\mathsf{Coeq}}\ot m_{\mathsf{Coeq}})(\mathrm{Id}_{\mathsf{Coeq}}\ot\sigma_{\mathsf{Coeq},\mathsf{Coeq}}\ot\mathrm{Id}_{\mathsf{Coeq}})(m_{\mathsf{Coeq}}\ot m_{\mathsf{Coeq}}\ot\mathrm{Id}_{\mathsf{Coeq}\ot \mathsf{Coeq}})\\&\hspace{0.5cm}(\mathrm{Id}_{\mathsf{Coeq}}\ot\sigma_{\mathsf{Coeq},\mathsf{Coeq}}\ot\mathrm{Id}_{\mathsf{Coeq}\ot \mathsf{Coeq}\ot \mathsf{Coeq}})(\pi\ot\pi\ot\pi\ot\pi\ot\pi\ot\pi)(\Delta_{B}\ot(f\ot f)\Delta_{A}\ot\Delta_{B})\\&=(m_{\mathsf{Coeq}}\ot m_{\mathsf{Coeq}})(\mathrm{Id}_{\mathsf{Coeq}}\ot\sigma_{\mathsf{Coeq},\mathsf{Coeq}}\ot\mathrm{Id}_{\mathsf{Coeq}})(m_{\mathsf{Coeq}}\ot m_{\mathsf{Coeq}}\ot\mathrm{Id}_{\mathsf{Coeq}\ot \mathsf{Coeq}})\\&\hspace{0.5cm}(\mathrm{Id}_{\mathsf{Coeq}}\ot\sigma_{\mathsf{Coeq},\mathsf{Coeq}}\ot\mathrm{Id}_{\mathsf{Coeq}\ot \mathsf{Coeq}\ot \mathsf{Coeq}})(\pi\ot\pi\ot\pi\ot\pi\ot\pi\ot\pi)(\Delta_{B}\ot(g\ot g)\Delta_{A}\ot\Delta_{B})\\&=(\pi\ot\pi)\Delta_{B}\phi_{g}.
\end{align*}
Hence, there exists a unique morphism $\Delta_{\mathsf{Coeq}}:\mathsf{Coeq}(\phi_{f},\phi_{g})\to\mathsf{Coeq}(\phi_{f},\phi_{g})\ot\mathsf{Coeq}(\phi_{f},\phi_{g})$ in $\Mm$ such that $\Delta_{\mathsf{Coeq}}\pi=(\pi\ot\pi)\Delta_{B}$. Moreover, we have
\[
\begin{split}
\varepsilon_{B}\phi_{f}&=\varepsilon_{B}m_{B}(m_{B}\ot\mathrm{Id}_{B})(\mathrm{Id}_{B}\ot f\ot\mathrm{Id}_{B})=(\varepsilon_{B}\ot\varepsilon_{B})(m_{B}\ot\mathrm{Id}_{B})(\mathrm{Id}_{B}\ot f\ot\mathrm{Id}_{B})\\&=\varepsilon_{B}\ot\varepsilon_{B}f\ot\varepsilon_{B}=\varepsilon_{B}\ot\varepsilon_{A}\ot\varepsilon_{B}=\varepsilon_{B}\ot\varepsilon_{B}g\ot\varepsilon_{B}=\varepsilon_{B}\phi_{g},
\end{split}
\]
so there exists a unique morphism $\varepsilon_{\mathsf{Coeq}}:\mathsf{Coeq}(\phi_{f},\phi_{g})\to\mathbf{1}$ in $\Mm$ such that $\varepsilon_{\mathsf{Coeq}}\pi=\varepsilon_{B}$. One can prove that $(\mathsf{Coeq}(\phi_{f},\phi_{g}),\Delta_{\mathsf{Coeq}},\varepsilon_{\mathsf{Coeq}})$ is in $\mathsf{Comon}(\Mm)$ so that $\pi$ is in $\mathsf{Comon}(\Mm)$.
Furthermore, since $B$ is in $\mathsf{Bimon}(\Mm)$, also $\mathsf{Coeq}(\phi_{f},\phi_{g})$ is in $\mathsf{Bimon}(\Mm)$ so that $\pi$ is a morphism in $\mathsf{Bimon}(\Mm)$.

Finally, we compute
\[
\begin{split}
S_{B}\phi_{f}&=S_{B}m_{B}(m_{B}\ot\mathrm{Id}_{B})(\mathrm{Id}_{B}\ot f\ot\mathrm{Id}_{B})
=m_{B}\sigma_{B,B}(S_{B}\ot S_{B})(m_{B}\ot\mathrm{Id}_{B})(\mathrm{Id}_{B}\ot f\ot\mathrm{Id}_{B})
\\&=m_{B}\sigma_{B,B}(m_{B}\ot\mathrm{Id}_{B})(\sigma_{B,B}\ot\mathrm{Id}_{B})(S_{B}\ot S_{B}\ot S_{B})(\mathrm{Id}_{B}\ot f\ot\mathrm{Id}_{B})\\&=m_{B}\sigma_{B,B}(m_{B}\ot\mathrm{Id}_{B})(\sigma_{B,B}\ot\mathrm{Id}_{B})(\mathrm{Id}_{B}\ot f\ot\mathrm{Id}_{B})(S_{B}\ot S_{A}\ot S_{B})
\end{split}
\]
and then 
\[
\begin{split}
&\hspace{0.5cm} \pi S_{B}\phi_{f}=\pi m_{B}\sigma_{B,B}(m_{B}\ot\mathrm{Id}_{B})(\sigma_{B,B}\ot\mathrm{Id}_{B})(\mathrm{Id}_{B}\ot f\ot\mathrm{Id}_{B})(S_{B}\ot S_{A}\ot S_{B})\\&=m_{\mathsf{Coeq}}(\pi\ot\pi)\sigma_{B,B}(m_{B}\ot\mathrm{Id}_{B})(\sigma_{B,B}\ot\mathrm{Id}_{B})(\mathrm{Id}_{B}\ot f\ot\mathrm{Id}_{B})(S_{B}\ot S_{A}\ot S_{B})\\&=m_{\mathsf{Coeq}}\sigma_{\mathsf{Coeq},\mathsf{Coeq}}(\pi\ot\pi)(m_{B}\ot\mathrm{Id}_{B})(\sigma_{B,B}\ot\mathrm{Id}_{B})(\mathrm{Id}_{B}\ot f\ot\mathrm{Id}_{B})(S_{B}\ot S_{A}\ot S_{B})\\&=m_{\mathsf{Coeq}}\sigma_{\mathsf{Coeq},\mathsf{Coeq}}(m_{\mathsf{Coeq}}\ot\mathrm{Id}_{\mathsf{Coeq}})(\pi\ot\pi\ot\pi)(\sigma_{B,B}\ot\mathrm{Id}_{B})(\mathrm{Id}_{B}\ot f\ot\mathrm{Id}_{B})(S_{B}\ot S_{A}\ot S_{B})\\&=m_{\mathsf{Coeq}}\sigma_{\mathsf{Coeq},\mathsf{Coeq}}(m_{\mathsf{Coeq}}\ot\mathrm{Id}_{\mathsf{Coeq}})(\sigma_{\mathsf{Coeq},\mathsf{Coeq}}\ot\mathrm{Id}_{\mathsf{Coeq}})(\pi\ot\pi\ot\pi)(\mathrm{Id}_{B}\ot f\ot\mathrm{Id}_{B})(S_{B}\ot S_{A}\ot S_{B})\\&=m_{\mathsf{Coeq}}\sigma_{\mathsf{Coeq},\mathsf{Coeq}}(m_{\mathsf{Coeq}}\ot\mathrm{Id}_{\mathsf{Coeq}})(\sigma_{\mathsf{Coeq},\mathsf{Coeq}}\ot\mathrm{Id}_{\mathsf{Coeq}})(\pi\ot\pi\ot\pi)(\mathrm{Id}_{B}\ot g\ot\mathrm{Id}_{B})(S_{B}\ot S_{A}\ot S_{B})\\&=\pi S_{B}\phi_{g}.
\end{split}
\]
By the universal property, there exists a unique morphism $S_{\mathsf{Coeq}}:\mathsf{Coeq}(\phi_{f},\phi_{g})\to\mathsf{Coeq}(\phi_{f},\phi_{g})$ in $\Mm$ such that $\pi S_{B}=S_{\mathsf{Coeq}}\pi$. One can check that $(\mathsf{Coeq}(\phi_{f},\phi_{g}),m_{\mathsf{Coeq}},u_{\mathsf{Coeq}},\Delta_{\mathsf{Coeq}},\varepsilon_{\mathsf{Coeq}},S_{\mathsf{Coeq}})$ is in $\mathsf{Hopf}(\Mm)$ so that $\pi$ is in $\mathsf{Hopf}(\Mm)$.
\end{proof}

\begin{proposition}\label{prop:coequalizerHopf}
Given morphisms $f, g:A \to B$ in $\mathsf{Hopf}(\Mm)$, $\pi:B\to\mathsf{Coeq}(\phi_{f},\phi_{g})$ is the coequalizer of $(f,g)$ in $\mathsf{Hopf}(\Mm)$. The same happens for the category $\Hopf$.
\end{proposition}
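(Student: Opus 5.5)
By Lemma \ref{lem:pihopf}, $\pi$ is a morphism in $\mathsf{Hopf}(\Mm)$. Just before that lemma we saw that $\pi f=\pi g$. So it remains to verify the universal property. Let $h:B\to D$ be a morphism in $\mathsf{Hopf}(\Mm)$ with $hf=hg$. We want a unique $\bar h:\mathsf{Coeq}(\phi_f,\phi_g)\to D$ in $\mathsf{Hopf}(\Mm)$ with $\bar h\pi=h$.

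The plan is to first produce $\bar h$ in $\Mm$ from the universal property of $\pi=\mathsf{coeq}(\phi_f,\phi_g)$ in $\Mm$. For this we need $h\phi_f=h\phi_g$, which follows from multiplicativity of $h$:
\[
h\phi_f=m_D(m_D\ot\mathrm{Id}_D)(h\ot hf\ot h)=m_D(m_D\ot\mathrm{Id}_D)(h\ot hg\ot h)=h\phi_g .
\]
This gives a unique $\bar h$ in $\Mm$ with $\bar h\pi=h$. Uniqueness in $\mathsf{Hopf}(\Mm)$ is then automatic, since $\pi$ is an epimorphism in $\Mm$.

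Next we check that $\bar h$ is a morphism of Hopf monoids. Each structure compatibility reduces to an identity after precomposing with an epimorphism. The tensor powers $\pi\ot\pi$ and $\pi\ot\pi\ot\pi$ are epimorphisms in $\Mm$, because $\ot$ preserves coequalizers and a composite of epimorphisms is an epimorphism.
\begin{itemize}
\item For the multiplication:
\[
\bar h m_{\mathsf{Coeq}}(\pi\ot\pi)=\bar h\pi m_B=hm_B=m_D(h\ot h)=m_D(\bar h\ot\bar h)(\pi\ot\pi),
\]
so $\bar h m_{\mathsf{Coeq}}=m_D(\bar h\ot\bar h)$.
\item For the unit: $\bar h u_{\mathsf{Coeq}}=\bar h\pi u_B=hu_B=u_D$.
\item For the comultiplication:
\[
\Delta_D\bar h\pi=\Delta_D h=(h\ot h)\Delta_B=(\bar h\ot\bar h)(\pi\ot\pi)\Delta_B=(\bar h\ot\bar h)\Delta_{\mathsf{Coeq}}\pi,
\]
so $\Delta_D\bar h=(\bar h\ot\bar h)\Delta_{\mathsf{Coeq}}$.
\item For the counit: $\varepsilon_D\bar h\pi=\varepsilon_B=\varepsilon_{\mathsf{Coeq}}\pi$.
\end{itemize}
Compatibility with antipodes is automatic for bimonoid morphisms between Hopf monoids. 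It can also be checked directly: $S_D\bar h\pi=S_Dh=hS_B=\bar h\pi S_B=\bar hS_{\mathsf{Coeq}}\pi$.

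For the statement about $\Hopf$, suppose $A$ and $B$ are cocommutative. We must show that $\mathsf{Coeq}(\phi_f,\phi_g)$ is cocommutative. Using naturality of the braiding,
\[
\sigma\Delta_{\mathsf{Coeq}}\pi=\sigma(\pi\ot\pi)\Delta_B=(\pi\ot\pi)\sigma_{B,B}\Delta_B=(\pi\ot\pi)\Delta_B=\Delta_{\mathsf{Coeq}}\pi,
\]
and cancelling the epimorphism $\pi$ gives $\sigma\Delta_{\mathsf{Coeq}}=\Delta_{\mathsf{Coeq}}$. Since $\Hopf$ is a full subcategory of $\mathsf{Hopf}(\Mm)$, the universal property restricts, so $\pi$ is the coequalizer of $(f,g)$ in $\Hopf$ as well.

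There is no real obstacle here. The only point needing care is that the tensor powers of $\pi$ are epimorphisms, which is where the hypothesis that $\ot$ preserves coequalizers is used.
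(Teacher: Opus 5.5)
Your proposal is correct and follows the paper's proof essentially step by step. You factor $h$ through $\pi$ in $\Mm$ using $h\phi_f=h\phi_g$, check the (co)monoid compatibilities by cancelling the epimorphisms $\pi$ and $\pi\ot\pi$, and obtain cocommutativity of $\mathsf{Coeq}(\phi_f,\phi_g)$ from that of $B$.
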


\begin{proof}
By Lemma \ref{lem:pihopf}, we already know that $\pi$ is in $\mathsf{Hopf}(\Mm)$ and coequalizes the pair $(f,g)$ in $\mathsf{Hopf}(\Mm)$. Suppose there is a morphism $l:B\to D$ in $\mathsf{Hopf}(\Mm)$ such that $lf=lg$. 
Since $l$ is a morphism in $\mathsf{Mon}(\Mm)$, we get that $l\phi_{f}=l\phi_{g}$. Thus, there exists a unique morphism $p:\mathsf{Coeq}(\phi_{f},\phi_{g})\to D$ in $\Mm$ such that $p\pi=l$. Since $l$ and $\pi$ are morphisms in $\mathsf{Mon}(\Mm)$ and $\pi\ot \pi$ is an epimorphism in $\Mm$, we get that $p$ is a morphism in $\mathsf{Mon}(\Mm)$ (hence the unique morphism in $\mathsf{Mon}(\Mm)$ such that $p\pi=l$). Moreover, since $l$ and $\pi$ are morphisms in $\mathsf{Comon}(\Mm)$, we have
\[
\begin{split}
    \Delta_{D}p\pi=\Delta_{D}l=(l\ot l)\Delta_{B}=(p\ot p)(\pi\ot\pi)\Delta_{B}=(p\ot p)\Delta_{\mathsf{Coeq}}\pi,\ \varepsilon_{D}p\pi=\varepsilon_{D}l=\varepsilon_{B}=\varepsilon_{\mathsf{Coeq}}\pi,
\end{split}
\]
so that the morphism $p$ is in $\mathsf{Comon}(\Mm)$ as $\pi$ is an epimorphism in $\Mm$.
Therefore, $p$ is the unique morphism in $\mathsf{Hopf}(\Mm)$ such that $p\pi=l$, which means that $(\mathsf{Coeq}(\phi_{f},\phi_{g}),\pi)$ is the coequalizer of the pair $(f,g)$ in $\mathsf{Hopf}(\Mm)$. 

If we consider morphisms $f,g:A\to B$ in $\Hopf$ then $\mathsf{Coeq}(\phi_{f},\phi_{g})$ is in $\Hopf$. In fact, from
\[
\sigma_{\mathsf{Coeq},\mathsf{Coeq}}\Delta_{\mathsf{Coeq}}\pi=\sigma_{\mathsf{Coeq},\mathsf{Coeq}}(\pi\ot\pi)\Delta_{B}=(\pi\ot\pi)\sigma_{B,B}\Delta_{B}=(\pi\ot\pi)\Delta_{B}=\Delta_{\mathsf{Coeq}}\pi,
\]
we get $\sigma_{\mathsf{Coeq},\mathsf{Coeq}}\Delta_{\mathsf{Coeq}}=\Delta_{\mathsf{Coeq}}$. Therefore, $(\mathsf{Coeq}(\phi_{f},\phi_{g}),\pi)$ is the coequalizer of the pair $(f,g)$ in $\Hopf$. 
\end{proof}

From the explicit description of coequalizers in $\mathsf{Hopf}(\Mm)$, we can deduce how cokernels in $\mathsf{Hopf}(\Mm)$ are made. 

\begin{corollary}\label{cor:cokernel}
The cokernel of a morphism $f:A\to B$ in $\mathsf{Hopf}(\Mm)$, which we denote by $\mathsf{hcoker}(f):B\to\mathsf{HCoker}(f)$, is given by the coequalizer of the pair $(m_{B}(m_{B}\ot\mathrm{Id}_{B})(\mathrm{Id}_{B}\ot f\ot\mathrm{Id}_{B}), m_{B}(\mathrm{Id}_{B}\ot\varepsilon_{A}\ot\mathrm{Id}_{B}))$ in $\Mm$. The same holds for $\Hopf$.
\end{corollary}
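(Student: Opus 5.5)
The plan is to specialize Proposition \ref{prop:coequalizerHopf} to the pair $(f,0)$. By Lemma \ref{lem:Hopfpointed}, $\mathbf{1}$ is a zero object in $\mathsf{Hopf}(\Mm)$. The zero morphism $A\to B$ is therefore the composite of the unique morphisms $A\to\mathbf{1}\to B$, namely $u_{B}\varepsilon_{A}$. Hence the cokernel of $f$ in $\mathsf{Hopf}(\Mm)$ is, by definition, the coequalizer of the pair $(f,u_{B}\varepsilon_{A})$ in $\mathsf{Hopf}(\Mm)$. By Proposition \ref{prop:coequalizerHopf}, this is $\pi:B\to\mathsf{Coeq}(\phi_{f},\phi_{u_{B}\varepsilon_{A}})$, where the coequalizer is computed in $\Mm$.

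The only remaining step is to simplify $\phi_{u_{B}\varepsilon_{A}}$ using \eqref{def:defphi}. We have
\[
\phi_{u_{B}\varepsilon_{A}}=m_{B}(m_{B}\ot\mathrm{Id}_{B})(\mathrm{Id}_{B}\ot u_{B}\varepsilon_{A}\ot\mathrm{Id}_{B}).
\]
Since $\varepsilon_{A}$ maps to the unit $\mathbf{1}$, we can write $\mathrm{Id}_{B}\ot u_{B}\varepsilon_{A}\ot\mathrm{Id}_{B}=(\mathrm{Id}_{B}\ot u_{B}\ot\mathrm{Id}_{B})(\mathrm{Id}_{B}\ot\varepsilon_{A}\ot\mathrm{Id}_{B})$, suppressing unit constraints. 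The right unit axiom gives $m_{B}(\mathrm{Id}_{B}\ot u_{B})=\mathrm{Id}_{B}$, so $m_{B}(m_{B}\ot\mathrm{Id}_{B})(\mathrm{Id}_{B}\ot u_{B}\ot\mathrm{Id}_{B})=m_{B}$. Therefore
\[
\phi_{u_{B}\varepsilon_{A}}=m_{B}(\mathrm{Id}_{B}\ot\varepsilon_{A}\ot\mathrm{Id}_{B}),
\]
which is exactly the second morphism in the statement.

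For $\Hopf$ the argument is identical. Lemma \ref{lem:Hopfpointed} again gives the zero morphism $u_{B}\varepsilon_{A}$, and Proposition \ref{prop:coequalizerHopf} asserts that the same coequalizer is the coequalizer in $\Hopf$ when $A$ and $B$ are cocommutative.

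I expect no real obstacle here. The only point needing care is to identify the zero morphism correctly via the zero object $\mathbf{1}$ and to track the unit constraints in the simplification.
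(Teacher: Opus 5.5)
Your proposal is correct and is essentially the paper's argument: the paper gives no separate proof and obtains the corollary by specializing Proposition \ref{prop:coequalizerHopf} to the pair $(f,u_{B}\varepsilon_{A})$. Here $u_{B}\varepsilon_{A}$ is the zero morphism through the zero object $\mathbf{1}$ of Lemma \ref{lem:Hopfpointed}, and your unit-axiom computation $\phi_{u_{B}\varepsilon_{A}}=m_{B}(\mathrm{Id}_{B}\ot\varepsilon_{A}\ot\mathrm{Id}_{B})$ is exactly the simplification needed.
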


As recalled in Section \ref{sec:prelimiaries}, the existence of the coequalizer for any pair of morphisms is not required in the definition of a semi-abelian category see Definition \ref{def:semi-abeliancategory}), but it is obtained as a consequence. In our strategy, we will use the explicit construction of coequalizers in $\Hopf$ to deduce the regularity of the category. \medskip

On the other hand, the existence of binary coproducts is required in the definition of a semi-abelian category. We make some comments in this regard. It is known that the forgetful functor $\mathsf{Comon}(\Mm)\to\Mm$ creates colimits, for any monoidal category $(\Mm,\ot,\mathbf{1})$ (we refer the reader to \cite[V 1]{MacLane-book} for the definition of creation of colimits). As recalled in Section \ref{sec:prelimiaries}, given a symmetric monoidal category $(\Mm,\ot,\mathbf{1},\sigma)$, the category $\mathsf{Bimon}(\Mm)\cong\mathsf{Comon}(\mathsf{Mon}(\Mm))$ is monoidal. Hence, the forgetful functor
\[
\mathsf{Bimon}_{\mathrm{coc}}(\Mm)\cong\mathsf{Comon}(\mathsf{Bimon}(\Mm))\cong \mathsf{Comon}(\mathsf{Comon}(\mathsf{Mon}(\Mm)))\to \mathsf{Comon}(\mathsf{Mon}(\Mm))
\to\mathsf{Mon}(\Mm)
\]
creates colimits. Therefore, if $\mathsf{Mon}(\Mm)$ has binary coproducts, $\mathsf{Bimon}_{\mathrm{coc}}(\Mm)$ also has binary coproducts. Hence, if in addition $\Hopf$ is closed under binary coproducts in $\mathsf{Bimon}_{\mathrm{coc}}(\Mm)$, then $\Hopf$ also has binary coproducts.

\begin{remark}
If $(\Mm,\ot,\mathbf{1})$ is an abelian monoidal category 
and the forgetful functor $\mathsf{Mon}(\Mm)\to\Mm$ has a left adjoint, then $\mathsf{Mon}(\Mm)$ has binary coproducts, see e.g.\ \cite[Theorem 3.5]{Porstequalizer}. Moreover, if the forgetful functor $\mathsf{Mon}(\Mm)\to\Mm$ is extremally monadic, $\Hopf$ is closed under binary coproducts in $\mathsf{Bimon}_{\mathrm{coc}}(\Mm)$, see e.g.\ \cite[Proposition 52]{Porst2}.
\end{remark}

\begin{remark}\label{rmk:colimitsHopf}
    Recall that if $\Mm$ is locally presentable then $\mathsf{Mon}(\Mm)$ and $\mathsf{Comon}(\Mm)$ are locally presentable by \cite[Proposition 2.9]{HLFV}. Since $\mathsf{Bimon}_{\mathrm{coc}}(\Mm)\cong \mathsf{Comon}(\mathsf{Comon}(\mathsf{Mon}(\Mm)))$, we obtain that $\mathsf{Bimon}_{\mathrm{coc}}(\Mm)$ is locally presentable by repeating the argument. In particular, it is complete and cocomplete. However, in general, $\mathsf{Hopf}(\Mm)$ is not closed under (co)limits in $\mathsf{Bimon}(\Mm)$. 
    If $\Mm$ is locally presentable, the fact that $\mathsf{Hopf}(\Mm)$ is closed under colimits in $\mathsf{Bimon}(\Mm)$ is equivalent to the other conditions given in \cite[Proposition 49]{Porst2}. We also point out that conditions on a braided monoidal category $(\Mm,\ot,\mathbf{1},\sigma)$ such that $\mathsf{Hopf}(\Mm)$ is closed in $\mathsf{Bimon}(\Mm)$ under colimits are given in \cite[Theorem 5.10]{AGV}. 
\end{remark}

\section{Newman's Theorem for $\Hopf$}\label{sec:NewmanTheorem}

In this section, $(\Mm,\ot,\mathbf{1},\sigma)$ will be a braided monoidal category which has equalizers and coequalizers that are preserved by $\ot$.

Recall that there is a well known bijective correspondence between Hopf subalgebras and left ideals which are also two-sided coideals of a cocommutative Hopf algebra, proven by K. Newman in \cite[Theorem 4.1]{Newman}. The goal of this section is to generalize this correspondence in a suitable way to the setting of $\Hopf$, which will be useful to prove the regularity of the category $\Hopf$. To this end, the following propositions provide the precise definitions of the aforementioned bijective correspondence maps.

\begin{proposition}\label{prop:definitionphi}
    Let $A$ be an object in $\Hopf$ and $i:K\to A$ be a monomorphism in $\Hopf$. Let $\pi:A\to Q$ be the coequalizer of the pair of morphisms $(m_{A}(\mathrm{Id}_{A}\ot i),\mathrm{Id}_{A}\ot\varepsilon_{K})$ in $\Mm$. Then, we have the following results.
\begin{itemize}
    \item[1)] The morphism $\pi$ is an epimorphism in $\mathsf{Comon}_{\mathrm{coc}}(_{A}\Mm)$. 
    \item[2)] The following diagram is a coequalizer in $\Mm$

\begin{equation}\label{picoequalizer}
\begin{tikzcd}
	A\square_{Q}A && A & Q
	\arrow[shift left, from=1-1, to=1-3, "(\varepsilon_{A}\ot\mathrm{Id}_{A})e_{A,A}"]
	\arrow[shift right, from=1-1, to=1-3, "(\mathrm{Id}_{A}\ot\varepsilon_{A})e_{A,A}"']
	\arrow[from=1-3, to=1-4, "\pi"]
\end{tikzcd}
\end{equation}
where $e_{A,A}:A\square_{Q}A\to A\ot A$ is the equalizer in $\Mm$ defined as in \eqref{def:cotensor} and $A$ is an object in ${}^{Q}\Mm$ with structure $(\pi\ot\mathrm{Id}_{A})\Delta_{A}$ and in $\Mm^{Q}$ with structure $(\mathrm{Id}_{A}\ot\pi)\Delta_{A}$.
\end{itemize}
\end{proposition}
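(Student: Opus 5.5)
For 1), I will first give $Q$ a left $A$-module structure. Since $A\ot(-)$ preserves coequalizers, $\mathrm{Id}_A\ot\pi$ is the coequalizer of $(\mathrm{Id}_A\ot m_A(\mathrm{Id}_A\ot i),\mathrm{Id}_A\ot\mathrm{Id}_A\ot\varepsilon_K)$. Associativity of $m_A$ shows that $\pi m_A$ coequalizes this pair, so there is a unique $\mu_Q:A\ot Q\to Q$ with $\mu_Q(\mathrm{Id}_A\ot\pi)=\pi m_A$. The module axioms for $\mu_Q$ then follow from those of $m_A$, because $\mathrm{Id}_A\ot\mathrm{Id}_A\ot\pi$ and $\pi$ are epimorphisms.

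Next I will put a comonoid structure on $Q$. I will check that $(\pi\ot\pi)\Delta_A$ coequalizes the defining pair. Since $i$ is a comonoid morphism, $\Delta_A m_A(\mathrm{Id}_A\ot i)=(m_A\ot m_A)(\mathrm{Id}_A\ot\sigma_{A,A}\ot\mathrm{Id}_A)(\Delta_A\ot (i\ot i)\Delta_K)$. Applying $\pi\ot\pi$ and using $\pi m_A(\mathrm{Id}_A\ot i)=\pi(\mathrm{Id}_A\ot\varepsilon_K)$ in each tensor factor, together with counitality of $K$, reduces this to $(\pi\ot\pi)\Delta_A(\mathrm{Id}_A\ot\varepsilon_K)$; the braiding is removed by naturality. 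The counit is handled similarly, using $\varepsilon_A m_A(\mathrm{Id}_A\ot i)=\varepsilon_A\ot\varepsilon_K$. This gives $\Delta_Q$ and $\varepsilon_Q$ with $\pi$ a comonoid morphism. Coassociativity, counitality and cocommutativity then descend from $A$, because $\pi\ot\pi\ot\pi$ is an epimorphism: $\ot$ preserves coequalizers, so tensor products of regular epis are epi. The identities $\Delta_Q\mu_Q=(\mu_Q\ot\mu_Q)\Delta_{A\ot Q}$ and $\varepsilon_Q\mu_Q=\varepsilon_A\ot\varepsilon_Q$, which say that $Q$ is a comonoid in ${}_A\Mm$, are checked after precomposing with $\mathrm{Id}_A\ot\pi$, where they reduce to the bimonoid axioms of $A$. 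Finally, $\pi$ is $A$-linear and a comonoid morphism, and it is an epimorphism in $\Mm$. Since the forgetful functor to $\Mm$ is faithful, $\pi$ is an epimorphism in $\mathsf{Comon}_{\mathrm{coc}}({}_A\Mm)$.

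For 2), write $p_1:=(\mathrm{Id}_A\ot\varepsilon_A)e_{A,A}$ and $p_2:=(\varepsilon_A\ot\mathrm{Id}_A)e_{A,A}$.

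First I will show that $\pi p_1=\pi p_2$. The defining equation of the cotensor gives $(\mathrm{Id}_A\ot\pi\ot\mathrm{Id}_A)(\Delta_A\ot\mathrm{Id}_A)e=(\mathrm{Id}_A\ot\pi\ot\mathrm{Id}_A)(\mathrm{Id}_A\ot\Delta_A)e$. Applying $\varepsilon_A\ot\mathrm{Id}_Q\ot\varepsilon_A$ yields $\pi p_1=\pi p_2$.

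For the universal property, I will produce a morphism $A\ot K\to A\square_Q A$ whose composites with $p_1,p_2$ recover the original pair $(m_A(\mathrm{Id}_A\ot i),\mathrm{Id}_A\ot\varepsilon_K)$. Since $\pi$ is by definition the coequalizer of that pair, any $h$ with $hp_1=hp_2$ then also coequalizes the original pair, and so factors uniquely through $\pi$. The candidate is
\[
\gamma:=(\mathrm{Id}_A\ot m_A)(\Delta_A\ot i):A\ot K\to A\ot A.
\]
Then $(\mathrm{Id}_A\ot\varepsilon_A)\gamma=\mathrm{Id}_A\ot\varepsilon_K$ and $(\varepsilon_A\ot\mathrm{Id}_A)\gamma=m_A(\mathrm{Id}_A\ot i)$, up to the order of the pair, which does not matter.

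The main obstacle is showing that $\gamma$ factors through $e_{A,A}$. This amounts to
\[
(\mathrm{Id}_A\ot\pi\ot\mathrm{Id}_A)(\Delta_A\ot\mathrm{Id}_A)\gamma=(\mathrm{Id}_A\ot\pi\ot\mathrm{Id}_A)(\mathrm{Id}_A\ot\Delta_A)\gamma.
\]
Expanding the right side gives $a_1\ot\pi(a_2k_1)\ot a_3k_2$ in Sweedler-like notation, while the left side is $a_1\ot\pi(a_2)\ot a_3k$. Using $A$-linearity of $\pi$, I rewrite $\pi(a_2k_1)=a_2\cdot\pi(k_1)$. The key point is $\pi i=\pi u_A\varepsilon_K$, obtained by precomposing the coequalized pair with $u_A\ot\mathrm{Id}_K$. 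With this, $\pi(k_1)\ot k_2=\pi(1)\ot k$ by counitality of $K$. The braiding that appears when moving $k_1$ past $a_3$ has to be handled with naturality of $\sigma$ and cocommutativity of $A$.
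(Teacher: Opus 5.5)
Your proposal is correct and follows essentially the same route as the paper: $\mu_Q$, $\varepsilon_Q$ and $\Delta_Q$ are induced through the coequalizer $\pi$, and in 2) the same auxiliary map $(\mathrm{Id}_A\otimes m_A)(\Delta_A\otimes i)$ is factored through $e_{A,A}$ using $A$-linearity of $\pi$ and $\pi i=\pi u_A\varepsilon_K$. The only variation is that you check the coequalizing property of $(\pi\otimes\pi)\Delta_A$ directly from $\pi m_A(\mathrm{Id}_A\otimes i)=\pi(\mathrm{Id}_A\otimes\varepsilon_K)$ rather than via $\mu_Q$, which is immaterial; and in the final factorization the crossing is removed by naturality of $\sigma$ alone (the $k_1$ strand ends in $\varepsilon_K$), so cocommutativity of $A$ is not actually needed there.
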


\begin{proof}
    1). First, we prove that $\pi:A\to Q$ is in $\mathsf{Comon}_{\mathrm{coc}}(_{A}\Mm)$. Since $A\ot(-)$ preserves coequalizers in $\Mm$, we obtain that $\mathrm{Id}_{A}\ot\pi$ is the coequalizer of the pair $(\mathrm{Id}_{A}\ot m_{A}(\mathrm{Id}_{A}\ot i),\mathrm{Id}_{A\ot A}\ot\varepsilon_{K})$ in $\Mm$. Since 
    \begin{align*}
    \pi m_{A}(\mathrm{Id}_{A}\ot m_{A}(\mathrm{Id}_{A}\ot i))&=\pi m_{A}(\mathrm{Id}_{A} \ot i) (m_{A} \ot \mathrm{Id}_{K})=\pi (\mathrm{Id}_{A}\ot\varepsilon_{K})(m_{A} \ot \mathrm{Id}_{K})\\&= \pi m_{A} (\mathrm{Id}_{A\ot A}\ot\varepsilon_{K}),
    \end{align*}
    there exists a unique morphism $\mu_{Q}:A\ot Q\to Q$ in $\Mm$ such that $\mu_{Q}(\mathrm{Id}_{A}\ot\pi)=\pi m_{A}$. One can check that $Q$ becomes an object in $_{A}\Mm$ with action $\mu_{Q}$. This follows since $(A,m_{A},u_{A})$ is in $\mathsf{Mon}(\Mm)$ and $\pi$ is an epimorphism in $\Mm$, which is preserved by $A\ot(-)$. Because $\varepsilon_{A}m_{A}(\mathrm{Id}_{A}\ot i)=\varepsilon_{A}\ot\varepsilon_{A}i=\varepsilon_{A}(\mathrm{Id}_{A}\ot\varepsilon_{K})$, there exists a unique morphism $\varepsilon_{Q}:Q\to\mathbf{1}$ in $\Mm$ such that $\varepsilon_{Q}\pi=\varepsilon_{A}$. Since $\pi m_{A}(\mathrm{Id}_A\ot i)=\pi(\mathrm{Id}_A\ot\varepsilon_{K})$, we have 
    $$
    \pi i=\pi m_{A}(\mathrm{Id}_A\ot i) (u_A \ot \mathrm{Id}_K)=\pi(\mathrm{Id}_A\ot\varepsilon_{K}) (u_A \ot \mathrm{Id}_K)= \pi u_{A}\varepsilon_{K}. 
    $$
    Thus, we get
\[
\begin{split}
&\hspace{0.5cm} (\pi\ot\pi)\Delta_{A}m_{A}(\mathrm{Id}_{A}\ot i)=(\pi\ot\pi)(m_{A}\ot m_{A})(\mathrm{Id}_{A}\ot\sigma_{A,A}\ot\mathrm{Id}_{A})(\Delta_{A}\ot\Delta_{A})(\mathrm{Id}_{A}\ot i)\\&=(\mu_{Q}\ot\mu_{Q})(\mathrm{Id}_{A}\ot\pi\ot\mathrm{Id}_{A}\ot\pi)(\mathrm{Id}_{A}\ot\sigma_{A,A}\ot\mathrm{Id}_{A})(\Delta_{A}\ot\Delta_{A})(\mathrm{Id}_{A}\ot i)\\&=(\mu_{Q}\ot\mu_{Q})(\mathrm{Id}_{A}\ot\sigma_{A,Q}\ot\mathrm{Id}_{Q})(\mathrm{Id}_{A\ot A}\ot\pi\ot\pi)(\Delta_{A}\ot\Delta_{A})(\mathrm{Id}_{A}\ot i)\\&=(\mu_{Q}\ot\mu_{Q})(\mathrm{Id}_{A}\ot\sigma_{A,Q}\ot\mathrm{Id}_{Q})(\mathrm{Id}_{A\ot A}\ot\pi\ot\pi)(\Delta_{A}\ot(i\ot i)\Delta_{K})\\&=(\mu_{Q}\ot\mu_{Q})(\mathrm{Id}_{A}\ot\sigma_{A,Q}\ot\mathrm{Id}_{Q})(\Delta_{A}\ot(\pi u_{A}\varepsilon_{K}\ot\pi u_{A}\varepsilon_{K})\Delta_{K})\\&=(\mu_{Q}\ot\mu_{Q})(\mathrm{Id}_{A}\ot \pi u_{A}\ot\mathrm{Id}_{A}\ot \pi u_{A})(\Delta_{A}\ot\varepsilon_{K})\\&=(\pi m_{A}\ot\pi m_{A})(\mathrm{Id}_{A}\ot u_{A}\ot\mathrm{Id}_{A}\ot u_{A})(\Delta_{A}\ot\varepsilon_{K})\\&=(\pi\ot\pi)\Delta_{A}(\mathrm{Id}_{A}\ot\varepsilon_{K}).
\end{split}
\] 
Therefore, there exists a unique morphism $\Delta_{Q}:Q\to Q\ot Q$ in $\Mm$ such that $\Delta_{Q}\pi=(\pi\ot\pi)\Delta_{A}$. One can check that $(Q,\Delta_{Q},\varepsilon_{Q})$ is an object in $\mathsf{Comon}(\Mm)$ since $(A,\Delta_{A},\varepsilon_{A})$ is in $\mathsf{Comon}(\Mm)$ and $\pi$ is an epimorphism in $\Mm$, so that $\pi$ is a morphism in $\mathsf{Comon}(\Mm)$. Notice that the cocommutativity of $Q$ descends from the cocommutativity of $A$. To conclude that $\pi$ is in $\mathsf{Comon}_{\mathrm{coc}}(_{A}\Mm)$, it remains to prove that $Q$ is in $\mathsf{Comon}_{\mathrm{coc}}(_{A}\Mm)$, i.e.\ it remains to prove that $\Delta_{Q}$ and $\varepsilon_{Q}$ are morphisms in $_{A}\Mm$. Since $\mu_{Q\ot Q}=(\mu_{Q}\ot\mu_{Q})(\mathrm{Id}_{A}\ot\sigma_{A,Q}\ot\mathrm{Id}_{Q})(\Delta_{A}\ot\mathrm{Id}_{Q\ot Q})$, we can compute
\[
\begin{split}
\mu_{Q\ot Q}(\mathrm{Id}_{A}\ot\Delta_{Q})(\mathrm{Id}_{A}\ot\pi)&=(\mu_{Q}\ot\mu_{Q})(\mathrm{Id}_{A}\ot\sigma_{A,Q}\ot\mathrm{Id}_{Q})(\Delta_{A}\ot\Delta_{Q})(\mathrm{Id}_{A}\ot\pi)\\&=(\mu_{Q}\ot\mu_{Q})(\mathrm{Id}_{A}\ot\sigma_{A,Q}\ot\mathrm{Id}_{Q})(\Delta_{A}\ot(\pi\ot\pi)\Delta_{A})\\&=(\mu_{Q}\ot\mu_{Q})(\mathrm{Id}_{A}\ot\pi\ot\mathrm{Id}_{A}\ot\pi)(\mathrm{Id}\ot\sigma_{A,A}\ot\mathrm{Id}_{A})(\Delta_{A}\ot\Delta_{A})\\&=(\pi\ot\pi)(m_{A}\ot m_{A})(\mathrm{Id}_{A}\ot\sigma_{A,A}\ot\mathrm{Id}_{A})(\Delta_{A}\ot\Delta_{A})\\&=(\pi\ot\pi)\Delta_{A}m_{A}=\Delta_{Q}\pi m_{A}=\Delta_{Q}\mu_{Q}(\mathrm{Id}_{A}\ot\pi).
\end{split}
\]
Since $\mathrm{Id}_{A}\ot\pi$ is an epimorphism in $\Mm$, we obtain $\mu_{Q\ot Q}(\mathrm{Id}_{A}\ot\Delta_{Q})=\Delta_{Q}\mu_{Q}$. Furthermore, by considering the trivial left $A$-module structure $\mu_{\mathbf{1}}:= \varepsilon_{A} \ot \mathrm{Id}_{\mathbf{1}}:A \ot \mathbf{1} \to \mathbf{1}$ of the unit object $\mathbf{1}$, we have
\[
\mu_{\mathbf{1}}(\mathrm{Id}_{A}\ot\varepsilon_{Q})(\mathrm{Id}_{A}\ot\pi)=\mu_{\mathbf{1}} (\mathrm{Id}_{A}\ot \varepsilon_{A})=\varepsilon_{A}\ot\varepsilon_{A}=\varepsilon_{A}m_{A}=\varepsilon_{Q}\pi m_{A}=\varepsilon_{Q}\mu_{Q}(\mathrm{Id}_{A}\ot\pi).
\]
It follows that $\mu_{\mathbf{1}}(\mathrm{Id}_A \ot\varepsilon_{Q})=\varepsilon_{Q}\mu_{Q}$. Hence, $\pi$ is in $\mathsf{Comon}_{\mathrm{coc}}(_{A}\Mm)$. Since $\pi$ is an epimorphism in $\Mm$, it is also an epimorphism in $\mathsf{Comon}_{\mathrm{coc}}(_{A}\Mm)$.

2). We verify that $\pi$ is the coequalizer of the pair $((\varepsilon_{A}\ot\mathrm{Id}_{A})e_{A,A},(\mathrm{Id}_{A}\ot\varepsilon_{A})e_{A,A})$ in $\Mm$. Since $((\mathrm{Id}_{A}\ot\pi)\Delta_{A}\ot\mathrm{Id}_{A})e_{A,A}=(\mathrm{Id}_{A}\ot(\pi\ot\mathrm{Id}_{A})\Delta_{A})e_{A,A}$, we obtain 
\[
\begin{split}
\pi(\mathrm{Id}_{A}\ot\varepsilon_{A})e_{A,A}&=(\varepsilon_{A}\ot\mathrm{Id}_{Q}\ot\varepsilon_{A})((\mathrm{Id}_{A}\ot\pi)\Delta_{A}\ot\mathrm{Id}_{A})e_{A,A}\\&=(\varepsilon_{A}\ot\mathrm{Id}_{Q}\ot\varepsilon_{A})(\mathrm{Id}_{A}\ot(\pi\ot\mathrm{Id}_{A})\Delta_{A})e_{A,A}=\pi(\varepsilon_{A}\ot\mathrm{Id}_{A})e_{A,A},
\end{split}
\]
i.e.\ $\pi$ coequalizes the pair $((\varepsilon_{A}\ot\mathrm{Id}_{A})e_{A,A},(\mathrm{Id}_{A}\ot\varepsilon_{A})e_{A,A})$. Define the following morphism in $\Mm$
\[
\zeta:=(\mathrm{Id}_{A}\ot m_{A})(\Delta_{A}\ot i):A\ot K\to A\ot A.
\]
We compute
\begin{align*}
&(\mathrm{Id}_{A}\ot(\pi\ot\mathrm{Id}_{A})\Delta_{A})\zeta=(\mathrm{Id}_{A}\ot(\pi\ot\mathrm{Id}_{A})\Delta_{A})(\mathrm{Id}_{A}\ot m_{A})(\Delta_{A}\ot i)\\&=(\mathrm{Id}_{A}\ot\pi\ot\mathrm{Id}_{A})(\mathrm{Id}_{A}\ot m_{A}\ot m_{A})(\mathrm{Id}_{A\ot A}\ot\sigma_{A,A}\ot\mathrm{Id}_{A})(\mathrm{Id}_{A}\ot\Delta_{A}\ot\Delta_{A})(\Delta_{A}\ot i)\\&=(\mathrm{Id}_{A}\ot\pi\ot\mathrm{Id}_{A})(\mathrm{Id}_{A}\ot m_{A}\ot m_{A})(\mathrm{Id}_{A\ot A}\ot\sigma_{A,A}\ot\mathrm{Id}_{A})(\mathrm{Id}_{A}\ot\Delta_{A}\ot i\ot i)(\Delta_{A}\ot \Delta_{K})\\&=(\mathrm{Id}_{A}\ot\mu_{Q}\ot m_{A})(\mathrm{Id}_{A}\ot\mathrm{Id}_{A}\ot\pi\ot \mathrm{Id}_{A\ot A})(\mathrm{Id}_{A\ot A}\ot\sigma_{A,A}\ot\mathrm{Id}_{A})(\mathrm{Id}_{A}\ot\Delta_{A}\ot i\ot i)(\Delta_{A}\ot \Delta_{K})\\&=(\mathrm{Id}_{A}\ot\mu_{Q}\ot m_{A})(\mathrm{Id}_{A\ot A}\ot\sigma_{A,Q}\ot\mathrm{Id}_{A})(\mathrm{Id}_{A}\ot\mathrm{Id}_{A\ot A}\ot\pi\ot \mathrm{Id}_{A})(\mathrm{Id}_{A}\ot\Delta_{A}\ot i\ot i)(\Delta_{A}\ot \Delta_{K})\\&=(\mathrm{Id}_{A}\ot\mu_{Q}\ot m_{A})(\mathrm{Id}_{A\ot A}\ot\sigma_{A,Q}\ot\mathrm{Id}_{A})(\mathrm{Id}_{A}\ot\mathrm{Id}_{A\ot A}\ot\pi\ot \mathrm{Id}_{A})(\mathrm{Id}_{A}\ot\Delta_{A}\ot u_{A}\varepsilon_{K}\ot i)(\Delta_{A}\ot \Delta_{K})
\\&=(\mathrm{Id}_{A}\ot\mu_{Q}\ot m_{A})(\mathrm{Id}_{A\ot A}\ot\sigma_{A,Q}\ot\mathrm{Id}_{A})(\mathrm{Id}_{A}\ot\mathrm{Id}_{A\ot A}\ot\pi\ot \mathrm{Id}_{A})(\mathrm{Id}_{A}\ot\Delta_{A}\ot u_{A}\ot\mathrm{Id}_{A})(\Delta_{A}\ot i)\\&=(\mathrm{Id}_{A}\ot\mu_{Q}\ot m_{A})(\mathrm{Id}_{A\ot A}\ot\pi\ot \mathrm{Id}_{A\ot A})(\mathrm{Id}_{A\ot A}\ot\sigma_{A,A}\ot\mathrm{Id}_{A})(\Delta_{A}\ot\mathrm{Id}_{A}\ot u_{A}\ot\mathrm{Id}_{A})(\Delta_{A}\ot i)\\&=(\mathrm{Id}_{A}\ot\mu_{Q}\ot m_{A})(\mathrm{Id}_{A\ot A}\ot\pi\ot \mathrm{Id}_{A\ot A})(\Delta_{A}\ot u_{A}\ot\mathrm{Id}_{A\ot A})(\Delta_{A}\ot i)\\&=(\mathrm{Id}_{A}\ot\pi\ot m_{A})(\mathrm{Id}_{A}\ot m_{A}\ot \mathrm{Id}_{A\ot A})(\Delta_{A}\ot u_{A}\ot\mathrm{Id}_{A\ot A})(\Delta_{A}\ot i)\\&=(\mathrm{Id}_{A}\ot\pi\ot m_{A})(\Delta_{A}\ot\mathrm{Id}_{A\ot A})(\Delta_{A}\ot i)=((\mathrm{Id}_{A}\ot\pi)\Delta_{A}\ot\mathrm{Id}_{A})(\mathrm{Id}_{A}\ot m_{A})(\Delta_{A}\ot i)\\&=((\mathrm{Id}_{A}\ot\pi)\Delta_{A}\ot\mathrm{Id}_{A})\zeta.
\end{align*}
By the universal property, there exists a unique morphism $\xi:A\ot K\to A\square_{Q}A$ in $\Mm$ such that $e_{A,A}\xi=\zeta$. Suppose we have a morphism $f:A\to C$ in $\Mm$ such that $f(\varepsilon_{A}\ot\mathrm{Id}_{A})e_{A,A}=f(\mathrm{Id}_{A}\ot\varepsilon_{A})e_{A,A}$. We have
\[
\begin{split}
fm_{A}(\mathrm{Id}_{A}\ot i)&=f(\varepsilon_{A}\ot\mathrm{Id}_{A})(\mathrm{Id}_{A}\ot m_{A})(\Delta_{A}\ot i)=f(\varepsilon_{A}\ot\mathrm{Id}_{A})\zeta=f(\varepsilon_{A}\ot\mathrm{Id}_{A})e_{A,A}\xi\\&=f(\mathrm{Id}_{A}\ot\varepsilon_{A})e_{A,A}\xi=f(\mathrm{Id}_{A}\ot\varepsilon_{A})\zeta=f(\mathrm{Id}_{A}\ot\varepsilon_{A})(\mathrm{Id}_{A}\ot m_{A})(\Delta_{A}\ot i)\\&=f(\mathrm{Id}_{A}\ot\varepsilon_{A}\ot\varepsilon_{A})(\Delta_{A}\ot i)=f(\mathrm{Id}_{A}\ot\varepsilon_{K}).
\end{split}
\]
Since $\pi$ is the coequalizer of the pair $(m_{A}(\mathrm{Id}_{A}\ot i),\mathrm{Id}_{A}\ot\varepsilon_{K})$ in $\Mm$, there exists a unique morphism $p:Q\to C$ such that $p\pi=f$. Therefore, $\pi$ is the coequalizer of the pair $((\varepsilon_{A}\ot\mathrm{Id}_{A})e_{A,A},(\mathrm{Id}_{A}\ot\varepsilon_{A})e_{A,A})$ in $\Mm$.
\end{proof}

Proposition \ref{prop:definitionphi} maps a monomorphism $i:K \to A$ in $\Hopf$ to an epimorphism $\pi:A \to Q$ in $\mathsf{Comon}_{\mathrm{coc}}(_{A}\Mm)$. The next goal is to find the inverse assignment. By Corollary \ref{lemma:limitscocombim}, we know that $\mathsf{Comon}_{\mathrm{coc}}(\Mm)$ has equalizers. Thus, we can consider the equalizer of two particular morphisms in $\mathsf{Comon}_{\mathrm{coc}}(\Mm)$. As is usually done, we denote by $A^{\mathrm{co}Q}$ the equalizer of the pair $(\pi,\pi u_{A}\varepsilon_{A})$ in $\mathsf{Comon}_{\mathrm{coc}}(\Mm)$, which is the equalizer of the pair $((\pi\ot\mathrm{Id}_{A})\Delta_{A},\pi u_{A}\ot\mathrm{Id}_{A})$ in $\Mm$ (see Corollary \ref{lemma:limitscocombim}).   


\begin{proposition}\label{prop:definitionpsi}
    Let $A$ be an object in $\Hopf$ and $\pi:A\to Q$ be an epimorphism in $\mathsf{Comon}_{\mathrm{coc}}(_{A}\Mm)$, where $A$ is in $_{A}\Mm$ with action given by $m_{A}$. Let $i:A^{\mathrm{co}Q}\to A$ be the equalizer of the pair of morphisms $(\pi,\pi u_{A}\varepsilon_{A})$ in $\mathsf{Comon}_{\mathrm{coc}}(\Mm)$, i.e.\ the equalizer of the pair $((\pi\ot\mathrm{Id}_{A})\Delta_{A},\pi u_{A}\ot\mathrm{Id}_{A})$ in $\Mm$. Then, we have the following results.
\begin{itemize}
    \item[1)] The morphism $i$ is a monomorphism in $\Hopf$. 
    \item[2)] The following diagram is an equalizer in $\Mm$
\begin{equation}\label{iequalizer}
\begin{tikzcd}
	A^{\mathrm{co}Q} & A && A\ot_{A^{\mathrm{co}Q}}A
	\arrow[from=1-1, to=1-2, "i"]
	\arrow[shift left, from=1-2, to=1-4, "q_{A,A}(u_{A}\ot\mathrm{Id}_{A})"]
	\arrow[shift right, from=1-2, to=1-4, "q_{A,A}(\mathrm{Id}_{A}\ot u_{A})"']
\end{tikzcd}
\end{equation}
where $q_{A,A}:A\ot A\to A\ot_{A^{\mathrm{co}Q}}A$ is the coequalizer in $\Mm$ defined as in \eqref{def:balancedtensor} and $A$ is an object in $_{A^{\mathrm{co}Q}}\Mm$ with structure $m_{A}(i\ot\mathrm{Id}_{A})$ and in $\Mm_{A^{\mathrm{co}Q}}$ with structure $m_{A}(\mathrm{Id}_{A}\ot i)$. 
\end{itemize}
\end{proposition}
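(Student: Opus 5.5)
For 1), I will show that $A^{\mathrm{co}Q}$ inherits a Hopf monoid structure making $i$ a morphism in $\Hopf$. By Corollary \ref{lemma:limitscocombim}, $i$ is already a morphism in $\mathsf{Comon}_{\mathrm{coc}}(\Mm)$ and a monomorphism in $\Mm$. So it suffices to show that $m_A(i\ot i)$, $u_A$ and $S_Ai$ factor through $i$. Write $\rho:=(\pi\ot\mathrm{Id}_A)\Delta_A$.

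For the unit, $\rho u_A=\pi u_A\ot u_A$ is immediate.

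For the multiplication, I compute $\rho m_A(i\ot i)$ using the bimonoid axiom and the $A$-linearity of $\pi$, namely $\pi m_A=\mu_Q(\mathrm{Id}_A\ot\pi)$:
\[
\rho m_A(i\ot i)=(\mu_Q\ot m_A)(\mathrm{Id}_A\ot\sigma_{A,Q}\ot\mathrm{Id}_A)\big((\mathrm{Id}_A\ot\pi)\Delta_A i\ot \Delta_A i\big).
\]
Using cocommutativity, $(\mathrm{Id}_A\ot\pi)\Delta_A=\sigma\rho$, so the defining relation of $i$ gives $(\pi\ot\mathrm{Id})\Delta_Ai=\pi u_A\ot i$. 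Substituting this for the second factor, and then using $\mu_Q(\mathrm{Id}_A\ot\pi u_A)=\pi$ and counitality, should reduce the expression to $\pi u_A\ot m_A(i\ot i)$. This is the step where $A$-linearity of $\pi$ is essential.

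For the antipode, I use that $S_A$ is a comonoid map $A\to A$. This holds because $A$ is cocommutative, so that $\Delta_AS_A=(S_A\ot S_A)\sigma\Delta_A=(S_A\ot S_A)\Delta_A$. Together with $S_A^2=\mathrm{Id}$ in the cocommutative case, this lets me write
\[
\rho S_Ai=(\pi S_A\ot S_A)\Delta_A i .
\]
I then need $\pi S_A$ restricted to the relevant leg to equal $\pi$. I would try the identity $\pi(x)=\pi(x_1 S(y_1)\,y_2)$ pattern, i.e.\ write $\pi S_A(a)=\mu_Q(S_A(a_1)\ot\pi(a_2 S_A(a_3)))$ applied to coinvariants, exploiting that $\Delta_A i$ has its left leg coinvariant in the appropriate sense. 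This is the most delicate computation. Failing that, I would appeal to $\mathsf{Grp}(\mathsf{Comon}_{\mathrm{coc}}(\Mm))$: a subobject in $\mathsf{Comon}_{\mathrm{coc}}$ closed under multiplication and unit is automatically closed under inverse, when inverses are expressed via the division map $m_A(\mathrm{Id}\ot S_A)$. Once the structure maps factor, the axioms follow from $i$ being monic and $i\ot i$ monic (by exactness of $\ot$). Then $i$ is in $\Hopf$ and monic there.

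For 2), I will mirror the proof of Proposition \ref{prop:definitionphi} 2), dualized. First, $i$ equalizes the two maps because $q_{A,A}(i\ot\mathrm{Id}_A)=q_{A,A}(\mathrm{Id}_A\ot i)$ after inserting units. Next, I define $\theta:=(m_A\ot\mathrm{Id}_A)(\mathrm{Id}_A\ot\rho')$ with $\rho'=(\pi\ot\mathrm{Id}_A)\Delta_A$, or rather a map $A\ot A\to Q\ot A$, say $\theta=(\mu_Q\ot\mathrm{Id}_A)(\mathrm{Id}_A\ot\rho)$. I check that it coequalizes the two actions defining $q_{A,A}$, using the coinvariance of $i$, so that it factors as $\bar\theta:A\ot_{A^{\mathrm{co}Q}}A\to Q\ot A$. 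Now take any $f:C\to A$ equalizing the pair. Composing with $\bar\theta$ gives $\theta(u_A\ot f)=\theta(f\ot u_A)$, which reads $\rho f=\pi u_A\ot f$. Hence $f$ factors uniquely through $i$. Checking $\theta(u_A\ot\mathrm{Id})=\rho$ and $\theta(\mathrm{Id}\ot u_A)=\pi\ot u_A$ after twisting is the routine part. The main obstacle overall is the antipode closure in 1).
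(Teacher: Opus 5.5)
Your overall strategy is the paper's: factor $m_A(i\ot i)$, $u_A$ and $S_Ai$ through the equalizer, and for 2) descend a Galois-type map $A\ot A\to Q\ot A$ to $A\ot_{A^{\mathrm{co}Q}}A$. However, two steps do not go through as written.

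\emph{Antipode in 1).} Your identity $\pi S_A(a)=S_A(a_1)\cdot\pi(a_2S_A(a_3))$ holds for every $a$ and just returns $S_A(a)\cdot\pi(1)$, so it uses nothing about coinvariants. The fallback is false: $\Bbbk\mathbb{N}\subset\Bbbk\mathbb{Z}$ is a subcoalgebra of a cocommutative Hopf algebra that is closed under $m$ and $u$ but not under $S$. Closure under $m_A(\id_A\ot S_A)$ would suffice, but that is exactly what has to be shown. The missing step is the proof that $\pi S_A i=\pi i$. Applying $\id_Q\ot\varepsilon_A$ to $\rho i=\pi u_A\ot i$ gives $\pi i=\pi u_A\varepsilon_A i$. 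Then use $A$-linearity of $\pi$ and the identity $(\id_A\ot\pi)\Delta_A i=\sigma_{Q,A}\rho i=i\ot\pi u_A$; this is where cocommutativity enters. Together these give
\[
\pi i=\pi m_A(S_A\ot\id_A)\Delta_A i=\mu_Q(S_A\ot\id_Q)(\id_A\ot\pi)\Delta_A i=\mu_Q(\id_A\ot\pi u_A)S_A i=\pi S_A i .
\]
Combined with $\Delta_A i=(i\ot i)\Delta_{A^{\mathrm{co}Q}}$, this yields $\rho S_A i=(\pi S_A i\ot S_A i)\Delta_{A^{\mathrm{co}Q}}=(\id_Q\ot S_A)\rho i=\pi u_A\ot S_A i$.

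There is also a smaller error in the multiplication step. The correct identity is $\rho m_A=(\mu_Q\ot m_A)(\id_A\ot\sigma_{A,Q}\ot\id_A)(\Delta_A\ot\rho)$, with $\pi$ on the first leg of the second factor, not on $(\id_A\ot\pi)\Delta_A$ of the first. After substituting $\rho i=\pi u_A\ot i$ you get $(\id_Q\ot m_A)(\rho i\ot i)$. The last step then uses coinvariance of the first factor again, not counitality, and needs no cocommutativity.

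\emph{Part 2).} Your map $\theta=(\mu_Q\ot\id_A)(\id_A\ot\rho)$, i.e.\ $a\ot b\mapsto a\cdot\pi(b_1)\ot b_2$, does not descend to $A\ot_{A^{\mathrm{co}Q}}A$. Precompose $\theta(m_A(\id_A\ot i)\ot\id_A)$ and $\theta(\id_A\ot m_A(i\ot\id_A))$ with $u_A\ot\id\ot u_A$. The first gives $\pi u_A\varepsilon_{A^{\mathrm{co}Q}}\ot u_A$ and the second gives $\pi u_A\ot i$. These already differ for $A=\Bbbk G$, $Q=\Bbbk[G/H]$, $A^{\mathrm{co}Q}=\Bbbk H$ and $1\neq h\in H$, where they are $H\ot 1$ versus $H\ot h$. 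Moreover $\theta(\id_A\ot u_A)=\pi\ot u_A$, so the equation you would extract is $\rho f=(\pi\ot u_A)f$, not $\rho f=(\pi u_A\ot\id_A)f$, and no twist repairs this. The map that works, as in the paper, is $\zeta=(\pi\ot m_A)(\Delta_A\ot\id_A)$. For coinvariant $k$ one has $\pi(a_1k_1)\ot a_2k_2b=a_1\cdot\pi(k_1)\ot a_2k_2b=\pi(a_1)\ot a_2kb$, so $\zeta$ factors through $q_{A,A}$. Since $\zeta(\id_A\ot u_A)=\rho$ and $\zeta(u_A\ot\id_A)=\pi u_A\ot\id_A$, the factored map turns $q_{A,A}(u_A\ot\id_A)f=q_{A,A}(\id_A\ot u_A)f$ into $\rho f=(\pi u_A\ot\id_A)f$, which is what you need.
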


\begin{proof}
    1).  
    We already know that $i:A^{\mathrm{co}Q}\to A$ is in $\mathsf{Comon}_{\mathrm{coc}}(\Mm)$. It suffices to show that $A^{\mathrm{co}Q}$ is in $\mathsf{Mon}(\Mm)$ and it has an antipode. Consider the following diagram:
\[\begin{tikzcd}
	A^{\mathrm{co}Q}\ot A^{\mathrm{co}Q} &  A\ot A &&& A\ot Q\ot A \\
	A^{\mathrm{co}Q} & A &&& Q\ot A
	\arrow[from=1-1, to=1-2, "i\ot i"]
	\arrow[dashed, from=1-1, to=2-1,"m_{A^{\mathrm{co}Q}}"]
	\arrow[shift left, from=1-2, to=1-5,"\mathrm{Id}_{A}\ot(\pi\ot\mathrm{Id}_{A})\Delta_{A}"]
	\arrow[shift right, from=1-2, to=1-5, "\mathrm{Id}_{A}\ot\pi u_{A}\ot\mathrm{Id}_{A}"']
	\arrow[from=1-2, to=2-2,"m_A"]
	\arrow[from=1-5, to=2-5,"\mu_{Q\ot A}"]
	\arrow[from=2-1, to=2-2,"i"']
	\arrow[shift left, from=2-2, to=2-5,"(\pi\ot\mathrm{Id}_{A})\Delta_{A}"]
	\arrow[shift right, from=2-2, to=2-5,"\pi u_{A}\ot\mathrm{Id}_{A}"']
\end{tikzcd}\]
For the sake of brevity, we first compute
\[
\begin{split}
(\pi\ot\mathrm{Id}_{A})\Delta_{A}m_{A}&=(\pi\ot\mathrm{Id}_{A})(m_{A}\ot m_{A})(\mathrm{Id}_{A}\ot\sigma_{A,A}\ot\mathrm{Id}_{A})(\Delta_{A}\ot\Delta_{A})\\&=(\mu_{Q}\ot m_{A})(\mathrm{Id}_{A}\ot\pi\ot\mathrm{Id}_{A\ot A})(\mathrm{Id}_{A}\ot\sigma_{A,A}\ot\mathrm{Id}_{A})(\Delta_{A}\ot\Delta_{A})\\&=(\mu_{Q}\ot m_{A})(\mathrm{Id}_{A}\ot\sigma_{A,Q}\ot\mathrm{Id}_{A})(\Delta_{A}\ot\mathrm{Id}_{Q\ot A})
(\mathrm{Id}_{A}\ot(\pi\ot\mathrm{Id}_{A})\Delta_{A})\\&=\mu_{Q\ot A}(\mathrm{Id}_{A}\ot(\pi\ot\mathrm{Id}_{A})\Delta_{A}).
\end{split}
\]
Therefore,
\[
\begin{split}
(\pi\ot\mathrm{Id}_{A})&\Delta_{A}m_{A}(i\ot i)=\mu_{Q\ot A}(\mathrm{Id}_{A}\ot(\pi\ot\mathrm{Id}_{A})\Delta_{A})(i\ot i)=\mu_{Q\ot A}(i\ot(\pi i\ot i)\Delta_{A^{\mathrm{co}Q}})\\&=\mu_{Q\ot A}(i\ot(\pi u_{A}\varepsilon_{A}i\ot i)\Delta_{A^{\mathrm{co}Q}})=\mu_{Q\ot A}(i\ot(\pi u_{A}\varepsilon_{A}\ot \mathrm{Id}_{A})\Delta_{A}i)\\&=\mu_{Q\ot A}(i\ot\pi u_{A}\ot i)=(\mu_{Q}\ot m_{A})(\mathrm{Id}_{A}\ot\sigma_{A,Q}\ot\mathrm{Id}_{A})(\Delta_{A}\ot\mathrm{Id}_{Q\ot A})(i\ot\pi u_{A}\ot i)\\&=(\mu_{Q}\ot m_{A})(\mathrm{Id}_{A}\ot\sigma_{A,Q}\ot\mathrm{Id}_{A})(i\ot i\ot\pi u_{A}\ot i)(\Delta_{A^{\mathrm{co}Q}}\ot\mathrm{Id}_{A^{\mathrm{co}Q}})\\&=(\mu_{Q}\ot m_{A})(i\ot \pi u_{A}\ot i\ot i)(\Delta_{A^{\mathrm{co}Q}}\ot\mathrm{Id}_{A^{\mathrm{co}Q}})\\&=(\pi m_{A}(\mathrm{Id}_{A}\ot u_{A})\ot m_{A})
(i\ot i\ot i)(\Delta_{A^{\mathrm{co}Q}}\ot\mathrm{Id}_{A^{\mathrm{co}Q}})\\&=
(\pi i\ot m_{A}(i\ot i))(\Delta_{A^{\mathrm{co}Q}}\ot\mathrm{Id}_{A^{\mathrm{co}Q}})=
(\pi u_{A}\varepsilon_{A} i\ot m_{A}(i\ot i))(\Delta_{A^{\mathrm{co}Q}}\ot\mathrm{Id}_{A^{\mathrm{co}Q}})\\&=(\pi u_{A}\varepsilon_{A} \ot m_{A})(\Delta_{A}i\ot i)=(\pi u_{A}\ot\mathrm{Id}_{A})m_{A}(i\ot i).
\end{split}
\]
By Corollary \ref{lemma:limitscocombim}, we know that $i:A^{\mathrm{co}Q}\to A$ is the equalizer of the pair $((\pi\ot\mathrm{Id}_{A})\Delta_{A},\pi u_{A}\ot\mathrm{Id}_{A})$ in $\Mm$.
Hence, by the universal property, there exists a unique morphism $m_{A^{\mathrm{co}Q}}:A^{\mathrm{co}Q}\ot A^{\mathrm{co}Q}\to A^{\mathrm{co}Q}$ in $\Mm$ such that $m_{A}(i\ot i)=im_{A^{\mathrm{co}Q}}$. Moreover, since $(\pi\ot\mathrm{Id}_{A})\Delta_{A}u_{A}=(\pi u_{A}\ot\mathrm{Id}_{A})u_{A}$, there exists a unique morphism $u_{A^{\mathrm{co}Q}}:\mathbf{1}\to A^{\mathrm{co}Q}$ in $\Mm$ such that 
$iu_{A^{\mathrm{co}Q}}=u_{A}$. One can easily check that $(A^{\mathrm{co}Q},m_{A^{\mathrm{co}Q}},u_{A^{\mathrm{co}Q}})$ is in $\mathsf{Mon}(\Mm)$ since $(A,m_{A},u_{A})$ is in $\mathsf{Mon}(\Mm)$ and $i$ is a monomorphism in $\Mm$, so that $i:A^{\mathrm{co}Q}\to A$ becomes a morphism in $\mathsf{Mon}(\Mm)$. It follows that $(A^{\mathrm{co}Q},m_{A^{\mathrm{co}Q}},u_{A^{\mathrm{co}Q}},\Delta_{A^{\mathrm{co}Q}},\varepsilon_{A^{\mathrm{co}Q}})$ is in $\mathsf{Bimon}_{\mathrm{coc}}(\Mm)$ (and hence $i$ is a morphism in $\mathsf{Bimon}_{\mathrm{coc}}(\Mm)$) since $(A,m_{A},u_{A},\Delta_{A},\varepsilon_{A})$ is in $\mathsf{Bimon}(\Mm)$ and $i$ is a monomorphism in $\Mm$ which is preserved by $\ot$. Furthermore, we compute
\[
\begin{split}
    \pi i=\pi u_{A}\varepsilon_{A}i&=\pi m_{A}(S_{A}\ot\mathrm{Id}_{A})\Delta_{A}i=\mu_{Q}(\mathrm{Id}_{A}\ot\pi)(S_{A}\ot\mathrm{Id}_{A})\Delta_{A}i=\mu_{Q}(S_{A}\ot\mathrm{Id}_{Q})(\mathrm{Id}_{A}\ot\pi)\Delta_{A}i\\&=\mu_{Q}(S_{A}\ot\mathrm{Id}_{Q})(\mathrm{Id}_{A}\ot\pi u_{A})i=\mu_{Q}(\mathrm{Id}_{A}\ot\pi u_{A})S_{A}i=\pi m_{A}(\mathrm{Id}_{A}\ot u_{A})S_{A}i=\pi S_{A}i
\end{split}
\]
and, since $\Delta_{A}S_{A}=(S_{A}\ot S_{A})\Delta_{A}$ as $A$ is cocommutative, we also have 
\[
\begin{split}
(\pi\ot\mathrm{Id}_{A})\Delta_{A}S_{A}i&=(\pi\ot\mathrm{Id}_{A})(S_{A}\ot S_{A})\Delta_{A}i=(\pi S_{A}i\ot S_{A}i)\Delta_{A^{\mathrm{co}Q}}=(\pi i\ot S_{A}i)\Delta_{A^{\mathrm{co}Q}}\\&=(\pi u_{A}\varepsilon_{A}i\ot S_{A}i)\Delta_{A^{\mathrm{co}Q}}=(\pi u_{A}\varepsilon_{A}\ot S_{A})\Delta_{A}i=(\pi u_{A}\ot\mathrm{Id}_{A})S_{A}i.
\end{split}
\]
Hence, there exists a unique morphism $S_{A^{\mathrm{co}Q}}:A^{\mathrm{co}Q}\to A^{\mathrm{co}Q}$ in $\Mm$ such that $iS_{A^{\mathrm{co}Q}}=S_{A}i$. Since $S_{A}$ is the antipode of $A$ and $i$ is in $\mathsf{Bimon}_{\mathrm{coc}}(\Mm)$, we obtain that $S_{A^{\mathrm{co}Q}}$ is the antipode of $A^{\mathrm{co}Q}$. Therefore, the morphism $i$ is in $\Hopf$. Since $i$ is a monomorphism in $\Mm$, it is also a monomorphism in $\Hopf$. 

2). We now prove that $i$ is the equalizer of the pair $(q_{A,A}(u_{A}\ot\mathrm{Id}_{A}),q_{A,A}(\mathrm{Id}_{A}\ot u_{A}))$ in $\Mm$. We have that
\[
\begin{split}
q_{A,A}(u_{A}\ot \mathrm{Id}_{A})i&=q_{A,A}(\mathrm{Id}_{A}\ot m_{A}(i\ot\mathrm{Id}_{A}))(u_{A}\ot\mathrm{Id}_{K}\ot u_{A})\\&=q_{A,A}(m_{A}(\mathrm{Id}_{A}\ot i)\ot\mathrm{Id}_{A})(u_{A}\ot\mathrm{Id}_{K}\ot u_{A})=q_{A,A}(\mathrm{Id}_{A}\ot u_{A})i,
\end{split}
\]
i.e.\ $i$ equalizes the pair $(q_{A,A}(u_{A}\ot\mathrm{Id}_{A}),q_{A,A}(\mathrm{Id}_{A}\ot u_{A}))$. It remains to verify the universal property. By defining the following morphism in $\Mm$
\[
\zeta:=(\pi\ot m_{A})(\Delta_{A}\ot \mathrm{Id}_{A}):A\ot A\to Q\ot A,
\]
we compute
\begin{align*}
\zeta&(m_{A}(\mathrm{Id}_{A}\ot i)\ot\mathrm{Id}_{A})=(\pi\ot m_{A})(\Delta_{A}\ot\mathrm{Id}_{A})(m_{A}(\mathrm{Id}_{A}\ot i)\ot\mathrm{Id}_{A})\\&=(\pi\ot m_{A})(m_{A}\ot m_{A}\ot\mathrm{Id}_{A})(\mathrm{Id}_{A}\ot\sigma_{A,A}\ot\mathrm{Id}_{A\ot A})(\Delta_{A}\ot\Delta_{A}\ot\mathrm{Id}_{A})(\mathrm{Id}_{A}\ot i\ot\mathrm{Id}_{A})\\&=(\mu_{Q}\ot m_{A})(\mathrm{Id}_{A}\ot\pi\ot m_{A}\ot\mathrm{Id}_{A})(\mathrm{Id}_{A}\ot\sigma_{A,A}\ot\mathrm{Id}_{A\ot A})(\mathrm{Id}_{A\ot A}\ot i\ot i\ot\mathrm{Id}_{A})(\Delta_{A}\ot\Delta_{A^{\mathrm{co}Q}}\ot\mathrm{Id}_{A})\\&=(\mu_{Q}\ot m_{A})(\mathrm{Id}_{A}\ot\pi\ot m_{A}\ot\mathrm{Id}_{A})(\mathrm{Id}_{A}\ot i\ot\mathrm{Id}_{A}\ot i\ot\mathrm{Id}_{A})(\mathrm{Id}_{A}\ot\sigma_{A,A^{\mathrm{co}Q}}\ot\mathrm{Id}_{A^{\mathrm{co}Q}\ot A})\\&\hspace{0.5cm}(\Delta_{A}\ot\Delta_{A^{\mathrm{co}Q}}\ot\mathrm{Id}_{A})\\&=(\mu_{Q}\ot m_{A})(\mathrm{Id}_{A}\ot\pi u_{A}\varepsilon_{A}\ot m_{A}\ot\mathrm{Id}_{A})(\mathrm{Id}_{A}\ot i\ot\mathrm{Id}_{A}\ot i\ot\mathrm{Id}_{A})(\mathrm{Id}_{A}\ot\sigma_{A,A^{\mathrm{co}Q}}\ot\mathrm{Id}_{A^{\mathrm{co}Q}\ot A})\\&\hspace{0.5cm}(\Delta_{A}\ot\Delta_{A^{\mathrm{co}Q}}\ot\mathrm{Id}_{A})\\&=(\pi\ot m_{A})(m_{A}(\mathrm{Id}_{A}\ot u_{A}\varepsilon_{A})\ot m_{A}\ot\mathrm{Id}_{A})(\mathrm{Id}_{A}\ot i\ot\mathrm{Id}_{A}\ot i\ot\mathrm{Id}_{A})(\mathrm{Id}_{A}\ot\sigma_{A,A^{\mathrm{co}Q}}\ot\mathrm{Id}_{A^{\mathrm{co}Q}\ot A})\\&\hspace{0.5cm}(\Delta_{A}\ot\Delta_{A^{\mathrm{co}Q}}\ot\mathrm{Id}_{A})\\&=(\pi\ot m_{A})(\mathrm{Id}_{A}\ot \varepsilon_{A}\ot m_{A}\ot\mathrm{Id}_{A})(\mathrm{Id}_{A}\ot\sigma_{A,A}\ot\mathrm{Id}_{A\ot A})(\mathrm{Id}_{A\ot A}\ot i\ot i\ot\mathrm{Id}_{A})(\Delta_{A}\ot\Delta_{A^{\mathrm{co}Q}}\ot\mathrm{Id}_{A})\\&=(\pi\ot m_{A})(\mathrm{Id}_{A}\ot m_{A}\ot\mathrm{Id}_{A})(\mathrm{Id}_{A\ot A}\ot \varepsilon_{A}i\ot i\ot\mathrm{Id}_{A})(\Delta_{A}\ot\Delta_{A^{\mathrm{co}Q}}\ot\mathrm{Id}_{A})\\&=(\pi\ot m_{A})(\mathrm{Id}_{A}\ot m_{A}\ot\mathrm{Id}_{A})(\mathrm{Id}_{A\ot A}\ot \varepsilon_{A^{\mathrm{co}Q}}\ot i\ot\mathrm{Id}_{A})(\Delta_{A}\ot\Delta_{A^{\mathrm{co}Q}}\ot\mathrm{Id}_{A})\\&=(\pi\ot m_{A})(\mathrm{Id}_{A}\ot m_{A}\ot\mathrm{Id}_{A})(\Delta_{A}\ot i\ot\mathrm{Id}_{A})=(\pi\ot m_{A})(\mathrm{Id}_{A\ot A}\ot m_{A})(\Delta_{A}\ot i\ot\mathrm{Id}_{A})\\&=(\pi\ot m_{A})(\Delta_{A}\ot\mathrm{Id}_{A})(\mathrm{Id}_{A}\ot m_{A}(i\ot\mathrm{Id}_{A}))=\zeta(\mathrm{Id}_{A}\ot m_{A}(i\ot\mathrm{Id}_{A})).
\end{align*}
Therefore, there exists a unique morphism $\xi:A\ot_{A^{\mathrm{co}Q}}A\to Q\ot A$ in $\Mm$ such that $\xi q_{A,A}=\zeta$. Suppose that there is a morphism $f:C\to A$ in $\Mm$ such that $q_{A,A}(u_{A}\ot\mathrm{Id}_{A})f=q_{A,A}(\mathrm{Id}_{A}\ot u_{A})f$. We have
\[
\begin{split}
(\pi\ot\mathrm{Id}_{A})&\Delta_{A}f = (\pi\ot m_A) (\id_A \ot \id_A \ot u_A)\Delta_{A}f = (\pi\ot m_{A})(\Delta_{A}\ot\mathrm{Id}_{A})(\mathrm{Id}_{A}\ot u_{A})f\\
&=\zeta(\mathrm{Id}_{A}\ot u_{A})f=\xi q_{A,A}(\mathrm{Id}_{A}\ot u_{A})f=\xi q_{A,A}(u_{A}\ot\mathrm{Id}_{A})f=\zeta(u_{A}\ot\mathrm{Id}_{A})f\\
&=(\pi\ot m_{A})(\Delta_{A}\ot \mathrm{Id}_{A})(u_{A}\ot\mathrm{Id}_{A})f=(\pi\ot m_{A})(u_{A}\ot u_{A}\ot\mathrm{Id}_{A})f=(\pi u_A\ot\mathrm{Id}_{A})f.
\end{split}
\]
Since $i$ is the equalizer of the pair $((\pi\ot\mathrm{Id}_{A})\Delta_{A},\pi u_{A}\ot\mathrm{Id}_{A})$ in $\Mm$, there exists a unique morphism $p:C\to A^{\mathrm{co}Q}$ in $\Mm$ such that $ip=f$. Therefore, $i$ is the equalizer of the pair $(q_{A,A}(u_{A}\ot\mathrm{Id}_{A}),q_{A,A}(\mathrm{Id}_{A}\ot u_{A}))$ in $\Mm$.
\end{proof}

We now prove that Proposition \ref{prop:definitionphi} and Proposition \ref{prop:definitionpsi} define maps between subobjects of $A$ in $\Hopf$ and quotients of $A$ in $\mathsf{Comon}_{\mathrm{coc}}(_{A}\Mm)$. By a subobject of $A$ in $\Hopf$ we mean an isomorphism class of monomorphisms $i:K\to A$ in $\Hopf$ and by a quotient of $A$ in $\mathsf{Comon}_{\mathrm{coc}}(_{A}\Mm)$ we mean an isomorphism class of epimorphisms $\pi:A\to Q$ in $\mathsf{Comon}_{\mathrm{coc}}(_{A}\Mm)$. Sometimes we will write equalities between subobjects (resp.\ quotients) meaning that they are equal as classes, i.e.\ there is an isomorphism between their domains (resp.\ codomains).

\begin{proposition}\label{prop:definitionbijections}
Let $A$ be an object in $\Hopf$. 
\begin{itemize}
    \item[1)] There is a well-defined map $\phi_{A}$ from subobjects of $A$ in $\Hopf$ to quotients of $A$ in $\mathsf{Comon}_{\mathrm{coc}}(_{A}\Mm)$ defined by $\phi_{A}(i):=\pi$ as in Proposition \ref{prop:definitionphi}.
    \item[2)] There is a well-defined map $\psi_{A}$ from quotients of $A$ in $\mathsf{Comon}_{\mathrm{coc}}(_{A}\Mm)$ to subobjects of $A$ in $\Hopf$ defined by $\psi_{A}(\pi):=i$ as in Proposition \ref{prop:definitionpsi}.
\end{itemize}
\end{proposition}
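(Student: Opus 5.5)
The plan is to reduce well-definedness to invariance under isomorphism of representatives, since Propositions \ref{prop:definitionphi} and \ref{prop:definitionpsi} already show that the assignments produce objects of the right kind. For 1), Proposition \ref{prop:definitionphi} gives that $\pi$ is an epimorphism in $\mathsf{Comon}_{\mathrm{coc}}(_{A}\Mm)$. For 2), Proposition \ref{prop:definitionpsi} gives that $i$ is a monomorphism in $\Hopf$. It remains to check that isomorphic representatives have isomorphic images.

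For 1), let $i:K\to A$ and $i':K'\to A$ be monomorphisms in $\Hopf$, and let $\alpha:K\to K'$ be an isomorphism in $\Hopf$ with $i'\alpha=i$. Let $\pi$ and $\pi'$ be the coequalizers in $\Mm$ of $(m_{A}(\mathrm{Id}_{A}\ot i),\mathrm{Id}_{A}\ot\varepsilon_{K})$ and $(m_{A}(\mathrm{Id}_{A}\ot i'),\mathrm{Id}_{A}\ot\varepsilon_{K'})$, respectively. We have $m_{A}(\mathrm{Id}_{A}\ot i)=m_{A}(\mathrm{Id}_{A}\ot i')(\mathrm{Id}_{A}\ot\alpha)$. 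Since $\alpha$ is comonoidal, $\varepsilon_{K'}\alpha=\varepsilon_{K}$, so also $\mathrm{Id}_{A}\ot\varepsilon_{K}=(\mathrm{Id}_{A}\ot\varepsilon_{K'})(\mathrm{Id}_{A}\ot\alpha)$. The first pair is therefore the second pair precomposed with the isomorphism $\mathrm{Id}_{A}\ot\alpha$, so the two pairs have the same coequalizers. Hence there is a unique isomorphism $\beta:Q\to Q'$ in $\Mm$ with $\beta\pi=\pi'$.

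We then check that $\beta$ is a morphism in $\mathsf{Comon}_{\mathrm{coc}}(_{A}\Mm)$. Each structure on $Q$ (action, comultiplication, counit) is the unique one making $\pi$ a morphism. Because $\pi$ and $\mathrm{Id}_{A}\ot\pi$ are epimorphisms, we get
\[
\beta\mu_{Q}(\mathrm{Id}_{A}\ot\pi)=\beta\pi m_{A}=\pi' m_{A}=\mu_{Q'}(\mathrm{Id}_{A}\ot\pi')=\mu_{Q'}(\mathrm{Id}_{A}\ot\beta)(\mathrm{Id}_{A}\ot\pi),
\]
and similarly $\Delta_{Q'}\beta\pi=(\pi'\ot\pi')\Delta_{A}=(\beta\ot\beta)\Delta_{Q}\pi$ and $\varepsilon_{Q'}\beta\pi=\varepsilon_{A}=\varepsilon_{Q}\pi$. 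Cancelling the epimorphisms shows that $\beta$ is $A$-linear and comonoidal. The inverse of $\beta$ is then automatically a morphism of the same kind, so $\pi$ and $\pi'$ define the same quotient.

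For 2), dually, let $\gamma:Q\to Q'$ be an isomorphism in $\mathsf{Comon}_{\mathrm{coc}}(_{A}\Mm)$ with $\gamma\pi=\pi'$. Then
\[
(\pi'\ot\mathrm{Id}_{A})\Delta_{A}=(\gamma\ot\mathrm{Id}_{A})(\pi\ot\mathrm{Id}_{A})\Delta_{A},\qquad \pi'u_{A}\ot\mathrm{Id}_{A}=(\gamma\ot\mathrm{Id}_{A})(\pi u_{A}\ot\mathrm{Id}_{A}).
\]
Since $\gamma\ot\mathrm{Id}_{A}$ is an isomorphism, the two pairs have the same equalizers in $\Mm$. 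This gives an isomorphism $\delta:A^{\mathrm{co}Q}\to A^{\mathrm{co}Q'}$ in $\Mm$ with $i'\delta=i$. Because $i'$ is a monomorphism in $\Mm$ preserved by $\ot$, and the Hopf structures on $A^{\mathrm{co}Q}$ and $A^{\mathrm{co}Q'}$ are the unique ones making $i$ and $i'$ Hopf morphisms, cancelling $i'$ (or $i'\ot i'$) shows that $\delta$ preserves multiplication, unit, comultiplication, counit and antipode. Thus $\delta$ is an isomorphism in $\Hopf$.

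The only mildly delicate point is the bookkeeping showing that the induced isomorphisms respect all the structure. This is handled uniformly by the epi- and mono-cancellation arguments above, using that $\ot$ preserves (co)equalizers and hence epimorphisms and monomorphisms.
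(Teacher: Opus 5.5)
Your proof is correct and follows essentially the same route as the paper. The paper builds the comparison maps $t$ and $t'$ in both directions from the universal properties and checks that they are mutually inverse, whereas you observe directly that the two parallel pairs differ by the isomorphism $\mathrm{Id}_A\ot\alpha$ (resp.\ $\gamma\ot\mathrm{Id}_A$); in both versions, compatibility with the $A$-module/comonoid (resp.\ Hopf) structures comes from cancelling the epimorphisms $\pi$, $\mathrm{Id}_A\ot\pi$ (resp.\ the monomorphisms $i'$, $i'\ot i'$).

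One small precision: $\ot$ preserving (co)equalizers only guarantees that regular epimorphisms and monomorphisms are preserved. That suffices here, because $\pi$ is a coequalizer and $i'$ is an equalizer.
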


\begin{proof}
1). For any monomorphism $i:K \to A$ in $\Hopf$ and isomorphism $j:K' \to K$ in $\Hopf$, denote by $\pi:A \to Q$ and $\pi':A \to Q'$ the coequalizers of $(m_{A}(\mathrm{Id}_{A}\ot i),\mathrm{Id}_{A}\ot\varepsilon_{K})$  and $(m_{A}(\mathrm{Id}_{A}\ot ij),\mathrm{Id}_{A}\ot\varepsilon_{K'})$ in $\Mm$, respectively, as in Proposition \ref{prop:definitionphi}. It suffices to show that there is an isomorphism $t: Q' \to Q$ in $\mathsf{Comon}_{\mathrm{coc}}(_{A}\Mm)$ such that $\pi = t \pi'$. Since $\pi m_{A}(\mathrm{Id}_{A}\ot ij)
= \pi(\mathrm{Id}_{A}\ot\varepsilon_{K}j)= \pi(\mathrm{Id}_{A}\ot\varepsilon_{K'})$, there is a unique morphism $t:Q' \to Q$ in $\Mm$ such that $\pi = t \pi'$. Since $j: K' \to K$ is an isomorphism in $\Hopf$, there is a morphism $j':K \to K'$ in $\Hopf$ such that $jj' = \mathrm{Id}_{K}$ and $j'j = \mathrm{Id}_{K'}$. Therefore, $\pi' m_{A}(\mathrm{Id}_{A}\ot i)= \pi' m_{A}(\mathrm{Id}_{A}\ot ijj')= \pi'(\mathrm{Id}_{A}\ot\varepsilon_{K'}j')= \pi'(\mathrm{Id}_{A}\ot\varepsilon_{K}).$
It follows that there is a unique morphism $t':Q \to Q'$ in $\Mm$ such that $\pi' = t' \pi$. As a result, $t:Q' \to Q$ is an isomorphism in $\Mm$ with inverse $t'$. One can easily verify that $t$ and $t'$ are indeed in $\mathsf{Comon}_{\mathrm{coc}}(_{A}\Mm)$. 
\begin{invisible}
It remains to show $t: Q' \to Q$ and $t':Q \to Q'$ are in $\mathsf{Comon}_{\mathrm{coc}}(_{A}\Mm)$. By Proposition \ref{prop:definitionphi}, $(t\ot t) \Delta_{Q'} \pi' = (t\ot t) (\pi' \ot \pi') \Delta_A = (\pi \ot \pi) \Delta_A = \Delta_Q \pi = \Delta_Q t \pi'$ and $\varepsilon_Q t \pi' = \varepsilon_Q \pi = \varepsilon_A = \varepsilon_{Q'} \pi'$. Since $\pi'$ is an epimorphism in $\Mm$, we obtain $(t\ot t) \Delta_{Q'} = \Delta_Q t$, and $\varepsilon_Q t = \varepsilon_{Q'}$. Furthermore, $\Delta_{Q'}t'= (t' \ot t')(t\ot t) \Delta_{Q'}t' = (t' \ot t')\Delta_Q t t' = (t' \ot t')\Delta_Q$, and $\varepsilon_{Q'} t' = \varepsilon_Q t t' =\varepsilon_Q$. Moreover, again by Proposition \ref{prop:definitionphi}, $\mu_Q (\mathrm{Id}_{A} \ot t)(\mathrm{Id}_{A} \ot \pi')= \mu_Q (\mathrm{Id}_{A} \ot \pi) = \pi m_A = t \pi' m_A = t \mu_{Q'} (\mathrm{Id}_{A} \ot \pi')$. Since $\mathrm{Id}_{A} \ot \pi'$ is an epimorphism in $\Mm$, we have $\mu_Q (\mathrm{Id}_{A} \ot t) = t \mu_{Q'}$. Moreover, $t' \mu_Q = t' \mu_Q (\mathrm{Id}_{A} \ot t) (\mathrm{Id}_{A} \ot t') = t' t \mu_{Q'} (\mathrm{Id}_{A} \ot t') = \mu_{Q'} (\mathrm{Id}_{A} \ot t')$. Therefore, $t$ is in $\mathsf{Comon}_{\mathrm{coc}}(_{A}\Mm)$. Similarly, $t'$ is in $\mathsf{Comon}_{\mathrm{coc}}(_{A}\Mm)$.
\end{invisible}

2). For any epimorphism $\pi:A\to Q$ in $\mathsf{Comon}_{\mathrm{coc}}(_{A}\Mm)$ and isomorphism $j:Q\to Q'$ in $\mathsf{Comon}_{\mathrm{coc}}(_{A}\Mm)$, denote by $i:A^{\mathrm{co}Q}\to A$ and $i':A^{\mathrm{co}Q'}\to A$ the equalizers of $(\pi,\pi u_{A}\varepsilon_{A})$ and $(j\pi,j\pi u_{A}\varepsilon_{A})$ in $\mathsf{Comon}_{\mathrm{coc}}(_{A}\Mm)$, respectively, as in Proposition \ref{prop:definitionpsi}. We show that there is an isomorphism $t: A^{\mathrm{co}Q} \to A^{\mathrm{co}Q'}$ in $\Hopf$ such that $i = i't$. Since $A^{\mathrm{co}Q}=\mathsf{Eq}((\pi\ot\mathrm{Id}_{A})\Delta_{A},\pi u_{A}\ot\mathrm{Id}_{A})$ and $A^{\mathrm{co}Q'}=\mathsf{Eq}(( j\pi\ot\mathrm{Id}_{A})\Delta_{A}, j\pi u_{A}\ot\mathrm{Id}_{A})$ in $\Mm$ and $(j \pi\ot\mathrm{Id}_{A})\Delta_A i 
= (j \pi u_A\ot\mathrm{Id}_{A} )i$, there is a unique morphism $t: A^{\mathrm{co}Q} \to A^{\mathrm{co}Q'}$ in $\Mm$ such that $i = i't$. Since $j: Q \to Q'$ is an isomorphism in $\mathsf{Comon}_{\mathrm{coc}}(_{A}\Mm)$, there is a morphism $j':Q' \to Q$ in $\mathsf{Comon}_{\mathrm{coc}}(_{A}\Mm)$ such that $jj' = \mathrm{Id}_{Q'}$ and $j'j = \mathrm{Id}_{Q}$. Hence,  $(\pi\ot\mathrm{Id}_{A})\Delta_A i' = (j'j\pi\ot\mathrm{Id}_{A})\Delta_A i' = (j'j \pi u_A\ot\mathrm{Id}_{A}) i' = (\pi u_A\ot\mathrm{Id}_{A}) i'$. Therefore, there is a unique morphism $t':A^{\mathrm{co}Q'} \to A^{\mathrm{co}Q}$ such that $i'=it'$. As a result, $t$ is an isomorphism in $\Mm$ with inverse $t'$. One can easily verify that $t$ and $t'$ are indeed in $\Hopf$.
\begin{invisible}
It remains to show $t$ and $t'$ are in $\Hopf$. By Proposition \ref{prop:definitionpsi}, we know $i m_{A^{\mathrm{co}Q}} = m_A (i \ot i)$ ,$i' m_{A^{\mathrm{co}Q'}} = m_A (i' \ot i')$, 
and $i u_{A^{\mathrm{co}Q}} = u_A = i' u_{A^{\mathrm{co}Q'}}
$. Hence, $i' t m_{A^{\mathrm{co}Q}} = i m_{A^{\mathrm{co}Q}} = m_A (i \ot i) = m_A (i' \ot i')(t \ot t) = i' m_{A^{\mathrm{co}Q'}} (t \ot t)$ and $i' t u_{A^{\mathrm{co}Q}} = i u_{A^{\mathrm{co}Q}} = u_A = i' u_{A^{\mathrm{co}Q'}}$. Since $i'$ is a monomorphism in $\Mm$, we have $t m_{A^{\mathrm{co}Q}} = m_{A^{\mathrm{co}Q'}} (t \ot t)$ and $t u_{A^{\mathrm{co}Q}} = u_{A^{\mathrm{co}Q'}}$. Consequently, $m_{A^{\mathrm{co}Q}}(t' \ot t') =t' t  m_{A^{\mathrm{co}Q}}(t' \ot t') = t' m_{A^{\mathrm{co}Q'}} (t \ot t)(t' \ot t') = t' m_{A^{\mathrm{co}Q'}}$ and $t' u_{A^{\mathrm{co}Q'}} = t' t u_{A^{\mathrm{co}Q}} = u_{A^{\mathrm{co}Q}}$. Thus, $t$ and $t'$ 
is in $\mathsf{Mon}(\Mm)$. By Proposition \ref{prop:definitionpsi} again, $(i'\ot i')(t \ot t)\Delta_{A^{\mathrm{co}Q}} = (i \ot i)\Delta_{A^{\mathrm{co}Q}} = \Delta_{A} i = \Delta_{A} i' t = (i'\ot i') \Delta_{A^{\mathrm{co}Q'}} t$. Since the tensor product is biexact, $i'\ot i'$ is a monomorphism in $\Mm$ and then $(t \ot t)\Delta_{A^{\mathrm{co}Q}}=\Delta_{A^{\mathrm{co}Q'}} t$. It follows that $\Delta_{A^{\mathrm{co}Q}} t' = (t' \ot t')(t \ot t)\Delta_{A^{\mathrm{co}Q}} t' = (t' \ot t')\Delta_{A^{\mathrm{co}Q'}} t t' = (t' \ot t')\Delta_{A^{\mathrm{co}Q'}}$. Besides, $\varepsilon_{A^{\mathrm{co}Q'}} t = \varepsilon_A i' t = \varepsilon_A i = \varepsilon_{A^{\mathrm{co}Q}}$. Thus, $t$ and $t'$ are in $\mathsf{Comon}(\Mm)$.
\end{invisible}
\end{proof}
Finally, we can provide a generalization of the bijective correspondence \eqref{bijectionNewman} known for cocommutative Hopf algebras (which goes back to \cite{Newman}), in the setting of $\Hopf$.

\begin{theorem}\label{thm:NewmanforM}
Let $A$ be an object in $\Hopf$. Then, there is a bijective correspondence between:
\begin{itemize}
    \item[1)] subobjects of $A$ in $\Hopf$ which are equalizers as in \eqref{iequalizer},
    \item[2)] quotients of $A$ in $\mathsf{Comon}_{\mathrm{coc}}(_{A}\Mm)$ which are coequalizers as in \eqref{picoequalizer}.
\end{itemize}
The mutually inverse bijections are given by $\phi_{A}$ and $\psi_{A}$ defined as in Proposition \ref{prop:definitionbijections}.
\end{theorem}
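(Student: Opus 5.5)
We must show $\psi_A\phi_A(i)=i$ for every subobject $i:K\to A$ in $\Hopf$ that is an equalizer as in \eqref{iequalizer}, and $\phi_A\psi_A(\pi)=\pi$ for every quotient $\pi:A\to Q$ in $\mathsf{Comon}_{\mathrm{coc}}(_{A}\Mm)$ that is a coequalizer as in \eqref{picoequalizer}. Afterwards we check that $\phi_A$ and $\psi_A$ send each class into the other. By part 2) of Propositions \ref{prop:definitionphi} and \ref{prop:definitionpsi}, the image of $\phi_A$ always consists of coequalizers as in \eqref{picoequalizer}, and the image of $\psi_A$ always consists of equalizers as in \eqref{iequalizer}. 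So the maps restrict correctly, and only the two round-trip identities remain.

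\textbf{First round trip.} Let $i:K\to A$ be in class 1) and put $\pi=\phi_A(i)$. From the proof of Proposition \ref{prop:definitionphi} we have $\pi i=\pi u_A\varepsilon_K$. Using that $i$ is a comonoid morphism, this gives $(\pi\ot\mathrm{Id}_A)\Delta_A i=(\pi i\ot i)\Delta_K=(\pi u_A\ot\mathrm{Id}_A)i$. Hence $i$ factors through $\psi_A(\pi):A^{\mathrm{co}Q}\to A$, i.e.\ $i=i'\,t$ for a monomorphism $t:K\to A^{\mathrm{co}Q}$, where $i':=\psi_A(\pi)$.

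For the converse factorization I will use that $i$ is the equalizer of $q_{A,A}(u_A\ot\mathrm{Id}_A)$ and $q_{A,A}(\mathrm{Id}_A\ot u_A)$, with $q_{A,A}:A\ot A\to A\ot_K A$. It suffices to show that $i'$ equalizes this pair. I plan to build a morphism $\theta:Q\ot A\to A\ot_K A$ with $\theta\,(\pi\ot m_A)(\Delta_A\ot\mathrm{Id}_A)=q_{A,A}$. The candidate comes from the Galois-type formula $x\ot y\mapsto x_1\ot S(x_2)y$. Concretely, $\theta$ should be induced on $Q\ot A$ from $q_{A,A}(\mathrm{Id}_A\ot m_A)(\mathrm{Id}_A\ot S_A\ot\mathrm{Id}_A)(\Delta_A\ot\mathrm{Id}_A):A\ot A\to A\ot_K A$. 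This requires checking that the latter coequalizes the pair defining $\pi\ot\mathrm{Id}_A$, i.e.\ it kills $a k\ot y - a\varepsilon(k)\ot y$. The check uses that $K$ is a Hopf submonoid, so $\Delta_A i=(i\ot i)\Delta_K$ and $S_A i=iS_K$, together with the balancing relation in $A\ot_K A$ and cocommutativity.

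Granting $\theta$, for $x$ in $A^{\mathrm{co}Q}$ we compute
\[
q(1\ot x)=\theta(\pi\ot m_A)(\Delta_A\ot\mathrm{Id}_A)(u_A\ot i')=\theta(\pi u_A\ot i'),
\]
and also
\[
q(x\ot 1)=\theta(\pi\ot m_A)(\Delta_A i'\ot u_A)=\theta((\pi\ot\mathrm{Id}_A)\Delta_A i')=\theta(\pi u_A\ot i').
\]
So $i'$ factors through $i$, and hence $i=i'$ as subobjects.

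\textbf{Second round trip.} This is dual. Let $\pi:A\to Q$ be in class 2), put $i=\psi_A(\pi)$, and let $\pi'=\phi_A(i):A\to Q'$. Since $\pi$ is $A$-linear and $\pi i=\pi u_A\varepsilon$, we get $\pi m_A(\mathrm{Id}_A\ot i)=\mu_Q(\mathrm{Id}_A\ot\pi i)=\pi(\mathrm{Id}_A\ot\varepsilon)$. Thus $\pi$ factors through $\pi'$. Conversely, I must show $\pi'$ coequalizes $(\varepsilon_A\ot\mathrm{Id}_A)e_{A,A}$ and $(\mathrm{Id}_A\ot\varepsilon_A)e_{A,A}$ for $A\square_Q A$. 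Dually to $\theta$, I will construct $\vartheta:A\square_Q A\to A\ot A^{\mathrm{co}Q}$ with $(\mathrm{Id}_A\ot i)\vartheta$ given by $x\ot y\mapsto x_1\ot S(x_2)y$. The key verification is that $(\mathrm{Id}_A\ot m_A)(\mathrm{Id}_A\ot S_A\ot\mathrm{Id}_A)(\Delta_A\ot\mathrm{Id}_A)e_{A,A}$ lands in $A\ot A^{\mathrm{co}Q}$. This uses $A$-linearity of $\pi$, the cotensor condition, and cocommutativity, and exactness of $A\ot(-)$ lets us identify $A\ot A^{\mathrm{co}Q}$ as an equalizer. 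Then $(\mathrm{Id}_A\ot\varepsilon_A)e_{A,A}=m_A(\mathrm{Id}_A\ot i)\vartheta$ and $(\varepsilon_A\ot\mathrm{Id}_A)e_{A,A}=(\mathrm{Id}_A\ot\varepsilon)\vartheta$ after applying $\pi'$, so $\pi'$ factors through $\pi$ and $\pi=\pi'$.

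\textbf{Main obstacle.} The main obstacle is constructing the comparison maps $\theta$ and $\vartheta$. This means verifying the well-definedness identities diagrammatically in a general symmetric monoidal category, where cocommutativity and $\Delta S=(S\ot S)\Delta$ must be used in place of element arguments. I would first do the sweedler computation and then translate each step into a string of the identities already used in Propositions \ref{prop:definitionphi} and \ref{prop:definitionpsi}.
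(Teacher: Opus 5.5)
Your proposal is correct and is essentially the paper's proof. Your $\theta$ and $\vartheta$ are exactly the paper's $\overline{\mathsf{can}}^{-1}$ and $\xi^{-1}$, both obtained by descending $\mathsf{can}^{-1}=(\mathrm{Id}_A\ot m_A)(\mathrm{Id}_A\ot S_A\ot\mathrm{Id}_A)(\Delta_A\ot\mathrm{Id}_A)$ through the relevant (co)equalizers. The only difference is that you use them to get the reverse factorization directly and conclude by mutual factorization, whereas the paper also builds $\overline{\mathsf{can}}$ and $f$ and shows that the comparison maps are isomorphisms of (co)equalizer diagrams.

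One harmless slip: the identities you state for $\vartheta$ have their matching swapped. On the nose one has $m_A(\mathrm{Id}_A\ot i)\vartheta=(\varepsilon_A\ot\mathrm{Id}_A)e_{A,A}$ and $(\mathrm{Id}_A\ot\varepsilon)\vartheta=(\mathrm{Id}_A\ot\varepsilon_A)e_{A,A}$, but after composing with $\pi'$ both pairings agree, so your conclusion is unaffected.
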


\begin{proof}
First, we show $\psi_{A}\phi_{A}= \id$. Given a subobject $i:K\to A$ of $A$ in $\Hopf$, define $\pi:=\phi_{A}(i):A\to Q$ in $\mathsf{Comon}_{\mathrm{coc}}(_{A}\Mm)$ and $i':=\psi_{A}(\pi):A^{\mathrm{co}Q}\to A$ in $\Hopf$. It suffices to show that $K$ is isomorphic to $A^{\mathrm{co}Q}$ in $\Mm$, so that they are isomorphic in $\mathsf{Hopf}_{\mathrm{coc}}(\Mm)$ since they are subobjects of $A$ in $\Hopf$. By definition of $\pi$, we know $\pi m_{A}(\mathrm{Id}_{A}\ot i)= \pi(\mathrm{Id}_{A}\ot\varepsilon_{K})$, thus we have 
\begin{align*}
(\mathrm{Id}_A\otimes \pi) \Delta_A i 
&= (\mathrm{Id}_A\otimes \pi) (\mathrm{Id}_A\otimes m_A(u_A\ot \mathrm{Id}_A))\Delta_A i\\
&= (\mathrm{Id}_A\otimes \pi) (\mathrm{Id}_A\otimes m_A)(\mathrm{Id}_A\otimes u_A\ot \mathrm{Id}_A) (i\ot i) \Delta_K\\
&= (\mathrm{Id}_A\otimes \pi) (\mathrm{Id}_A\otimes m_A)
(\mathrm{Id}_A\otimes \mathrm{Id}_A\otimes i)
(i\otimes u_A\ot \mathrm{Id}_K) \Delta_K\\
&= (\mathrm{Id}_A\otimes \pi) (\mathrm{Id}_A\otimes \mathrm{Id}_A\otimes \varepsilon_K)
(i\otimes u_A\ot \mathrm{Id}_K) \Delta_K\\
&= (\mathrm{Id}_A\otimes \pi) 
(i\otimes u_A) (\mathrm{Id}_K\otimes \varepsilon_K) \Delta_K = (\id_{A}\otimes \pi u_A)i.
\end{align*}
As a consequence, since $i':A^{\mathrm{co}Q}\to A$ is the equalizer of the pair of morphisms $((\mathrm{Id}_{A}\ot\pi)\Delta_{A},\mathrm{Id}_{A}\ot\pi u_{A})$ in $\Mm$, there exists a unique morphism $\varphi:K \to A^{\mathrm{co}Q}$ in $\Mm$ such that 
$i'\varphi=i$. By assumption $i$ is the equalizer of the pair of morphisms $(q_{A,A}(u_{A}\ot\mathrm{Id}_{A}),q_{A,A}(\mathrm{Id}_{A}\ot u_{A}))$ in $\Mm$. 
We define the following morphism in $\Mm$
\[
\zeta:=(\mathrm{Id}_{Q}\ot m_{A})((\pi\ot\mathrm{Id}_{A})\Delta_{A}\ot\mathrm{Id}_{A}):A\ot A\to Q\ot A.
\]
Because
\[
\begin{split}
    \zeta&(m_{A}(\mathrm{Id}_{A}\ot i)\ot\mathrm{Id}_{A})=(\mathrm{Id}_{Q}\ot m_{A})((\pi\ot\mathrm{Id}_{A})\Delta_{A}\ot\mathrm{Id}_{A})(m_{A}(\mathrm{Id}_{A}\ot i)\ot\mathrm{Id}_{A})\\&=(\mathrm{Id}_{Q}\ot m_{A})((\pi\ot\mathrm{Id}_{A})(m_{A}\ot m_{A})(\mathrm{Id}_{A}\ot\sigma_{A,A}\ot\mathrm{Id}_{A})(\Delta_{A}\ot\Delta_{A})(\mathrm{Id}_{A}\ot i)\ot\mathrm{Id}_{A})\\&=(\mathrm{Id}_{Q}\ot m_{A})((\pi\ot\mathrm{Id}_{A})(m_{A}\ot m_{A})(\mathrm{Id}_{A}\ot\sigma_{A,A}\ot\mathrm{Id}_{A})(\mathrm{Id}_{A\ot A}\ot i\ot i)(\Delta_{A}\ot\Delta_{K})\ot\mathrm{Id}_{A})\\&=(\mathrm{Id}_{Q}\ot m_{A})((\pi\ot\mathrm{Id}_{A})(m_{A}\ot m_{A})(\mathrm{Id}_{A}\ot i\ot\mathrm{Id}_{A}\ot i)(\mathrm{Id}_{A}\ot\sigma_{A,K}\ot\mathrm{Id}_{K})(\Delta_{A}\ot\Delta_{K})\ot\mathrm{Id}_{A})\\&=(\mathrm{Id}_{Q}\ot m_{A})((\pi (\mathrm{Id}_{A}\ot \varepsilon_{K})\ot m_{A}(\mathrm{Id}_{A}\ot i))(\mathrm{Id}_{A}\ot\sigma_{A,K}\ot\mathrm{Id}_{K})(\Delta_{A}\ot\Delta_{K})\ot\mathrm{Id}_{A})\\&=(\mathrm{Id}_{Q}\ot m_{A})((\pi \ot m_{A}(\mathrm{Id}_{A}\ot i))(\Delta_{A}\ot\mathrm{Id}_{K})\ot\mathrm{Id}_{A})\\&=(\mathrm{Id}_{Q}\ot m_{A})(\mathrm{Id}_{Q}\ot\mathrm{Id}_{A}\ot m_{A})((\pi\ot\mathrm{Id}_{A})\Delta_{A}\ot i\ot\mathrm{Id}_{A})\\&=(\mathrm{Id}_{Q}\ot m_{A})((\pi\ot\mathrm{Id}_{A})\Delta_{A}\ot\mathrm{Id}_{A})(\mathrm{Id}_{A}\ot m_{A}(i\ot\mathrm{Id}_{A}))\\&=\zeta(\mathrm{Id}_{A}\ot m_{A}(i\ot\mathrm{Id}_{A})),
\end{split}
\]
by the universal property of the coequalizer $q_{A,A}:A\ot A\to A\ot_{K}A$, there is a unique morphism $\overline{\mathsf{can}}:A\ot_{K}A\to Q\ot A$ in $\Mm$ such that $\overline{\mathsf{can}}q_{A,A}=\zeta$. We compute
\begin{eqnarray*}
\overline{\mathsf{can}}q_{A,A}(u_{A}\ot\mathrm{Id}_{A})=\zeta(u_{A}\ot\mathrm{Id}_{A})=(\mathrm{Id}_{Q}\ot m_{A})((\pi\ot\mathrm{Id}_{A})\Delta_{A}\ot\mathrm{Id}_{A})(u_{A}\ot\mathrm{Id}_{A})=\pi u_{A}\ot\mathrm{Id}_{A}
\end{eqnarray*}
and
\begin{eqnarray*}
\overline{\mathsf{can}}q_{A,A}(\mathrm{Id}_{A}\ot u_{A})=\zeta(\mathrm{Id}_{A}\ot u_{A})=(\mathrm{Id}_{Q}\ot m_{A})((\pi\ot\mathrm{Id}_{A})\Delta_{A}\ot\mathrm{Id}_{A})(\mathrm{Id}_{A}\ot u_{A})=(\pi\ot\mathrm{Id}_{A})\Delta_{A}.
\end{eqnarray*}
Hence, the following diagram in $\Mm$ commutes:
\begin{equation}\label{commutativediagram}
\begin{tikzcd}
	K & A && A\ot_{K}A \\
	A^{\mathrm{co}Q} & A && Q\ot A
	\arrow[from=1-1, to=1-2, "i"]
	\arrow[from=1-1, to=2-1, "\varphi"']
	\arrow[shift left, from=1-2, to=1-4, "q_{A,A}(u_{A}\ot\mathrm{Id}_{A})"]
	\arrow[shift right, from=1-2, to=1-4, "q_{A,A}(\mathrm{Id}_{A}\ot u_{A})"']
	\arrow[from=1-2, to=2-2,"\mathrm{Id}_{A}"']
	\arrow[from=1-4, to=2-4, "\overline{\mathsf{can}}"]
	\arrow[from=2-1, to=2-2, "i'"']
	\arrow[shift left, from=2-2, to=2-4, "\pi u_{A}\ot\mathrm{Id}_{A}"]
	\arrow[shift right, from=2-2, to=2-4, "(\pi\ot\mathrm{Id}_{A})\Delta_{A}"']
\end{tikzcd}
\end{equation}
Observe that $\zeta=(\pi\ot\mathrm{Id}_{A})\mathsf{can}$, where $\mathsf{can}:=(\mathrm{Id}_{A}\ot m_{A})(\Delta_{A}\ot\mathrm{Id}_{A})$. Note that $\mathsf{can}$ is an isomorphism in $\Mm$ since $A$ is in $\mathsf{Hopf}(\Mm)$ and its inverse is given by $\mathsf{can}^{-1}=(\mathrm{Id}_{A}\ot m_{A})(\mathrm{Id}_{A}\ot S_{A}\ot\mathrm{Id}_{A})(\Delta_{A}\ot\mathrm{Id}_{A})$, see e.g.\ \cite[Theorem 1.8]{Vercruysse}. Since $\pi$ is the coequalizer of the pair of morphisms $(m_{A}(\mathrm{Id}_{A}\ot i),\mathrm{Id}_{A}\ot\varepsilon_{K})$ in $\Mm$ and $(-)\ot A$ preserves coequalizers, the morphism $\pi\ot\mathrm{Id}_{A}$ is the coequalizer of the pair of morphisms $(m_{A}(\mathrm{Id}_{A}\ot i)\ot\mathrm{Id}_{A},\mathrm{Id}_{A}\ot\varepsilon_{K}\ot\mathrm{Id}_{A})$ in $\Mm$. Then, $\zeta=(\pi\ot\mathrm{Id}_{A})\mathsf{can}$ is the coequalizer of the pair $(\mathsf{can}^{-1}(m_{A}(\mathrm{Id}_{A}\ot i)\ot\mathrm{Id}_{A}),\mathsf{can}^{-1}(\mathrm{Id}_{A}\ot\varepsilon_{K}\ot\mathrm{Id}_{A}))$ in $\Mm$. Since
\begin{align*}
    &\mathsf{can}^{-1}(m_{A}(\mathrm{Id}_{A}\ot i)\ot\mathrm{Id}_{A})=(\mathrm{Id}_{A}\ot m_{A})(\mathrm{Id}_{A}\ot S_{A}\ot\mathrm{Id}_{A})(\Delta_{A}\ot\mathrm{Id}_{A})(m_{A}(\mathrm{Id}_{A}\ot i)\ot\mathrm{Id}_{A})\\&=(\mathrm{Id}_{A}\ot m_{A})(\mathrm{Id}_{A}\ot S_{A}\ot\mathrm{Id}_{A})((m_{A}\ot m_{A})(\mathrm{Id}_{A}\ot\sigma_{A,A}\ot\mathrm{Id}_{A})(\Delta_{A}\ot\Delta_{A})(\mathrm{Id}_{A}\ot i)\ot\mathrm{Id}_{A})\\&=(\mathrm{Id}_{A}\ot m_{A})(\mathrm{Id}_{A}\ot S_{A}\ot\mathrm{Id}_{A})((m_{A}\ot m_{A})(\mathrm{Id}_{A}\ot\sigma_{A,A}\ot\mathrm{Id}_{A})(\mathrm{Id}_{A\ot A}\ot i\ot i)(\Delta_{A}\ot\Delta_{K})\ot\mathrm{Id}_{A})\\&=(\mathrm{Id}_{A}\ot m_{A})(\mathrm{Id}_{A}\ot S_{A}\ot\mathrm{Id}_{A})((m_{A}\ot m_{A})(\mathrm{Id}_{A}\ot i\ot\mathrm{Id}_{A}\ot i)(\mathrm{Id}_{A}\ot\sigma_{A,K}\ot\mathrm{Id}_{K})(\Delta_{A}\ot\Delta_{K})\ot\mathrm{Id}_{A})\\&=(m_{A}(\mathrm{Id}_{A}\ot i)\ot \mathrm{Id}_{A})(\mathrm{Id}_{A\ot K}\ot m_{A})((\mathrm{Id}_{A\ot K}\ot S_{A}m_{A}(\mathrm{Id}_{A}\ot i))(\mathrm{Id}_{A}\ot\sigma_{A,K}\ot\mathrm{Id}_{K})(\Delta_{A}\ot\Delta_{K})\ot\mathrm{Id}_{A})
\end{align*}  
and 
\begin{align*}
    &\mathsf{can}^{-1}(\mathrm{Id}_{A}\ot\varepsilon_{K}\ot\mathrm{Id}_{A})=(\mathrm{Id}_{A}\ot m_{A})(\mathrm{Id}_{A}\ot S_{A}\ot\mathrm{Id}_{A})(\Delta_{A}\ot\mathrm{Id}_{A})(\mathrm{Id}_{A}\ot\varepsilon_{K}\ot\mathrm{Id}_{A})\\
    &=
    (\mathrm{Id}_{A}\ot m_{A})(\mathrm{Id}_{A}\ot u_{A}\ot m_{A})((\mathrm{Id}_{A}\ot S_{A})\Delta_{A}\ot\varepsilon_{K}\ot\mathrm{Id}_{A})\\
    &=(\mathrm{Id}_{A}\ot m_{A})(\mathrm{Id}_{A}\ot u_{A}\ot m_{A})(\id_A \ot \varepsilon_K \ot \id_{A\ot A})((\mathrm{Id}_{A\ot K}\ot S_{A})(\mathrm{Id}_{A}\ot\sigma_{A,K})(\Delta_{A}\ot\mathrm{Id}_{K})\ot\mathrm{Id}_{A})\\
    &=(\mathrm{Id}_{A}\ot m_{A})(\mathrm{Id}_{A}\ot i\ot\mathrm{Id}_{A})(\mathrm{Id}_{A}\ot u_{K}\varepsilon_{K}\ot m_{A})((\mathrm{Id}_{A\ot K}\ot S_{A})(\mathrm{Id}_{A}\ot\sigma_{A,K})(\Delta_{A}\ot\mathrm{Id}_{K})\ot\mathrm{Id}_{A})\\&=(\mathrm{Id}_{A}\ot m_{A})(\mathrm{Id}_{A}\ot i\ot\mathrm{Id}_{A})(\mathrm{Id}_{A}\ot m_{K}(\mathrm{Id}_{K}\ot S_{K})\Delta_{K}\ot m_{A})\\&\hspace{0.5cm}((\mathrm{Id}_{A\ot K}\ot S_{A})(\mathrm{Id}_{A}\ot\sigma_{A,K})(\Delta_{A}\ot\mathrm{Id}_{K})\ot\mathrm{Id}_{A})\\
    &=(\mathrm{Id}_{A}\ot m_{A})(\mathrm{Id}_{A}\ot i\ot\mathrm{Id}_{A})(\mathrm{Id}_{A}\ot m_{K}\ot m_{A})\\&\hspace{0.5cm}((\mathrm{Id}_{A\ot K}\ot S_{K}\ot S_{A})(\mathrm{Id}_{A}\ot\Delta_{K}\ot\mathrm{Id}_{A})(\mathrm{Id}_{A}\ot\sigma_{A,K})(\Delta_{A}\ot\mathrm{Id}_{K})\ot\mathrm{Id}_{A})\\&=(\mathrm{Id}_{A}\ot m_{A})(\mathrm{Id}_{A}\ot i\ot\mathrm{Id}_{A})(\mathrm{Id}_{A}\ot m_{K}\ot m_{A})\\&\hspace{0.5cm}((\mathrm{Id}_{A\ot K}\ot S_{K}\ot S_{A})(\mathrm{Id}_{A}\ot\sigma_{A,K\ot K})(\Delta_{A}\ot\Delta_{K})\ot\mathrm{Id}_{A})\\&=(\mathrm{Id}_{A}\ot m_{A})(\mathrm{Id}_{A}\ot i\ot\mathrm{Id}_{A})(\mathrm{Id}_{A}\ot m_{K}\ot m_{A})\\&\hspace{0.5cm}((\mathrm{Id}_{A\ot K}\ot(S_{K}\ot S_{A}) \sigma_{A,K})(\mathrm{Id}_{A}\ot\sigma_{A,K}\ot\mathrm{Id}_{K})(\Delta_{A}\ot\Delta_{K})\ot\mathrm{Id}_{A})\\&=(\mathrm{Id}_{A}\ot m_{A})(\mathrm{Id}_{A}\ot m_{A}\ot\mathrm{Id}_{A})(\mathrm{Id}_{A}\ot i\ot i\ot \mathrm{Id}_{A})(\mathrm{Id}_{A\ot K\ot K}\ot m_{A})\\&\hspace{0.5cm}((\mathrm{Id}_{A\ot K}\ot \sigma_{A,K}(S_{A}\ot S_{K}))(\mathrm{Id}_{A}\ot\sigma_{A,K}\ot\mathrm{Id}_{K})(\Delta_{A}\ot\Delta_{K})\ot\mathrm{Id}_{A})\\&=(\mathrm{Id}_{A}\ot m_{A})(\mathrm{Id}_{A}\ot i\ot m_{A})(\mathrm{Id}_{A\ot K}\ot i\ot m_{A})\\&\hspace{0.5cm}((\mathrm{Id}_{A\ot K}\ot \sigma_{A,K}(S_{A}\ot S_{K}))(\mathrm{Id}_{A}\ot\sigma_{A,K}\ot\mathrm{Id}_{K})(\Delta_{A}\ot\Delta_{K})\ot\mathrm{Id}_{A})\\&=(\mathrm{Id}_{A}\ot m_{A}(i\ot\mathrm{Id}_{A}))(\mathrm{Id}_{A\ot K}\ot m_{A})(\mathrm{Id}_{A\ot K}\ot i\ot m_{A})\\&\hspace{0.5cm}((\mathrm{Id}_{A\ot K}\ot \sigma_{A,K}(S_{A}\ot S_{K}))(\mathrm{Id}_{A}\ot\sigma_{A,K}\ot\mathrm{Id}_{K})(\Delta_{A}\ot\Delta_{K})\ot\mathrm{Id}_{A})\\&
    =(\mathrm{Id}_{A}\ot m_{A}(i\ot\mathrm{Id}_{A}))(\mathrm{Id}_{A\ot K}\ot m_{A})(\mathrm{Id}_{A\ot K}\ot m_{A}(i\ot\mathrm{Id}_{A})\ot\mathrm{Id}_{A})\\&\hspace{0.5cm}((\mathrm{Id}_{A\ot K}\ot \sigma_{A,K}(S_{A}\ot S_{K}))(\mathrm{Id}_{A}\ot\sigma_{A,K}\ot\mathrm{Id}_{K})(\Delta_{A}\ot\Delta_{K})\ot\mathrm{Id}_{A})\\&=(\mathrm{Id}_{A}\ot m_{A}(i\ot\mathrm{Id}_{A}))(\mathrm{Id}_{A\ot K}\ot m_{A})(\mathrm{Id}_{A\ot K}\ot m_{A}\sigma_{A,A}(S_{A}\ot S_{A}i)\ot\mathrm{Id}_{A})\\&\hspace{0.5cm}((\mathrm{Id}_{A}\ot\sigma_{A,K}\ot\mathrm{Id}_{K})(\Delta_{A}\ot\Delta_{K})\ot\mathrm{Id}_{A})\\&=(\mathrm{Id}_{A}\ot m_{A}(i\ot\mathrm{Id}_{A}))(\mathrm{Id}_{A\ot K}\ot m_{A})(\mathrm{Id}_{A\ot K}\ot S_{A}m_{A}(\id_{A}\ot i)\ot\mathrm{Id}_{A})\\&\hspace{0.5cm}((\mathrm{Id}_{A}\ot\sigma_{A,K}\ot\mathrm{Id}_{K})(\Delta_{A}\ot\Delta_{K})\ot\mathrm{Id}_{A})
\end{align*}
we get
\begin{align*}
    &q_{A,A}\mathsf{can}^{-1}(m_{A}(\mathrm{Id}_{A}\ot i)\ot\mathrm{Id}_{A})
    \\&=q_{A,A}(m_{A}(\mathrm{Id}_{A}\ot i)\ot \mathrm{Id}_{A})(\mathrm{Id}_{A\ot K}\ot m_{A})(\mathrm{Id}_{A\ot K}\ot S_{A}m_{A}(\mathrm{Id}_{A}\ot i)\ot\mathrm{Id}_{A})\\
    &\hspace{0.5cm}((\mathrm{Id}_{A}\ot\sigma_{A,K}\ot\mathrm{Id}_{K})(\Delta_{A}\ot\Delta_{K})\ot\mathrm{Id}_{A})\\&=q_{A,A}(\mathrm{Id}_{A}\ot m_{A}(i\ot\mathrm{Id}_{A}))(\mathrm{Id}_{A\ot K}\ot m_{A})(\mathrm{Id}_{A\ot K}\ot S_{A}m_{A}(\id_{A}\ot i)\ot\mathrm{Id}_{A})\\&\hspace{0.5cm}((\mathrm{Id}_{A}\ot\sigma_{A,K}\ot\mathrm{Id}_{K})(\Delta_{A}\ot\Delta_{K})\ot\mathrm{Id}_{A})\\&=q_{A,A}\mathsf{can}^{-1}(\mathrm{Id}_{A}\ot\varepsilon_{K}\ot\mathrm{Id}_{A}).
\end{align*}
Hence, there exists a unique morphism $\overline{\mathsf{can}}^{-1}:Q\ot A\to A\ot_{K}A$ in $\Mm$ such that $\overline{\mathsf{can}}^{-1}\zeta=q_{A,A}$. We show that $\overline{\mathsf{can}}^{-1}$ is indeed the inverse of $\overline{\mathsf{can}}$. Since $\mathsf{can}$ is an isomorphism in $\Mm$ and $\pi\ot\mathrm{Id}_{A}$ is an epimorphism in $\Mm$, we have $\zeta=(\pi\ot\mathrm{Id}_{A})\mathsf{can}$ is an epimorphism in $\Mm$. Note that $q_{A,A}$ is also an epimorphism in $\Mm$. Thus, the equations
$\overline{\mathsf{can}}\ \overline{\mathsf{can}}^{-1}\zeta=\overline{\mathsf{can}}q_{A,A}=\zeta$ and $\overline{\mathsf{can}}^{-1}\ \overline{\mathsf{can}}q_{A,A}=\overline{\mathsf{can}}^{-1}\zeta=q_{A,A}$ imply that $\overline{\mathsf{can}}^{-1}$ is the inverse of $\overline{\mathsf{can}}$. Since both lines in the diagram \eqref{commutativediagram} are equalizers in $\Mm$ and $\overline{\mathsf{can}}$ is an isomorphism in $\Mm$, it follows that $\varphi$ is an isomorphism in $\Mm$ as well. \medskip


Now, we show that $\phi_{A}\psi_{A}=\id$. Given a quotient $\pi:A\to Q$ in $\mathsf{Comon}_{\mathrm{coc}}(_{A}\Mm)$, let $i:=\psi_{A}(\pi):A^{\mathrm{co}Q}\to A$ in $\Hopf$ and $\pi':=\phi_{A}(i):A\to Q'$ in $\mathsf{Comon}_{\mathrm{coc}}(_{A}\Mm)$. We prove that $Q$ is isomorphic to $Q'$ in $\Mm$, so that $Q$ and $Q'$ are isomorphic in $\mathsf{Comon}_{\mathrm{coc}}(_{A}\Mm)$ since they are quotients of $A$ in $\mathsf{Comon}_{\mathrm{coc}}(_{A}\Mm)$. Recall that $i$ is the equalizer of the pair $(\pi,\pi u_{A}\varepsilon_{A})$ in $\mathsf{Comon}_{\mathrm{coc}}(\Mm)$ and $\pi':A\to Q'$ is the coequalizer of the pair $(m_{A}(\mathrm{Id}_{A}\ot i),\mathrm{Id}_{A}\ot\varepsilon_{A^{\mathrm{co}Q}})$ in $\Mm$. We compute 
\[
\pi m_{A}(\mathrm{Id}_{A}\ot i)=\mu_{Q}(\mathrm{Id}_{A}\ot\pi i)=\mu_{Q}(\mathrm{Id}_{A}\ot\pi u_{A}\varepsilon_{A}i)=\pi m_{A}(\mathrm{Id}_{A}\ot u_{A}\varepsilon_{A^{\mathrm{co}Q}})=\pi(\mathrm{Id}_{A}\ot\varepsilon_{A^{\mathrm{co}Q}}).
\]
Thus, there exists a unique morphism $\varphi:Q'\to Q$ in $\Mm$ such that  $\varphi\pi'=\pi$. As said before, since $A$ is in $\mathsf{Hopf}(\Mm)$, $\mathsf{can}:=(\mathrm{Id}_{A}\ot m_{A})(\Delta_{A}\ot\mathrm{Id}_{A})$ is an isomorphism in $\Mm$, see e.g.\ \cite[Theorem 1.8]{Vercruysse}. Consider the following morphism in $\Mm$:
\[
f:=(\mathrm{Id}_{A}\ot\mu_{Q}\ot m_{A})(\mathrm{Id}_{A\ot A}\ot \sigma_{A,Q}\ot\mathrm{Id}_{A})(\mathrm{Id}_{A}\ot\Delta_{A}\ot\mathrm{Id}_{Q\ot A})(\Delta_{A}\ot\mathrm{Id}_{Q\ot A}):A\ot Q\ot A\to A\ot Q\ot A.
\]
We compute
\begin{align*}
&f(\mathrm{Id}_{A}\ot\pi u_{A}\ot\mathrm{Id}_{A})\\
&=(\mathrm{Id}_{A}\ot\mu_{Q}\ot m_{A})(\mathrm{Id}_{A\ot A}\ot \sigma_{A,Q}\ot\mathrm{Id}_{A})((\Delta_{A}\ot\mathrm{Id}_{A})\Delta_{A}\ot\mathrm{Id}_{Q\ot A})(\mathrm{Id}_{A}\ot\pi u_{A}\ot\mathrm{Id}_{A})\\&=(\mathrm{Id}_{A}\ot\mu_{Q}\ot m_{A})(\mathrm{Id}_{A\ot A}\ot \sigma_{A,Q}\ot\mathrm{Id}_{A})(\mathrm{Id}_{A\ot A\ot A}\ot\pi u_{A}\ot\mathrm{Id}_{A})((\Delta_{A}\ot\mathrm{Id}_{A})\Delta_{A}\ot\mathrm{Id}_{A})\\&=(\mathrm{Id}_{A}\ot\mu_{Q}\ot m_{A})(\mathrm{Id}_{A\ot A}\ot\pi u_{A}\ot\mathrm{Id}_{ A\ot A})((\Delta_{A}\ot\mathrm{Id}_{A})\Delta_{A}\ot\mathrm{Id}_{A})\\&=(\mathrm{Id}_{A}\ot\mu_{Q}(\mathrm{Id}_{A}\ot\pi u_{A})\ot m_{A})((\Delta_{A}\ot\mathrm{Id}_{A})\Delta_{A}\ot\mathrm{Id}_{A})\\&=(\mathrm{Id}_{A}\ot \pi m_{A}(\mathrm{Id}_{A}\ot u_{A})\ot m_{A})((\Delta_{A}\ot\mathrm{Id}_{A})\Delta_{A}\ot\mathrm{Id}_{A})\\&=(\mathrm{Id}_{A}\ot \pi \ot m_{A})((\Delta_{A}\ot\mathrm{Id}_{A})\Delta_{A}\ot\mathrm{Id}_{A})\\&=((\mathrm{Id}_{A}\ot\pi)\Delta_{A}\ot\mathrm{Id}_{A})(\mathrm{Id}_{A}\ot m_{A})(\Delta_{A}\ot\mathrm{Id}_{A})\\&=((\mathrm{Id}_{A}\ot\pi)\Delta_{A}\ot\mathrm{Id}_{A})\mathsf{can}
\end{align*}
and
\begin{align*}
&f(\mathrm{Id}_{A}\ot(\pi\ot\mathrm{Id}_{A})\Delta_{A})\\&=(\mathrm{Id}_{A}\ot\mu_{Q}\ot m_{A})(\mathrm{Id}_{A\ot A}\ot \sigma_{A,Q}\ot\mathrm{Id}_{A})((\mathrm{Id}_{A}\ot\Delta_{A})\Delta_{A}\ot\mathrm{Id}_{Q\ot A})(\mathrm{Id}_{A}\ot(\pi\ot\mathrm{Id}_{A})\Delta_{A})\\&=(\mathrm{Id}_{A}\ot\mu_{Q}\ot m_{A})(\mathrm{Id}_{A\ot A}\ot \sigma_{A,Q}\ot\mathrm{Id}_{A})(\mathrm{Id}_{A\ot A\ot A}\ot\pi\ot\mathrm{Id}_{A})((\mathrm{Id}_{A}\ot\Delta_{A})\Delta_{A}\ot\Delta_{A})\\&=(\mathrm{Id}_{A}\ot\mu_{Q}\ot m_{A})(\mathrm{Id}_{A\ot A}\ot\pi\ot\mathrm{Id}_{A\ot A})(\mathrm{Id}_{A\ot A}\ot \sigma_{A,A}\ot\mathrm{Id}_{A})((\mathrm{Id}_{A}\ot\Delta_{A})\Delta_{A}\ot\Delta_{A})\\&=(\mathrm{Id}_{A}\ot\pi m_{A}\ot m_{A})(\mathrm{Id}_{A\ot A}\ot \sigma_{A,A}\ot\mathrm{Id}_{A})((\mathrm{Id}_{A}\ot\Delta_{A})\Delta_{A}\ot\Delta_{A})\\&=(\mathrm{Id}_{A}\ot\pi\ot\mathrm{Id}_{A})(\mathrm{Id}_{A}\ot m_{A}\ot m_{A})(\mathrm{Id}_{A\ot A}\ot \sigma_{A,A}\ot\mathrm{Id}_{A})(\mathrm{Id}_{A}\ot\Delta_{A}\ot\Delta_{A})(\Delta_{A}\ot\mathrm{Id}_{A})\\&=(\mathrm{Id}_{A}\ot\pi\ot\mathrm{Id}_{A})(\mathrm{Id}_{A}\ot \Delta_{A}m_{A})(\Delta_{A}\ot\mathrm{Id}_{A})\\&=(\mathrm{Id}_{A}\ot(\pi\ot\mathrm{Id}_{A})\Delta_{A})\mathsf{can}.
\end{align*}
Therefore, we obtain the following commutative diagram
\[
\begin{tikzcd}
	A\ot A^{\mathrm{co}Q} & A\ot A &&& A\ot Q\ot A \\
	A\square_{Q}A & A\ot A &&& A\ot Q\ot A
	\arrow[from=1-1, to=1-2,"\mathrm{Id}_{A}\ot i"]
	\arrow[from=1-1, to=2-1,"\xi"']
	\arrow[shift left, from=1-2, to=1-5, "\mathrm{Id}_{A}\ot\pi u_{A}\ot\mathrm{Id}_{A}"]
	\arrow[shift right, from=1-2, to=1-5,"\mathrm{Id}_{A}\ot(\pi\ot\mathrm{Id}_{A})\Delta_{A}"']
	\arrow[from=1-2, to=2-2, "\mathsf{can}"']
	\arrow[from=1-5, to=2-5,"f"]
	\arrow[from=2-1, to=2-2, "e_{A,A}"']
	\arrow[shift left, from=2-2, to=2-5,"(\mathrm{Id}_{A}\ot\pi)\Delta_{A}\ot\mathrm{Id}_{A}"]
	\arrow[shift right, from=2-2, to=2-5, "\mathrm{Id}_{A}\ot(\pi\ot\mathrm{Id}_{A})\Delta_{A}"']
\end{tikzcd}\]
where $\xi$ is obtained by universal property. In fact, since $i:A^{\mathrm{co}Q}\to A$ is an equalizer in $\Mm$ and $A\ot(-)$ preserves equalizers in $\Mm$, the first row in the previous diagram is an equalizer in $\Mm$, while the second row is an equalizer in $\Mm$ by definition. Moreover, we have
\[
\begin{split}
(\mathrm{Id}_{A}\ot(\pi\ot\mathrm{Id}_{A})\Delta_{A})\mathsf{can}(\mathrm{Id}_{A}\ot i)&=f(\mathrm{Id}_{A}\ot(\pi\ot\mathrm{Id}_{A})\Delta_{A})(\mathrm{Id}_{A}\ot i)
=f(\mathrm{Id}_{A}\ot\pi u_{A}\ot\mathrm{Id}_{A})(\mathrm{Id}_{A}\ot i)\\&=((\mathrm{Id}_{A}\ot\pi)\Delta_{A}\ot\mathrm{Id}_{A})\mathsf{can}(\mathrm{Id}_{A}\ot i).
\end{split}
\]
Since $A$ has antipode $S_{A}$ and it is cocommutative, one can easily verify that $f$ is an isomorphism in $\Mm$ with inverse given by
\[
f^{-1}:=(\mathrm{Id}_{A}\ot\mu_{Q}\ot m_{A})(\mathrm{Id}_{A}\ot S_{A}\ot\mathrm{Id}_{Q}\ot S_{A}\ot\mathrm{Id}_{A})(\mathrm{Id}_{A\ot A}\ot \sigma_{A,Q}\ot\mathrm{Id}_{A})(\mathrm{Id}_{A}\ot\Delta_{A}\ot\mathrm{Id}_{Q\ot A})(\Delta_{A}\ot\mathrm{Id}_{Q\ot A}).
\]
Since $\mathsf{can}$ and $f$ are isomorphisms in $\Mm$ and both lines are equalizers in $\Mm$, we obtain that $\xi$ is an isomorphism in $\Mm$. Moreover, we compute
\[
\begin{split}
(\varepsilon_{A}\ot\mathrm{Id}_{A})e_{A,A}\xi&=(\varepsilon_{A}\ot\mathrm{Id}_{A})\mathsf{can}(\mathrm{Id}_{A}\ot i)=(\varepsilon_{A}\ot\mathrm{Id}_{A})(\mathrm{Id}_{A}\ot m_{A})(\Delta_{A}\ot\mathrm{Id}_{A})(\mathrm{Id}_{A}\ot i)\\&=m_{A}(\varepsilon_{A}\ot\mathrm{Id}_{A\ot A})(\Delta_{A}\ot\mathrm{Id}_{A})(\mathrm{Id}_{A}\ot i)=m_{A}(\mathrm{Id}_{A}\ot i)
\end{split}
\]
and
\[
\begin{split}
(\mathrm{Id}_{A}\ot\varepsilon_{A})e_{A,A}\xi&=(\mathrm{Id}_{A}\ot\varepsilon_{A})\mathsf{can}(\mathrm{Id}_{A}\ot i)=(\mathrm{Id}_{A}\ot\varepsilon_{A})(\mathrm{Id}_{A}\ot m_{A})(\Delta_{A}\ot\mathrm{Id}_{A})(\mathrm{Id}_{A}\ot i)\\&=(\mathrm{Id}_{A}\ot\varepsilon_{A}\ot\varepsilon_{A})(\Delta_{A}\ot\mathrm{Id}_{A})(\mathrm{Id}_{A}\ot i)=\mathrm{Id}_{A}\ot\varepsilon_{A}i=\mathrm{Id}_{A}\ot\varepsilon_{A^{\mathrm{co}Q}}.
\end{split}
\]
Therefore, we have the following commmutative diagram in $\Mm$: 
\[\begin{tikzcd}
	A\ot A^{\mathrm{co}Q} && A & Q' \\
	A\square_{Q}A && A & Q
	\arrow[shift left, from=1-1, to=1-3, "m_{A}(\mathrm{Id}_{A}\ot i)"]
	\arrow[shift right, from=1-1, to=1-3,"\mathrm{Id}_{A}\ot\varepsilon_{A^{\mathrm{co}Q}}"']
	\arrow[from=1-1, to=2-1,"\xi"']
	\arrow[from=1-3, to=1-4, "\pi'"]
	\arrow[from=1-3, to=2-3,"\mathrm{Id}_{A}"]
	\arrow[from=1-4, to=2-4,"\varphi"]
	\arrow[shift left, from=2-1, to=2-3,"(\varepsilon_{A}\ot\mathrm{Id}_{A})e_{A,A}"]
	\arrow[shift right, from=2-1, to=2-3,"(\mathrm{Id}_{A}\ot\varepsilon_{A})e_{A,A}"']
	\arrow[from=2-3, to=2-4,"\pi"']
\end{tikzcd}\]
Since both rows are coequalizers in $\Mm$ (the second one by assumption) and $\xi$ is an isomorphism in $\Mm$, it follows that $\varphi$ is an isomorphism in $\Mm$ as well.
\end{proof}

We point out that the conditions assumed on subojects and quotients are needed to prove that, for an arbitrary $A\in\Hopf$, the maps $\phi_{A}$ and $\psi_{A}$ are inverse to each other, not to define them.

\begin{remark}
By applying Theorem \ref{thm:NewmanforM} to $(\Mm,\ot,\mathbf{1},\sigma)=(\mathsf{Vec}_{\Bbbk},\ot_{\Bbbk},\Bbbk,\tau)$, one recovers the well-known bijective correspondence for cocommutative Hopf algebras \eqref{bijectionNewman} which goes back to \cite[Theorem 4.1]{Newman}, while for $(\Mm,\ot,\mathbf{1},\sigma)=(\mathsf{Vec}_{G},\ot_{\Bbbk},\Bbbk,\sigma)$, where $G$ is a finitely generated abelian group and $\mathrm{char}(\Bbbk)\not=2$ (not needed if $G$ is finite of odd dimension), one recovers \cite[Theorem 5.20]{AS}. 
We also recall that the bijective correspondence given in \cite{Newman} for cocommutative Hopf algebras was also generalized to the setting of Hopf algebroids in \cite{EGSV,EGSV1} and we observe that the corresponding coequalizer condition on quotients was previously observed and studied in \cite{Saracco}. Given $(\Mm,\ot,\mathbf{1},\sigma)=(\mathsf{Vec}_{\Bbbk},\ot_{\Bbbk},\Bbbk,\tau)$ our categorical proof of Newman's Theorem recovers the one provided in \cite{EGSV,EGSV1}. In that case, as pointed out in \cite[page 10 (b)]{Saracco}, any subobject of $A$ in $\Hopf$ is an equalizer as in \eqref{iequalizer} and any quotient of $A$ in $\mathsf{Comon}(_{A}\Mm)$ is a coequalizer as in \eqref{picoequalizer}. In general, we do not know when this is true for $\Hopf$ given an arbitrary braided monoidal category $(\Mm,\ot,\mathbf{1},\sigma)$ with equalizers and coequalizers which are preserved by $\ot$. We will come back to this question in the next section.
\end{remark}

In Corollary \ref{cor:bijectivecorrespondence} we will prove that the bijective correspondence given in Theorem \ref{thm:NewmanforM} induces a bijective correspondence between kernels in $\Hopf$ and quotients in $\mathsf{Comon}_{\mathrm{coc}}(_{A}\Mm)$ which are regular epimorphisms in $\Hopf$, once the category $\Mm$ is pointed. We end this section by proving a result that will be used for that purpose. First, we need the following easy observation.

\begin{remark}\label{rmk:kerHopfideal}
    Let $f:A\to B$ be a morphism in $\Hopf$. Consider the kernel $\mathsf{ker}(f):\mathsf{Ker}(f)\to A$ of $f$ in $\Mm$, we have that $\mathsf{Ker}(f)$ is a Hopf ideal (see Section \ref{sec:prelimiaries}). In fact, it is routine to check that the morphism $\mathsf{ker}(f)$ is in $_{A}\Mm_{A}$, where the structure of $\mathsf{Ker}(f)$ in $_{A}\Mm_{A}$ is induced by $A$. In addition, $\mathsf{Ker}(f)$ is a two-sided coideal. In fact, for $\pi:=\mathsf{coker}(\mathsf{ker}(f))$ and $h:=\mathsf{ker}(\mathsf{coker}(f))$ in $\Mm$, we get
\[
0=\Delta_{B}f\mathsf{ker}(f)=(f\ot f)\Delta_{A}\mathsf{ker}(f)=(h\ot h)(\pi\ot\pi)\Delta_{A}\mathsf{ker}(f),
\]
and then, since $h\ot h$ is a monomorphism in $\Mm$, we obtain that $(\pi\ot\pi)\Delta_{A}\mathsf{ker}(f)=0$. Moreover, $\varepsilon_{A}\mathsf{ker}(f)=\varepsilon_{B}f\mathsf{ker}(f)=0$. Finally, since $0=S_{B}f\mathsf{ker}(f)=fS_{A}\mathsf{ker}(f)=h\pi S_{A}\mathsf{ker}(f)$ and $h$ is a monomorphism in $\Mm$, we obtain $\pi S_{A}\mathsf{ker}(f)=0$. As a result, $\mathsf{Ker}(f)$ is a Hopf ideal. Hence, $\pi$ is a morphism in $\Hopf$, which in fact is also a morphism in $\mathsf{Comon}_{\mathrm{coc}}(_{A}\Mm)$. 
Moreover, $h$ is also in $\Hopf$, since $f=h\pi$ with $\pi$ and $f$ in $\Hopf$ and $\pi$ is an epimorphism in $\Mm$.
\end{remark}

The following result will be useful in the next section.

\begin{lemma}\label{lem:inducedbijectivecorr}
The following statements hold:
\begin{itemize}
    \item[1)] Let $\pi:A\to Q$ be a quotient of $A$ in $\mathsf{Comon}_{\mathrm{coc}}(_{A}\Mm)$ which is also a morphism in $\Hopf$. Then $\psi_{A}(\pi)=\mathsf{hker}(\pi)$.
    \item[2)] Let $f:A\to B$ be a morphism in $\Hopf$ such that $\mathsf{coker}(\mathsf{ker}(f))$ is a coequalizer as in \eqref{picoequalizer}. Then $\phi_{A}(\mathsf{hker}(f))=\mathsf{coker}(\mathsf{ker}(f))$.
\end{itemize}
\end{lemma}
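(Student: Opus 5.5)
Part 1) is a direct comparison of two equalizers in $\Mm$. By Proposition \ref{prop:definitionpsi}, $\psi_{A}(\pi)$ is the equalizer in $\Mm$ of the pair $((\pi\ot\mathrm{Id}_{A})\Delta_{A},\pi u_{A}\ot\mathrm{Id}_{A})$. By Lemma \ref{lem:eqkernel}, $\mathsf{hker}(\pi)$ is the equalizer in $\Mm$ of $((\pi\ot\mathrm{Id}_{A})\Delta_{A},u_{Q}\ot\mathrm{Id}_{A})$. Since $\pi$ is a morphism in $\Hopf$, it is unital, so $\pi u_{A}=u_{Q}$ and the two parallel pairs coincide. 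Hence both are the same subobject of $A$. The only point to check is that the Hopf structures agree, but both objects carry the structure induced from $A$ along a monomorphism, so they agree automatically.

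For 2), write $\pi:=\mathsf{coker}(\mathsf{ker}(f))\colon A\to Q$ and $h:=\mathsf{ker}(\mathsf{coker}(f))$, so that $f=h\pi$. By Remark \ref{rmk:kerHopfideal}, $\pi$ is a morphism in $\Hopf$ and also a quotient in $\mathsf{Comon}_{\mathrm{coc}}(_{A}\Mm)$, and $h$ is a monomorphism in $\Mm$ lying in $\Hopf$.

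The first step is to show $\mathsf{hker}(f)=\mathsf{hker}(\pi)$. The equalizer pair for $f$ is $((h\pi\ot\mathrm{Id}_{A})\Delta_{A},u_{B}\ot\mathrm{Id}_{A})$. Using $u_{B}=hu_{Q}$, this pair equals the pair for $\pi$ post-composed with $h\ot\mathrm{Id}_{A}$. Since $h\ot\mathrm{Id}_{A}$ is a monomorphism ($\ot$ is exact), the two equalizers coincide.

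By part 1), $\mathsf{hker}(\pi)=\psi_{A}(\pi)$, so $\phi_{A}(\mathsf{hker}(f))=\phi_{A}\psi_{A}(\pi)$. It remains to prove $\phi_{A}\psi_{A}(\pi)=\pi$. This is exactly the second half of the proof of Theorem \ref{thm:NewmanforM}. That argument used only one hypothesis on $\pi$: that it is a coequalizer as in \eqref{picoequalizer}, which is assumed here. It did not use any equalizer condition on $\psi_{A}(\pi)$. So I would rerun it verbatim. Build $\varphi\colon Q'\to Q$ with $\varphi\pi'=\pi$, and the isomorphism $\xi\colon A\ot A^{\mathrm{co}Q}\to A\square_{Q}A$ obtained via $\mathsf{can}$ and $f$. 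Then compare the two coequalizer rows to conclude that $\varphi$ is an isomorphism.

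The main obstacle is confirming that this half of the theorem's proof really does not need the hypothesis that $\psi_{A}(\pi)$ is an equalizer as in \eqref{iequalizer}. I would check this carefully. The rows used there are: $A\ot(-)$ applied to the defining equalizer of $A^{\mathrm{co}Q}$, the cotensor equalizer, and the coequalizer defining $\pi'$ together with the assumed coequalizer \eqref{picoequalizer}. None of these is the condition \eqref{iequalizer}, so the argument should go through, and $\phi_{A}(\mathsf{hker}(f))=\mathsf{coker}(\mathsf{ker}(f))$ follows.
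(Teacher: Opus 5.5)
Your proposal is correct and follows the paper's proof essentially step by step: for 1) you note $\pi u_{A}=u_{Q}$, so the two defining equalizers coincide, and for 2) you use that $h\ot\mathrm{Id}_{A}$ is a monomorphism to get $\mathsf{hker}(f)=\mathsf{hker}(\pi)=\psi_{A}(\pi)$, then conclude with $\phi_{A}\psi_{A}(\pi)=\pi$. The only difference is that the paper simply cites Theorem~\ref{thm:NewmanforM} for that last step, and your ``main obstacle'' is moot: by 2) of Proposition~\ref{prop:definitionpsi}, $\psi_{A}(\pi)$ is always an equalizer as in \eqref{iequalizer}, so the theorem applies directly without rerunning its proof.
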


\begin{proof}
    1). Since $\pi:A\to Q$ is a morphism in $\mathsf{Hopf}_{\mathrm{coc}}(\Mm)$, we have that $\pi u_{A}\varepsilon_{A}=u_{Q}\varepsilon_{A}$. Thus, $\psi_{A}(\pi)$, which is defined as the equalizer of the pair $(\pi,\pi u_{A}\varepsilon_{A})$ in $\mathsf{Comon}_{\mathrm{coc}}(\Mm)$, is indeed the equalizer of the pair $(\pi,u_{Q}\varepsilon_{A})$ in $\Hopf$, i.e.\ $\mathsf{hker}(\pi)$.
    
2). We set $\pi:=\mathsf{coker}(\mathsf{ker}(f))$ and $h:=\mathsf{ker}(\mathsf{coker}(f))$ so that $f=h\pi$. By Remark \ref{rmk:kerHopfideal}, we already know that $\pi$ and $h$ are morphisms in $\Hopf$. Since $h\ot\mathrm{Id}_{A}$ is a monomorphism in $\Mm$, we have
\begin{align}
\mathsf{hker}(f)&=\mathsf{ker}((f\ot\id_{A})\Delta_{A}-u_{B}\ot\id_{A})\label{auxeq}\\
&=\mathsf{ker}((h\pi\ot\id_{A})\Delta_{A}-hu_{\mathsf{Ker}(\mathsf{coker}(f))}\ot\id_{A})\notag\\
&=\mathsf{ker}((h\ot\id_{A})((\pi\ot\id_{A})\Delta_{A}-u_{\mathsf{Ker}(\mathsf{coker}(f))}\ot\id_{A}))\notag\\
&=\mathsf{ker}((\pi\ot\id_{A})\Delta_{A}-u_{\mathsf{Ker}(\mathsf{coker}(f))}\ot\id_{A})\notag\\
&=\mathsf{hker}(\pi)=\mathsf{hker}(\mathsf{coker}(\mathsf{ker}(f)))\notag.
\end{align}
By 1) we already know that $\psi_{A}(\mathsf{coker}(\mathsf{ker}(f)))=\mathsf{hker}(\mathsf{coker}(\mathsf{ker}(f)))$, hence we get
\[
\mathsf{coker}(\mathsf{ker}(f))=\phi_{A}(\psi_{A}(\mathsf{coker}(\mathsf{ker}(f))))=\phi_{A}(\mathsf{hker}(\mathsf{coker}(\mathsf{ker}(f))))\overset{\eqref{auxeq}}{=}\phi_{A}(\mathsf{hker}(f)),
\]
where we use Theorem \ref{thm:NewmanforM} for the first equality, since $\mathsf{coker}(\mathsf{ker}(f))$ is coequalizer as in \eqref{picoequalizer}.
\end{proof}



In the next section, we provide an equivalent description for kernels in $\Hopf$.

\section{An equivalent characterization for kernels in $\Hopf$}

In this section, $(\Mm,\ot,\mathbf{1},\sigma)$ will be a braided monoidal category which has equalizers and coequalizers that are preserved by $\ot$. Additional assumptions will be assumed if needed.

As an application of Theorem \ref{thm:NewmanforM}, we can obtain another characterization of kernels in $\Hopf$. In order to do this, recall the definition of the adjoint morphism in $\Mm$: 
\[
\mathrm{ad}_{A}:=m_{A}(m_{A}\ot\mathrm{Id}_{A})(\mathrm{Id}_{A}\ot\sigma_{A,A})((\mathrm{Id}_{A}\ot S_{A})\Delta_{A}\ot\mathrm{Id}_{A}):A\ot A\to A.
\]
Equivalently, $\mathrm{ad}_{A}=m_{A}(m_{A}\ot S_{A})(\id_{A}\ot\sigma_{A,A})(\Delta_{A}\ot\id_{A})$.
Note that the morphism $\mathrm{ad}_{A}$ defines a left action of $A$ over itself, see e.g.\ \cite[Proposition 3.7.1]{HS}.

We introduce the following definition:

\begin{definition}\label{def:normal}
    Let $A$ be an object in $\Hopf$ and $i:K\to A$ be a monomorphism in $\Hopf$. We say that $i$ is \textit{left normal} if there exists a morphism $\psi:A\ot K\to K$ in $\Mm$ such that the following diagram
\begin{equation}\label{diagram:normal}
\begin{tikzcd}
	A\ot K && K \\
	A\ot A && A
	\arrow[from=1-1, to=1-3, "\psi"]
	\arrow[from=1-1, to=2-1,"\mathrm{Id}_{A}\ot i"']
	\arrow[from=1-3, to=2-3, "i"]
	\arrow[from=2-1, to=2-3, "\mathrm{ad}_{A}"']
\end{tikzcd}
\end{equation}
commutes.
\end{definition}

One can give the definition of normal monomorphism for $\mathsf{Hopf}(\Mm)$ in the same way, without assuming cocommutativity. In this paper, we will work only with $\Hopf$. 

\begin{remark}
    Given $A$ in $\Hopf$, one could also define 
\[
\mathrm{ad}_{A}':=m_{A}(m_{A}\ot\mathrm{Id}_{A})(\sigma_{A,A}\ot\mathrm{Id}_{A})(\mathrm{Id}_{A}\ot(S_{A}\ot\mathrm{Id}_{A})\Delta_{A}):A\ot A\to A.
\]
A monomorphism $i:K\to A$ in $\Hopf$ is said to be right normal if there exists a morphism $\psi':K\ot A\to K$ in $\Mm$ such that $i\psi'=\mathrm{ad}'_{A}(i\ot\mathrm{Id}_{A})$. 
\end{remark}

The following result shows that $i$ is left normal if and only if $i$ is right normal. In the following, we will simply say that $i$ is a normal monomorphism.

\begin{lemma}\label{lem:leftrightnormal}
Suppose $(\Mm,\ot,\mathbf{1},\sigma)$ is a symmetric monoidal category. Let $i:K\to A$ be a monomorphism in $\Hopf$. Then $i$ is left normal if and only if it is right normal.
\end{lemma}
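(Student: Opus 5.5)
The plan is to show that the right adjoint action is the left adjoint action composed with the symmetry and the antipode. Once we have such an identity, a factorization of one action through $K$ transports directly to a factorization of the other, since $S_{A}$ restricts to $K$ (as $i$ is a morphism in $\Hopf$, $S_{A}i=iS_{K}$).

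\emph{Key identity.} First I will establish
\[
\mathrm{ad}'_{A}=\mathrm{ad}_{A}(S_{A}\ot\mathrm{Id}_{A})\sigma_{A,A}.
\]
Since $A$ is cocommutative, $S_{A}^{2}=\mathrm{Id}_{A}$, and $\Delta_{A}S_{A}=(S_{A}\ot S_{A})\Delta_{A}$ holds, as already used in Proposition \ref{prop:definitionpsi}. The involutivity of $S_{A}$ in the cocommutative case is standard in any symmetric monoidal category: $S_{A}^{2}$ and $\mathrm{Id}_{A}$ are both convolution inverses of $S_{A}$. Write $\mathrm{ad}_{A}=m_{A}(m_{A}\ot S_{A})(\mathrm{Id}_{A}\ot\sigma_{A,A})(\Delta_{A}\ot\mathrm{Id}_{A})$. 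Precomposing with $(S_{A}\ot\mathrm{Id}_{A})\sigma_{A,A}$ and using the naturality of $\sigma$ gives a morphism $A\ot A\to A$ which in Sweedler-like notation reads
\[
x\ot a\mapsto S(a_{1})\,x\,S^{2}(a_{2})=S(a_{1})\,x\,a_{2}.
\]
This is exactly $\mathrm{ad}'_{A}=m_{A}(m_{A}\ot\mathrm{Id}_{A})(\sigma_{A,A}\ot\mathrm{Id}_{A})(\mathrm{Id}_{A}\ot(S_{A}\ot\mathrm{Id}_{A})\Delta_{A})$. To be rigorous, I will verify this with string-diagram or equational manipulations: move $S_{A}\ot S_{A}$ through $\Delta_{A}$, cancel $S_{A}^{2}$, and use the naturality of the symmetry and $\sigma_{A,A}\sigma_{A,A}=\mathrm{Id}$ to match the order of the tensor factors. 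Symmetry (not just braiding) is needed here, to identify $\sigma$ with its inverse when rearranging. Symmetrically,
\[
\mathrm{ad}_{A}=\mathrm{ad}'_{A}\,\sigma_{A,A}(\mathrm{Id}_{A}\ot S_{A}),
\]
which follows from the first identity by precomposing with $\sigma_{A,A}(\mathrm{Id}_{A}\ot S_{A})$, the inverse of $(S_{A}\ot\mathrm{Id}_{A})\sigma_{A,A}$ (using $S_{A}^{2}=\mathrm{Id}$ and $\sigma^{-1}=\sigma$).

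\emph{Transport of normality.} If $i$ is left normal with $\psi$ satisfying $i\psi=\mathrm{ad}_{A}(\mathrm{Id}_{A}\ot i)$, define
\[
\psi':=\psi(S_{A}\ot\mathrm{Id}_{K})\sigma_{K,A}:K\ot A\to K.
\]
Then
\[
i\psi'=\mathrm{ad}_{A}(S_{A}\ot i)\sigma_{K,A}=\mathrm{ad}_{A}(S_{A}\ot\mathrm{Id}_{A})\sigma_{A,A}(i\ot\mathrm{Id}_{A})=\mathrm{ad}'_{A}(i\ot\mathrm{Id}_{A}),
\]
by naturality of $\sigma$ and the key identity. Conversely, given $\psi'$ with $i\psi'=\mathrm{ad}'_{A}(i\ot\mathrm{Id}_{A})$, set
\[
\psi:=\psi'\sigma_{A,K}(S_{A}\ot\mathrm{Id}_{K}),
\]
and the second identity gives $i\psi=\mathrm{ad}_{A}(\mathrm{Id}_{A}\ot i)$. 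Note that we never need $S_{K}$ here, only $S_{A}$ acting on the $A$-factor.

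The main obstacle is the careful verification of the key identity in the symmetric monoidal setting, in particular tracking the braidings when distributing $\Delta_{A}$ over $S_{A}$ and confirming $S_{A}^{2}=\mathrm{Id}_{A}$. I will handle it by first proving $S_{A}^{2}=\mathrm{Id}_{A}$ via convolution: $S_{A}*S_{A}^{2}=S_{A}m_{A}\sigma_{A,A}\ldots$ reduces, using anti-multiplicativity and cocommutativity, to $u_{A}\varepsilon_{A}$. The rest is then naturality bookkeeping.
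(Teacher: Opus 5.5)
Your argument is the paper's own: the same key identity $\mathrm{ad}'_{A}=\mathrm{ad}_{A}(S_{A}\ot\mathrm{Id}_{A})\sigma_{A,A}$ (from cocommutativity and $S_{A}^{2}=\mathrm{Id}_{A}$), the same $\psi'$, and the same $\psi$. By naturality of $\sigma$, your $\psi=\psi'\sigma_{A,K}(S_{A}\ot\mathrm{Id}_{K})$ coincides with the paper's $\psi'(\mathrm{Id}_{K}\ot S_{A})\sigma_{A,K}$.

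There is one concrete error, in your ``second identity''. The morphism $\sigma_{A,A}(\mathrm{Id}_{A}\ot S_{A})$ is not the inverse of $(S_{A}\ot\mathrm{Id}_{A})\sigma_{A,A}$. By naturality it is \emph{equal} to it, and its square is $S_{A}\ot S_{A}$. Precomposing the key identity with it gives $\mathrm{ad}'_{A}\sigma_{A,A}(\mathrm{Id}_{A}\ot S_{A})=\mathrm{ad}_{A}(S_{A}\ot S_{A})$. This is not $\mathrm{ad}_{A}$: for a group algebra it sends $g\ot h\mapsto g^{-1}h^{-1}g$ instead of $ghg^{-1}$. So the identity you display is false. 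It is also not the identity your $\psi$ needs, since expanding gives $i\psi=\mathrm{ad}'_{A}(i\ot\mathrm{Id}_{A})\sigma_{A,K}(S_{A}\ot\mathrm{Id}_{K})=\mathrm{ad}'_{A}\sigma_{A,A}(S_{A}\ot\mathrm{Id}_{A})(\mathrm{Id}_{A}\ot i)$, with $S_{A}$ on the first factor.

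The fix is immediate. The actual inverse is $\sigma_{A,A}(S_{A}\ot\mathrm{Id}_{A})=(\mathrm{Id}_{A}\ot S_{A})\sigma_{A,A}$, and precomposing with it gives
$\mathrm{ad}'_{A}\sigma_{A,A}(S_{A}\ot\mathrm{Id}_{A})=\mathrm{ad}_{A}(S_{A}\ot\mathrm{Id}_{A})\sigma_{A,A}^{2}(S_{A}\ot\mathrm{Id}_{A})=\mathrm{ad}_{A}(S_{A}^{2}\ot\mathrm{Id}_{A})=\mathrm{ad}_{A}$.
This is exactly what closes your computation for $i\psi$. With this correction your converse is the paper's final chain of equalities, and the proof is complete.
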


\begin{proof}
Since $A$ is cocommutative, we have
\[
\begin{split}
\mathrm{ad}_{A}(S_{A}\ot\mathrm{Id}_{A})\sigma_{A,A}&=m_{A}(m_{A}\ot\mathrm{Id}_{A})(\mathrm{Id}_{A}\ot\sigma_{A,A})((\mathrm{Id}_{A}\ot S_{A})\Delta_{A}\ot\mathrm{Id}_{A})(S_{A}\ot\mathrm{Id}_{A})\sigma_{A,A}\\&=m_{A}(m_{A}\ot\mathrm{Id}_{A})(\mathrm{Id}_{A}\ot\sigma_{A,A})((S_{A}\ot S^{2}_{A})\Delta_{A}\ot\mathrm{Id}_{A})\sigma_{A,A}\\&=m_{A}(m_{A}\ot\mathrm{Id}_{A})(\mathrm{Id}_{A}\ot\sigma_{A,A})((S_{A}\ot\mathrm{Id}_{A})\Delta_{A}\ot\mathrm{Id}_{A})\sigma_{A,A}\\&=m_{A}(m_{A}\ot\mathrm{Id}_{A})(\mathrm{Id}_{A}\ot\sigma_{A,A})\sigma_{A,A\ot A}(\mathrm{Id}_{A}\ot(S_{A}\ot\mathrm{Id}_{A})\Delta_{A})\\&=m_{A}(m_{A}\ot\mathrm{Id}_{A})(\mathrm{Id}_{A}\ot\sigma_{A,A}^{2})(\sigma_{A,A}\ot\mathrm{Id}_{A})(\mathrm{Id}_{A}\ot(S_{A}\ot\mathrm{Id}_{A})\Delta_{A})\\&=\mathrm{ad}'_{A}.
\end{split}
\]
If $i$ is left normal, then there exists a morphism $\psi:A\ot K\to K$ in $\Mm$ such that $i\psi=\mathrm{ad}_{A}(\mathrm{Id}_{A}\ot i)$. Define $\psi':=\psi(S_{A}\ot\mathrm{Id}_{K})\sigma_{K,A}:K\ot A\to K$ in $\Mm$. We get
\[
i\psi'=i\psi(S_{A}\ot\mathrm{Id}_{K})\sigma_{K,A}=\mathrm{ad}_{A}(\mathrm{Id}_{A}\ot i)(S_{A}\ot\mathrm{Id}_{K})\sigma_{K,A}=\mathrm{ad}_{A}(S_{A}\ot\mathrm{Id}_{A})\sigma_{A,A}(i\ot\mathrm{Id}_{A})=\mathrm{ad}_{A}'(i\ot\mathrm{Id}_{A}).
\]
Hence, $i$ is right normal. Conversely, if $i$ is right normal, i.e.\ there exists $\psi':K\ot A\to K$ in $\Mm$ such that $i\psi'=\mathrm{ad}'_{A}(i\ot\mathrm{Id}_{A})$, define $\psi:=\psi'(\mathrm{Id}_{K}\ot S_{A})\sigma_{A,K}$. Then,
\begin{align*}
i\psi &= i\psi'(\mathrm{Id}_{K}\ot S_{A})\sigma_{A,K} = \mathrm{ad}'_{A}(i\ot\mathrm{Id}_{A})(\mathrm{Id}_{K}\ot S_{A})\sigma_{A,K}\\ &= \mathrm{ad}_{A}(S_{A}\ot\mathrm{Id}_{A})\sigma_{A,A} (i\ot\mathrm{Id}_{A})(\mathrm{Id}_{K}\ot S_{A})\sigma_{A,K}\\
&= \mathrm{ad}_{A}(S_{A}^{2}\ot\id_{A}) \sigma_{A,A}^2(\mathrm{Id}_{A}\ot i)
=\mathrm{ad}_{A}(\mathrm{Id}_{A}\ot i).
\end{align*}
This means that $i$ is left normal.
\end{proof}

We also obtain the following result, which will be very useful in the sequel.
\begin{lemma}\label{properties:ad}
    Let $(\Mm,\ot,\mathbf{1},\sigma)$ be a symmetric monoidal category. Let $A$ be an object in $\Hopf$. The following properties hold:
\begin{itemize}
    \item[1)] $\mathrm{ad}_{A}$ is a morphism in $\mathsf{Comon}_{\mathrm{coc}}(\Mm)$. 
    \item[2)] Given $g:A\to B$ in $\Hopf$, then $\mathrm{ad}_{B}(g\ot g)=g\mathrm{ad}_A$. 
    \item[3)] Suppose $\Mm$ is an abelian symmetric monoidal category. Given $g:A\to B$ in $\Hopf$ which is an epimorphism in $\Mm$ and a normal monomorphism $i:D\to A$ in $\Hopf$, then $\mathsf{ker}(\mathsf{coker}(gi))$ is normal. 
\end{itemize}
\end{lemma}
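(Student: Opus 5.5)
For 1) I will exhibit $\mathrm{ad}_{A}$ as a composite of morphisms in $\mathsf{Comon}_{\mathrm{coc}}(\Mm)$. Since $(\Mm,\ot,\mathbf{1},\sigma)$ is symmetric, $(\mathsf{Comon}_{\mathrm{coc}}(\Mm),\ot,\mathbf{1},\sigma)$ is cartesian (Remark \ref{cartesian of Hopf}). Hence the following are all comonoid morphisms:
\begin{itemize}
\item $\Delta_{A}$, by cocommutativity, as in the binary product discussion of Subsection \ref{subsect:binprod};
\item $\sigma$, since $\sigma$ is a symmetry;
\item $m_{A}$, because $A$ is a bimonoid;
\item $S_{A}$, because $\Delta_{A}S_{A}=(S_{A}\ot S_{A})\Delta_{A}$ for cocommutative $A$ and $\varepsilon_{A}S_{A}=\varepsilon_{A}$.
\end{itemize}
Tensor products and composites of comonoid morphisms are again comonoid morphisms, and $A\ot A$ is cocommutative. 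Writing $\mathrm{ad}_{A}=m_{A}(m_{A}\ot S_{A})(\id_{A}\ot\sigma_{A,A})(\Delta_{A}\ot\id_{A})$ therefore gives the claim.

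For 2) I would compute directly. Since $g$ is in $\Hopf$, it commutes with $m$, $\Delta$ and $S$ (antipodes are preserved by bimonoid morphisms), and by naturality $\sigma_{B,B}(g\ot g)=(g\ot g)\sigma_{A,A}$. Substituting these into $\mathrm{ad}_{B}(g\ot g)$ pushes $g$ all the way through, giving $g\,\mathrm{ad}_{A}$.

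For 3), set $f:=gi:D\to B$, a morphism in $\Hopf$. Write $p:=\mathsf{coker}(\mathsf{ker}(f))$ and $h:=\mathsf{ker}(\mathsf{coker}(f))$, so $f=hp$. By Remark \ref{rmk:kerHopfideal}, $h:E\to B$ is a morphism in $\Hopf$ and a monomorphism in $\Mm$, hence a monomorphism in $\Hopf$. To show $h$ is normal, it suffices to prove that $\mathrm{ad}_{B}(\id_{B}\ot h)$ factors through $h=\mathsf{ker}(\mathsf{coker}(f))$, i.e.\ that $\mathsf{coker}(f)\,\mathrm{ad}_{B}(\id_{B}\ot h)=0$. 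Because $\Mm$ is abelian with exact $\ot$, both $g\ot\id_{E}$ and $\id_{A}\ot p$ are epimorphisms, so $g\ot p$ is an epimorphism. It is therefore enough to check
\[
\mathsf{coker}(f)\,\mathrm{ad}_{B}(g\ot hp)=0 .
\]
Using 2) and normality of $i$ via $\psi:A\ot D\to D$, we get
\[
\mathrm{ad}_{B}(g\ot gi)=g\,\mathrm{ad}_{A}(\id_{A}\ot i)=gi\psi=f\psi ,
\]
and $\mathsf{coker}(f)f\psi=0$. The induced morphism $B\ot E\to E$ then exists by the universal property of the kernel $h$.

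The only delicate point is the epimorphism argument in 3): the vanishing is verified on $B\ot D$ and must descend to $B\ot E$. This descent relies on exactness of $\ot$ in the abelian setting, which is exactly why that hypothesis is added in 3). The remaining verifications, namely that $S_{A}$ is a comonoid morphism under cocommutativity and that antipodes are preserved in 2), are routine.
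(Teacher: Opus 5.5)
Your proposal is correct and follows the paper's proof essentially step for step. In 1) you write $\mathrm{ad}_{A}$ as a composite of comonoid morphisms, using symmetry for $\sigma$ and cocommutativity for $\Delta_{A}$ and $S_{A}$; 2) is the direct computation; and 3) checks $\mathsf{coker}(gi)\,\mathrm{ad}_{B}(g\ot gi)=0$ via 2) and the normality of $i$, then cancels the epimorphism $g\ot\mathsf{coker}(\mathsf{ker}(gi))$. One small slip in your last paragraph: the vanishing is verified on $A\ot D$ (the domain of $g\ot p$), not on $B\ot D$.
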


\begin{proof}
1). Since $(\Mm,\ot,\mathbf{1},\sigma)$ is a symmetric monoidal category, $\sigma_{X,Y}$ is in $\mathsf{Comon}(\Mm)$ for any objects $X$, $Y$ in $\mathsf{Comon}(\Mm)$. Therefore, the morphism $\mathrm{ad}_{A}$ is in $\mathsf{Comon}(\Mm)$ (hence in $\mathsf{Comon}_{\mathrm{coc}}(\Mm)$) as it is composition of morphisms in $\mathsf{Comon}(\Mm)$. We point out that the cocommutativity of $A$ is used to have that $S_{A}$ and $\Delta_{A}$ are morphisms in $\mathsf{Comon}(\Mm)$.

2). It is straightforward and it doesn't use the fact that $\sigma$ is a symmetry.
 \begin{invisible}
    \[
    \begin{split}
        \mathrm{ad}_{B}(g\ot g)&=m_{B}(m_{B}\ot\mathrm{Id}_{B})(\mathrm{Id}_{B}\ot\sigma_{B,B})((\mathrm{Id}_{B}\ot S_{B})\Delta_{B}\ot\mathrm{Id}_{B})(g\ot g)\\&=m_{B}(m_{B}\ot\mathrm{Id}_{B})(\mathrm{Id}_{B}\ot\sigma_{B,B})((g\ot gS_{A})\Delta_{A}\ot g))\\&=m_{B}(m_{B}\ot\mathrm{Id}_{B})(g\ot g\ot g)(\mathrm{Id}_{A}\ot\sigma_{A,A})((\mathrm{Id}_{A}\ot S_{A})\Delta_{A}\ot\mathrm{Id}_{A})\\&=m_{B}(g\ot g)(m_{A}\ot\mathrm{Id}_{A})(\mathrm{Id}_{A}\ot\sigma_{A,A})((\mathrm{Id}_{A}\ot S_{A})\Delta_{A}\ot\mathrm{Id}_{A})\\&=gm_{A}(m_{A}\ot\mathrm{Id}_{A})(\mathrm{Id}_{A}\ot\sigma_{A,A})((\mathrm{Id}_{A}\ot S_{A})\Delta_{A}\ot\mathrm{Id}_{A})\\&=g\mathrm{ad}_{A}
    \end{split}
    \]
\end{invisible}

3). By Remark \ref{rmk:kerHopfideal}, we already know that $\mathsf{ker}(\mathsf{coker}(gi))$ is a monomorphism in $\Hopf$.
For brevity, we set $t:= \mathsf{ker}(\mathsf{coker}(gi))$ and $t':= \mathsf{coker}(\mathsf{ker}(gi))$. Since $i$ is normal, there is a morphism $\psi_D:A \ot D \to D$ in $\Mm$ such that $\mathrm{ad}_{A} (\id_A \ot i) = i \psi_D$. By 2), we have
$$
\mathsf{coker}(gi) \mathrm{ad}_{B} (g \ot t t') = \mathsf{coker}(gi) \mathrm{ad}_{B} (g \ot gi)  = \mathsf{coker}(gi) g \mathrm{ad}_{A} (\id_A \ot i)= \mathsf{coker}(gi) gi \psi_D = 0. 
$$
Since $g$ and $t'$ are epimorphisms in $\Mm$ and $\ot$ preserves them, we obtain that $g\ot t'$ is also an epimorphism in $\Mm$. Hence, $\mathsf{coker}(gi) \mathrm{ad}_{B} (\id_B \ot t) = 0$. Thus, by the universal property of the kernel, there exists a unique morphism $\psi: B \ot \mathsf{Ker}(\mathsf{coker}(gi)) \to \mathsf{Ker}(\mathsf{coker}(gi))$ in $\Mm$ such that $t \psi = \mathrm{ad}_{B} (\id_B \ot t)$.
\end{proof}

\begin{remark}\label{rmk:psipsi'counitary}
Given a normal monomorphism $i:K\to A$ in $\Hopf$ and the corresponding morphism $\psi:A\ot K\to K$ in $\Mm$, one immediately obtains that $\varepsilon_{K}\psi=\varepsilon_{A}i\psi=\varepsilon_{A}\mathrm{ad}_{A}(\mathrm{Id}_{A}\ot i)=\varepsilon_{A}\ot\varepsilon_{A}i=\varepsilon_{A}\ot\varepsilon_{K}$, since $\mathrm{ad}_{A}$ is counitary. As a consequence, given the corresponding $\psi':=\psi(S_{A}\ot\mathrm{Id}_{K})\sigma_{K,A}:K\ot A\to K$, we have $\varepsilon_{K}\psi'=\varepsilon_{K}\ot\varepsilon_{A}$.
\end{remark}

By means of normal monomorphism, we obtain the aforementioned characterization of kernels in $\Hopf$. This is proven thanks to the following two results.

\begin{proposition}\label{prop:normalimpliesHopfmap}
    Let $i:K\to A$ be a monomorphism in $\Hopf$, where $(\Mm,\ot,\mathbf{1},\sigma)$ is a symmetric monoidal category. If $i$ is normal then $\pi:=\phi_{A}(i)=\mathsf{coeq}(m_{A}(\id_{A}\ot i),\id_{A}\ot\varepsilon_{K})$ is a morphism in $\Hopf$.
\end{proposition}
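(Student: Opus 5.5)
The plan is to equip $Q$ with a monoid structure and an antipode such that $\pi$ preserves them. By Proposition \ref{prop:definitionphi}, $Q$ is already a cocommutative comonoid, $\pi$ is a comonoid morphism, and $Q$ is a left $A$-module with $\mu_Q(\id_A\ot\pi)=\pi m_A$. The coequalizer defining $\pi$ encodes only the left ideal generated by $K^+$; in Sweedler-like terms, $\pi(xk)=\pi(x)\varepsilon(k)$. The main step is to upgrade this to the two-sided relation
\[
\pi m_A(m_A\ot\id_A)(\id_A\ot i\ot\id_A)=\pi m_A(\id_A\ot\varepsilon_K\ot\id_A):A\ot K\ot A\to Q,
\]
that is, $\pi(xky)=\pi(xy)\varepsilon(k)$. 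This is where normality enters, and I expect it to be the main obstacle.

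To prove the two-sided relation, I will use Lemma \ref{lem:leftrightnormal} to pass to right normality: there is $\psi'=\psi(S_A\ot\id_K)\sigma_{K,A}$ with $i\psi'=\mathrm{ad}'_A(i\ot\id_A)$, where $\mathrm{ad}'_A(k\ot y)=S(y_1)ky_2$. Using the antipode axiom and coassociativity, write
\[
m_A(i\ot\id_A)=m_A(\id_A\ot \mathrm{ad}'_A)(\id_A\ot i\ot\id_A)(\sigma_{K,A}\ot\id_A)(\id_K\ot\Delta_A),
\]
which in element notation reads $ky=y_1\,S(y_2)ky_3$. This can be verified as a diagrammatic identity in the symmetric category. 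It then gives $xky=xy_1\,i\psi'(k\ot y_2)$. Applying $\pi$ and the defining relation $\pi m_A(\id_A\ot i)=\pi(\id_A\ot\varepsilon_K)$ yields $\pi(xy_1)\,\varepsilon_K\psi'(k\ot y_2)$. By Remark \ref{rmk:psipsi'counitary} this equals $\pi(xy_1)\varepsilon(k)\varepsilon(y_2)=\pi(xy)\varepsilon(k)$.

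Given this relation, I will construct the multiplication exactly as in Lemma \ref{lem:pihopf}. Since $(-)\ot A$ preserves coequalizers, $\pi\ot\id_A$ is a coequalizer, so there is $\psi_Q:Q\ot A\to Q$ with $\psi_Q(\pi\ot\id_A)=\pi m_A$. Next, $\id_Q\ot\pi$ is a coequalizer, and $\psi_Q$ coequalizes $\id_Q\ot m_A(\id_A\ot i)$ and $\id_Q\ot\id_A\ot\varepsilon_K$. To see this, precompose with the epimorphism $\pi\ot\id_{A\ot K}$ and use associativity together with the left-ideal relation. This gives $m_Q$ with $m_Q(\pi\ot\pi)=\pi m_A$; I set $u_Q:=\pi u_A$. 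Associativity and unitality follow from those of $A$, since $\pi\ot\pi\ot\pi$ is an epimorphism. Compatibility of $m_Q,u_Q$ with $\Delta_Q,\varepsilon_Q$ follows from the bimonoid axioms of $A$ by cancelling the epimorphism $\pi\ot\pi$. Thus $Q\in\mathsf{Bimon}_{\mathrm{coc}}(\Mm)$ and $\pi$ is a bimonoid morphism.

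For the antipode, I need $\pi S_A$ to coequalize the defining pair of $\pi$. Since $A$ is cocommutative and $\sigma$ is a symmetry, $S_A$ is anti-multiplicative, so $S_Am_A(\id_A\ot i)=m_A(S_Ai\ot S_A)\sigma_{A,K}=m_A(iS_K\ot S_A)\sigma_{A,K}$. Now $\pi$ is multiplicative and $\pi i=\pi u_A\varepsilon_K$ (established in the proof of Proposition \ref{prop:definitionphi}). Hence $\pi S_Am_A(\id_A\ot i)=m_Q(\pi u_A\varepsilon_KS_K\ot\pi S_A)\sigma_{A,K}=\pi S_A(\id_A\ot\varepsilon_K)$, using $\varepsilon_KS_K=\varepsilon_K$. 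This gives $S_Q$ with $S_Q\pi=\pi S_A$, and the antipode axioms for $S_Q$ follow from those of $S_A$ by cancelling $\pi$. Therefore $\pi$ is a morphism in $\Hopf$.
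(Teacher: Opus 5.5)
Your proposal is correct and follows essentially the same route as the paper's proof. Both derive the two-sided relation $\pi m_A(m_A\ot\id_A)(\id_A\ot i\ot\id_A)=\pi m_A(\id_A\ot\varepsilon_K\ot\id_A)$ from the identity $xky=xy_1S(y_2)ky_3$, right normality (Lemma \ref{lem:leftrightnormal}) and Remark \ref{rmk:psipsi'counitary}; then both build $m_Q$ through the coequalizers $\pi\ot\id_A$ and $\id_Q\ot\pi$, and $S_Q$ from the anti-multiplicativity of $S_A$ together with $\pi i=\pi u_A\varepsilon_K$. The only cosmetic difference is that you prove the key identity without the left factor $x$ and multiply afterwards, whereas the paper establishes it directly for $m_A(m_A\ot\id_A)$.
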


\begin{proof}
   By 1) of Proposition \ref{prop:definitionbijections} we know that $\pi:=\phi_{A}(i)=\mathsf{coeq}(m_{A}(\mathrm{Id}_{A}\ot i),\mathrm{Id}_{A}\ot\varepsilon_{K}):A\to Q$ is an epimorphism in $\mathsf{Comon}_{\mathrm{coc}}(_{A}\Mm)$. We prove that $\pi$ is a morphism in $\Hopf$. First, we compute
\[
\begin{split}
  &m_{A}(m_{A}\ot\mathrm{Id}_{A})=m_{A}(m_{A}\ot\mathrm{Id}_{A})(\mathrm{Id}_{A\ot A}\ot\varepsilon_{A}\ot\mathrm{Id}_{A})(\mathrm{Id}_{A\ot A}\ot\Delta_{A})\\&=m_{A}(\mathrm{Id}_{A}\ot m_{A})(\mathrm{Id}_{A}\ot\varepsilon_{A}\ot\mathrm{Id}_{A\ot A})(\mathrm{Id}_{A}\ot\sigma_{A,A}\ot\mathrm{Id}_{A})(\mathrm{Id}_{A\ot A}\ot\Delta_{A})\\&=m_{A}(m_{A}\ot m_{A})(\mathrm{Id}_{A}\ot u_{A}\varepsilon_{A}\ot\mathrm{Id}_{A\ot A})(\mathrm{Id}_{A}\ot\sigma_{A,A}\ot\mathrm{Id}_{A})(\mathrm{Id}_{A\ot A}\ot\Delta_{A})\\&=m_{A}(m_{A}\ot m_{A})(\mathrm{Id}_{A}\ot\sigma_{A,A}\ot\mathrm{Id}_{A})(\mathrm{Id}_{A\ot A}\ot u_{A}\varepsilon_{A}\ot\mathrm{Id}_{A})(\mathrm{Id}_{A\ot A}\ot\Delta_{A})\\&=m_{A}(m_{A}\ot m_{A})(\mathrm{Id}_{A}\ot\sigma_{A,A}\ot\mathrm{Id}_{A})(\mathrm{Id}_{A\ot A}\ot m_{A}(\mathrm{Id}_{A}\ot S_{A})\Delta_{A}\ot\mathrm{Id}_{A})(\mathrm{Id}_{A\ot A}\ot\Delta_{A})\\&=m_{A}(m_{A}\ot m_{A})(\mathrm{Id}_{A}\ot m_{A}\ot\mathrm{Id}_{A\ot A})(\mathrm{Id}_{A}\ot\sigma_{A,A\ot A}\ot\mathrm{Id}_{A})(\mathrm{Id}_{A\ot A}\ot(\mathrm{Id}_{A}\ot S_{A})\Delta_{A}\ot\mathrm{Id}_{A})(\mathrm{Id}_{A\ot A}\ot\Delta_{A})\\&=m_{A}(m_{A}\ot m_{A}(m_{A}\ot\mathrm{Id}_{A}))(\mathrm{Id}_{A}\ot\sigma_{A,A\ot A}\ot\mathrm{Id}_{A})(\mathrm{Id}_{A\ot A}\ot(\mathrm{Id}_{A}\ot S_{A})\Delta_{A}\ot\mathrm{Id}_{A})(\mathrm{Id}_{A\ot A}\ot\Delta_{A})\\&=m_{A}(m_{A}\ot m_{A}(m_{A}\ot\mathrm{Id}_{A}))(\mathrm{Id}_{A\ot A}\ot\sigma_{A,A}\ot\mathrm{Id}_{A})(\mathrm{Id}_{A}\ot\sigma_{A,A}\ot\mathrm{Id}_{A\ot A})\\&\hspace{0.5cm}(\mathrm{Id}_{A\ot A\ot A}\ot( S_{A}\ot\mathrm{Id}_{A})\Delta_{A})(\mathrm{Id}_{A\ot A}\ot\Delta_{A})\\&=m_{A}(m_{A}\ot m_{A}(m_{A}\ot\mathrm{Id}_{A})(\sigma_{A,A}\ot\mathrm{Id}_{A}))(\mathrm{Id}_{A}\ot\sigma_{A,A}\ot(S_{A}\ot\mathrm{Id}_{A})\Delta_{A})(\mathrm{Id}_{A\ot A}\ot\Delta_{A})\\&=m_{A}(m_{A}\ot m_{A}(m_{A}\ot\mathrm{Id}_{A})(\sigma_{A,A}\ot\mathrm{Id}_{A})(\mathrm{Id}_{A}\ot(S_{A}\ot\mathrm{Id}_{A})\Delta_{A}))(\mathrm{Id}_{A}\ot\sigma_{A,A}\ot\mathrm{Id}_{A})(\mathrm{Id}_{A\ot A}\ot\Delta_{A})\\&=m_{A}(m_{A}\ot\mathrm{ad}'_{A})(\mathrm{Id}_{A}\ot\sigma_{A,A}\ot\mathrm{Id}_{A})(\mathrm{Id}_{A\ot A}\ot\Delta_{A}),
\end{split}
\]
so we get
\begin{equation}\label{auxeqq2}
    m_{A}(m_{A}\ot\mathrm{Id}_{A})=m_{A}(m_{A}\ot\mathrm{ad}'_{A})(\mathrm{Id}_{A}\ot\sigma_{A,A}\ot\mathrm{Id}_{A})(\mathrm{Id}_{A\ot A}\ot\Delta_{A}).
\end{equation}
Since $i$ is normal, using Lemma \ref{lem:leftrightnormal}, there exists a morphism $\psi':K\ot A\to K$ in $\Mm$ such that $\mathrm{ad}'_{A}(i\ot\mathrm{Id}_{A})=i\psi'$. Therefore, we get
\[
\begin{split}
m_{A}(m_{A}\ot\mathrm{Id}_{A})(\mathrm{Id}_{A}\ot i\ot\mathrm{Id}_{A})&\overset{\eqref{auxeqq2}}{=}m_{A}(m_{A}\ot\mathrm{ad}'_{A})(\mathrm{Id}_{A}\ot\sigma_{A,A}\ot\mathrm{Id}_{A})(\mathrm{Id}_{A\ot A}\ot\Delta_{A})(\mathrm{Id}_{A}\ot i\ot\mathrm{Id}_{A})\\&=
m_{A}(m_{A}\ot\mathrm{ad}'_{A})(\mathrm{Id}_{A\ot A}\ot i\ot\mathrm{Id}_{A})(\mathrm{Id}_{A}\ot\sigma_{K,A}\ot\mathrm{Id}_{A})(\mathrm{Id}_{A\ot K}\ot\Delta_{A})\\&=
m_{A}(m_{A}\ot i\psi')(\mathrm{Id}_{A}\ot\sigma_{K,A}\ot\mathrm{Id}_{A})(\mathrm{Id}_{A\ot K}\ot\Delta_{A}).
\end{split}
\]
Then, by Remark \ref{rmk:psipsi'counitary}, we have
\[
\begin{split}
\pi m_{A}(m_{A}\ot\mathrm{Id}_{A})(\mathrm{Id}_{A}\ot i\ot\mathrm{Id}_{A})&=\pi m_{A}(m_{A}\ot i\psi')(\mathrm{Id}_{A}\ot\sigma_{K,A}\ot\mathrm{Id}_{A})(\mathrm{Id}_{A\ot K}\ot\Delta_{A})\\&=\pi m_{A}(\mathrm{Id}_{A}\ot i)(m_{A}\ot\psi')(\mathrm{Id}_{A}\ot\sigma_{K,A}\ot\mathrm{Id}_{A})(\mathrm{Id}_{A\ot K}\ot\Delta_{A})\\&=\pi (\mathrm{Id}_{A}\ot \varepsilon_{K})(m_{A}\ot\psi')(\mathrm{Id}_{A}\ot\sigma_{K,A}\ot\mathrm{Id}_{A})(\mathrm{Id}_{A\ot K}\ot\Delta_{A})\\&=\pi (m_{A}\ot\varepsilon_{K}\ot\varepsilon_{A})(\mathrm{Id}_{A}\ot\sigma_{K,A}\ot\mathrm{Id}_{A})(\mathrm{Id}_{A\ot K}\ot\Delta_{A})\\&=\pi m_{A}(\mathrm{Id}_{A}\ot\varepsilon_{K}\ot(\mathrm{Id}_{A}\ot\varepsilon_{A})\Delta_{A})\\&=\pi m_{A}(\mathrm{Id}_{A}\ot\varepsilon_{K}\ot\mathrm{Id}_{A}).
\end{split}
\]
Since $\pi\ot\mathrm{Id}_{A}$ is the coequalizer of the pair $(m_{A}(\mathrm{Id}_{A}\ot i)\ot\mathrm{Id}_{A},\mathrm{Id}_{A}\ot\varepsilon_{K}\ot\mathrm{Id}_{A})$ in $\Mm$, there exists a unique morphism $\xi:Q\ot A\to Q$ in $\Mm$ such that 
\begin{equation}\label{equationpimA}
    \xi(\pi\ot\mathrm{Id}_{A})=\pi m_{A}.
\end{equation}
Consequently,
\[
\begin{split}
    \xi(\mathrm{Id}_{Q}\ot m_{A}(\mathrm{Id}_{A}\ot i))(\pi\ot\mathrm{Id}_{A\ot K})&=\xi(\pi\ot\mathrm{Id}_{A})(\mathrm{Id}_{A}\ot m_{A}(\mathrm{Id}_{A}\ot i))\overset{\eqref{equationpimA}}{=}\pi m_{A}(\mathrm{Id}_{A}\ot m_{A}(\mathrm{Id}_{A}\ot i))\\&=\pi m_{A}(m_{A}\ot i)=\pi(m_{A}\ot\varepsilon_{K})=\xi(\mathrm{Id}_{Q\ot A}\ot\varepsilon_{K})(\pi\ot\mathrm{Id}_{A\ot K}).
\end{split}
\]
Since $\pi\ot\mathrm{Id}_{A\ot K}$ is an epimorphism in $\Mm$, we obtain $\xi(\mathrm{Id}_{Q}\ot m_{A}(\mathrm{Id}_{A}\ot i))=\xi(\mathrm{Id}_{Q\ot A}\ot\varepsilon_{K})$. By the universal property of the coequalizer $\mathrm{Id}_{Q}\ot\pi$, there exists a unique morphism $m_{Q}:Q\ot Q\to Q$ in $\Mm$ such that $m_{Q}(\mathrm{Id}_{Q}\ot\pi)=\xi$. Moreover, we define $u_{Q}:=\pi u_{A}$. One can check that $(Q,m_{Q},u_{Q})$ is in $\mathsf{Mon}(\Mm)$, since $A$ is in $\mathsf{Mon}(\Mm)$ and $\pi$ is an epimorphism in $\Mm$ which is preserved by $\ot$. Moreover, since
$\pi m_{A}=\xi(\pi\ot\mathrm{Id}_{A})=m_{Q}(\pi\ot\pi)$ and $u_{Q}=\pi u_{A}$, we get that $\pi$ is in $\mathsf{Mon}(\Mm)$. Indeed, $Q$ is in $\mathsf{Bimon}_{\mathrm{coc}}(\Mm)$ since $A$ is in $\mathsf{Bimon}_{\mathrm{coc}}(\Mm)$ and $\pi$ is an epimorphism in $\Mm$, so $\pi:A\to Q$ is in $\mathsf{Bimon}_{\mathrm{coc}}(\Mm)$. In order to conclude that $\pi$ is in $\Hopf$, it remains to show that $Q$ has an antipode. Because
\[
\begin{split}
    \pi S_{A}m_{A}(\mathrm{Id}_{A}\ot i)&=\pi m_{A}\sigma_{A,A}(S_{A}\ot S_{A})(\mathrm{Id}_{A}\ot i)=\pi m_{A}\sigma_{A,A}(\mathrm{Id}_{A}\ot i)(S_{A}\ot S_{K})\\&=\pi m_{A}(i\ot\mathrm{Id}_{A})\sigma_{A,K}(S_{A}\ot S_{K})=m_{Q}(\pi\ot\pi)(i\ot\mathrm{Id}_{A})\sigma_{A,K}(S_{A}\ot S_{K})\\&=m_{Q}(\pi u_{A}\varepsilon_{K}\ot\pi)\sigma_{A,K}(S_{A}\ot S_{K})=m_{Q}(\pi\ot\pi)\sigma_{A,A}(\mathrm{Id}_{A}\ot u_{A}\varepsilon_{K})(S_{A}\ot S_{K})\\&=\pi m_{A}\sigma_{A,A}(S_{A}\ot u_{A}\varepsilon_{K})=\pi m_{A}(u_{A}\ot\mathrm{Id}_{A})(S_{A}\ot \varepsilon_{K})=\pi S_{A}(\mathrm{Id}_{A}\ot\varepsilon_{K}),
\end{split}
\]
there exists a unique morphism $S_{Q}:Q\to Q$ in $\Mm$ such that $\pi S_{A}=S_{Q}\pi$. One can check that $S_{Q}$ is an antipode for $Q$ using the fact that $S_{A}$ is the antipode of $A$. As a consequence, $\pi$ is a morphism in $\Hopf$.
\end{proof}

\begin{proposition}\label{prop:kernelimpliesmono}
    Let $i:K\to A$ be a monomorphism in $\Hopf$, where $(\Mm,\ot,\mathbf{1},\sigma)$ is a symmetric monoidal category. If $i$ is a kernel in $\Hopf$ then it is normal. 
\end{proposition}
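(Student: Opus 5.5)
We may assume $i=\mathsf{hker}(f)$ for some morphism $f:A\to B$ in $\Hopf$. By Lemma \ref{lem:eqkernel}, $i$ is then the equalizer in $\Mm$ of the pair $((f\ot\mathrm{Id}_{A})\Delta_{A},u_{B}\ot\mathrm{Id}_{A})$. To show that $i$ is left normal, we will use the universal property of this equalizer to produce $\psi:A\ot K\to K$ with $i\psi=\mathrm{ad}_{A}(\mathrm{Id}_{A}\ot i)$. It therefore suffices to check
\[
(f\ot\mathrm{Id}_{A})\Delta_{A}\,\mathrm{ad}_{A}(\mathrm{Id}_{A}\ot i)=(u_{B}\ot\mathrm{Id}_{A})\,\mathrm{ad}_{A}(\mathrm{Id}_{A}\ot i).
\]

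\textbf{Step 1.} By Lemma \ref{properties:ad} 1), $\mathrm{ad}_{A}$ is a morphism of comonoids. Since $(A\ot A,\Delta_{A\ot A})$ carries the tensor comonoid structure, we get
\[
\Delta_{A}\mathrm{ad}_{A}=(\mathrm{ad}_{A}\ot\mathrm{ad}_{A})(\mathrm{Id}_{A}\ot\sigma_{A,A}\ot\mathrm{Id}_{A})(\Delta_{A}\ot\Delta_{A}).
\]
Precomposing with $\mathrm{Id}_{A}\ot i$ and using $\Delta_{A}i=(i\ot i)\Delta_{K}$, the left-hand side of the desired identity becomes
\[
(f\mathrm{ad}_{A}(\mathrm{Id}_{A}\ot i)\ot\mathrm{ad}_{A}(\mathrm{Id}_{A}\ot i))(\mathrm{Id}_{A}\ot\sigma_{A,K}\ot\mathrm{Id}_{K})(\Delta_{A}\ot\Delta_{K}).
\]

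\textbf{Step 2.} By Lemma \ref{properties:ad} 2),
\[
f\mathrm{ad}_{A}(\mathrm{Id}_{A}\ot i)=\mathrm{ad}_{B}(f\ot fi).
\]
Since $i$ is the kernel of $f$ in $\Hopf$, we have $fi=u_{B}\varepsilon_{K}$. A direct computation shows that $\mathrm{ad}_{B}(\mathrm{Id}_{B}\ot u_{B})=u_{B}\varepsilon_{B}$, since $m_{B}(\mathrm{Id}_{B}\ot S_{B})\Delta_{B}=u_{B}\varepsilon_{B}$. Hence
\[
f\mathrm{ad}_{A}(\mathrm{Id}_{A}\ot i)=u_{B}(\varepsilon_{A}\ot\varepsilon_{K}).
\]

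\textbf{Step 3.} Substituting into Step 1, the factors $\varepsilon_{A}$ and $\varepsilon_{K}$ absorb the first legs of $\Delta_{A}$ and $\Delta_{K}$ by counitality. The braiding disappears after applying the counits, by naturality. The expression therefore collapses to $u_{B}\ot\mathrm{ad}_{A}(\mathrm{Id}_{A}\ot i)$, which is exactly the right-hand side. The equalizer property then gives a unique $\psi$ with $i\psi=\mathrm{ad}_{A}(\mathrm{Id}_{A}\ot i)$, so $i$ is left normal, and hence normal by Lemma \ref{lem:leftrightnormal}.

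The main point needing care is Step 1. We must make sure that $\mathrm{ad}_{A}$ is comultiplicative with respect to the tensor-product comonoid structure on $A\ot A$. This is where symmetry of $\sigma$ and cocommutativity of $A$ enter, through Lemma \ref{properties:ad} 1). The remaining manipulations are routine naturality and counit arguments.
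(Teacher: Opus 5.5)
Your proof is correct and is essentially the paper's argument. Both reduce to the equalizer description of $\mathsf{hker}(f)$ from Lemma \ref{lem:eqkernel}, expand $\Delta_{A}\mathrm{ad}_{A}$ via 1) of Lemma \ref{properties:ad}, move $f$ inside via 2), and collapse the first tensor leg using $\mathrm{ad}_{B}(f\ot u_{B})=u_{B}\varepsilon_{A}$. The only cosmetic difference is that you derive $(f\ot\mathrm{Id}_{A})\Delta_{A}i=u_{B}\ot i$ from $\Delta_{A}i=(i\ot i)\Delta_{K}$ and $fi=u_{B}\varepsilon_{K}$, whereas the paper reads it off directly from the equalizer.
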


\begin{proof}
    Suppose $i= \mathsf{hker}(f)$ for some $f:A\to B$ in $\Hopf$, i.e.\ $i$ is the equalizer of the pair $((f\ot\mathrm{Id}_{A})\Delta_{A},u_{B}\ot\mathrm{Id}_{A})$ in $\Mm$ (see Lemma \ref{lem:eqkernel}). Since 
\begin{align*}
\mathrm{ad}_{B} (f \ot u_B) = &m_{B}(m_{B}\ot\mathrm{Id}_{B})(\mathrm{Id}_{B}\ot\sigma_{B,B})((\mathrm{Id}_{B}\ot S_{B})\Delta_{B}\ot\mathrm{Id}_{B})(f \ot u_B)\\
= &m_{B}(m_{B}\ot\mathrm{Id}_{B})(\mathrm{Id}_{B}\ot\sigma_{B,B})(\mathrm{Id}_{B}\ot S_{B} \ot \id_B) (f \ot f \ot u_B) \Delta_{A}\\
= &m_{B}(m_{B}\ot\mathrm{Id}_{B})(\mathrm{Id}_{B}\ot\sigma_{B,B}) (f \ot f \ot u_B) (\mathrm{Id}_{A}\ot S_{A}) \Delta_{A}\\
= &m_{B}(m_{B}\ot\mathrm{Id}_{B}) (f \ot u_B \ot f) (\mathrm{Id}_{A}\ot S_{A}) \Delta_{A}\\
= &m_{B} (f \ot f) (\mathrm{Id}_{A}\ot S_{A}) \Delta_{A} = f m_A (\mathrm{Id}_{A}\ot S_{A}) \Delta_{A} \\=& f u_A \varepsilon_A = u_B \varepsilon_A
\end{align*}
and $(f\ot\mathrm{Id}_{A})\Delta_{A}i = u_{B}\ot i$, by using 1) and 2) of Lemma \ref{properties:ad} we obtain
\begin{align*}
(f\ot\mathrm{Id}_{A}) \Delta_{A}\mathrm{ad}_{A} (\id_A \ot i) = &(f\ot\mathrm{Id}_{A})(\mathrm{ad}_{A} \ot \mathrm{ad}_{A})\Delta_{A \ot A} (\id_A \ot i)
\\=& (f \mathrm{ad}_{A} \ot \mathrm{ad}_{A})(\id_A \ot \sigma_{A,A} \ot \id_A) (\Delta_{A} \ot \Delta_{A}) (\id_A \ot i)\\
= &(\mathrm{ad}_{B} \ot \mathrm{ad}_{A})(f \ot f \ot \id_A \ot \id_A) (\id_A \ot \sigma_{A,A} \ot \id_A) (\Delta_{A} \ot \Delta_{A}) (\id_A \ot i)\\
= &(\mathrm{ad}_{B} \ot \mathrm{ad}_{A}) (\id_A \ot \sigma_{A,B} \ot \id_A) (f \ot \id_A \ot f \ot \id_A)(\Delta_{A} \ot \Delta_{A}) (\id_A \ot i)\\
= &(\mathrm{ad}_{B} \ot \mathrm{ad}_{A}) (\id_A \ot \sigma_{A,B} \ot \id_A) (f \ot \id_A \ot u_B \ot i)(\Delta_{A} \ot \id_K)\\
= &(\mathrm{ad}_{B} \ot \mathrm{ad}_{A}) (f \ot u_B \ot \id_A \ot i)(\Delta_{A} \ot \id_K)\\
= &(u_B \varepsilon_A \ot \mathrm{ad}_{A}(\id_A \ot i))(\Delta_{A} \ot \id_K)
= u_B \ot \mathrm{ad}_{A}(\id_A \ot i).
\end{align*}
Therefore, by the universal property of equalizer, there is a unique morphism $\psi:A \ot K \to A$ in $\Mm$ such that $i\psi = \mathrm{ad}_{A}(\id_A \ot i)$, so $i$ is normal.
\end{proof}

We finally obtain the following result:

\begin{theorem}\label{cor:normalobjects}
    Let $i:K\to A$ be a monomorphism in $\Hopf$ which is an equalizer as in \eqref{iequalizer}, where $(\Mm,\ot,\mathbf{1},\sigma)$ is a symmetric monoidal category. The following conditions are equivalent:
\begin{itemize}
    \item[1)] $i$ is normal;
\item[2)] $\pi:=\phi_{A}(i)=\mathsf{coeq}(m_{A}(\id_{A}\ot i),\id_{A}\ot\varepsilon_{K})$ is a morphism in $\Hopf$;
\item[3)] $i$ is a kernel in $\Hopf$.
\end{itemize}
The implications $3)\Rightarrow1)\Rightarrow2)$ hold without asking that $i$ is an equalizer as in \eqref{iequalizer}.
\end{theorem}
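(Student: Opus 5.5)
The three implications are handled separately, and only the last one, $2)\Rightarrow 3)$, uses that $i$ is an equalizer as in \eqref{iequalizer}.

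\emph{Implications $3)\Rightarrow1)$ and $1)\Rightarrow2)$.} These are exactly Proposition \ref{prop:kernelimpliesmono} and Proposition \ref{prop:normalimpliesHopfmap}. Neither proposition assumes that $i$ is an equalizer as in \eqref{iequalizer}, which gives the final sentence of the statement. They also require only that $\sigma$ be a symmetry, which holds here.

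\emph{Implication $2)\Rightarrow3)$.} Here I use the hypothesis that $i$ is an equalizer as in \eqref{iequalizer} together with the Newman-type bijection.

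By Theorem \ref{thm:NewmanforM}, more precisely the first half of its proof (the identity $\psi_A\phi_A=\id$), we have $\psi_A(\phi_A(i))=i$ as subobjects of $A$. That half of the argument only used that $i$ is an equalizer as in \eqref{iequalizer}; no condition on $\pi$ was needed.

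Set $\pi:=\phi_A(i)$. By 1) of Proposition \ref{prop:definitionphi}, $\pi$ is a quotient of $A$ in $\mathsf{Comon}_{\mathrm{coc}}(_{A}\Mm)$, and by assumption 2) it is also a morphism in $\Hopf$. Then 1) of Lemma \ref{lem:inducedbijectivecorr} gives $\psi_A(\pi)=\mathsf{hker}(\pi)$.

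Combining the two, $i\cong \psi_A(\pi)=\mathsf{hker}(\pi)$ as subobjects of $A$. The isomorphism between domains is a morphism in $\Hopf$, since both are subobjects in $\Hopf$ (as in the proof of Proposition \ref{prop:definitionbijections}). Hence $i$ is a kernel in $\Hopf$.

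\emph{Main point to check.} The step needing care is that the proof of $\psi_A\phi_A=\id$ in Theorem \ref{thm:NewmanforM} does not use any coequalizer condition on $\pi$. Rereading it confirms this: the argument builds $\overline{\mathsf{can}}$ from the equalizer property of $i$ and from the fact that $\pi$ is, by definition, the coequalizer of $(m_A(\id_A\ot i),\id_A\ot\varepsilon_K)$. It never invokes \eqref{picoequalizer}.

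The other thing to keep track of is that $\psi_A$ is computed as an equalizer in $\mathsf{Comon}_{\mathrm{coc}}(\Mm)$, whereas $\mathsf{hker}$ is an equalizer in $\Hopf$. Lemma \ref{lem:inducedbijectivecorr} shows these agree once $\pi u_A\varepsilon_A=u_Q\varepsilon_A$, and this identity holds because $\pi$ is unital.
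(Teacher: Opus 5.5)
Your proposal is correct and matches the paper's proof: $3)\Rightarrow1)$ and $1)\Rightarrow2)$ are Propositions \ref{prop:kernelimpliesmono} and \ref{prop:normalimpliesHopfmap}, and $2)\Rightarrow3)$ combines $\psi_A\phi_A(i)=i$ from Theorem \ref{thm:NewmanforM} with 1) of Lemma \ref{lem:inducedbijectivecorr}. Your extra check on the proof of Theorem \ref{thm:NewmanforM} is harmless but not needed, since 2) of Proposition \ref{prop:definitionphi} already shows that $\phi_A(i)$ is a coequalizer as in \eqref{picoequalizer}, so the theorem applies as stated.
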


\begin{proof}
$1)\Rightarrow2)$. This is Proposition \ref{prop:normalimpliesHopfmap}.

$2)\Rightarrow3)$. Since $i$ is a monomorphism in $\Hopf$ which is an equalizer as in \eqref{iequalizer}, we have that $i=\psi_{A}(\phi_{A}(i))$ by Theorem \ref{thm:NewmanforM}. Then, since $\pi:=\phi_{A}(i)$ is a morphism in $\Hopf$, we get $i=\psi_{A}(\pi)=\mathsf{hker}(\pi)$ by 1) of Lemma \ref{lem:inducedbijectivecorr}.

$3)\Rightarrow1)$. This is Proposition \ref{prop:kernelimpliesmono}.
\end{proof}

Usually a monomorphism is said to be normal if it is the kernel of some morphism. The previous theorem justifies the name ``normal'' adopted in Definition \ref{def:normal}. 

\begin{lemma}\label{lem:regepicoeq}
    Let $f:A\to B$ be a morphism in $\Hopf$. Then, $\phi_{A}(\mathsf{hker}(f))=\mathsf{hcoker}(\mathsf{hker}(f))$.
\end{lemma}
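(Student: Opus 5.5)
Set $i:=\mathsf{hker}(f):K\to A$ and $\pi:=\phi_{A}(i)=\mathsf{coeq}(m_{A}(\mathrm{Id}_{A}\ot i),\mathrm{Id}_{A}\ot\varepsilon_{K}):A\to Q$. The plan is to show that $\pi$ and $\mathsf{hcoker}(i)$ satisfy the same universal property in $\Hopf$, i.e.\ that $\pi$ is itself the cokernel of $i$ in $\Hopf$. Uniqueness of colimits then gives the equality of quotients.

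First, by Proposition \ref{prop:kernelimpliesmono}, $i$ is normal. By Proposition \ref{prop:normalimpliesHopfmap} (equivalently the implication $1)\Rightarrow2)$ of Theorem \ref{cor:normalobjects}, which does not need the equalizer hypothesis), $\pi$ is a morphism in $\Hopf$. Next, $\pi$ kills $i$ in $\Hopf$. In the proof of Proposition \ref{prop:definitionphi} it was shown that $\pi i=\pi u_{A}\varepsilon_{K}=u_{Q}\varepsilon_{K}$, which is the zero morphism $K\to Q$ in $\Hopf$ by Lemma \ref{lem:Hopfpointed}.

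For the universal property, take $l:A\to D$ in $\Hopf$ with $li=u_{D}\varepsilon_{K}$. Since $l$ is multiplicative,
\[
l\,m_{A}(\mathrm{Id}_{A}\ot i)=m_{D}(l\ot li)=m_{D}(l\ot u_{D}\varepsilon_{K})=l(\mathrm{Id}_{A}\ot\varepsilon_{K}),
\]
so $l$ coequalizes the defining pair of $\pi$ in $\Mm$. Hence there is a unique $p:Q\to D$ in $\Mm$ with $p\pi=l$. It remains to check that $p$ lies in $\Hopf$. This goes exactly as in the proof of Proposition \ref{prop:coequalizerHopf}: $\pi$ and $\pi\ot\pi$ are epimorphisms in $\Mm$ because $\ot$ preserves epimorphisms, and $\pi$, $l$ are bimonoid maps. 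From this, $pm_{Q}=m_{D}(p\ot p)$, $pu_{Q}=u_{D}$, $\Delta_{D}p=(p\ot p)\Delta_{Q}$ and $\varepsilon_{D}p=\varepsilon_{Q}$ all follow by precomposing with $\pi$ or $\pi\ot\pi$. Compatibility with antipodes is automatic for bimonoid maps between Hopf monoids. Uniqueness of $p$ in $\Hopf$ follows from uniqueness in $\Mm$.

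Therefore $\pi$ is a cokernel of $i$ in $\Hopf$, and so $\phi_{A}(\mathsf{hker}(f))=\pi=\mathsf{hcoker}(\mathsf{hker}(f))$ as quotients. The only nontrivial ingredient is that $\pi$ is a morphism in $\Hopf$, and that is supplied by the normality results above. The only point needing care is that $\Mm$ is symmetric, which Propositions \ref{prop:normalimpliesHopfmap} and \ref{prop:kernelimpliesmono} require; I would assume this standing hypothesis here. If needed, one can also compare with Corollary \ref{cor:cokernel}: the pair defining $\mathsf{hcoker}(i)$ and the pair defining $\pi$ have the same coequalizer, because $\pi$ also coequalizes $\phi_{i}$ and $\phi_{u\varepsilon}$ (being multiplicative). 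Conversely, $\mathsf{hcoker}(i)$ coequalizes $m_{A}(\mathrm{Id}_{A}\ot i)$ and $\mathrm{Id}_{A}\ot\varepsilon_{K}$ after precomposing with $\mathrm{Id}_{A}\ot i\ot u_{A}$.
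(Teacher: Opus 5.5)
Your proof is correct, but it takes a different route from the paper's.

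\textbf{The paper's route.} The paper works inside $\Mm$ using its abelian structure. It writes $\pi=\mathsf{coker}(\zeta)$ with $\zeta=m_{A}(\mathrm{Id}_{A}\ot\mathsf{hker}(f))-\mathrm{Id}_{A}\ot\varepsilon_{\mathsf{Hker}(f)}$. It then uses Corollary \ref{cor:cokernel} to express $\mathsf{hcoker}(\mathsf{hker}(f))$ as $\mathsf{coker}(m_{A}(\zeta\ot\mathrm{Id}_{A}))$. Since $\pi$ is a Hopf morphism, the image of $\zeta$ is a right $A$-module. Factoring $m_{A}(\zeta\ot\mathrm{Id}_{A})$ through that image and its (epi) action gives $\mathsf{coker}(m_{A}(\zeta\ot\mathrm{Id}_{A}))=\mathsf{coker}(\zeta)$.

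\textbf{Your route.} You instead check directly that $\pi$ has the universal property of the cokernel of $\mathsf{hker}(f)$ in $\Hopf$, reusing the factorization argument of Proposition \ref{prop:coequalizerHopf}.

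\textbf{Comparison.} Both proofs rest on the same key input: $\pi$ is a morphism in $\Hopf$ because kernels are normal (Propositions \ref{prop:kernelimpliesmono} and \ref{prop:normalimpliesHopfmap}). Beyond that, your version does not need the explicit description of $\mathsf{hcoker}$, nor subtraction of morphisms or images. So it works with only $\sigma$ symmetric and $\ot$ preserving (co)equalizers, without additivity of $\Mm$. The paper's version instead gives a concrete identification of the two cokernels at the level of $\Mm$.

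\textbf{Your closing remark.} The remark that each defining pair is coequalized by the other coequalizer is a valid and shorter way to obtain that same identification in $\Mm$. One correction there: the map to precompose with is $\mathrm{Id}_{A}\ot\mathrm{Id}_{K}\ot u_{A}:A\ot K\to A\ot K\ot A$, not $\mathrm{Id}_{A}\ot i\ot u_{A}$.
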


\begin{proof}
By $3)\Rightarrow 1)\Rightarrow2)$ of Theorem \ref{cor:normalobjects} we know that $\pi:=\phi_{A}(\mathsf{hker}(f))$ is a morphism in $\Hopf$. Set $\zeta:=m_{A}(\id_{A}\ot\mathsf{hker}(f))-\id_{A}\ot\varepsilon_{\mathsf{Hker}(f)}$ so that $\pi=\mathsf{coker}(\zeta)$. Since $\mathsf{coker}(\zeta)$ is a morphism in $\Hopf$, we have that $\mathsf{ker}(\mathsf{coker}(\zeta)):\mathrm{Im}(\zeta)\to A$ is also a morphism in $\Mm_{A}$ and, since and $\mu_{\mathrm{Im}(\zeta)}$ and $\mathsf{coker}(\mathsf{ker}(\zeta))\ot\id_{A}$ are epimorphisms in $\Mm$, we get
\[
\begin{split}
    \mathsf{hcoker}(\mathsf{hker}(f))&=\mathsf{coker}(m_{A}(m_{A}\ot\mathrm{Id}_{A})(\mathrm{Id}_{A}\ot \mathsf{hker}(f)\ot\mathrm{Id}_{A})-m_{A}(\mathrm{Id}_{A}\ot\varepsilon_{\mathsf{Hker}(f)}\ot\mathrm{Id}_{A}))
    \\&=\mathsf{coker}(m_{A}(\zeta\ot\id_{A}))=\mathsf{coker}(m_{A}(\mathsf{ker}(\mathsf{coker}(\zeta))\ot\id_{A})(\mathsf{coker}(\mathsf{ker}(\zeta))\ot\id_{A}))\\&=\mathsf{coker}(m_{A}(\mathsf{ker}(\mathsf{coker}(\zeta))\ot\id_{A}))=\mathsf{coker}(\mathsf{ker}(\mathsf{coker}(\zeta))\mu_{\mathrm{Im}(\zeta)})\\&=\mathsf{coker}(\mathsf{ker}(\mathsf{coker}(\zeta)))=\mathsf{coker}(\zeta)=\phi_{A}(\mathsf{hker}(f)),
\end{split}
\]
i.e.\ $\phi_{A}(\mathsf{hker}(f))=\mathsf{hcoker}(\mathsf{hker}(f))$. 
\end{proof}

As a consequence, recalling that regular epimorphisms in $\Hopf$ are exactly those morphisms $f$ such that $f=\mathsf{hcoker}(\mathsf{hker}(f))$ (see Corollary \ref{coeqequalcoker}), we obtain the following two results:

\begin{corollary}\label{cor:bijectivecorrespondence}
    Let $A$ be an object in $\Hopf$. The bijective correspondence given in Theorem \ref{thm:NewmanforM} restricts to a bijective correspondence between subobjects of $A$ which are kernels in $\Hopf$ and quotients of $A$ in $\mathsf{Comon}_{\mathrm{coc}}(_{A}\Mm)$ which are regular epimorphisms in $\mathsf{Hopf}_{\mathrm{coc}}(\Mm)$. 
\end{corollary}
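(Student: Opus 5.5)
The plan is to show directly that $\phi_{A}$ sends kernels to regular epimorphisms and $\psi_{A}$ sends regular epimorphisms to kernels, and that the two maps are mutually inverse on these classes. This avoids the equalizer/coequalizer hypotheses of Theorem \ref{thm:NewmanforM}: for these classes both composites can be computed in $\Hopf$ itself. Well-definedness on isomorphism classes is already given by Proposition \ref{prop:definitionbijections}.

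\emph{Kernels go to regular epimorphisms.} Let $i=\mathsf{hker}(f)$ for some $f:A\to B$ in $\Hopf$. By Lemma \ref{lem:regepicoeq}, $\phi_{A}(i)=\mathsf{hcoker}(\mathsf{hker}(f))$. This is a cokernel in $\Hopf$, hence a regular epimorphism by Corollary \ref{coeqequalcoker}. By Proposition \ref{prop:definitionphi} it is also a quotient of $A$ in $\mathsf{Comon}_{\mathrm{coc}}({}_{A}\Mm)$.

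\emph{Regular epimorphisms go to kernels.} Let $\pi:A\to Q$ be a quotient in $\mathsf{Comon}_{\mathrm{coc}}({}_{A}\Mm)$ which is a regular epimorphism in $\Hopf$. By 1) of Lemma \ref{lem:inducedbijectivecorr}, $\psi_{A}(\pi)=\mathsf{hker}(\pi)$, which is a kernel.

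\emph{First composite.} For such $\pi$ we get $\phi_{A}\psi_{A}(\pi)=\phi_{A}(\mathsf{hker}(\pi))=\mathsf{hcoker}(\mathsf{hker}(\pi))$ by Lemma \ref{lem:regepicoeq}. In a pointed, finitely complete, protomodular category a regular epimorphism is the cokernel of its kernel (this is the characterization recalled before Corollary \ref{coeqequalcoker}), so this equals $\pi$ as a quotient. The isomorphism between the codomains is in $\Hopf$, and since both quotient maps are in $\mathsf{Comon}_{\mathrm{coc}}({}_{A}\Mm)$ and epimorphisms in $\Mm$, it also lies in $\mathsf{Comon}_{\mathrm{coc}}({}_{A}\Mm)$, as in the proof of Proposition \ref{prop:definitionbijections}.

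\emph{Second composite.} For $i=\mathsf{hker}(f)$ we get $\psi_{A}\phi_{A}(i)=\mathsf{hker}(\mathsf{hcoker}(\mathsf{hker}(f)))$. It remains to check the general fact $\mathsf{hker}(\mathsf{hcoker}(k))=k$ for a kernel $k=\mathsf{hker}(f)$; this is the only step needing an argument beyond citations.
\begin{itemize}
\item Since $\mathsf{hcoker}(k)\,k=0$, the morphism $k$ factors through $\mathsf{hker}(\mathsf{hcoker}(k))$.
\item Conversely, $fk=0$, so $f$ factors as $f=g\,\mathsf{hcoker}(k)$ for some $g$. Hence $f\,\mathsf{hker}(\mathsf{hcoker}(k))=0$, and $\mathsf{hker}(\mathsf{hcoker}(k))$ factors through $k=\mathsf{hker}(f)$.
\end{itemize}
Both morphisms are monomorphisms, so the two subobjects coincide. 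Therefore $\psi_{A}\phi_{A}(i)=i$, and the restricted maps are mutually inverse bijections.
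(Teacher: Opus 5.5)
Your proof is correct and follows the route the paper indicates. It uses Lemma \ref{lem:regepicoeq}, 1) of Lemma \ref{lem:inducedbijectivecorr}, and the fact that regular epimorphisms in $\Hopf$ are the cokernels of their kernels (Corollary \ref{coeqequalcoker}). Your explicit check that $\mathsf{hker}(\mathsf{hcoker}(k))=k$ for a kernel $k$ supplies a step the paper leaves tacit. Moreover, $\psi_{A}$ and $\phi_{A}$ always produce equalizers as in \eqref{iequalizer} and coequalizers as in \eqref{picoequalizer} (Propositions \ref{prop:definitionpsi} and \ref{prop:definitionphi}). So your identities $i=\psi_{A}\phi_{A}(i)$ and $\pi=\phi_{A}\psi_{A}(\pi)$ also show that kernels and regular epimorphisms lie inside the classes of Theorem \ref{thm:NewmanforM}, which is what literally justifies the word ``restricts''.
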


\begin{corollary}
    Regular epimorphisms (equivalently, cokernels) in $\Hopf$ are coequalizers as in \eqref{picoequalizer}. 
\end{corollary}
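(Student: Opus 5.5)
Let $f:A\to B$ be a regular epimorphism in $\Hopf$; we must show that $f$ is a coequalizer as in \eqref{picoequalizer}, with $\pi$ replaced by $f$.

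\textbf{Step 1: reduce to $\phi_{A}(\mathsf{hker}(f))$.} By Corollary \ref{coeqequalcoker}, $f$ is a cokernel in $\Hopf$, so $f=\mathsf{hcoker}(\mathsf{hker}(f))$ up to isomorphism of quotients. By Lemma \ref{lem:regepicoeq}, $\mathsf{hcoker}(\mathsf{hker}(f))=\phi_{A}(\mathsf{hker}(f))$. Hence $f$ is isomorphic, as a quotient of $A$ under an isomorphism $B\cong Q$, to $\pi:=\phi_{A}(\mathsf{hker}(f))=\mathsf{coeq}(m_{A}(\id_{A}\ot \mathsf{hker}(f)),\id_{A}\ot\varepsilon_{\mathsf{HKer}(f)})$.

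\textbf{Step 2: apply Proposition \ref{prop:definitionphi}.} Part 2) of Proposition \ref{prop:definitionphi} holds for any monomorphism $i:K\to A$ in $\Hopf$; in particular it holds for $i=\mathsf{hker}(f)$. It states that $\pi=\phi_{A}(i)$ is the coequalizer of
\[
\bigl((\varepsilon_{A}\ot\id_{A})e_{A,A},\,(\id_{A}\ot\varepsilon_{A})e_{A,A}\bigr),
\]
where $A\square_{Q}A$ is built from the coactions $(\pi\ot\id_{A})\Delta_{A}$ and $(\id_{A}\ot\pi)\Delta_{A}$.

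\textbf{Step 3: transport along the isomorphism.} Let $t:Q\to B$ be an isomorphism with $t\pi=f$. It remains to check that replacing $\pi$ by $t\pi$ changes nothing.
\begin{itemize}
\item The coactions become $(t\ot\id_{A})(\pi\ot\id_{A})\Delta_{A}$ and $(\id_{A}\ot t)(\id_{A}\ot\pi)\Delta_{A}$. Composing with the isomorphism $\id_{A}\ot t\ot\id_{A}$ does not change the equalizer, so $A\square_{B}A=A\square_{Q}A$ with the same $e_{A,A}$.
\item Post-composing a coequalizer with an isomorphism yields a coequalizer of the same pair.
\end{itemize}
Therefore $f$ is the coequalizer of $((\varepsilon_{A}\ot\id_{A})e_{A,A},(\id_{A}\ot\varepsilon_{A})e_{A,A})$, as required.

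\textbf{Main obstacle.} The only subtle point is Step 3: the identification of $f$ with $\pi$ holds only as quotients, and the diagram \eqref{picoequalizer} depends on the codomain through the cotensor product. The argument above shows that this dependence is harmless. Everything else follows directly from Lemma \ref{lem:regepicoeq} and Proposition \ref{prop:definitionphi}.
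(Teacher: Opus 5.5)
Your proof is correct and follows the paper's argument: $f=\mathsf{hcoker}(\mathsf{hker}(f))=\phi_{A}(\mathsf{hker}(f))$ by Lemma~\ref{lem:regepicoeq}, and then part 2) of Proposition~\ref{prop:definitionphi} gives the coequalizer. Your Step 3 spells out the invariance under isomorphism of quotients, which the paper leaves implicit through its convention of writing equalities between quotient classes.
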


\begin{proof}
     Since $f=\mathsf{hcoker}(\mathsf{hker}(f))$, by Lemma \ref{lem:regepicoeq} we get $f=\phi_{A}(\mathsf{hker}(f))$ and so $f$ is a coequalizer as in \eqref{picoequalizer} by 2) of Proposition \ref{prop:definitionphi}.
\end{proof}

As we will see in the next section, in order to have a regular epi-mono factorization for any morphism in $\Hopf$ we will need that all the morphisms in $\Hopf$ that are epimorphisms (equivalently, cokernels) in $\Mm$ are coequalizers as in \eqref{picoequalizer}, not just the cokernels in $\Hopf$.

\section{Regularity of $\Hopf$}\label{sec:regularity}

Now, we denote by $(\Mm,\ot,\mathbf{1},\sigma)$ an abelian symmetric monoidal category. The aim of this section is to prove that $\Hopf$ is regular by using Theorem \ref{thm:NewmanforM}. By Proposition \ref{prop:Hopfcocfintcomplete}, we know that $\Hopf$ is finitely complete. It suffices to show that any morphism in $\Hopf$ factorizes as a regular epimorphism followed by a monomorphism and regular epimorphisms in $\Hopf$ are stable under pullbacks. For this purpose, we need that the category $\Hopf$ satisfies some special properties.

\subsection{Factorization of morphisms in $\Hopf$}

First, we consider the regular epi-mono factorization for morphisms in $\Hopf$.  The following result shows that any morphism $f$ in $\Hopf$ such that $\mathsf{coker}(\mathsf{ker}(f))$ is a coequalizer as in \eqref{picoequalizer} can be factorized as expected. Note that regular epimorphisms in $\Hopf$ coincide with cokernels, see Corollary \ref{coeqequalcoker}.

\begin{proposition}\label{prop:factorizationmorphisms}
Let $f$ be a morphism in $\Hopf$ such that $\mathsf{coker}(\mathsf{ker}(f))$ is a coequalizer in $\Mm$ as in \eqref{picoequalizer}. Then, the morphism $f$ factorizes as a regular epimorphism in $\Hopf$ followed by a monomorphism in $\Hopf$. More specifically, $f = i \pi$ where $\pi:=\mathsf{hcoker}(\mathsf{hker}(f))=\mathsf{coker}(\mathsf{ker}(f))$ and $i=\mathsf{ker}(\mathsf{coker}(f))$.

\end{proposition}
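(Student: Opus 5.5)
We work with the abelian image factorization of $f$ in $\Mm$. Set $\pi:=\mathsf{coker}(\mathsf{ker}(f))$ and $i:=\mathsf{ker}(\mathsf{coker}(f))$, so that $f=i\pi$ in $\Mm$. By Remark \ref{rmk:kerHopfideal}, $\mathsf{Ker}(f)$ is a Hopf ideal of $A$. Hence $\pi$ is a morphism in $\Hopf$ (and in $\mathsf{Comon}_{\mathrm{coc}}(_{A}\Mm)$), and $i$ is also a morphism in $\Hopf$. Since $i$ is a monomorphism in $\Mm$ and the forgetful functor $\Hopf\to\Mm$ is faithful, $i$ is a monomorphism in $\Hopf$. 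It therefore remains to show that $\pi$ is a regular epimorphism in $\Hopf$, and that it coincides with $\mathsf{hcoker}(\mathsf{hker}(f))$.

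For this we use the Newman-type bijection. The hypothesis says that $\pi=\mathsf{coker}(\mathsf{ker}(f))$ is a coequalizer as in \eqref{picoequalizer}, so 2) of Lemma \ref{lem:inducedbijectivecorr} applies and gives $\phi_{A}(\mathsf{hker}(f))=\mathsf{coker}(\mathsf{ker}(f))=\pi$. Lemma \ref{lem:regepicoeq} gives $\phi_{A}(\mathsf{hker}(f))=\mathsf{hcoker}(\mathsf{hker}(f))$. Combining the two, $\pi=\mathsf{hcoker}(\mathsf{hker}(f))$ as quotients of $A$, that is, up to an isomorphism of codomains compatible with both structures. A cokernel in $\Hopf$ is in particular a coequalizer (of $\mathsf{hker}(f)$ and the zero morphism), so $\pi$ is a regular epimorphism in $\Hopf$. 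This agrees with Corollary \ref{coeqequalcoker}.

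The one point needing care is the identification in Lemma \ref{lem:inducedbijectivecorr}, which holds only up to isomorphism. We have to check that the isomorphism $\theta$ between the codomain of $\mathsf{hcoker}(\mathsf{hker}(f))$ and $\mathsf{Coker}(\mathsf{ker}(f))$ is a morphism in $\Hopf$, so that transporting the regular-epi property along it is legitimate. Both quotient maps are morphisms in $\Hopf$ and are epimorphisms in $\Mm$, and $\theta$ commutes with them. Hence $\theta$ respects multiplication, unit, comultiplication, counit and antipode: each compatibility holds after precomposition with an epimorphism, using that $\ot$ preserves epimorphisms. So $\theta$ is an isomorphism in $\Hopf$. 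Then $f=i\pi$ is the required factorization, with $\pi$ a regular epimorphism and $i$ a monomorphism in $\Hopf$.
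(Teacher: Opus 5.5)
Your proof is correct and follows essentially the paper's route. The paper likewise identifies $\mathsf{hcoker}(\mathsf{hker}(f))$ with $\phi_{A}(\mathsf{hker}(f))$ via Lemma \ref{lem:regepicoeq}, and then with $\mathsf{coker}(\mathsf{ker}(f))$ by writing out the chain \eqref{auxeq}, 1) of Lemma \ref{lem:inducedbijectivecorr} and Theorem \ref{thm:NewmanforM}, which is exactly what you invoke packaged as 2) of Lemma \ref{lem:inducedbijectivecorr}; the only cosmetic difference is that the paper obtains $i$ from the universal property of the cokernel in $\Hopf$ and then recognizes it as $\mathsf{ker}(\mathsf{coker}(f))$, whereas you start from the image factorization in $\Mm$, and your check that the comparison isomorphism lies in $\Hopf$ is a correct detail the paper leaves implicit.
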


\begin{proof}
We already know that $\Hopf$ is pointed (Lemma \ref{lem:Hopfpointed}), finitely complete (Proposition \ref{prop:Hopfcocfintcomplete}) and has coequalizers (Proposition \ref{prop:coequalizerHopf}). In particular, any morphism in $\Hopf$ has kernel and cokernel. For a morphism $f:A\to B$ in $\Hopf$, we consider $\mathsf{hker}(f):\mathsf{HKer}(f)\to A$ in $\Hopf$ and $\pi:=\mathsf{hcoker}(\mathsf{hker}(f)):A\to \mathsf{HCoker}(\mathsf{hker}(f))$ in $\Hopf$. By the universal property of cokernel, there is a unique morphism $i:\mathsf{HCoker}(\mathsf{hker}(f))\to B$ in $\Hopf$ such that the following diagram commutes:
\begin{equation}\label{factorizationdiagram}
\begin{tikzcd}
	\mathsf{HKer}(f) & A && B\\
	&& \mathsf{HCoker}(\mathsf{hker}(f))
	\arrow[from=1-1, to=1-2, "\mathsf{hker}(f)"]
	\arrow[from=1-2, to=1-4, "f"]
	\arrow[from=1-2, to=2-3, "\pi"']
	\arrow[from=2-3, to=1-4, "i"']
\end{tikzcd}
\end{equation}
The morphism $\pi$ is a regular epimorphism in $\Hopf$ by definition. It remains to show that $i$ is a monomorphism in $\Hopf$. 
   
Consider $\mathsf{ker}(f):\mathsf{Ker}(f)\to A$, i.e. the kernel of $f$ in $\Mm$, and its cokernel $\pi':=\mathsf{coker}(\mathsf{ker}(f)):A\to\mathsf{Coker}(\mathsf{ker}(f))$ in $\Mm$. 
By Lemma \ref{lem:regepicoeq}, we know that $\mathsf{hcoker}(\mathsf{hker}(f))=\phi_{A}(\mathsf{hker}(f))$. Since $\pi'$ is a morphism in $\Hopf$ (Remark \ref{rmk:kerHopfideal}), we have
\[
\mathsf{hcoker}(\mathsf{hker}(f))=\phi_{A}(\mathsf{hker}(f))\overset{\eqref{auxeq}}{=}\phi_{A}(\mathsf{hker}(\mathsf{coker}(\mathsf{ker}(f))))=\phi_{A}(\mathsf{hker}(\pi'))=\phi_{A}\psi_{A}(\pi')
\]
using 1) of Lemma \ref{lem:inducedbijectivecorr} for the last equality. Since $\pi'$ is a coequalizer as in \eqref{picoequalizer}, we obtain $\mathsf{hcoker}(\mathsf{hker}(f))=\mathsf{coker}(\mathsf{ker}(f))$ by Theorem \ref{thm:NewmanforM}. Hence, using the image factorization of $f$ in $\Mm$, we get that $i=\mathsf{ker}(\mathsf{coker}(f))$. This means the morphism $i$ is a monomorphism in $\Mm$. Since $i$ is a morphism in $\Hopf$, we get that $i$ is a monomorphism in $\Hopf$.
\end{proof}

\begin{remark}
    As proven in Proposition \ref{prop:factorizationmorphisms}, for a morphism $f$ in $\Hopf$ such that $\mathsf{coker}(\mathsf{ker}(f))$ is coequalizer as in \eqref{picoequalizer}, the regular epimorphism-monomorphism factorization of $f$ in $\Hopf$ is provided by the image factorization of $f$ in the abelian category $\Mm$. This means the factorization is unique up to isomorphism. 
\end{remark}

\begin{remark}\label{rmk:remregepi}
By Corollary \ref{coeqequalcoker}, we know that regular epimorphisms in $\Hopf$ coincide with cokernels. We also point out that, if $\Cc$ is an homological category, the regular-epi mono factorization is obtained as $f=i\mathsf{coker}(\mathsf{ker}(f))$, see e.g.\ \cite[Proposition 4.1.2]{BB}. This supports our result for $\Hopf$. 

Besides, observe that, for a morphism $f$ in $\Hopf$, the morphism $\mathsf{coker}(\mathsf{ker}(f))$ is in $\Hopf$ (Remark \ref{rmk:kerHopfideal}), and an epimorphism in $\Mm$. Hence, to have that for any morphism in $\Hopf$ the factorization \eqref{factorizationdiagram} coincides with the image factorization in $\Mm$, we need that any morphism in $\Hopf$ which is an epimorphism in $\Mm$ is a coequalizer as in \eqref{picoequalizer}.

\end{remark}

Inspired by the previous remark, we prove the following result. 

\begin{invisible}
\rd{[Since we already have Proposition \ref{prop:cofaitfullyflat} proving the same, we do not need this. I keep it here in case it might be useful in other parts.]}

\begin{proposition}\label{prop:differentproof}
Let $p:A\to B$ be a morphism in $\mathsf{Hopf}(\Mm)$ which is an epimorphism in $\Mm$. Suppose that the functor $A\square_{B}(-)$ preserves and reflects coequalizers. Then, the following diagram
\begin{equation}\label{doublecotensor}
\begin{tikzcd}
	A\square_{B}A\square_{B}A &&& A\square_{B}A && A
	\arrow[shift left, from=1-1, to=1-4, "\id_{A}\square_{B}(\varepsilon_{A}\ot\id_{A})e_{A,A}"]
	\arrow[shift right, from=1-1, to=1-4,"\id_{A}\square_{B}(\id_{A}\ot\varepsilon_{A})e_{A,A}"']
	\arrow[from=1-4, to=1-6,"(\id_{A}\ot\varepsilon_{A})e_{A,A}"]
\end{tikzcd}
\end{equation}
is a split coequalizer in $\Mm$. As a consequence, $p$ is a coequalizer as in \eqref{picoequalizer}.
\end{proposition}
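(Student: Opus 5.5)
The plan is to exhibit explicit splittings for the fork \eqref{doublecotensor}, so that it becomes a split (hence absolute) coequalizer in $\Mm$. Then we transport this to \eqref{picoequalizer} (with $Q=B$ and $\pi=p$) using that $A\square_{B}(-)$ reflects coequalizers. Throughout, $A$ is a left $B$-comodule via $(p\ot\id_{A})\Delta_{A}$ and a right one via $(\id_{A}\ot p)\Delta_{A}$. The cotensor products are formed as in \eqref{def:cotensor}, and $A\square_{B}A$ inherits a left $B$-coaction from its left factor and a right one from its right factor, as recalled in Section \ref{sec:prelimiaries}, so that $A\square_{B}A\square_{B}A$ makes sense (associativity of $\square_{B}$ holds since $\ot$ preserves equalizers).

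\textbf{Step 1 (the fork).} Write $d_{0}:=\id_{A}\square_{B}(\id_{A}\ot\varepsilon_{A})e_{A,A}$, $d_{1}:=\id_{A}\square_{B}(\varepsilon_{A}\ot\id_{A})e_{A,A}$ and $q:=(\id_{A}\ot\varepsilon_{A})e_{A,A}$. First I check that $(\varepsilon_{A}\ot\id_{A})e_{A,A}$ and $(\id_{A}\ot\varepsilon_{A})e_{A,A}$ are left $B$-colinear, so that $\id_{A}\square_{B}(-)$ can be applied to them. For the second this is immediate. For the first, one composes the defining identity $((\id_{A}\ot p)\Delta_{A}\ot\id_{A})e_{A,A}=(\id_{A}\ot(p\ot\id_{A})\Delta_{A})e_{A,A}$ with $\varepsilon_{A}\ot\id_{B}\ot\id_{A}$. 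The identity $qd_{0}=qd_{1}$ is then a direct check: both sides reduce to $\id_{A}\ot\varepsilon_{A}\ot\varepsilon_{A}$ precomposed with the triple equalizer inclusion.

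\textbf{Step 2 (splittings).} Let $s:A\to A\square_{B}A$ be induced by $\Delta_{A}$. It factors through $e_{A,A}$ by coassociativity, since both composites equal $(\id_{A}\ot p\ot\id_{A})(\Delta_{A}\ot\id_{A})\Delta_{A}$. Let $t:A\square_{B}A\to A\square_{B}A\square_{B}A$ be induced by $\Delta_{A}\ot\id_{A}$; it is $\Delta_{A}\square_{B}\id_{A}$, which makes sense since $\Delta_{A}$ is right $B$-colinear for the right coaction of $A\square_{B}A$. Counitality gives $qs=\id_{A}$ and $d_{1}t=\id_{A\square_{B}A}$, because $(\id_{A}\ot\varepsilon_{A}\ot\id_{A})(\Delta_{A}\ot\id_{A})=\id$. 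It also gives $d_{0}t=(\Delta_{A}\ot\varepsilon_{A})e_{A,A}=sq$. Hence \eqref{doublecotensor} is a split coequalizer, with the roles of $d_{0},d_{1}$ arranged as in the standard split fork. All of this is verified after composing with the relevant equalizer monomorphisms, which are preserved by $\ot$.

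\textbf{Step 3 (consequence).} Apply $A\square_{B}(-)$ to the fork \eqref{picoequalizer} with $\pi=p$. First, $p$ coequalizes the pair by the computation in the proof of 2) of Proposition \ref{prop:definitionphi}, which only uses the cotensor condition and $\varepsilon_{B}p=\varepsilon_{A}$. Second, $p:A\to B$ is left $B$-colinear because $\Delta_{B}p=(p\ot p)\Delta_{A}$, so $\id_{A}\square_{B}p$ makes sense. Third, composing with the canonical isomorphism $\Lambda_{A}^{-1}=(\id_{A}\ot\varepsilon_{B})e_{A,B}:A\square_{B}B\to A$ gives
\[(\id_{A}\ot\varepsilon_{B})e_{A,B}(\id_{A}\square_{B}p)=(\id_{A}\ot\varepsilon_{B}p)e_{A,A}=q.\]
So the image of \eqref{picoequalizer} under $A\square_{B}(-)$ is isomorphic to the split coequalizer \eqref{doublecotensor}, and is therefore a coequalizer. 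Since $A\square_{B}(-)$ reflects coequalizers, \eqref{picoequalizer} is a coequalizer in $\Mm$.

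The only delicate point I expect is bookkeeping. One must make sure that every map to which $A\square_{B}(-)$ is applied is genuinely left $B$-colinear, and that the iterated cotensor and the induced maps $s,t$ are well defined. Once the splittings $s$ and $t$ are written down, the identities themselves are formal consequences of coassociativity and counitality.
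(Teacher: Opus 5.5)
Your proposal is correct and is essentially the paper's proof. The paper uses the same splittings $s=\Delta'_{A}$ (induced by $\Delta_{A}$) and $t=\Delta'_{A}\square_{B}\id_{A}$ to exhibit the fork as a split coequalizer, and then concludes that $p$ is a coequalizer as in \eqref{picoequalizer} from the identity $\Lambda_{A}(\id_{A}\ot\varepsilon_{A})e_{A,A}=\id_{A}\square_{B}p$ together with reflection of coequalizers by $A\square_{B}(-)$; you additionally carry out the colinearity and well-definedness checks for $(\varepsilon_{A}\ot\id_{A})e_{A,A}$, $p$, $s$ and $t$ that the paper only asserts.
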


\begin{proof}

As recalled in the preliminaries, the category $(^{B}\Mm^{B},\square_{B},B)$ is monoidal. Since $p$ is a morphism in $\mathsf{Comon}(\Mm)$, $A$ is an object in $^{B}\Mm^{B}$ with structures $(p\ot\id_{A})\Delta_{A}$ and $(\id_{A}\ot p)\Delta_{A}$, so that $p$ becomes a morphism in $^{B}\Mm^{B}$. The object $A\square_{B}A$ is in $^{B}\Mm^{B}$ with comodule strucures defined such that $e_{A,A}$ becomes a morphism in $^{B}\Mm^{B}$. Hence, $(\varepsilon_{A}\ot\id_{A})e_{A,A}$ and $(\id_{A}\ot\varepsilon_{A})e_{A,A}$ are morphisms in $^{B}\Mm^{B}$, given $\mathbf{1}$ in $^{B}\Mm^{B}$ with trivial coactions determined $u_{B}$.

We prove that \eqref{doublecotensor} is a split coequalizer. First, we have
\[
\begin{split}
(\id_{A}\ot\varepsilon_{A})e_{A,A}(\id_{A}\square_{B}(\varepsilon_{A}\ot\id_{A})e_{A,A})&=(\id_{A}\ot\varepsilon_{A})(\id_{A}\ot(\varepsilon_{A}\ot\id_{A})e_{A,A})e_{A,A\square_{B}A}\\&=(\id_{A}\ot\varepsilon_{A})(\id_{A}\ot(\id_{A}\ot\varepsilon_{A})e_{A,A})e_{A,A\square_{B}A}\\&=(\id_{A}\ot\varepsilon_{A})e_{A,A}(\id_{A}\square_{B}(\id_{A}\ot\varepsilon_{A})e_{A,A})
\end{split}
\]
Moreover, there is a morphism $\Delta'_{A}:A\to A\square_{B}A$ in $\Mm$ such that $e_{A,A}\Delta'_{A}=\Delta_{A}$, hence $(\id_{A}\ot\varepsilon_{A})e_{A,A}\Delta'_{A}=\id_{A}$. Given $\Delta'_{A}\square_{B}\id_{A}:A\square_{B}A\to A\square_{B}A\square_{B}A$ one have that
\[
(\id_{A}\square_{B}(\varepsilon_{A}\ot\id_{A})e_{A,A})(\Delta'_{A}\square_{B}\id_{A})=((\id_{A}\ot\varepsilon_{A})e_{A,A}\square_{B}\id_{A})(\Delta'_{A}\square_{B}\id_{A})=\id
\]
and $(\id_{A}\square_{B}(\id_{A}\ot\varepsilon_{A})e_{A,A})(\Delta'_{A}\square_{B}\id_{A})=(\Delta'_{A}\ot\varepsilon_{A})e_{A,A}$, so that \eqref{doublecotensor} is a split coequalizer \rd{[To add details]}. In particular, \eqref{doublecotensor} is a coequalizer. 

Now, given the canonical isomorphism $\Lambda_{A}:A\to A\square_{B}B$ in $\Mm$ determined by $e_{A,B}\Lambda_{A}=\rho_{A}=(\id_{A}\ot p)\Delta_{A}$, we compute
\[
\begin{split}
e_{A,B}\Lambda_{A}(\id_{A}\ot\varepsilon_{A})e_{A,A}&=(\id_{A}\ot p)\Delta_{A}(\id_{A}\ot\varepsilon_{A})e_{A,A}\\&= (\id_{A\ot B}\ot\varepsilon_{A})(\id_{A}\ot p\ot\id_{A})(\Delta_{A}\ot\id_{A})e_{A,A}\\&=(\id_{A\ot B}\ot\varepsilon_{A})(\id_{A}\ot p\ot\id_{A})(\id_{A}\ot\Delta_{A})e_{A,A}\\&= (\id_{A}\ot p)e_{A,A}=e_{A,B}(\id_{A}\square_{B} p).
\end{split}
\]
Hence, since $e_{A,B}$ is a monomorphism in $\Mm$, we get $\Lambda_{A}(\id_{A}\ot\varepsilon_{A})e_{A,A}=\id_{A}\square_{B}p$. Therefore, since $\Lambda_{A}$ is an isomorphism in $\Mm$ and \eqref{doublecotensor} is a coequalizer in $\Mm$, we get that $\id_{A}\square_{B}p$ is the coequalizer of the pair of morphisms $(\id_{A}\square_{B}(\varepsilon_{A}\ot\id_{A})e_{A,A}),\id_{A}\square_{B}(\id_{A}\ot\varepsilon_{A})e_{A,A})$ in $\Mm$. Since $A\square_{B}(-)$ reflects coequalizers, $p$ is the coequalizer of the pair $((\varepsilon_{A}\ot\id_{A})e_{A,A},(\id_{A}\ot\varepsilon_{A})e_{A,A})$ in $\Mm$.
\end{proof}
\end{invisible}

\begin{proposition}\label{prop:cofaitfullyflat}
Let $\pi:A \to Q$ be a morphism in $\mathsf{Comon}(\Mm)$ which is an epimorphism in $\Mm$. Suppose that the functor $(-)\square_{Q}A$ preserves and reflects epimorphisms. Then, $\id_{\mathsf{Ker}(\pi)}\square_{Q}\pi$ is an epimorphism in $\Mm$. As a consequence, $\pi$ is a coequalizer as in \eqref{picoequalizer}.

\begin{invisible}
In particular, this happens when $\pi$ is a morphism in $\Hopf$ which is an epimorphism in $\Mm$ and such that $(-)\square_{Q}A$ preserves and reflects epimorphisms.
\end{invisible}
\end{proposition}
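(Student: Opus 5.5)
The plan is to set $k:=\mathsf{ker}(\pi):\mathsf{Ker}(\pi)\to A$ and proceed in three steps. First I make $\mathsf{Ker}(\pi)$ a right $Q$-comodule. Then I prove that $\id_{\mathsf{Ker}(\pi)}\square_{Q}\pi$ is an epimorphism, by factoring it through a split epimorphism and using that $(-)\square_{Q}A$ reflects epimorphisms. Finally I deduce the coequalizer property \eqref{picoequalizer} from a short exact sequence of cotensor products, using that $(-)\square_Q A$ preserves epimorphisms.

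\emph{Step 1: comodule structures.} Regard $A$ as a $Q$-bicomodule via $(\pi\ot\id_A)\Delta_A$ and $(\id_A\ot\pi)\Delta_A$, and $Q$ as the regular bicomodule. Then $\pi$ is a bicomodule morphism, because $\Delta_Q\pi=(\pi\ot\pi)\Delta_A$. Since $(\pi\ot\pi)\Delta_A k=\Delta_Q\pi k=0$, the map $(\id_A\ot\pi)\Delta_A k$ is killed by $\pi\ot\id_Q$. Because $\ot$ is exact, $\mathsf{ker}(\pi\ot\id_Q)=k\ot\id_Q$, so this map factors through $\mathsf{Ker}(\pi)\ot Q$. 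This gives a right coaction on $\mathsf{Ker}(\pi)$ for which $k$ is colinear, hence a short exact sequence $0\to\mathsf{Ker}(\pi)\to A\to Q\to 0$ in $\Mm^{Q}$.

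\emph{Step 2: $\id_{\mathsf{Ker}(\pi)}\square_Q\pi$ is an epimorphism.} Since $\ot$ preserves equalizers, cotensor products are associative up to canonical isomorphism, i.e.\ $(X\square_Q Y)\square_Q Z\cong X\square_Q(Y\square_Q Z)$. Under this isomorphism,
\[(\id_{\mathsf{Ker}(\pi)}\square_Q\pi)\square_Q A\cong \id_{\mathsf{Ker}(\pi)}\square_Q(\pi\square_Q A).\]
Here $\mathsf{Ker}(\pi)\square_Q A$ is a right $Q$-comodule through the right coaction of $A$. Identifying $Q\square_Q A\cong A$ via $\Lambda'_A$, the morphism $\pi\square_Q A$ becomes $(\varepsilon_A\ot\id_A)e_{A,A}$. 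By coassociativity, $\Delta_A$ factors as $e_{A,A}\Delta'_A$ with $\Delta'_A:A\to A\square_Q A$, and $\Delta'_A$ is a section of $(\varepsilon_A\ot\id_A)e_{A,A}$. So $\pi\square_Q A$ is a split epimorphism, and any functor $\mathsf{Ker}(\pi)\square_Q(-)$ preserves split epimorphisms. Therefore $(\id_{\mathsf{Ker}(\pi)}\square_Q\pi)\square_Q A$ is an epimorphism, and since $(-)\square_Q A$ reflects epimorphisms, so is $\id_{\mathsf{Ker}(\pi)}\square_Q\pi$. The main obstacle is here: one has to check that the section $\Delta'_A$ is colinear for the relevant comodule structure, so that it survives the cotensor, and that the associativity isomorphisms are compatible with all the coactions involved. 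I expect this bookkeeping to need the cocommutativity of $A$ to match left and right coactions.

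\emph{Step 3: the coequalizer.} Apply $(-)\square_Q A$ to the sequence from Step 1. It is left exact automatically (kernels commute with equalizers, and $\ot$ is exact) and preserves epimorphisms by hypothesis. This gives an exact sequence
\[0\to\mathsf{Ker}(\pi)\square_Q A\to A\square_Q A\to Q\square_Q A\cong A\to 0.\]
In it, the last map is $(\varepsilon_A\ot\id_A)e_{A,A}$, so its kernel is $k\square_Q A$. Put $d:=(\varepsilon_A\ot\id_A)e_{A,A}-(\id_A\ot\varepsilon_A)e_{A,A}$. On $\mathsf{Ker}(\pi)\square_Q A$, the first term of $d$ vanishes by exactness. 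The remaining term is
\[-k(\id\ot\varepsilon_A)e_{\mathsf{Ker}(\pi),A}=-k\Lambda^{-1}_{\mathsf{Ker}(\pi)}(\id_{\mathsf{Ker}(\pi)}\square_Q\pi),\]
and by Step 2 its image is all of $\mathsf{Ker}(\pi)$. Hence $\mathrm{Im}(d)\supseteq\mathsf{Ker}(\pi)$. Conversely, $\pi d=0$ by the same computation as in 2) of Proposition \ref{prop:definitionphi}, which uses only that $\pi$ is a comonoid morphism. So $\mathrm{Im}(d)=\mathsf{Ker}(\pi)$, and since $\pi$ is an epimorphism in the abelian category $\Mm$, $\pi=\mathsf{coker}(d)$. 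That is precisely the coequalizer \eqref{picoequalizer}.
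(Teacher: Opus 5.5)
Your proposal is correct and is essentially the paper's proof. Both arguments use the same right coaction on $\mathsf{Ker}(\pi)$, the same splitting $\Delta'_A(\Lambda'_A)^{-1}$ of $\pi\square_Q\id_A$ pushed through $\mathsf{Ker}(\pi)\square_Q(-)$ and reflected by $(-)\square_Q A$, and the same final computation; the paper phrases that last step through the universal property of the coequalizer rather than through images. Note that the exactness of the cotensored sequence you invoke in Step 3 is not needed, since $(\varepsilon_A\ot\id_A)e_{A,A}(\mathsf{ker}(\pi)\square_Q\id_A)=(\varepsilon_A\,\mathsf{ker}(\pi)\ot\id_A)e_{\mathsf{Ker}(\pi),A}=0$ directly.

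The colinearity check you flag as the main obstacle requires only coassociativity, namely $((\pi\ot\id_A)\Delta_A\ot\id_A)\Delta_A=(\id_Q\ot\Delta_A)(\pi\ot\id_A)\Delta_A$. So no cocommutativity is needed, which matters because the statement only assumes that $\pi$ is a morphism in $\mathsf{Comon}(\Mm)$.
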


\begin{proof}
Since $\mathsf{ker}(\pi)\ot\mathrm{Id}_{Q}$ is the kernel of the morphism $\pi\ot\mathrm{Id}_{Q}$ in $\Mm$ and $(\pi\ot\id_{Q})(\id_{A}\ot\pi)\Delta_{A}\mathsf{ker}(\pi)=\Delta_{Q}\pi\mathsf{ker}(\pi)=0$, there is a unique morphism $\rho_{\mathsf{Ker}(\pi)}:\mathsf{Ker}(\pi)\to\mathsf{Ker}(\pi)\ot Q$ in $\Mm$ such that $(\mathsf{ker}(\pi)\ot\id_{Q})\rho_{\mathsf{Ker}(\pi)}=(\id_{A}\ot\pi)\Delta_{A}\mathsf{ker}(\pi)$. One can check that $\rho_{\mathsf{Ker}(\pi)}$ is a right $Q$-coaction on $\mathsf{Ker}(\pi)$, so that $\mathsf{ker}(\pi)$ is a morphism in $\Mm^{Q}$ by considering $A$ in $\Mm^{Q}$ with $(\id_{A}\ot\pi)\Delta_{A}$. Moreover, $A$ is an object in $^{Q}\Mm^{Q}$ with left comodule structure $(\pi\ot\id_{A})\Delta_{A}$, so that $\pi$ becomes a morphism in $^{Q}\Mm^{Q}$. Since $(^{Q}\Mm^{Q},\square_{Q},Q)$ is a monoidal category, the morphisms $\id_{A}\square_{Q}\pi$ and $\pi\square_{Q}\id_{A}$ are in $^{Q}\Mm^{Q}$.

Let $\Lambda'_{A}:A\to Q\square_{Q} A$ be the canonical isomorphism in $\Mm$ determined by $e_{Q,A}\Lambda'_{A}= \lambda_{A} = (\pi\ot\id_{A})\Delta_A$. There is a morphism $\Delta'_{A}:A\to A\square_{Q} A$ in $\Mm$ such that $e_{A,A}\Delta'_{A}=\Delta_A$. Consider $(\Lambda'_{A})^{-1}(\pi\square_{Q} \id_{A}):A\square_{Q} A\to A$, where $(\Lambda'_{A})^{-1}=(\varepsilon_{Q}\ot\id_{A})e_{Q,A}$. It is easy to check that $(\Lambda'_{A})^{-1} (\pi\square_{Q}\id_{A})\Delta'_A= \id_A$. Hence, $\pi\square_{Q}\id_{A}$ is a split epimorphism in $\Mm$. Therefore, $\id_{\mathsf{Ker}(\pi)}\square_{Q}\pi\square_{Q}\id_{A}$ is a split epimorphism in $\Mm$ and then, since $(-)\square_{Q}A$ reflects epimorphisms, we get that $\id_{\mathsf{Ker}(\pi)}\square_{Q}\pi$ is an epimorphism in $\Mm$.

We now show that $\pi$ is a coequalizer as in \eqref{picoequalizer}. Since
\[
\begin{split}
\pi(\varepsilon_{A}\ot\id_{A})e_{A,A}&=(\varepsilon_{A}\ot\id_{Q}\ot\varepsilon_{A})(\id_{A}\ot(\pi\ot\id_{A})\Delta_{A})e_{A,A}\\&=(\varepsilon_{A}\ot\id_{Q}\ot\varepsilon_{A})((\id_{A}\ot \pi)\Delta_{A}\ot\id_{A})e_{A,A}=\pi(\id_{A}\ot\varepsilon_{A})e_{A,A},
\end{split}
\]
i.e.\ $\pi$ coequalizes the pair $((\varepsilon_{A}\ot\id_{A})e_{A,A},(\id_{A}\ot\varepsilon_{A})e_{A,A})$, it remains to prove the universal property. Suppose there is a morphism $f:A\to Z$ in $\Mm$ such that $f(\varepsilon_{A}\ot\id_{A})e_{A,A}=f(\id_{A}\ot\varepsilon_{A})e_{A,A}$.  
We have
\[
\begin{split}
f\mathsf{ker}(\pi)(\mathrm{Id}_{\mathsf{Ker}(\pi)}\ot \varepsilon_{A})e_{\mathsf{Ker}(\pi),A}&=
f (\mathrm{Id}_{A}\ot \varepsilon_{A})(\mathsf{ker}(\pi)\ot \id_A)e_{\mathsf{Ker}(\pi),A}
\\&=f(\id_{A}\ot\varepsilon_{A})e_{A,A}(\mathsf{ker}(\pi)\square_{Q}\id_{A})\\&=f(\varepsilon_{A}\ot\id_{A})e_{A,A}(\mathsf{ker}(\pi)\square_{Q}\id_{A})\\&=f( \varepsilon_{A}\mathsf{ker}(\pi)\ot\mathrm{Id}_{A})e_{\mathsf{Ker}(\pi),A}\\&=f(\varepsilon_Q\pi\mathsf{ker}(\pi) \ot\mathrm{Id}_{A})e_{\mathsf{Ker}(\pi),A}=0.
\end{split}
\]
Because
\[
(\id_{\mathsf{Ker}(\pi)} \ot \varepsilon_{A})e_{\mathsf{Ker}(\pi),A} = (\id_{\mathsf{Ker}(\pi)} \ot \varepsilon_{Q}\pi)e_{\mathsf{Ker}(\pi),A} =  (\id_{\mathsf{Ker}(\pi)} \ot \varepsilon_{Q})e_{\mathsf{Ker}(\pi),Q}(\mathrm{Id}_{\mathsf{Ker}(\pi)} \square_{Q}\pi),
\]
and, as recalled in the preliminaries, the morphism $(\id_{\mathsf{Ker}(\pi)} \otimes \varepsilon_Q)e_{\mathsf{Ker}(\pi),Q}$ is an isomorphism in $\Mm$, we obtain $(\id_{\mathsf{Ker}(\pi)} \ot \varepsilon_{A})e_{\mathsf{Ker}(\pi),A}$ is an epimorphism in $\Mm$. Consequently, the previous calculation implies $f\mathsf{ker}(\pi)=0$. By the universal property of $\pi=\mathsf{coker}(\mathsf{ker}(\pi))$, there exists a unique morphism $\varphi:Q\to Z$ in $\Mm$ such that $\varphi\pi=f$.
\end{proof}

We now introduce the following definition:

\begin{definition}\label{deffaithcoflat}
    Let $(\Mm,\ot,\mathbf{1},\sigma)$ be an abelian symmetric monoidal category. We say that $(\Mm,\ot,\mathbf{1},\sigma)$ satisfies the ``faithful coflatness condition'' if, for any object $A$ in $\Hopf$ and any morphism $\pi:A\to Q$ in $\mathsf{Comon}_{\mathrm{coc}}(_{A}\Mm)$ which is an epimorphism in $\Mm$, $A$ is \textit{faithfully coflat} over $Q$, i.e.\ 
$(-)\square_{Q} A$ preserves and reflects epimorphisms. 
\end{definition}

\begin{remark}\label{rmk:prototypefaithcoflat}
    The prototype of this condition is given by $(\Mm,\ot,\mathbf{1},\sigma)=(\mathsf{Vec}_{\Bbbk},\ot_{\Bbbk},\Bbbk,\tau)$. It is known that, for any $A$ in $\mathsf{Hopf}_{\Bbbk,\mathrm{coc}}$ and any quotient morphism $\pi:A\to Q$ in $\mathsf{Comon}_{\mathrm{coc}}(_{A}\mm)$, $A$ is faithfully coflat over $Q$; this was proven in \cite[Theorem 1.3 (2)]{Masuoka}. 
\end{remark}

Under the cocommutativity assumption, $(-)\square_{Q}A$ preserves and reflects epimorphisms if and only if $A\square_{Q}(-)$ does.

\begin{lemma}\label{lemm:piididpi}
    Given a morphism $\pi:A\to Q$ in $\mathsf{Comon}_{\mathrm{coc}}(\Mm)$, we have that $\id_{A}\square_{Q}\pi$ is an epimorphism in $\Mm$ if and only if $\pi\square_{Q}\id_{A}$ is an epimorphism in $\Mm$.

    As a consequence, $A\square_{Q}(-)$ preserves and reflects epimorphisms if and only if $(-)\square_{Q}A$ preserves and reflects epimorphisms.
\end{lemma}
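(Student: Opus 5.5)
The plan is to use the symmetry $\sigma_{A,A}$, together with the cocommutativity of $A$ and $Q$, to build an isomorphism $\tau:A\square_{Q}A\to A\square_{Q}A$ that intertwines $\id_{A}\square_{Q}\pi$ and $\pi\square_{Q}\id_{A}$, up to the analogous flip isomorphism $A\square_{Q}Q\cong Q\square_{Q}A$. Throughout, $A$ is a $Q$-bicomodule with $\rho_{A}=(\id_{A}\ot\pi)\Delta_{A}$ and $\lambda_{A}=(\pi\ot\id_{A})\Delta_{A}$.

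\textbf{Step 1: the flip on $A\ot A$.} Because $A$ is cocommutative,
\[
\sigma_{Q,A}\rho_{A}=(\pi\ot\id_{A})\sigma_{A,A}\Delta_{A}=\lambda_{A},
\]
and, since $\sigma$ is a symmetry, $\sigma_{A,Q}\lambda_{A}=\rho_{A}$. Using naturality of $\sigma$ and the hexagon identities, one checks that
\[
(\rho_{A}\ot\id_{A})\sigma_{A,A}=\theta\,(\id_{A}\ot\lambda_{A}),\qquad (\id_{A}\ot\lambda_{A})\sigma_{A,A}=\theta\,(\rho_{A}\ot\id_{A}),
\]
where $\theta:A\ot Q\ot A\to A\ot Q\ot A$ is the isomorphism reversing the order of the three factors, built from $\sigma$. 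It follows that $\sigma_{A,A}e_{A,A}$ equalizes the defining pair of $A\square_{Q}A$. Hence there is a unique $\tau$ with $e_{A,A}\tau=\sigma_{A,A}e_{A,A}$, and $\tau^{2}=\id$ because $\sigma_{A,A}^{2}=\id$.

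\textbf{Step 2: the flip on the targets.} In the same way, there is an isomorphism $\tau':A\square_{Q}Q\to Q\square_{Q}A$ with $e_{Q,A}\tau'=\sigma_{A,Q}e_{A,Q}$, where $Q$ carries its regular coactions $\Delta_{Q}$. The same computation applies, now using cocommutativity of $Q$ as well.

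\textbf{Step 3: intertwining.} Naturality of $\sigma$ gives
\[
e_{Q,A}\tau'(\id_{A}\square_{Q}\pi)=\sigma_{A,Q}(\id_{A}\ot\pi)e_{A,A}=(\pi\ot\id_{A})\sigma_{A,A}e_{A,A}=e_{Q,A}(\pi\square_{Q}\id_{A})\tau .
\]
Since $e_{Q,A}$ is a monomorphism, $\tau'(\id_{A}\square_{Q}\pi)=(\pi\square_{Q}\id_{A})\tau$. As $\tau$ and $\tau'$ are isomorphisms, one of these morphisms is an epimorphism if and only if the other is.

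\textbf{Step 4: the consequence.} The same flip argument works for arbitrary objects. For any $X$ in $\Mm^{Q}$, make $X$ a left $Q$-comodule via $\sigma_{X,Q}^{-1}$; more precisely, take $\lambda_{X}=\sigma_{X,Q}\rho_{X}$, which is coassociative because $Q$ is cocommutative and $\sigma$ is symmetric. This defines an isomorphism of categories $\Mm^{Q}\cong{}^{Q}\Mm$, and there are natural isomorphisms $X\square_{Q}A\cong A\square_{Q}X$ induced by $\sigma_{X,A}$ on the equalizers, as in Step 1. Under this identification $(-)\square_{Q}A$ corresponds to $A\square_{Q}(-)$, so one functor preserves (resp.\ reflects) epimorphisms exactly when the other does. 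Here we use that the flip isomorphism of categories preserves and reflects epimorphisms, since its underlying morphisms are unchanged.

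The main obstacle is the coherence bookkeeping in Step 1 (and its analogue in Step 4): writing $\theta$ explicitly as a composite of braidings and verifying that both identities hold using only naturality, the hexagon axioms, symmetry, and cocommutativity. The rest of the argument is formal.
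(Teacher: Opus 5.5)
Your proposal is correct and is essentially the paper's proof. You lift $\sigma_{A,A}$ to an automorphism of $A\square_{Q}A$ using cocommutativity of $A$, lift the flip to an isomorphism between $A\square_{Q}Q$ and $Q\square_{Q}A$, and check that these intertwine $\id_{A}\square_{Q}\pi$ and $\pi\square_{Q}\id_{A}$.

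The differences are minor. The paper obtains the second isomorphism as $\Lambda_{A}(\Lambda'_{A})^{-1}$ instead of building it from $\sigma_{A,Q}$ via cocommutativity of $Q$. Your Step 4 also spells out the natural flip isomorphism $X\square_{Q}A\cong A\square_{Q}X$, which the paper leaves implicit for the ``as a consequence'' part.

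One small fix: in Step 1 the braiding indices are reversed, and it should read $\sigma_{A,Q}\rho_{A}=\lambda_{A}$ and $\sigma_{Q,A}\lambda_{A}=\rho_{A}$.
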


\begin{proof}
Since $A$ is cocommutative, we have
\[
\begin{split}
((\id_{A}\ot\pi)\Delta_{A}\ot\id_{A})e_{A,A}&=(\id_{A}\ot(\pi\ot\id_{A})\Delta_{A})e_{A,A}=(\id_{A}\ot(\pi\ot\id_{A})\sigma_{A,A}\Delta_{A})e_{A,A}\\&=(\id_{A}\ot\sigma_{A,Q})(\id_{A}\ot(\id_{A}\ot\pi)\Delta_{A})e_{A,A},
\end{split}
\]
and hence we obtain
\[
\begin{split}
    &(\sigma_{A,Q}\ot\id_{A})((\id_{A}\ot\pi)\Delta_{A}\ot\id_{A})\sigma_{A,A}e_{A,A}\\&=(\sigma_{A,Q}\ot\id_{A})\sigma_{A,A\ot Q}(\id_{A}\ot(\id_{A}\ot\pi)\Delta_{A})e_{A,A}\\&=(\sigma_{A,Q}\ot\id_{A})(\id_{A}\ot\sigma_{A,Q})(\sigma_{A,A}\ot\id_{Q})(\id_{A}\ot(\id_{A}\ot\pi)\Delta_{A})e_{A,A}\\&=(\id_{Q}\ot\sigma_{A,A})(\sigma_{A,Q}\ot\id_{A})(\id_{A}\ot\sigma_{A,Q})(\id_{A}\ot(\id_{A}\ot\pi)\Delta_{A})e_{A,A}\\&=(\id_{Q}\ot\sigma_{A,A})(\sigma_{A,Q}\ot\id_{A})((\id_{A}\ot\pi)\Delta_{A}\ot\id_{A})e_{A,A}\\&=(\id_{Q}\ot\sigma_{A,A})((\pi\ot\id_{A})\sigma_{A,A}\Delta_{A}\ot\id_{A})e_{A,A}\\&=(\id_{Q}\ot\sigma_{A,A})((\pi\ot\id_{A})\Delta_{A}\ot\id_{A})e_{A,A}.
\end{split}
\]
Therefore, 
\[
\begin{split}
((\id_{A}\ot\pi)\Delta_{A}\ot\id_{A})\sigma_{A,A}e_{A,A}&=(\sigma_{Q,A}\ot\id_{A})(\id_{Q}\ot\sigma_{A,A})((\pi\ot\id_{A})\Delta_{A}\ot\id_{A})e_{A,A}\\&=\sigma_{Q\ot A,A}((\pi\ot\id_{A})\Delta_{A}\ot\id_{A})e_{A,A}\\&=(\id_{A}\ot(\pi\ot\id_{A})\Delta_{A})\sigma_{A,A}e_{A,A}.
\end{split}
\]
By the universal property of the equalizer, there exists a unique morphism $\xi_{A,A}:A\square_{Q}A\to A\square_{Q}A$ in $\Mm$ such that $e_{A,A}\xi_{A,A}=\sigma_{A,A}e_{A,A}$. Similarly, there exists a unique morphism $\xi_{Q,A}:Q\square_{Q}A\to A\square_{Q}Q$ in $\Mm$ such that $e_{A,Q}\xi_{Q,A}=\sigma_{Q,A}e_{Q,A}$, which in fact can be obtained immediately by setting $\xi_{Q,A} = \Lambda_A (\Lambda_A')^{-1}$. \begin{invisible}
    In fact, $\xi$ is a natural transformation. 
\end{invisible}
More precisely, 
\begin{align*}
e_{A,Q}\xi_{Q,A} = &e_{A,Q} \Lambda_A (\Lambda_A')^{-1} = (\id_{A} \ot \pi) \Delta_{A} (\varepsilon_Q \ot \id_A) e_{Q,A} = (\varepsilon_Q \ot (\id_{A} \ot \pi)\sigma_{A,A}\Delta_{A}) e_{Q,A}\\
= &\sigma_{Q,A}(\varepsilon_Q \ot (\pi \ot \id_{A}) \Delta_{A}) e_{Q,A} = \sigma_{Q,A}((\varepsilon_Q \ot \id_{Q})\Delta_Q \ot \id_{A}) e_{Q,A} =\sigma_{Q,A} e_{Q,A}.
\end{align*}
Therefore, we have
\[
\begin{split}
e_{A,Q}\xi_{Q,A}(\pi\square_{Q}\id_{A})&=\sigma_{Q,A}e_{Q,A}(\pi\square_{Q}\id_{A})=\sigma_{Q,A}(\pi\ot\id_{A})e_{A,A}=(\id_{A}\ot\pi)\sigma_{A,A}e_{A,A}\\&=(\id_{A}\ot\pi)e_{A,A}\xi_{A,A}=e_{A,Q}(\id_{A}\square_{Q}\pi)\xi_{A,A}.
\end{split}
\]
Since $e_{A,Q}$ is a monomorphism in $\Mm$, we get $\xi_{Q,A}(\pi\square_{Q}\id_{A})=(\id_{A}\square_{Q}\pi)\xi_{A,A}$. Since $\sigma_{Q,A}$ and $\sigma_{A,A}$ are isomorphisms in $\Mm$, one can check that $\xi_{Q,A}$ and $\xi_{A,A}$ are isomorphisms in $\Mm$, so $\id_{A}\square_{Q}\pi$ is an epimorphism in $\Mm$ if and only if $\pi\square_{Q}\id_{A}$ is an epimorphism in $\Mm$.
\end{proof}

From now on, let $(\Mm,\ot,\mathbf{1},\sigma)$ be an abelian symmetric monoidal category that satisfies the ``faithful coflatness condition'', see Definition \ref{deffaithcoflat}. With the faithful coflatness condition, we are able to show that any morphism in $\Hopf$ factorizes as in Proposition \ref{prop:factorizationmorphisms}.

\begin{proposition}\label{prop:factmorp2}
   Every morphism $f:A\to B$ in $\Hopf$ satisfies that $\mathsf{coker}(\mathsf{ker}(f))$ is a coequalizer as in \eqref{picoequalizer}. 
   
   As a consequence, $\phi_{A}(\mathsf{hker}(f))=\mathsf{hcoker}(\mathsf{hker}(f))=\mathsf{coker}(\mathsf{ker}(f))$ and $f$ factorizes as a regular epimorphism (in fact, a cokernel) in $\Hopf$ followed by a monomorphism in $\Hopf$.
\end{proposition}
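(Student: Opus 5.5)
Set $\pi:=\mathsf{coker}(\mathsf{ker}(f)):A\to Q$ with $Q:=\mathsf{Coker}(\mathsf{ker}(f))$. The first claim reduces to checking that $\pi$ satisfies the hypotheses of Proposition \ref{prop:cofaitfullyflat}; the consequences then follow from results already established.

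By Remark \ref{rmk:kerHopfideal}, $\mathsf{Ker}(f)$ is a Hopf ideal of $A$. Hence $\pi$ is a morphism in $\Hopf$, and in fact in $\mathsf{Comon}_{\mathrm{coc}}(_{A}\Mm)$. Explicitly, $Q$ is a left $A$-module through $\mu_Q(\id_A\ot\pi)=\pi m_A$, which is well defined because $\mathsf{ker}(f)$ is left $A$-linear. Moreover $\Delta_Q$ and $\varepsilon_Q$ are $A$-linear because $\pi$ is a morphism of bimonoids. Cocommutativity of $Q$ descends from that of $A$, since $\pi$ is an epimorphism. Being a cokernel in the abelian category $\Mm$, $\pi$ is an epimorphism in $\Mm$.

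The faithful coflatness condition (Definition \ref{deffaithcoflat}) therefore applies to $\pi$: the functor $(-)\square_{Q}A$ preserves and reflects epimorphisms. Since $\pi$ is also a morphism in $\mathsf{Comon}(\Mm)$, Proposition \ref{prop:cofaitfullyflat} gives that $\pi$ is a coequalizer as in \eqref{picoequalizer}. This proves the first claim.

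For the consequences, Proposition \ref{prop:factorizationmorphisms} now applies to $f$. It yields $\mathsf{hcoker}(\mathsf{hker}(f))=\mathsf{coker}(\mathsf{ker}(f))$ and the factorization $f=i\pi$, where $i=\mathsf{ker}(\mathsf{coker}(f))$ is a monomorphism in $\Hopf$ and $\pi$ is a cokernel in $\Hopf$. The remaining equality $\phi_{A}(\mathsf{hker}(f))=\mathsf{hcoker}(\mathsf{hker}(f))$ is Lemma \ref{lem:regepicoeq}. Alternatively, it follows from 2) of Lemma \ref{lem:inducedbijectivecorr}, which gives $\phi_A(\mathsf{hker}(f))=\mathsf{coker}(\mathsf{ker}(f))$ directly.

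The only step that needs care is confirming that $\pi$ lies in $\mathsf{Comon}_{\mathrm{coc}}(_{A}\Mm)$, so that the faithful coflatness hypothesis is actually available. This is essentially contained in Remark \ref{rmk:kerHopfideal}; the rest is assembling earlier results.
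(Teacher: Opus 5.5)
Your proposal is correct and follows the paper's proof essentially step for step. You use Remark~\ref{rmk:kerHopfideal} to see that $\mathsf{coker}(\mathsf{ker}(f))$ is a morphism in $\mathsf{Comon}_{\mathrm{coc}}(_{A}\Mm)$ and an epimorphism in $\Mm$, then apply the faithful coflatness condition and Proposition~\ref{prop:cofaitfullyflat} to get the coequalizer property, and finish with Proposition~\ref{prop:factorizationmorphisms}, using Lemma~\ref{lem:regepicoeq} (or 2) of Lemma~\ref{lem:inducedbijectivecorr}) to identify the result with $\phi_{A}(\mathsf{hker}(f))$.
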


\begin{proof}
    We know that $\pi:=\mathsf{coker}(\mathsf{ker}(f)):A\to\mathrm{Im}(f)$ is a morphism in $\mathsf{Comon}_{\mathrm{coc}}(_{A}\Mm)$ (Remark \ref{rmk:kerHopfideal}), and an epimorphism in $\Mm$. Hence, by assumption on $\Mm$, the functor $(-)\square_{\mathrm{Im}(f)}A$ preserves and reflects epimorphisms. Therefore, by Proposition \ref{prop:cofaitfullyflat}, we get that $\pi$ is a coequalizer as in \eqref{picoequalizer}. 
    \begin{invisible}
    By 2) of Lemma \ref{lem:inducedbijectivecorr}, we have $\mathsf{coker}(\mathsf{ker}(f))=\phi_{A}(\mathsf{hker}(f))$ and then, by Lemma \ref{lem:regepicoeq}, we obtain $\mathsf{coker}(\mathsf{ker}(f))=\mathsf{hcoker}(\mathsf{hker}(f))$. Therefore, we have $f=i\pi$, where $\pi$ is a regular epimorphism in $\Hopf$ and $i$ is a monomorphism in $\Hopf$.
    \end{invisible}
    By Proposition \ref{prop:factorizationmorphisms}, we can conclude.
\end{proof}


By Proposition \ref{prop:factmorp2}, we can describe regular epimorphisms and monomorphisms in $\Hopf$. To this end, we observe that a morphism in $\Hopf$ is a monomorphism if and only if its kernel in $\Hopf$ is the zero morphism in $\Hopf$.

\begin{lemma}\label{lem:monohker}
Let $f:A\to B$ be a morphism in $\Hopf$. Then, $f$ is a monomorphism in $\Hopf$ if and only if $\mathsf{hker}(f)$ is the zero morphism in $\Hopf$ i.e.\ $\mathsf{hker}(f)=u_{A}\varepsilon_{\mathsf{Hker}(f)}$. 
\end{lemma}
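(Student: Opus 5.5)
This is the standard fact that, in a pointed category with kernels where the kernel pair of $f$ is a split extension, mono is detected by a trivial kernel. Here the cleanest route is the protomodularity of $\Hopf$ (Proposition \ref{prop:protomodularity}), since in any pointed protomodular category a morphism is a monomorphism if and only if its kernel is zero (see e.g.\ \cite{BB}). Still, I would give a direct argument using the explicit descriptions, so as to remain self-contained.

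The direction ($\Rightarrow$) is formal. If $f$ is a monomorphism in $\Hopf$, then $f\,\mathsf{hker}(f)=u_B\varepsilon_{\mathsf{HKer}(f)}=f\,u_A\varepsilon_{\mathsf{HKer}(f)}$, since $f$ is a map of Hopf monoids and the zero morphism is $u\varepsilon$ by Lemma \ref{lem:Hopfpointed}. Cancelling $f$ gives $\mathsf{hker}(f)=u_A\varepsilon_{\mathsf{HKer}(f)}$.

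For ($\Leftarrow$), I take $g,h:C\to A$ in $\Hopf$ with $fg=fh$ and form the convolution-type morphism $k:=m_A(g\ot S_Ah)\Delta_C:C\to A$. Because $C$ is cocommutative and $A$ is cocommutative, $S_A$ is a Hopf morphism and $\Delta_C$ is in $\Hopf$. Since $\Hopf$ is cartesian with product $\ot$ (Remark \ref{cartesian of Hopf}), $m_A$ is a Hopf morphism because $A$ is a group object in $\mathsf{Comon}_{\mathrm{coc}}(\Mm)$; I would also check directly that $m_A:A\ot A\to A$ is a comonoid map, using cocommutativity. Hence $k$ is a morphism in $\Hopf$. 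Then
\[
fk=m_B(fg\ot S_Bfh)\Delta_C=m_B(fh\ot S_Bfh)\Delta_C=u_B\varepsilon_C,
\]
so $k$ factors through $\mathsf{hker}(f)$, namely $k=\mathsf{hker}(f)\,p$ for some $p$. By hypothesis this gives $k=u_A\varepsilon_{\mathsf{HKer}(f)}p=u_A\varepsilon_C$.

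It remains to recover $g=h$ from $m_A(g\ot S_Ah)\Delta_C=u_A\varepsilon_C$. This is the convolution statement $g*(S_A h)=u\varepsilon$ in the monoid $\mathrm{Hom}(C,A)$. Since $h$ is a bimonoid map, $S_Ah$ is the convolution inverse of $h$. Therefore $g=g*(S_Ah)*h=u\varepsilon*h=h$, where associativity of convolution uses coassociativity of $\Delta_C$ and associativity of $m_A$. Thus $f$ is a monomorphism.

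The main point to verify carefully is that $k$ is genuinely a morphism in $\Hopf$, so that the universal property of $\mathsf{hker}(f)$ in $\Hopf$ applies. Cocommutativity and the symmetry are essential here, exactly as in the construction of binary products. Alternatively, one can just invoke protomodularity, with this computation serving as the explicit check.
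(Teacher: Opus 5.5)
The step that fails is the claim that $k:=m_A(g\ot S_Ah)\Delta_C$ is a morphism in $\Hopf$. Cocommutativity makes $S_A$ and $m_A$ morphisms of comonoids, but not of monoids. The antipode is anti-multiplicative, $S_Am_A=m_A\sigma_{A,A}(S_A\ot S_A)$, and $m_A\colon A\ot A\to A$ is multiplicative for the tensor product monoid structure only when $A$ is commutative. Being a group object in $\mathsf{Comon}_{\mathrm{coc}}(\Mm)$ means that $m_A$, $u_A$, $S_A$ are morphisms in $\mathsf{Comon}_{\mathrm{coc}}(\Mm)$, not in $\Hopf=\mathsf{Grp}(\mathsf{Comon}_{\mathrm{coc}}(\Mm))$. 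Already for group algebras, $c\mapsto g(c)h(c)^{-1}$ is not a group homomorphism when the target group is nonabelian. So you cannot apply the universal property of $\mathsf{hker}(f)$ in $\Hopf$ to $k$, as you do.

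The argument is easily repaired, because $\mathsf{hker}(f)$ is created in $\Mm$: by Lemma~\ref{lem:eqkernel} it is the equalizer in $\Mm$ of $((f\ot\id_A)\Delta_A,u_B\ot\id_A)$. Your $k$ is a counital comonoid morphism, being the composite of $\Delta_C$, $g\ot S_Ah$ and $m_A$, and you showed $fk=u_B\varepsilon_C$. Hence $(f\ot\id_A)\Delta_Ak=(fk\ot k)\Delta_C=u_B\ot k$, so $k=\mathsf{hker}(f)p$ for some $p$ in $\Mm$. Then $k=u_A\varepsilon_{\mathsf{HKer}(f)}p=u_A\varepsilon_A\mathsf{hker}(f)p=u_A\varepsilon_Ak=u_A\varepsilon_C$, and your convolution-inverse argument gives $g=h$. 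Alternatively, your first suggestion is valid as stated: in a pointed, finitely complete, protomodular category a morphism with zero kernel is a monomorphism, and $\Hopf$ is such a category by Proposition~\ref{prop:protomodularity}.

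Either way, your route differs from the paper's. Your ($\Rightarrow$) direction coincides with the paper's. For ($\Leftarrow$), the paper uses the factorization $f=i\,\mathsf{hcoker}(\mathsf{hker}(f))$ of Proposition~\ref{prop:factmorp2}, so it depends on the faithful coflatness condition. Yours needs only the explicit kernel and the group structure, so the lemma holds for any symmetric monoidal $\Mm$ with equalizers preserved by $\ot$.
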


\begin{proof}
Suppose $f$ is a monomorphism in $\Hopf$. Since $f\mathsf{hker}(f)=u_{B}\varepsilon_{\mathsf{Hker}(f)}=fu_{A}\varepsilon_{A}\mathsf{hker}(f)$, we obtain $\mathsf{hker}(f)=u_{A}\varepsilon_{A}\mathsf{hker}(f)=u_{A}\varepsilon_{\mathsf{Hker}(f)}$. Conversely, if $f$ has zero kernel in $\Hopf$, then $\pi:=\mathsf{hcoker}(\mathsf{hker}(f))=\mathsf{hcoker}(u_{A}\varepsilon_{\mathsf{Hker}(f)})=\mathrm{Id}_{A}$, see Corollary \ref{cor:cokernel}. Hence, since $f=i\pi$ by Proposition \ref{prop:factmorp2}, we get that $f=i$ is a monomorphism in $\Hopf$. 
\end{proof}

As a consequence, we obtain:

\begin{corollary}\label{cor:regepimono}
    The following facts hold in $\Hopf$:
\begin{itemize}
    \item[1)] regular epimorphisms in $\Hopf$ (equivalently, cokernels in $\Hopf$) coincide with the morphisms in $\Hopf$ that are epimorphisms in $\Mm$;
    \item[2)] monomorphisms in $\Hopf$ are exactly the morphisms in $\Hopf$ that are monomorphisms in $\Mm$.
\end{itemize}
\end{corollary}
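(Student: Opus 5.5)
Both parts follow from Proposition \ref{prop:factmorp2}, which factors every $f:A\to B$ in $\Hopf$ as $f=i\pi$ with $\pi=\mathsf{hcoker}(\mathsf{hker}(f))=\mathsf{coker}(\mathsf{ker}(f))$ and $i=\mathsf{ker}(\mathsf{coker}(f))$, i.e.\ as the image factorization in $\Mm$. We also use Corollary \ref{coeqequalcoker} (regular epimorphisms coincide with cokernels) and Lemma \ref{lem:monohker}.

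For 1), suppose first that $f$ is a regular epimorphism in $\Hopf$. Then $f=\mathsf{hcoker}(\mathsf{hker}(f))$ by \cite[Proposition 3.1.23]{BB}, and by Proposition \ref{prop:factmorp2} this equals $\mathsf{coker}(\mathsf{ker}(f))$, which is an epimorphism in $\Mm$. Conversely, let $f$ be in $\Hopf$ and epic in $\Mm$. Then $\mathsf{coker}(f)=0$, so $i=\mathsf{ker}(\mathsf{coker}(f))$ is an isomorphism in $\Mm$. Since $i$ is a morphism in $\Hopf$ and a bijective morphism of Hopf monoids, its inverse respects all structure maps (check by composing with $i$ or $i\ot i$), so $i$ is an isomorphism in $\Hopf$. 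Hence $f=i\pi$ is a regular epimorphism, because $\pi=\mathsf{hcoker}(\mathsf{hker}(f))$ is a cokernel.

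For 2), if $f$ is in $\Hopf$ and monic in $\Mm$, then it is monic in $\Hopf$, since the forgetful functor is faithful. Conversely, let $f$ be monic in $\Hopf$. By Lemma \ref{lem:monohker}, $\mathsf{hker}(f)$ is zero, so $\pi=\mathsf{hcoker}(0)=\mathrm{Id}_A$ by Corollary \ref{cor:cokernel}. Since $\pi=\mathsf{coker}(\mathsf{ker}(f))$, this means $\mathsf{coker}(\mathsf{ker}(f))$ is an isomorphism. In the abelian category $\Mm$ this forces $\mathsf{ker}(f)=0$, so $f=i$ is monic in $\Mm$.

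The argument carries little risk, because the substantive input, namely that the factorization in $\Hopf$ agrees with the image factorization in $\Mm$, is already contained in Proposition \ref{prop:factmorp2}. The only step that needs a check is the converse direction of 1), where one must confirm that the inverse of $i$ lies in $\Hopf$; this is routine.
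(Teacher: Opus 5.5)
Your proof is correct and follows essentially the same route as the paper: both directions rest on the factorization $f=i\pi$ from Proposition \ref{prop:factmorp2} together with Lemma \ref{lem:monohker}. The only deviation is in the forward direction of 1), where the paper just notes that coequalizers in $\Hopf$ are constructed as coequalizers in $\Mm$ (Section \ref{sec:colimits}), so Proposition \ref{prop:factmorp2} is not needed there; your explicit check that the inverse of $i$ lies in $\Hopf$ fills in a step the paper leaves implicit.
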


\begin{proof}
1). On one hand, the coequalizer in $\Hopf$ is a morphism in $\Hopf$ which is an epimorphism in $\Mm$, see Section \ref{sec:colimits}. On the other hand, for a morphism $f$ in $\Hopf$ which is an epimorphism in $\Mm$,  the factorization $f=i\pi$ given in \eqref{factorizationdiagram} implies that $i$ is an isomorphism in $\Mm$. Thus, $f$ is a coequalizer in $\Hopf$.

2). Clearly, a morphism in $\Hopf$ which is a monomorphism in $\Mm$ is a monomorphism in $\Hopf$. Conversely, for a monomorphism $f:A\to B$ in $\Hopf$, by Lemma \ref{lem:monohker}, we have $\mathsf{hker}(f)=u_{A}\varepsilon_{\mathsf{Hker}(f)}$. It follows that $\mathsf{coker}(\mathsf{ker}(f))=\mathsf{hcoker}(\mathsf{hker}(f))=\mathsf{hcoker}(u_{A}\varepsilon_{\mathsf{Hker}(f)})=\mathrm{Id}_{A}$ and then $f=\mathsf{ker}(\mathsf{coker}(f))$. Thus, $f$ is a monomorphism in $\Mm$.
\end{proof}

\subsection{Stability of regular epimorphisms along pullbacks}

To obtain that the finitely complete category $\Hopf$ is regular, it remains to prove that regular epimorphisms in $\Hopf$ are stable under pullbacks along any morphism in $\Hopf$. In this subsection, we let $(\Mm,\ot,\mathbf{1},\sigma)$ be an abelian symmetric monoidal category that satisfies the ``faithful coflatness condition'', see Definition \ref{deffaithcoflat}.

By \cite[Lemma 2.1]{GSV}, the fact that regular epimorphisms in $\Hopf$ are stable under pullbacks along any morphism in $\Hopf$ is equivalent to the following facts:
\begin{itemize}
    \item[1)] given any regular epimorphism $f:A\to B$ and any object $E$ in $\Hopf$, the induced arrow $\mathrm{Id}_{E}\times f:E\times A\to E\times B$ is a regular epimorphism in $\Hopf$;
    \item[2)] regular epimorphisms in $\Hopf$ are stable under pullbacks along split monomorphisms in $\Hopf$.
\end{itemize}

The first condition is clearly satisfied, as the following result shows:

\begin{lemma}\label{lem:regepipreservedbybinary}
     Let $f:A\to B$ be a regular epimorphism in $\Hopf$ and $E$ be an object in $\Hopf$. Then, the induced arrow $\mathrm{Id}_{E}\times f:E\times A\to E\times B$ is a regular epimorphism in $\Hopf$. 

\end{lemma}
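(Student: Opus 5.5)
The plan is to use Corollary \ref{cor:regepimono} 1), which says that regular epimorphisms in $\Hopf$ are exactly the morphisms of $\Hopf$ that are epimorphisms in $\Mm$. This reduces the statement to a property of the underlying morphism in $\Mm$.

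First, I make the induced arrow explicit. By \eqref{def:binaryproduct}, the binary products are $E\times A=E\ot A$ and $E\times B=E\ot B$. The induced arrow $\mathrm{Id}_{E}\times f$ is the unique morphism $\langle \pi_{E},f\pi_{A}\rangle$ in $\Hopf$. By the description of $\langle-,-\rangle$ in Subsection \ref{subsect:binprod}, it equals
\[
((\mathrm{Id}_{E}\ot\varepsilon_{A})\ot f(\varepsilon_{E}\ot\mathrm{Id}_{A}))\Delta_{E\ot A}.
\]
Unwinding $\Delta_{E\ot A}=(\mathrm{Id}_{E}\ot\sigma_{E,A}\ot\mathrm{Id}_{A})(\Delta_{E}\ot\Delta_{A})$ and applying the counit axioms, this simplifies to $\mathrm{Id}_{E}\ot f$. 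This is just the functoriality of the cartesian monoidal structure (Remark \ref{cartesian of Hopf}), so the computation is routine. Alternatively, $\mathrm{Id}_{E}\ot f$ is already a morphism in $\Hopf$ because $(\Hopf,\ot,\mathbf{1},\sigma)$ is monoidal, and it satisfies the two projection identities, so uniqueness in the universal property gives the same conclusion.

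Next, since $f$ is a regular epimorphism in $\Hopf$, Corollary \ref{cor:regepimono} 1) shows that $f$ is an epimorphism in $\Mm$. Because $\Mm$ is an abelian monoidal category, $E\ot(-)$ is exact and therefore preserves epimorphisms, so $\mathrm{Id}_{E}\ot f$ is an epimorphism in $\Mm$. Applying Corollary \ref{cor:regepimono} 1) once more, $\mathrm{Id}_{E}\times f$ is a regular epimorphism in $\Hopf$.

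None of these steps is a serious obstacle. The only point needing care is identifying $\mathrm{Id}_{E}\times f$ with $\mathrm{Id}_{E}\ot f$, which follows from the counit computation above. If one wanted to avoid the faithful coflatness hypothesis behind Corollary \ref{cor:regepimono}, one could argue directly instead: write $f$ as the coequalizer $\mathsf{Coeq}(\phi_{g},\phi_{h})$ of Proposition \ref{prop:coequalizerHopf} and check that $\mathrm{Id}_{E}\ot f$ is the coequalizer in $\Hopf$ of $\mathrm{Id}_{E}\ot g$ and $\mathrm{Id}_{E}\ot h$, using that $\ot$ preserves coequalizers. Under the standing assumptions of this subsection, however, the first argument suffices.
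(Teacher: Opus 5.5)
Your proof is correct and follows the paper's argument exactly: identify $\mathrm{Id}_{E}\times f$ with $\mathrm{Id}_{E}\otimes f$, use 1) of Corollary~\ref{cor:regepimono} to see that $f$ is an epimorphism in $\Mm$, note that $E\otimes(-)$ preserves epimorphisms, and apply the same corollary again. Your explicit counit computation of $\langle \pi_{E}, f\pi_{A}\rangle$ fills in a step that the paper simply asserts by reference to Subsection~\ref{subsect:binprod}.
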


\begin{proof}

Recall that the binary products $E\times A$ and $E\times B$ are given by $E\otimes A$ and $E\otimes B$, respectively, and the induced arrow $\mathrm{Id}_{E}\times f$ is $\mathrm{Id}_{E}\otimes f$, see Subsection \ref{subsect:binprod}. By 1) of Corollary \ref{cor:regepimono}, we know that $f$ is is an epimorphism in $\Mm$. Consequently, $\mathrm{Id}_{E}\ot f$ is an epimorphism in $\Mm$. Hence, $\mathrm{Id}_{E}\ot f$ is a regular epimorphism in $\Hopf$ by 1) of Corollary \ref{cor:regepimono}.
\end{proof}

Therefore, it remains to prove that regular epimorphisms in $\Hopf$ are stable under pullbacks along split monomorphisms in $\Hopf$. In order to do this, we first prove the following results.

\begin{lemma}\label{lem:p1p2Hopf}
    Let $p:A\to B$ be a morphism in $\Hopf$ and $i:C\to B$ a monomorphism in $\Hopf$. We denote by $(p^{-1}(C),p_{1},p_{2})$ the following pullback in $\Mm$:
\begin{equation}\label{pullbackpC}
\begin{tikzcd}
	p^{-1}(C) & C\ot A \\
	A & B\ot A
	\arrow[from=1-1, to=1-2,"p_{2}"]
	\arrow[from=1-1, to=2-1, "p_{1}"']
	\arrow["\lrcorner"{anchor=center, pos=0.125}, draw=none, from=1-1, to=2-2]
	\arrow[from=1-2, to=2-2, "i\ot\mathrm{Id}_{A}"]
	\arrow[from=2-1, to=2-2, "(p\ot\mathrm{Id}_{A})\Delta_{A}"']
\end{tikzcd}
\end{equation}
Then, $p^{-1}(C)$ is in $\Hopf$ and $p_{1}$ and $p_{2}$ are morphisms in $\Hopf$.   
\end{lemma}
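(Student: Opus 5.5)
Since $i\ot\mathrm{Id}_{A}$ is a monomorphism in $\Mm$ (as $\ot$ is exact), the pullback \eqref{pullbackpC} in $\Mm$ has $p_{1}$ a monomorphism in $\Mm$. The plan is to identify $p_{1}:p^{-1}(C)\to A$ with the equalizer in $\Mm$ of a pair of morphisms coming from $\Hopf$, and then use the description of equalizers in $\Hopf$ (Lemma \ref{lem:eqkernel}). Concretely, let $c:=\mathsf{coker}(i):B\to\mathsf{Coker}(i)$ in $\Mm$. Exactness of $(-)\ot A$ gives that $i\ot\mathrm{Id}_{A}=\mathsf{ker}(c\ot\mathrm{Id}_{A})$. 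Hence $p_{1}$ is the kernel in $\Mm$ of $(c\ot\mathrm{Id}_{A})(p\ot\mathrm{Id}_{A})\Delta_{A}=(cp\ot\mathrm{Id}_{A})\Delta_{A}$, and $p_{2}$ is the induced map. So $p^{-1}(C)$ is the subobject $\{x\in A : (p\ot\mathrm{Id})\Delta(x)\in C\ot A\}$.

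First I will show that $p^{-1}(C)$ is a subcomonoid of $A$, i.e.\ that $\Delta_{A}p_{1}$ factors through $p_{1}\ot p_{1}$. The argument uses that $p_{1}\ot p_{1}$ is the intersection of $p_{1}\ot A$ and $A\ot p_{1}$ (exactness of $\ot$). I need two things. First, $(\mathrm{Id}_{A}\ot(cp\ot\mathrm{Id}_{A})\Delta_{A})\Delta_{A}p_{1}=0$. Second, $((cp\ot\mathrm{Id}_{A})\Delta_{A}\ot\mathrm{Id}_{A})\Delta_{A}p_{1}=0$. Both follow from coassociativity together with cocommutativity of $A$: we rewrite $(\mathrm{Id}\ot cp\ot\mathrm{Id})(\Delta\ot\mathrm{Id})\Delta$ using $\sigma$ so that the factor $cp$ acts on the first leg, whence it is killed by $p_{1}$. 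Counit and cocommutativity are then inherited along the mono $p_{1}$, as in Corollary \ref{lemma:limitscocombim}. Next I will show that $p^{-1}(C)$ is closed under $m_{A}$, $u_{A}$, $S_{A}$.
\begin{itemize}
\item For the unit, $(p\ot\mathrm{Id})\Delta u_{A}=iu_{C}\ot u_{A}$.
\item For the multiplication, $(p\ot\mathrm{Id})\Delta m_{A}(p_{1}\ot p_{1})=(m_{B}\ot m_{A})(\mathrm{Id}\ot\sigma\ot\mathrm{Id})\big((p\ot\mathrm{Id})\Delta p_{1}\ot(p\ot\mathrm{Id})\Delta p_{1}\big)$. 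This factors through $(i\ot\mathrm{Id}_{A})(m_{C}\ot m_{A})(\mathrm{Id}\ot\sigma\ot\mathrm{Id})(p_{2}\ot p_{2})$, since $i$ is a monoid map.
\item For the antipode, $(p\ot\mathrm{Id})\Delta S_{A}=(S_{B}p\ot S_{A})\Delta$ by cocommutativity, and this factors as $(i\ot\mathrm{Id})(S_{C}\ot S_{A})p_{2}$ after precomposing with $p_{1}$.
\end{itemize}
By the universal property of the pullback we obtain $m$, $u$, $S$ on $p^{-1}(C)$. The Hopf axioms then hold because $p_{1}$ is a monomorphism in $\Mm$ that is preserved by $\ot$, so $p_{1}$ is a morphism in $\Hopf$.

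For $p_{2}$, I use that $i\ot\mathrm{Id}_{A}$ is a monomorphism in $\Mm$. Compatibility with multiplication, unit and antipode then reduces to the computations just made, since $(i\ot\mathrm{Id})p_{2}=(p\ot\mathrm{Id})\Delta_{A}p_{1}$ and $(p\ot\mathrm{Id})\Delta_{A}$ is a morphism in $\Hopf$: it is $\langle p,\mathrm{Id}_{A}\rangle$ into the product $B\ot A$ of \eqref{def:binaryproduct}. For comultiplicativity of $p_{2}$, I check $(i\ot\mathrm{Id}\ot i\ot\mathrm{Id})\Delta_{C\ot A}p_{2}=\Delta_{B\ot A}(p\ot\mathrm{Id})\Delta p_{1}=((p\ot\mathrm{Id})\Delta\ot(p\ot\mathrm{Id})\Delta)(p_{1}\ot p_{1})\Delta_{p^{-1}(C)}$ and then cancel the monomorphism $i\ot\mathrm{Id}\ot i\ot\mathrm{Id}$. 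Indeed $p_{2}$ is just the corestriction of the $\Hopf$-morphism $\langle p,\mathrm{Id}\rangle p_{1}$ along the $\Hopf$-mono $i\ot\mathrm{Id}_{A}$.

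The main obstacle is the subcomonoid step. Cocommutativity is essential there, and the leg-swapping must be carried out carefully with $\sigma$ to place $cp$ on the leg killed by $p_{1}$.
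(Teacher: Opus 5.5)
Your proof is correct. For the multiplication, unit and antipode it is the paper's argument: the same computations of $(p\ot\mathrm{Id}_A)\Delta_A m_A(p_1\ot p_1)$ and $(p\ot\mathrm{Id}_A)\Delta_A S_A p_1$, followed by the universal property of the pullback.

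The difference is in the comonoid structure and in the treatment of $p_2$.
\begin{itemize}
\item The paper stays with pullbacks, using that $\ot$ preserves them. It first factors $\Delta_A p_1$ through $p_1\ot\mathrm{Id}_A$, giving a map $\psi$; this needs only coassociativity. It then builds an auxiliary $\psi'\colon p^{-1}(C)\to p^{-1}(C)\ot C\ot A$, which is where cocommutativity is used. Finally it obtains $\Delta_{p^{-1}(C)}$ from $p^{-1}(C)\ot(-)$ applied to \eqref{pullbackpC}, so that $(p_2\ot p_2)\Delta_{p^{-1}(C)}=\Delta_{C\ot A}p_2$ holds by construction.
\item You instead write $p_1=\mathsf{ker}((cp\ot\mathrm{Id}_A)\Delta_A)$ and use that $p_1\ot p_1$ is the intersection of $p_1\ot\mathrm{Id}_A$ and $\mathrm{Id}_A\ot p_1$; this does follow from exactness of $\ot$. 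The comonoid step then reduces to two vanishing identities, and the role of cocommutativity becomes transparent. In fact only your first identity (factoring through $\mathrm{Id}_A\ot p_1$) needs it; the second is coassociativity alone, exactly mirroring the paper's $\psi'$ versus $\psi$.
\item You treat $p_2$ as the corestriction of the $\Hopf$-morphism $\langle p,\mathrm{Id}_A\rangle p_1$ along $i\ot\mathrm{Id}_A$, which is a morphism in $\Hopf$ and, together with $(i\ot\mathrm{Id}_A)^{\ot 2}$, a monomorphism in $\Mm$. This is more economical than the paper's bookkeeping.
\end{itemize}

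Two minor remarks.
\begin{itemize}
\item The announced reduction to Lemma \ref{lem:eqkernel} is never used, and it could not be. The morphism $c=\mathsf{coker}(i)$ is not a morphism of comonoids for any comonoid structure on $\mathsf{Coker}(i)$, because $ci=0$ whereas $\varepsilon_B i u_C=\mathrm{Id}_{\mathbf{1}}$. You should drop that sentence.
\item As in the paper, you need $i$ to be a monomorphism in $\Mm$, so that $i\ot\mathrm{Id}_A=\mathsf{ker}(c\ot\mathrm{Id}_A)$. Here this is supplied by 2) of Corollary \ref{cor:regepimono}, under the standing faithful coflatness assumption.
\end{itemize}
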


\begin{proof}
    We compute
\[
\begin{split}
(p\ot\mathrm{Id}_{A})\Delta_{A}m_{A}(p_{1}\ot p_{1})&=(p\ot\mathrm{Id}_{A})(m_{A}\ot m_{A})(\mathrm{Id}_{A}\ot\sigma_{A,A}\ot\mathrm{Id}_{A})(\Delta_{A}\ot\Delta_{A})(p_{1}\ot p_{1})\\&=(m_{B}\ot m_{A})(p\ot p\ot\mathrm{Id}_{A\ot A})(\mathrm{Id}_{A}\ot\sigma_{A,A}\ot\mathrm{Id}_{A})(\Delta_{A}\ot\Delta_{A})(p_{1}\ot p_{1})\\&=(m_{B}\ot m_{A})(\mathrm{Id}_{B}\ot\sigma_{A,B}\ot\mathrm{Id}_{A})(p\ot\mathrm{Id}_{A}\ot p\ot\mathrm{Id}_{A})(\Delta_{A}\ot\Delta_{A})(p_{1}\ot p_{1})\\&=(m_{B}\ot m_{A})(\mathrm{Id}_{B}\ot\sigma_{A,B}\ot\mathrm{Id}_{A})(i\ot\mathrm{Id}_{A}\ot i\ot\mathrm{Id}_{A})(p_{2}\ot p_{2})\\&=(m_{B}\ot m_{A})(i\ot i\ot\mathrm{Id}_{A\ot A})(\mathrm{Id}_{C}\ot\sigma_{A,C}\ot\mathrm{Id}_{A})(p_{2}\ot p_{2})\\&=(i\ot\mathrm{Id}_{A})(m_{C}\ot m_{A})(\mathrm{Id}_{C}\ot\sigma_{A,C}\ot\mathrm{Id}_{A})(p_{2}\ot p_{2})\\&=(i\ot\mathrm{Id}_{A})m_{C\ot A}(p_{2}\ot p_{2}),
\end{split}
\]
so, by the universal property of the pullback, there exists a unique morphism $m_{p^{-1}(C)}:p^{-1}(C)\ot p^{-1}(C)\to p^{-1}(C)$ in $\Mm$ such that $p_{1}m_{p^{-1}(C)}=m_{A}(p_{1}\ot p_{1})$ and $p_{2}m_{p^{-1}(C)}=m_{C\ot A}(p_{2}\ot p_{2})$. Moreover, we have
\[
(p\ot\mathrm{Id}_{A})\Delta_{A}u_{A}=(p\ot\mathrm{Id}_{A})(u_{A}\ot u_{A})=u_{B}\ot u_{A}=(i\ot\mathrm{Id}_{A})(u_{C}\ot u_{A}),
\]
so there exists a unique morphism $u_{p^{-1}(C)}:\mathbf{1}\to p^{-1}(C)$ in $\Mm$ such that $u_{A}=p_{1}u_{p^{-1}(C)}$ and $p_{2}u_{p^{-1}(C)}=u_{C\ot A}$. Since $i$ is a monomorphism in $\Mm$, also $i\ot\mathrm{Id}_{A}$ is a monomorphism in $\Mm$ and then also $p_{1}$ is a monomorphism in $\Mm$ as monomorphisms are stable under pullbacks. Then, using that $p_{1}$ is a monomorphism in $\Mm$ and that $(A,m_{A},u_{A})$ is an object in $\mathsf{Mon}(\Mm)$, one can show that $(p^{-1}(C),m_{p^{-1}(C)},u_{p^{-1}(C)})$ is an object in $\mathsf{Mon}(\Mm)$, so that $p_{1}$ and $p_{2}$ are morphisms in $\mathsf{Mon}(\Mm)$. Observe that, since we are considering an abelian monoidal category, we have that $\ot$ preserves binary products. 
\begin{invisible}
Indeed, we have
$A\ot(B\times C)=A\ot (B \oplus C) \cong (A \ot B) \oplus (A \ot C)=(A\ot B)\times(A\ot C)$, for any $A,B,C\in\Mm$, i.e.\ $A\ot(-)$ preserves binary products for any $A\in\Mm$. 
\end{invisible}
As a consequence, since $\ot$ preserves equalizers, we have that $\ot$ also preserves pullbacks. Hence we obtain the following pullback in $\Mm$:
\[\begin{tikzcd}
	p^{-1}(C)\ot A && C\ot A\ot A \\
	A\ot A && B\ot A\ot A
	\arrow[from=1-1, to=1-3,"p_{2}\ot\mathrm{Id}_{A}"]
	\arrow[from=1-1, to=2-1, "p_{1}\ot\mathrm{Id}_{A}"']
	\arrow["\lrcorner"{anchor=center, pos=0.125}, draw=none, from=1-1, to=2-3]
	\arrow[from=1-3, to=2-3, "i\ot\mathrm{Id}_{A\ot A}"]
	\arrow[from=2-1, to=2-3, "(p\ot\mathrm{Id}_{A})\Delta_{A}\ot\mathrm{Id}_{A}"']
\end{tikzcd}\]
Since $((p\ot\mathrm{Id}_{A})\Delta_{A}\ot\mathrm{Id}_{A})\Delta_{A}p_{1}=(p\ot\Delta_{A})\Delta_{A}p_{1}=
(i\ot\mathrm{Id}_{A\ot A})(\mathrm{Id}_{C}\ot\Delta_{A})p_{2}$, there exists a unique morphism $\psi:p^{-1}(C)\to p^{-1}(C)\ot A$ in $\Mm$ such that $(p_{1}\ot\mathrm{Id}_{A})\psi=\Delta_{A}p_{1}$ and $(p_{2}\ot\mathrm{Id}_{A})\psi=(\mathrm{Id}_{C}\ot\Delta_{A})p_{2}$. Moreover, also the following diagram is a pullback in $\Mm$:
\[\begin{tikzcd}
	p^{-1}(C)\ot C \ot A && C\ot A\ot C \ot A \\
	A\ot C \ot A && B\ot A\ot C \ot A
	\arrow[from=1-1, to=1-3,"p_{2}\ot\mathrm{Id}_{C\ot A}"]
	\arrow[from=1-1, to=2-1, "p_{1}\ot\mathrm{Id}_{C\ot A}"']
	\arrow["\lrcorner"{anchor=center, pos=0.125}, draw=none, from=1-1, to=2-3]
	\arrow[from=1-3, to=2-3, "i\ot\mathrm{Id}_{A}\ot\mathrm{Id}_{C\ot A}"]
	\arrow[from=2-1, to=2-3, "(p\ot\mathrm{Id}_{A})\Delta_{A}\ot\mathrm{Id}_{C\ot A}"']
\end{tikzcd}\]
Since $\mathrm{Id}_{B\ot A} \ot i \ot \mathrm{Id}_A$ is a monomorphism in $\Mm$ and
\begin{align*}
&(\mathrm{Id}_{B\ot A} \ot i \ot \mathrm{Id}_A) (i  \ot \mathrm{Id}_A \ot \mathrm{Id}_{C \ot A})\Delta_{C\ot A}p_{2}\\=&(\mathrm{Id}_{B\ot A} \ot i \ot \mathrm{Id}_A) (i  \ot \mathrm{Id}_A \ot \mathrm{Id}_{C \ot A}) (\mathrm{Id}_C \ot \sigma_{C,A} \ot \mathrm{Id}_A) (\Delta_C \ot \Delta_A) p_2\\
= &(\mathrm{Id}_B \ot \sigma_{B,A} \ot \mathrm{Id}_A) (i \ot i \ot \mathrm{Id}_A \ot \mathrm{Id}_A)(\Delta_C \ot \Delta_A) p_2
= (\mathrm{Id}_B \ot \sigma_{B,A} \ot \mathrm{Id}_A) (\Delta_B \ot \Delta_A) (i\ot \mathrm{Id}_A) p_2\\ =& (\mathrm{Id}_B \ot \sigma_{B,A} \ot \mathrm{Id}_A) (\Delta_B \ot \Delta_A) (p\ot\mathrm{Id}_{A})\Delta_{A}p_{1}
= (\mathrm{Id}_B \ot \sigma_{B,A} \ot \mathrm{Id}_A) ( (p\ot p)\Delta_A \ot \Delta_A) \Delta_{A}p_{1}\\
= &(p \ot \mathrm{Id}_A \ot p \ot \mathrm{Id}_A)(\mathrm{Id}_A \ot \sigma_{A,A} \ot \mathrm{Id}_A) (\Delta_A \ot \Delta_A) \Delta_{A}p_{1}
\\
= &(p \ot \mathrm{Id}_A \ot p \ot \mathrm{Id}_A)(\mathrm{Id}_A \ot \sigma_{A,A} \ot \mathrm{Id}_A) (\mathrm{Id}_A \ot \Delta_A \ot \mathrm{Id}_A)(\Delta_A \ot \mathrm{Id}_A) \Delta_{A}p_{1}\\
= &(p \ot \mathrm{Id}_A \ot p \ot \mathrm{Id}_A) (\mathrm{Id}_A \ot \Delta_A \ot \mathrm{Id}_A)(\Delta_A \ot \mathrm{Id}_A) \Delta_{A}p_{1}\\
= &(p \ot \mathrm{Id}_A \ot p \ot \mathrm{Id}_A) (\Delta_A \ot \Delta_A) \Delta_{A}p_{1}
= (p \ot \mathrm{Id}_A \ot p \ot \mathrm{Id}_A) (\Delta_A \ot \Delta_A) \sigma_{A,A} \Delta_{A}p_{1}\\
= &(p \ot \mathrm{Id}_A \ot p \ot \mathrm{Id}_A) (\Delta_A \ot \Delta_A) \sigma_{A,A} (p_{1}\ot \mathrm{Id}_A)\psi
= ((p \ot \mathrm{Id}_A)\Delta_A \ot (p \ot \mathrm{Id}_A)\Delta_A p_1)  \sigma_{p^{-1}(C),A} \psi\\
= &((p \ot \mathrm{Id}_A)\Delta_A \ot (i \ot \mathrm{Id}_A)p_2)  \sigma_{p^{-1}(C),A} \psi\\
= &(\mathrm{Id}_{B\ot A} \ot i \ot \mathrm{Id}_A) ((p \ot \mathrm{Id}_A)\Delta_A \ot \mathrm{Id}_{C \ot A}) (\mathrm{Id}_A \ot p_2)\sigma_{p^{-1}(C),A} \psi,
\end{align*}
we obtain 
\[
(i  \ot \mathrm{Id}_A \ot \mathrm{Id}_{C \ot A})\Delta_{C\ot A} p_2 = ((p \ot \mathrm{Id}_A)\Delta_A \ot \mathrm{Id}_{C \ot A}) (\mathrm{Id}_A \ot p_2)\sigma_{p^{-1}(C),A} \psi.
\]
Therefore, there exists a unique morphism $\psi': p^{-1}(C) \to p^{-1}(C) \ot C \ot A$ in $\Mm$ such that $(p_2 \ot \mathrm{Id}_{C \ot A}) \psi' =\Delta_{C\ot A} p_2$ and $(p_1 \ot \mathrm{Id}_{C \ot A}) \psi' = (\mathrm{Id}_A \ot p_2)\sigma_{p^{-1}(C),A} \psi$. Furthermore, also the following diagram is a pullback in $\Mm$:
\[\begin{tikzcd}
	p^{-1}(C)\ot p^{-1}(C) && p^{-1}(C)\ot C \ot A \\
	p^{-1}(C) \ot A && p^{-1}(C) \ot B \ot A
	\arrow[from=1-1, to=1-3,"\mathrm{Id}\ot p_{2}"]
	\arrow[from=1-1, to=2-1, "\mathrm{Id}\ot p_{1}"']
	\arrow["\lrcorner"{anchor=center, pos=0.125}, draw=none, from=1-1, to=2-3]
	\arrow[from=1-3, to=2-3, "\mathrm{Id}\ot i\ot\mathrm{Id}_{A}"]
	\arrow[from=2-1, to=2-3, "\mathrm{Id}\ot(p\ot\mathrm{Id}_{A})\Delta_{A}"']
\end{tikzcd}\]
Since
\[
(p_{1}\ot\mathrm{Id}_A)\psi=\Delta_{A}p_{1} = \sigma_{A,A} \Delta_{A}p_{1} = \sigma_{A,A} (p_{1}\ot\mathrm{Id}_A)\psi = (\mathrm{Id}_A\ot p_{1}) \sigma_{p^{-1}(C),A} \psi,
\]
we get
\begin{align*}
&(p_1 \ot \mathrm{Id}_{B \ot A}) (\mathrm{Id}_{p^{-1}(C)} \ot i\ot\mathrm{Id}_{A}) \psi' = (\mathrm{Id}_{A} \ot i\ot\mathrm{Id}_{A}) (p_1 \ot \mathrm{Id}_{C \ot A})\psi'\\
= &(\mathrm{Id}_{A} \ot i\ot\mathrm{Id}_{A}) (\mathrm{Id}_A \ot p_2)\sigma_{p^{-1}(C),A} \psi 
= (\mathrm{Id}_{A} \ot (p\ot\mathrm{Id}_{A})\Delta_{A}) (\mathrm{Id}_{A} \ot p_1)\sigma_{p^{-1}(C),A} \psi \\=& (\mathrm{Id}_{A} \ot (p\ot\mathrm{Id}_{A})\Delta_{A})(p_{1}\ot\mathrm{Id}_A)\psi=(p_{1}\ot\mathrm{Id}_{B \ot A})(\mathrm{Id}_{p^{-1}(C)} \ot (p\ot\mathrm{Id}_{A})\Delta_{A})\psi.
\end{align*}
Since $p_{1}\ot\mathrm{Id}_{B \ot A}$ is a monomorphism in $\Mm$, we obtain $(\mathrm{Id}_{p^{-1}(C)} \ot i\ot\mathrm{Id}_{A}) \psi' = (\mathrm{Id}_{p^{-1}(C)} \ot(p\ot\mathrm{Id}_{A})\Delta_{A}) \psi$. Consequently, there is a unique morphism $\Delta_{p^{-1}(C)}: p^{-1}(C) \to  p^{-1}(C) \ot p^{-1}(C)$ in $\Mm$ such that $(\mathrm{Id}_{p^{-1}(C)} \ot p_2) \Delta_{p^{-1}(C)} = \psi'$ and $(\mathrm{Id}_{p^{-1}(C)} \ot p_1)\Delta_{p^{-1}(C)} = \psi$. Then, we have
\[
(p_{1}\ot p_{1})\Delta_{p^{-1}(C)}=(p_{1}\ot\mathrm{Id}_{A})(\mathrm{Id}_{p^{-1}(C)} \ot p_1)\Delta_{p^{-1}(C)}=(p_{1}\ot\mathrm{Id}_{A})\psi=\Delta_{A}p_{1}
\]
and also
\[
(p_{2}\ot p_{2})\Delta_{p^{-1}(C)}=(p_{2}\ot\mathrm{Id}_{C\ot A})(\mathrm{Id}_{p^{-1}(C)}\ot p_{2})\Delta_{p^{-1}(C)}=(p_{2}\ot\mathrm{Id}_{C\ot A})\psi'=\Delta_{C\ot A}p_{2}.
\]
Moreover, we define $\varepsilon_{p^{-1}(C)}:=\varepsilon_{A}p_{1}$ which is also equal to
\[
\varepsilon_{A}p_{1}=(\varepsilon_{B}\ot\varepsilon_{A})(p\ot\mathrm{Id}_{A})\Delta_{A}p_{1}=(\varepsilon_{B}\ot\varepsilon_{A})(i\ot\mathrm{Id}_{A})p_{2}=(\varepsilon_{C}\ot\varepsilon_{A})p_{2}.
\]
One can check that $(p^{-1}(C),\Delta_{p^{-1}}(C),\varepsilon_{p^{-1}(C)})$ is an object in $\mathsf{Comon}(\Mm)$, so $p_{1}$ and $p_{2}$ becomes morphisms in $\mathsf{Comon}(\Mm)$. Since $A$ is an object in $\mathsf{Bimon}(\Mm)$, we automatically obtain that $p^{-1}(C)$ is an object in $\mathsf{Bimon}(\Mm)$. Finally, we compute
\[
\begin{split}
(p\ot\mathrm{Id}_{A})\Delta_{A}S_{A}p_{1}&=(p\ot\mathrm{Id}_{A})(S_{A}\ot S_{A})\Delta_{A}p_{1}=(S_{B}\ot S_{A})(p\ot\mathrm{Id}_{A})\Delta_{A}p_{1}=(S_{B}\ot S_{A})(i\ot\mathrm{Id}_{A})p_{2}\\&=(i\ot\mathrm{Id}_{A})(S_{C}\ot S_{A})p_{2}=(i\ot\mathrm{Id}_{A})S_{C\ot A}p_{2}.
\end{split}
\]
Thus, there exists a unique morphism $S_{p^{-1}(C)}:p^{-1}(C)\to p^{-1}(C)$ in $\Mm$ such that $p_{1}S_{p^{-1}(C)}=S_{A}p_{1}$ and $p_{2}S_{p^{-1}(C)}=S_{C\ot A}p_{2}$. Since $S_{A}$ is the antipode of $A$, one can check that $S_{p^{-1}(C)}$ is an antipode of $p^{-1}(C)$, which is so an object in $\Hopf$. As a consequence, $p_{1}$ and $p_{2}$ are morphisms in $\Hopf$.
\end{proof}

We now construct the pullback of a morphism $p:A\to B$ in $\Hopf$ along a monomorphism $i:C\to B$ in $\Hopf$, using the previous result.

\begin{lemma}\label{lem:pullbackp(c)}
    Let $p:A\to B$ be a morphism in $\Hopf$ and $i:C\to B$ be a monomorphism in $\Hopf$. Then, the diagram
\begin{equation}\label{pullbackmono}
\begin{tikzcd}
	p^{-1}(C) && C \\
	A && B
	\arrow[from=1-1, to=1-3, "(\mathrm{Id}_{C}\ot\varepsilon_{A})p_{2}"]
	\arrow[from=1-1, to=2-1, "p_{1}"']
	\arrow[from=1-3, to=2-3, "i"]
	\arrow[from=2-1, to=2-3, "p"']
\end{tikzcd}
\end{equation}
is a pullback in $\Hopf$, where $(p^{-1}(C),p_{1},p_{2})$ is the pullback in $\Mm$ defined as in \eqref{pullbackpC}.
\end{lemma}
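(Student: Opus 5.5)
The plan is to verify the universal property directly, using Lemma \ref{lem:p1p2Hopf}, which already gives that $p^{-1}(C)$ is in $\Hopf$ and that $p_1,p_2$ are morphisms there. Set $q:=(\mathrm{Id}_{C}\ot\varepsilon_{A})p_{2}$. Since $\mathrm{Id}_{C}\ot\varepsilon_{A}:C\ot A\to C$ is the product projection $\pi_C$ of \eqref{def:binaryproduct} in $\Hopf$, $q$ is a composite of morphisms in $\Hopf$.

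First I check that the square commutes. Applying $\mathrm{Id}_{B}\ot\varepsilon_{A}$ to the defining relation $(p\ot\mathrm{Id}_{A})\Delta_{A}p_{1}=(i\ot\mathrm{Id}_{A})p_{2}$ gives
\[
pp_{1}=(\mathrm{Id}_{B}\ot\varepsilon_{A})(p\ot\mathrm{Id}_{A})\Delta_{A}p_{1}=(\mathrm{Id}_B\ot\varepsilon_A)(i\ot\mathrm{Id}_{A})p_{2}=iq .
\]

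Next, for the universal property, take $x:D\to A$ and $y:D\to C$ in $\Hopf$ with $px=iy$. I want to apply the $\Mm$-pullback \eqref{pullbackpC} to $x$ and the morphism $y':=(y\ot x)\Delta_{D}=\langle y,x\rangle:D\to C\ot A$, which lies in $\Hopf$. The required identity is
\[
(p\ot\mathrm{Id}_{A})\Delta_{A}x=(px\ot x)\Delta_{D}=(iy\ot x)\Delta_{D}=(i\ot\mathrm{Id}_{A})y',
\]
which holds because $x$ is comultiplicative. This produces a unique $z:D\to p^{-1}(C)$ in $\Mm$ with $p_{1}z=x$ and $p_{2}z=y'$. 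From the second relation, $qz=(\mathrm{Id}_{C}\ot\varepsilon_{A})(y\ot x)\Delta_{D}=y$.

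The remaining points are that $z$ lies in $\Hopf$ and that it is unique among Hopf maps. Since $p_{1}$ is a monomorphism in $\Mm$ (as noted in the proof of Lemma \ref{lem:p1p2Hopf}), and hence so is $p_{1}\ot p_{1}$, compatibility of $z$ with the (co)multiplication, (co)unit and antipode follows by composing with $p_{1}$ and using that $p_{1}$ and $x$ are Hopf morphisms. For example, $(p_1\ot p_1)\Delta z=\Delta_A x=(x\ot x)\Delta_D=(p_1\ot p_1)(z\ot z)\Delta_D$, and $p_1 z m_D=x m_D=m_A(x\ot x)=p_1 m(z\ot z)$. Uniqueness also follows from $p_{1}$ being monic in $\Mm$: any $z'$ in $\Hopf$ with $p_{1}z'=x$ must equal $z$.

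I expect the main subtlety to be choosing the right second component $y'=\langle y,x\rangle$ rather than something like $y\ot u_A$. This choice is what makes the equation into the $\Mm$-pullback hold via comultiplicativity of $x$. Everything else reduces to monicity of $p_{1}$.
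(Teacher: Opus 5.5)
Your proposal is correct and matches the paper's proof step by step. You obtain commutativity by applying $\mathrm{Id}_B\ot\varepsilon_A$, induce the map from the $\Mm$-pullback using $(y\ot x)\Delta_D$ as the second component, and use that $p_1$ is a monomorphism in $\Mm$ to conclude that this map lies in $\Hopf$ and is unique. The explicit uniqueness remark is a small addition that the paper leaves implicit.
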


\begin{proof}
By the previous lemma, we know that $p^{-1}(C)$ is an object in $\Hopf$ and $p_{1},p_{2}$ are morphisms in $\Hopf$. Then, the diagram (\ref{pullbackmono}) is in $\Hopf$ and it commutes:
\[
pp_{1}=(\mathrm{Id}_{B}\ot\varepsilon_{A})(p\ot\mathrm{Id}_{A})\Delta_{A}p_{1}=(\mathrm{Id}_{B}\ot\varepsilon_{A})(i\ot\mathrm{Id}_{A})p_{2}=i(\mathrm{Id}_{C}\ot\varepsilon_{A})p_{2}.
\]
Now, we prove the universal property. Suppose there are morphisms $\alpha:D\to A$ and $\beta:D\to C$ in $\Hopf$ such that $p\alpha=i\beta$. Since $(p\ot\mathrm{Id}_{A})\Delta_{A}\alpha=(p\ot\mathrm{Id}_{A})(\alpha\ot\alpha)\Delta_{D}=(i\ot\mathrm{Id}_{A})(\beta \ot\alpha)\Delta_{D}$, by the universal property of the pullback $(p^{-1}(C),p_{1},p_{2})$, there exists a unique morphism $\psi:D\to p^{-1}(C)$ in $\Mm$ such that $p_{1}\psi=\alpha$ and $p_{2}\psi=(\beta\ot\alpha)\Delta_{D}$. Then, $(\mathrm{Id}_{C}\ot\varepsilon_{A})p_{2}\psi=(\mathrm{Id}_{C}\ot\varepsilon_{A})(\beta\ot\alpha)\Delta_{D}=(\beta\ot\varepsilon_{D})\Delta_{D}=\beta$. Since $p_{1}$ and $\alpha$ are morphisms in $\Hopf$ and $p_{1}$ is a monomorphism, one can check that $\psi:D\to p^{-1}(C)$ is a morphism in $\Hopf$. Thus, we can conclude. 
\end{proof}

\begin{remark}
    Using the description of pullbacks in $\Hopf$ given in Subsection \ref{subsec:pullbacksHopf} one can deduce that $p^{-1}(C)$ is isomorphic to $A\square_{B}C$ in $\Hopf$, where the $B$-coactions involved are $\rho_{A}:=(\mathrm{Id}_{A}\ot p)\Delta_{A}$ and $\lambda_{C}:=(i\ot\mathrm{Id}_{C})\Delta_{C}$.
\end{remark}

Using Lemma \ref{lem:pullbackp(c)}, we will prove that regular epimorphisms in $\Hopf$ are stable under pullbacks along split monomorphisms in $\Hopf$ once a technical condition, which is covered by faithful flatness condition, is satisfied. To do this, we first prove the following result.

\begin{lemma}\label{lem:tildepepi}
    Let $p:A\to B$ be a morphism in $\Hopf$ and $i:C\to B$ be a monomorphism in $\Hopf$. Consider the pullback \eqref{pullbackmono} in $\Hopf$. Then, $(\mathrm{Id}_{C}\ot\varepsilon_{A})p_{2}$ is an epimorphism in $\Mm$ if and only if there exists a monomorphism $\iota:D\to A$ in $\Hopf$ such that $i=\mathsf{ker}(\mathsf{coker}(p\iota))$.
\end{lemma}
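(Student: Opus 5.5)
We prove the two implications separately. Throughout, write $\tilde p:=(\mathrm{Id}_{C}\ot\varepsilon_{A})p_{2}:p^{-1}(C)\to C$. By Lemma \ref{lem:pullbackp(c)} we have $p p_{1}=i\tilde p$, and $p_{1}$ is a monomorphism in $\Hopf$ (it is a monomorphism in $\Mm$, as observed in the proof of Lemma \ref{lem:p1p2Hopf}).

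($\Rightarrow$) Assume $\tilde p$ is an epimorphism in $\Mm$. Take $D:=p^{-1}(C)$ and $\iota:=p_{1}$, which is a monomorphism in $\Hopf$. Then $p\iota=i\tilde p$ with $\tilde p$ an epimorphism and $i$ a monomorphism in $\Mm$; the latter follows from 2) of Corollary \ref{cor:regepimono}, or simply because every monomorphism $i$ in $\Hopf$ considered here is one in $\Mm$. Hence $i\tilde p$ is the image factorization of $p\iota$ in the abelian category $\Mm$, and so $i=\mathsf{ker}(\mathsf{coker}(p\iota))$ as subobjects of $B$.

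($\Leftarrow$) Assume $\iota:D\to A$ is a monomorphism in $\Hopf$ with $i=\mathsf{ker}(\mathsf{coker}(p\iota))$. Write $p\iota=i\tau$, where $\tau:=\mathsf{coker}(\mathsf{ker}(p\iota)):D\to C$ is an epimorphism in $\Mm$. By Remark \ref{rmk:kerHopfideal}, $\tau$ is a morphism in $\Hopf$. Since $p\iota=i\tau$ in $\Hopf$, the universal property of the pullback \eqref{pullbackmono} in $\Hopf$ (Lemma \ref{lem:pullbackp(c)}) yields a morphism $\gamma:D\to p^{-1}(C)$ in $\Hopf$ with $p_{1}\gamma=\iota$ and $\tilde p\gamma=\tau$. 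Since $\tau$ is an epimorphism in $\Mm$, so is $\tilde p$.

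The only point that needs care is the identification of subobjects. Monomorphisms in $\Hopf$ need not a priori be monomorphisms in $\Mm$ without the faithful coflatness condition, but that condition is in force in this subsection, so 2) of Corollary \ref{cor:regepimono} applies to $i$. The equality $i=\mathsf{ker}(\mathsf{coker}(p\iota))$ is meant as equality of subobjects up to isomorphism; in ($\Leftarrow$) we therefore transport $\tau$ along that isomorphism, which is harmless. Uniqueness of image factorizations in $\Mm$ gives ($\Rightarrow$).
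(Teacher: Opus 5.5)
Your proof is correct and follows the paper's argument. In ($\Rightarrow$) you take $\iota=p_1$ and read off $i$ from the image factorization $pp_1=i\tilde p$ in $\Mm$; in ($\Leftarrow$) you factor the epimorphism $\mathsf{coker}(\mathsf{ker}(p\iota))$ through $\tilde p$ using the pullback. The only cosmetic difference is that you invoke the universal property of the pullback in $\Hopf$, which requires $\mathsf{coker}(\mathsf{ker}(p\iota))$ to be a Hopf morphism (Remark \ref{rmk:kerHopfideal}), whereas the paper builds the same comparison map $j$ directly from the pullback \eqref{pullbackpC} in $\Mm$, with $p_{2}j=(\mathsf{coker}(\mathsf{ker}(p\iota))\ot\iota)\Delta_{D}$.
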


\begin{proof}
   Suppose $(\mathrm{Id}_{C}\ot\varepsilon_{A})p_{2}$ is an epimorphism in $\Mm$. Since $pp_{1}=i(\mathrm{Id}_{C}\ot\varepsilon_{A})p_{2}$, we have $\mathsf{ker}(\mathsf{coker}(pp_{1}))=\mathsf{ker}(\mathsf{coker}(i))=i$, since $i$ is a monomorphism in $\Mm$ by 2) of Corollary \ref{cor:regepimono}. Define $D:=p^{-1}(C)$ and $\iota:=p_{1}$. Since $\iota:=p_{1}$ is a morphism in $\Hopf$ by Lemma \ref{lem:p1p2Hopf} and a monomorphism in $\Mm$, it is a monomorphism in $\Hopf$ by 2) of Corollary \ref{cor:regepimono}.
   
   Conversely, suppose that there exists a monomorphism $\iota:D\to A$ in $\Hopf$ such that $i=\mathsf{ker}(\mathsf{coker}(p\iota))$. 
   We consider the pullback $(p^{-1}(C),p_{1},p_{2})$ of the pair of morphisms $((p\ot\mathrm{Id}_{A})\Delta_{A},i\ot\mathrm{Id}_{A})$ in $\Mm$ as in Lemma \ref{lem:p1p2Hopf}. Since
\[
\begin{split}
(i\ot\mathrm{Id}_{A})(\mathsf{coker}(\mathsf{ker}(p\iota))\ot \iota)\Delta_{D}&=(\mathsf{ker}(\mathsf{coker}(p\iota))\ot\mathrm{Id}_{A})(\mathsf{coker}(\mathsf{ker}(p\iota))\ot \iota)\Delta_{D}=(p\iota\ot \iota)\Delta_{D}\\&=(p\ot\mathrm{Id}_{A})\Delta_{A}\iota,
\end{split}
\]
by the universal property of the pullback there exists a unique morphism $j:D\to p^{-1}(C)$ in $\Mm$ such that $p_{1}j=\iota$ and $p_{2}j=(\mathsf{coker}(\mathsf{ker}(p\iota))\ot\iota)\Delta_{D}$. Since 
\[
\mathsf{ker}(\mathsf{coker}(p\iota))(\mathrm{Id}_{C}\ot\varepsilon_{A})p_{2}j=i(\mathrm{Id}_{C}\ot\varepsilon_{A})p_{2}j=pp_{1}j=p\iota=\mathsf{ker}(\mathsf{coker}(p\iota))\mathsf{coker}(\mathsf{ker}(p\iota)), 
\]
we get $(\mathrm{Id}_{C}\ot\varepsilon_{A})p_{2}j=\mathsf{coker}(\mathsf{ker}(p\iota))$. Therefore, $(\mathrm{Id}_{C}\ot\varepsilon_{A})p_{2}$ is an epimorphism in $\Mm$. 
\end{proof}

\begin{invisible}
\rd{[The next result is correct but it seems not useful for the sequel.]}
\begin{lemma}
    Let $p:A\to B$ be a regular epimorphism in $\Hopf$. Then, for any monomorphism $i:D\to A$ in $\Hopf$, there exist unique morphisms $\bar{p}:\mathsf{Ker}(\mathsf{coker}(g))\to\mathsf{Ker}(\mathsf{coker}(f))$ and $\overline{p\ot\mathrm{Id}_{D}}:\mathsf{Coker}(\mathsf{ker}(g))\to\mathsf{Coker}(\mathsf{ker}(f))$ in $\Mm$ such that
\begin{align}
    p\mathsf{ker}(\mathsf{coker}(g))&=\mathsf{ker}(\mathsf{coker}(f))\bar{p},\label{eq:commphip}\\
\overline{p\ot\mathrm{Id}_{D}}\mathsf{coker}(\mathsf{ker}(g))&=\mathsf{coker}(\mathsf{ker}(f))(p\ot\mathrm{Id}_{D}),
\end{align}
where $g:=m_{A}(\mathrm{Id}_{A}\ot i)-\mathrm{Id}_{A}\ot\varepsilon_{D}$ and $f:=m_{B}(\mathrm{Id}_{B}\ot pi)-\mathrm{Id}_{B}\ot\varepsilon_{D}$.
\end{lemma}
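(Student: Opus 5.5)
The plan is to reduce both assertions to the functoriality of the image factorization in the abelian category $\Mm$. The only input needed from $\Hopf$ is one commutation relation between $g$ and $f$. Here $g:=m_{A}(\mathrm{Id}_{A}\ot i)-\mathrm{Id}_{A}\ot\varepsilon_{D}:A\ot D\to A$ and $f:=m_{B}(\mathrm{Id}_{B}\ot pi)-\mathrm{Id}_{B}\ot\varepsilon_{D}:B\ot D\to B$. Note that the hypotheses that $p$ is a regular epimorphism and that $i$ is a monomorphism will not actually be used. Only the facts that $p$ is a morphism of monoids in $\Mm$ and that $\Mm$ is abelian enter.

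\textbf{Step 1: the square commutes.} I first check that
\[
pg=f(p\ot\mathrm{Id}_{D}).
\]
Since $p$ is in $\mathsf{Mon}(\Mm)$, we have $pm_{A}(\mathrm{Id}_{A}\ot i)=m_{B}(p\ot pi)=m_{B}(\mathrm{Id}_{B}\ot pi)(p\ot\mathrm{Id}_{D})$. By naturality of the tensor product, $p(\mathrm{Id}_{A}\ot\varepsilon_{D})=(\mathrm{Id}_{B}\ot\varepsilon_{D})(p\ot\mathrm{Id}_{D})$. Since composition is additive in the abelian category $\Mm$, subtracting these two identities gives the claim. I expect this step to be the only genuine (if mild) obstacle, and it is a one-line computation.

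\textbf{Step 2: the morphism $\bar{p}$.} Write $g=\mathsf{ker}(\mathsf{coker}(g))\,\mathsf{coker}(\mathsf{ker}(g))$ (up to the canonical isomorphism), as recalled in Section \ref{sec:prelimiaries}. Then
\[
\mathsf{coker}(f)\,p\,\mathsf{ker}(\mathsf{coker}(g))\,\mathsf{coker}(\mathsf{ker}(g))=\mathsf{coker}(f)\,pg=\mathsf{coker}(f)\,f\,(p\ot\mathrm{Id}_{D})=0.
\]
Since $\mathsf{coker}(\mathsf{ker}(g))$ is an epimorphism, this gives $\mathsf{coker}(f)\,p\,\mathsf{ker}(\mathsf{coker}(g))=0$. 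The universal property of the kernel $\mathsf{ker}(\mathsf{coker}(f))$ then yields a unique $\bar{p}$ satisfying \eqref{eq:commphip}. Uniqueness also follows directly, because $\mathsf{ker}(\mathsf{coker}(f))$ is a monomorphism.

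\textbf{Step 3: the morphism $\overline{p\ot\mathrm{Id}_{D}}$.} Dually, I show that $\mathsf{coker}(\mathsf{ker}(f))(p\ot\mathrm{Id}_{D})\,\mathsf{ker}(g)=0$. Composing with the monomorphism $\mathsf{ker}(\mathsf{coker}(f))$ turns the left-hand side into
\[
f(p\ot\mathrm{Id}_{D})\,\mathsf{ker}(g)=pg\,\mathsf{ker}(g)=0.
\]
Hence the universal property of $\mathsf{coker}(\mathsf{ker}(g))$ gives a unique $\overline{p\ot\mathrm{Id}_{D}}$ with the second required identity. Uniqueness holds because $\mathsf{coker}(\mathsf{ker}(g))$ is an epimorphism.
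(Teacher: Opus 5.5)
Your proof is correct and follows essentially the same route as the paper: both rest on the identity $pg=f(p\ot\mathrm{Id}_{D})$, which needs only that $p$ is a monoid morphism, and then obtain $\bar{p}$ and $\overline{p\ot\mathrm{Id}_{D}}$ from the universal properties of $\mathsf{ker}(\mathsf{coker}(f))$ and $\mathsf{coker}(\mathsf{ker}(g))$. The only cosmetic difference is in how the vanishing conditions are checked. The paper first builds induced maps $\tilde{p}:\mathsf{Coker}(g)\to\mathsf{Coker}(f)$ and $\widetilde{p\ot\mathrm{Id}_{D}}:\mathsf{Ker}(g)\to\mathsf{Ker}(f)$, whereas you cancel the epi and mono halves of the image factorizations of $g$ and $f$ directly. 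You are also right that the regular-epimorphism and monomorphism hypotheses are never used.
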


\begin{proof}
   Since
\[
\begin{split}
pg&=p(m_{A}(\mathrm{Id}_{A}\ot i)-\mathrm{Id}_{A}\ot\varepsilon_{D})=m_{B}(p\ot p)(\mathrm{Id}_{A}\ot i)-p\ot\varepsilon_{D}\\&=(m_{B}(\mathrm{Id}_{B}\ot pi)-\mathrm{Id}_{B}\ot\varepsilon_{D})(p\ot\mathrm{Id}_{D})=f(p\ot\mathrm{Id}_{D}),
\end{split}
\]
we obtain $\mathsf{coker}(f)pg=\mathsf{coker}(f)f(p\ot\mathrm{Id}_{D})=0$. Therefore, by the universal property of the cokernel, there exists a unique morphism $\tilde{p}:\mathsf{Coker}(g)\to\mathsf{Coker}(f)$ in $\Mm$ such that the following diagram commutes:
\[\begin{tikzcd}
	A\ot D &&& A & \mathsf{Coker}(g) \\
	B\ot D &&& B & \mathsf{Coker}(f)
	\arrow[from=1-1, to=1-4, "g"]
	\arrow[from=1-4, to=1-5,"\mathsf{coker}(g)"]
	\arrow[from=1-4, to=2-4,"p"]
	\arrow[dashed, from=1-5, to=2-5, "\tilde{p}"]
	\arrow[from=2-1, to=2-4, "f"']
	\arrow[from=2-4, to=2-5,"\mathsf{coker}(f)"']
    \arrow[from=1-1, to=2-1, "p\ot\mathrm{Id}_{D}"']
\end{tikzcd}\]
Thus, we have $\mathsf{coker}(f)p\mathsf{ker}(\mathsf{coker}(g))=\tilde{p}\mathsf{coker}(g)\mathsf{ker}(\mathsf{coker}(g))=0$ and so, by the universal property of the kernel, there exists a unique morphism $\bar{p}:\mathsf{Ker}(\mathsf{coker}(g))\to\mathsf{Ker}(\mathsf{coker}(f))$ in $\Mm$ such that $p\mathsf{ker}(\mathsf{coker}(g))=\mathsf{ker}(\mathsf{coker}(f))\bar{p}$.

Moreover, since $f(p\ot\mathrm{Id}_{D})\mathsf{ker}(g)=pg\mathsf{ker}(g)=0$, by the universal property of the kernel there exists a unique morphism $\widetilde{p\ot\mathrm{Id}_{D}}:\mathsf{Ker}(g)\to\mathsf{Ker}(f)$ in $\Mm$ such that the following diagram commutes:
\[\begin{tikzcd}
	\mathsf{Ker}(g) & A\ot D & A \\
	\mathsf{Ker}(f) & B\ot D & B
	\arrow[from=1-1, to=1-2,"\mathsf{ker}(g)"]
	\arrow[dashed, from=1-1, to=2-1,"\widetilde{p\ot\mathrm{Id}_{D}}"']
	\arrow[from=1-2, to=1-3,"g"]
	\arrow[from=1-2, to=2-2,"p\ot\mathrm{Id}_{D}"]
	\arrow[from=2-1, to=2-2,"\mathsf{ker}(f)"']
	\arrow[from=2-2, to=2-3,"f"']
    \arrow[from=1-3, to=2-3,"p"]
\end{tikzcd}\]
Therefore, we have $\mathsf{coker}(\mathsf{ker}(f))(p\ot\mathrm{Id}_{D})\mathsf{ker}(g)=\mathsf{coker}(\mathsf{ker}(f))\mathsf{ker}(f)\widetilde{p\ot\mathrm{Id}_{D}}=0$ and so, by the universal property of the cokernel, there exists a unique morphism $\overline{p\ot\mathrm{Id}_{D}}:\mathsf{Coker}(\mathsf{ker}(g))\to\mathsf{Coker}(\mathsf{ker}(f))$ in $\Mm$ such that $\overline{p\ot\mathrm{Id}_{D}}\mathsf{coker}(\mathsf{ker}(g))=\mathsf{coker}(\mathsf{ker}(f))(p\ot\mathrm{Id}_{D})$.
\end{proof}

We observe that, once $i$ is an equalizer as in \eqref{iequalizer} so that we can apply $\phi_{A}$ to it, the object $\mathsf{coker}(g)$ in the previous result is exactly $\phi_{A}(i)$. 
\end{invisible}

The following result will be used in Proposition \ref{prop:stabilityregepi}.


\begin{proposition}\label{lemma:phip}
    Consider a regular epimorphism $p:A\to B$ in $\Hopf$ and a monomorphism $i:C\to B$ in $\Hopf$. 
    Let $\pi:=\phi_{B}(i):B\to Q$ in $\mathsf{Comon}_{\mathrm{coc}}(_{B}\Mm)$. 
    Then, $\pi p$ is a coequalizer as in \eqref{picoequalizer}.
\end{proposition}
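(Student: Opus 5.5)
The plan is to reduce the statement to Proposition \ref{prop:cofaitfullyflat} by means of the standing ``faithful coflatness condition'' (Definition \ref{deffaithcoflat}). That condition applies to any morphism $A\to Q$ in $\mathsf{Comon}_{\mathrm{coc}}({}_{A}\Mm)$ which is an epimorphism in $\Mm$. So it suffices to show two things: that $\pi p:A\to Q$ is an epimorphism in $\Mm$, and that $\pi p$ is a morphism in $\mathsf{Comon}_{\mathrm{coc}}({}_{A}\Mm)$, where $A$ acts on itself by $m_A$ and on $Q$ by restriction of scalars along $p$.

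\textbf{Epimorphism.} Since $p$ is a regular epimorphism in $\Hopf$, it is an epimorphism in $\Mm$ by 1) of Corollary \ref{cor:regepimono}. By construction, $\pi=\mathsf{coeq}(m_{B}(\id_{B}\ot i),\id_{B}\ot\varepsilon_{C})$ is an epimorphism in $\Mm$. Hence $\pi p$ is an epimorphism in $\Mm$.

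\textbf{Module comonoid structure.} By Proposition \ref{prop:definitionphi}, $Q$ is a cocommutative comonoid in ${}_{B}\Mm$ with action $\mu_Q$ satisfying $\mu_Q(\id_B\ot\pi)=\pi m_B$, and $\pi$ is a morphism of comonoids. I would proceed in three checks.
\begin{itemize}
\item[(a)] Define the $A$-action on $Q$ by $\mu_Q(p\ot\id_Q)$. This is an action because $p$ is a morphism of monoids.
\item[(b)] The morphism $\pi p$ is $A$-linear:
\[
\pi p\, m_A=\pi m_B(p\ot p)=\mu_Q(\id_B\ot\pi)(p\ot p)=\mu_Q(p\ot\id_Q)(\id_A\ot\pi p).
\]
\item[(c)] $\Delta_Q$ and $\varepsilon_Q$ are $A$-linear. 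They are $B$-linear, and the $A$-module structure of $Q\ot Q$, namely $(\mu_Q\ot\mu_Q)(\id_A\ot\sigma_{A,Q}\ot\id_Q)(\Delta_A\ot\id_{Q\ot Q})$ with $\mu_Q$ replaced by $\mu_Q(p\ot\id_Q)$, is the restriction along $p$ of the $B$-module structure of $Q\ot Q$. This uses $\Delta_Bp=(p\ot p)\Delta_A$ and naturality of $\sigma$. The same argument with $\varepsilon_Bp=\varepsilon_A$ handles the trivial module $\mathbf{1}$.
\end{itemize}
Finally, $\pi p$ is a morphism of comonoids, being a composite of comonoid morphisms, and $Q$ is cocommutative.

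\textbf{Conclusion.} Definition \ref{deffaithcoflat} now says that $(-)\square_{Q}A$ preserves and reflects epimorphisms, where $A$ carries the $Q$-coactions induced by $\pi p$. Proposition \ref{prop:cofaitfullyflat}, applied to the comonoid morphism $\pi p:A\to Q$, then yields that $\pi p$ is the coequalizer of $((\varepsilon_A\ot\id_A)e_{A,A},(\id_A\ot\varepsilon_A)e_{A,A})$ with $e_{A,A}:A\square_Q A\to A\ot A$. This is exactly the claim.

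\textbf{Main obstacle.} The only genuinely non-formal step is the verification that $\Delta_Q$ is $A$-linear for the restricted action, i.e. the compatibility of the restricted tensor-product action with $p$. I expect it to be a routine diagram chase using that $p$ is a bimonoid morphism.
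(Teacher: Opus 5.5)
Your proof is correct, and it takes a much shorter route than the paper. You note that $\pi p$ is itself a morphism of $\mathsf{Comon}_{\mathrm{coc}}({}_{A}\Mm)$ which is an epimorphism in $\Mm$. The standing faithful coflatness condition together with Proposition \ref{prop:cofaitfullyflat} then gives the conclusion at once, exactly as in the proof of Proposition \ref{prop:factmorp2}.

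The paper instead verifies the universal property by hand, in three steps:
\begin{itemize}
\item[1)] Given $f$ coequalizing the pair for $\pi p$, it factors $\mathsf{can}(\id_A\ot\mathsf{hker}(p))$ through $A\square_Q A$. This shows that $f$ coequalizes $(m_A(\id_A\ot\mathsf{hker}(p)),\id_A\ot\varepsilon_{\mathsf{HKer}(p)})$, so $f$ factors through $\phi_A(\mathsf{hker}(p))=p$ by Proposition \ref{prop:factmorp2}.
\item[2)] It shows $p\square_Q p$ is an epimorphism, using coflatness of $B$ over $Q$ and of $A$ over $Q$ (the latter via Lemma \ref{lemm:piididpi}). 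Hence the induced map on $B$ coequalizes the pair attached to $\pi$.
\item[3)] That map then factors through $\pi$, because $\pi=\phi_B(i)$ is a coequalizer as in \eqref{picoequalizer} by 2) of Proposition \ref{prop:definitionphi}.
\end{itemize}

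What each route buys:
\begin{itemize}
\item The paper's argument presents the coequalizer property of $\pi p$ as inherited from those of $p$ and $\pi$. At the gluing step it only needs the relevant cotensor functors to preserve epimorphisms; the reflection part enters only through Proposition \ref{prop:factmorp2} applied to $p$.
\item Your argument shows the statement is an instance of a general fact: under the faithful coflatness condition, every morphism of $\mathsf{Comon}_{\mathrm{coc}}({}_{A}\Mm)$ that is an epimorphism in $\Mm$ is a coequalizer as in \eqref{picoequalizer}. The hypotheses on $p$ and $i$ serve only to make $\pi p$ such a morphism; the monomorphism property of $i$ is never used.
\end{itemize}

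The paper's own proof already invokes faithful coflatness of $A$ over $Q$ along $\pi p$, so your shortcut needs no additional assumption. Your check (c), that $\Delta_Q$ and $\varepsilon_Q$ stay $A$-linear for the action restricted along $p$, is correct and fills in a detail the paper leaves implicit.
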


\begin{proof}
Given the $B$-action $\mu_{Q}$ of $Q$, $Q$ automatically equips a left $A$-module structure $\mu_{Q}(p\ot\mathrm{Id}_{Q})$. Since $\pi pm_{A}=\pi m_{B}(p\ot p)=\mu_{Q}(\mathrm{Id}_{B}\ot\pi)(p\ot p)=\mu_{Q}(p\ot\id_{Q})(\id_{A}\ot\pi p)$, we obtain that $\pi p:A\to Q$ is in $\mathsf{Comon}_{\mathrm{coc}}(_{A}\Mm)$. 
Since $p$ is a morphism in $\mathsf{Comon}(\Mm)$, one has
\[
(\pi\ot\id_{B})\Delta_{B}p=(\id_{Q}\ot p)(\pi p\ot\id_{A})\Delta_{A},\quad(\id_{B}\ot\pi)\Delta_{B}p=(p\ot\id_{Q})(\id_{A}\ot\pi p)\Delta_{A},
\]
i.e.\ $p$ is a morphism in $^{Q}\Mm$ and $\Mm^{Q}$. 
\begin{invisible}
so the right squares in the following diagram 
\[\begin{tikzcd}
	A\square_{Q}A & A\ot A && A\ot Q\ot A\\
    B\square_{Q}B & B\ot B && B\ot Q\ot B 
	\arrow[from=2-1, to=2-2, "e_{B,B}"']
	\arrow[shift left, from=2-2, to=2-4, "\mathrm{Id}_{B}\ot(\pi\ot\mathrm{Id}_{B})\Delta_{B}"]
	\arrow[shift right, from=2-2, to=2-4, "(\mathrm{Id}_{B}\ot\pi)\Delta_{B}\ot\mathrm{Id}_{B}"']
	\arrow[dashed, from=1-1, to=2-1]
	\arrow[from=1-1, to=1-2,"e_{A,A}"]
	\arrow[from=1-2, to=2-2,"p\ot p"']
	\arrow[shift left, from=1-2, to=1-4,"\mathrm{Id}_{A}\ot(\pi p\ot\mathrm{Id}_{A})\Delta_{A}"]
	\arrow[shift right, from=1-2, to=1-4,"(\mathrm{Id}_{A}\ot\pi p)\Delta_{A}\ot\mathrm{Id}_{A}"']
	\arrow[from=1-4, to=2-4,"p\ot\mathrm{Id}_{Q}\ot p"]
\end{tikzcd}\]
are commutative. Hence, by the universal property of the equalizer there exists a unique 
\end{invisible}
Thus, we have a morphism 
$p\square_{Q}p:A\square_{Q}A\to B\square_{Q}B$ in $\Mm$ such that $e_{B,B}(p\square_{Q}p)=(p\ot p)e_{A,A}$. Clearly
\[
\begin{split}
\pi p(\varepsilon_{A}\ot\mathrm{Id}_{A})e_{A,A}&=\pi(\varepsilon_{B}\ot\mathrm{Id}_{B})(p\ot p)e_{A,A}=\pi(\varepsilon_{B}\ot\mathrm{Id}_{B})e_{B,B}(p\square_{B}p)\\&=\pi(\mathrm{Id}_{B}\ot\varepsilon_{B})e_{B,B}(p\square_{B}p)=\pi(\mathrm{Id}_{B}\ot\varepsilon_{B})(p\ot p)e_{A,A}\\&=\pi p(\mathrm{Id}_{A}\ot\varepsilon_{A})e_{A,A},
\end{split}
\]
i.e. $\pi p$ coequalizes the pair $((\varepsilon_{A}\ot\mathrm{Id}_{A})e_{A,A},(\mathrm{Id}_{A}\ot\varepsilon_{A})e_{A,A})$. Now, we verify the universal property. Suppose to have a morphism $f:A\to Z$ in $\Mm$ such that $f(\varepsilon_{A}\ot\mathrm{Id}_{A})e_{A,A}=f(\mathrm{Id}_{A}\ot\varepsilon_{A})e_{A,A}$. Using the morphism $\mathsf{can}:=(\id_A \otimes m_{A})(\Delta_{A} \otimes \id_{A}):A\ot A\to A\ot A$, we have
\[
\begin{split}
&(\mathrm{Id}_{A}\ot(\pi p\ot\mathrm{Id}_{A})\Delta_{A})\mathsf{can}(\id_A\ot\mathsf{hker}(p) )\\&=(\mathrm{Id}_{A}\ot(\pi p\ot\mathrm{Id}_{A})\Delta_{A})(\id_A \otimes m_{A})(\Delta_{A} \otimes \id_{A})(\id_A\ot\mathsf{hker}(p))\\&=\big(\mathrm{Id}_{A}\ot(\pi p\ot\mathrm{Id}_{A})(m_{A}\ot m_{A})(\id_{A}\ot\sigma_{A,A}\ot\id_{A})(\Delta_{A}\ot\Delta_{A})\big)(\Delta_{A} \otimes \id_{A})(\id_A\ot\mathsf{hker}(p))\\&=\big(\mathrm{Id}_{A}\ot(\pi \ot\mathrm{Id}_{A})(m_{B}\ot m_{A})(p\ot p\ot\mathrm{Id}_{A\ot A})(\id_{A}\ot\sigma_{A,A}\ot\id_{A})(\Delta_{A}\ot\Delta_{A})\big)(\Delta_{A} \otimes \id_{A})(\id_A\ot\mathsf{hker}(p))\\&=\big(\mathrm{Id}_{A}\ot(\pi \ot\mathrm{Id}_{A})(m_{B}\ot m_{A})(\id_{B}\ot\sigma_{A,B}\ot\id_{A})((p\ot\id_{A})\Delta_{A}\ot(p\ot\mathrm{Id}_{A})\Delta_{A})\big)(\id_{A\ot A}\ot\mathsf{hker}(p))(\Delta_{A} \otimes \id_{\mathsf{Hker}(p)})\\&=\big(\mathrm{Id}_{A}\ot(\pi \ot\mathrm{Id}_{A})(m_{B}\ot m_{A})(\id_{B}\ot\sigma_{A,B}\ot\id_{A})\big)(\id_{A}\ot(p\ot\mathrm{Id}_{A})\Delta_{A}\ot(p\ot\mathrm{Id}_{A})\Delta_{A}\mathsf{hker}(p))(\Delta_{A} \otimes \id_{\mathsf{Hker}(p)})\\&=\big(\mathrm{Id}_{A}\ot(\pi \ot\mathrm{Id}_{A})(m_{B}\ot m_{A})(\id_{B}\ot\sigma_{A,B}\ot\id_{A})\big)(\id_{A}\ot(p\ot\mathrm{Id}_{A})\Delta_{A}\ot(u_{B}\ot\mathrm{Id}_{A})\mathsf{hker}(p))(\Delta_{A} \otimes \id_{\mathsf{Hker}(p)})\\&=\big(\mathrm{Id}_{A}\ot(\pi \ot\mathrm{Id}_{A})(m_{B}\ot m_{A})(\id_{B}\ot\sigma_{A,B}\ot\id_{A})\big)(\id_{A\ot B\ot A}\ot u_{B}\ot\mathrm{Id}_{A})(\id_{A}\ot(p\ot\mathrm{Id}_{A})\Delta_{A}\ot\mathrm{Id}_{A})(\Delta_{A} \otimes \mathsf{hker}(p))\\&=\big(\mathrm{Id}_{A}\ot(\pi \ot\mathrm{Id}_{A})(m_{B}\ot m_{A})(\id_{B}\ot\sigma_{A,B}\ot\id_{A})(\id_{B\ot A}\ot u_{B}\ot\mathrm{Id}_{A})\big)(\id_{A}\ot(p\ot\mathrm{Id}_{A})\Delta_{A}\ot\mathrm{Id}_{A})(\Delta_{A} \otimes \mathsf{hker}(p))\\&=\big(\mathrm{Id}_{A}\ot(\pi \ot\mathrm{Id}_{A})(m_{B}\ot m_{A})(\id_{B}\ot u_{B}\ot\mathrm{Id}_{A\ot A})\big)(\id_{A}\ot(p\ot\mathrm{Id}_{A})\Delta_{A}\ot\mathrm{Id}_{A})(\Delta_{A} \otimes \mathsf{hker}(p))\\&=\big(\mathrm{Id}_{A}\ot(\pi \ot\mathrm{Id}_{A})(\id_{B}\ot m_{A})\big)(\id_{A}\ot(p\ot\mathrm{Id}_{A})\Delta_{A}\ot\mathrm{Id}_{A})(\Delta_{A} \otimes \mathsf{hker}(p))\\&=\big(\mathrm{Id}_{A}\ot\pi p \ot\mathrm{Id}_{A}\big)(\id_{A\ot A}\ot m_{A})(\id_{A}\ot\Delta_{A}\ot\mathrm{Id}_{A})(\Delta_{A} \otimes \mathsf{hker}(p))\\&=((\mathrm{Id}_{A}\ot\pi p)\Delta_{A} \ot\mathrm{Id}_{A})(\id_{A}\ot m_{A})(\Delta_{A} \otimes \mathsf{hker}(p))\\&=((\mathrm{Id}_{A}\ot\pi p)\Delta_{A}\ot\mathrm{Id}_{A})\mathsf{can}(\id_A\ot\mathsf{hker}(p)).
\end{split}
\]
Hence, since $e_{A,A}$ is the equalizer of the pair of morphisms $(\id_{A}\ot(\pi p\ot\id_{A})\Delta_{A},(\id_{A}\ot\pi p)\Delta_{A}\ot\id_{A})$ in $\Mm$, there exists a unique morphism $t:A\ot\mathsf{HKer}(p)\to A\square_{Q}A$ in $\Mm$ such that $\mathsf{can}(\id_A\ot\mathsf{hker}(p))= e_{A,A} t$. Since $f(\varepsilon_A \ot \id_{A}) e_{A,A} = f(\id_{A} \ot \varepsilon_A) e_{A,A}$, we get $f(\varepsilon_A \ot \id_{A}) e_{A,A} t = f(\id_{A} \ot \varepsilon_A) e_{A,A} t$, i.e.
\[
f(\varepsilon_A \ot \id)\mathsf{can}(\id_A\ot\mathsf{hker}(p))= f(\id \ot \varepsilon_A) \mathsf{can}(\id_A\ot\mathsf{hker}(p)).
\]
The left hand side is 
\[
f(\varepsilon_A \ot \id_{A})\mathsf{can}(\id_A\ot\mathsf{hker}(p))=f(\varepsilon_A \ot \id)(\id_A \otimes m_{A})(\Delta_{A} \otimes \id_{A})(\id_A\ot\mathsf{hker}(p))=f m_{A}(\id_{A}\ot\mathsf{hker}(p)) 
\]
while the right hand side is
\[
\begin{split}
f(\id_{A} \ot \varepsilon_A) \mathsf{can}(\id_A\ot\mathsf{hker}(p))&=f(\id_{A} \ot \varepsilon_A) (\id_A \otimes m_{A})(\Delta_{A} \otimes \id_{A})(\id_A\ot\mathsf{hker}(p))\\&=f(\mathrm{Id}_{A}\ot\varepsilon_{A}\mathsf{hker}(p))=f(\mathrm{Id}_{A}\ot\varepsilon_{\mathsf{Hker}(p)}).
\end{split}
\]
Hence, we have $f(m_{A}(\id_{A}\ot\mathsf{hker}(p))-\mathrm{Id}_{A}\ot\varepsilon_{\mathsf{Hker}(p)})=0$. Thus, there exists a unique morphism $\xi:\mathsf{Coker}(m_{A}(\id_{A}\ot\mathsf{hker}(p))-\mathrm{Id}_{A}\ot\varepsilon_{\mathsf{Hker}(p)})\to Z$ in $\Mm$ such that $f=\xi\mathsf{coker}(m_{A}(\id_{A}\ot\mathsf{hker}(p))-\mathrm{Id}_{A}\ot\varepsilon_{\mathsf{Hker}(p)})$. By Proposition \ref{prop:factmorp2} and the fact that $p$ is an epimorphism in $\Mm$ by 1) of Corollary \ref{cor:regepimono}, we have
\[
\mathsf{coker}(m_{A}(\id_{A}\ot\mathsf{hker}(p))-\mathrm{Id}_{A}\ot\varepsilon_{\mathsf{Hker}(p)})=\phi_{A}(\mathsf{hker}(p))=\mathsf{coker}(\mathsf{ker}(p))=p,
\]
so $f=\xi' p$, where $\xi'=\xi\zeta:B\to Z$ and $\zeta:B\to\mathsf{Coker}(m_{A}(\id_{A}\ot\mathsf{hker}(p))-\mathrm{Id}_{A}\ot\varepsilon_{\mathsf{Hker}(p)})$ is an isomorphism in $\Mm$.
Now, we have
\[
\begin{split}
\xi'
(\varepsilon_B \ot\id_{B}) e_{B,B}(p\square_{B}p)&=\xi'
(\varepsilon_B\ot\id_{B})(p \ot p)e_{A,A}=\xi'
p(\varepsilon_A \ot \id_{A}) e_{A,A}=f(\varepsilon_A \ot \id_{A}) e_{A,A}\\&=f(\id_{A}\ot\varepsilon_{A})e_{A,A}=\xi'
p(\id_{A}\ot\varepsilon_A ) e_{A,A}=\xi'
(\id_{B}\ot\varepsilon_B )(p\ot p) e_{A,A}\\&=\xi'
(\id_{B}\ot\varepsilon_B) e_{B,B}(p\square_{B}p).
\end{split}
\]
Since $\pi=\phi_{B}(i):B\to Q$ is a morphism in $\mathsf{Comon}_{\mathrm{coc}}(_{B}\Mm)$ and an epimorphism in $\Mm$, by the faithful coflatness condition on $\Mm$, we have that $(-)\square_{Q}B$ preserves epimorphisms. Moreover, $\pi p:A\to Q$ is a morphism in $\mathsf{Comon}_{\mathrm{coc}}(_{A}\Mm)$ and, since $p$ is an epimorphism in $\Mm$, $\pi p$ is an epimorphism in $\Mm$. Hence $A\square_{Q}(-)$ preserves epimorphisms, by the faithful coflatness condition on $\Mm$. As a consequence, $p\square_{Q}p=(p\square_{Q}\id_{B})(\id_{A}\square_{Q}p)$ is an epimorphism in $\Mm$. Thus, we get that $\xi'
(\varepsilon_B \ot\id_{B}) e_{B,B}=\xi'
(\id_{B}\ot\varepsilon_B) e_{B,B}$. Since $\pi=\phi_{B}(i)$ is the coequalizer of the pair $((\varepsilon_B \ot\id_{B}) e_{B,B},(\id_{B}\ot\varepsilon_B) e_{B,B})$ in $\Mm$ by 2) of Proposition \ref{prop:definitionphi}, there exists a unique morphism $\xi'':Q\to Z$ in $\Mm$ such that $\xi'=\xi''\pi$. It follows that $f=\xi' 
p=\xi''\pi p$, and $\xi''$ is the unique morphism in $\Mm$ such that this happens, so $\pi p$ is the coequalizer of the pair $((\varepsilon_{A}\ot\mathrm{Id}_{A})e_{A,A},(\mathrm{Id}_{A}\ot\varepsilon_{A})e_{A,A})$ in $\Mm$. 
\end{proof}

We are now able to prove that regular epimorphisms in $\Hopf$ are stable under pullbacks along any monomorphism in $\Hopf$ which is an equalizer as in \eqref{iequalizer}, once a technical condition is satisfied.

\begin{proposition}\label{prop:stabilityregepi}
Consider a regular epimorphism $p:A\to B$ in $\Hopf$ and a monomorphism $i:C\to B$ in $\Hopf$ which is an equalizer as in \eqref{iequalizer}. If $\mathsf{ker}(\mathsf{coker}(p\psi_{A}(\phi_{B}(i) p)))$ is an equalizer as in \eqref{iequalizer}, then the morphism $(\mathrm{Id}_{C}\ot\varepsilon_{A})p_{2}$ in the pullback \eqref{pullbackmono} is a regular epimorphism in $\Hopf$.
\end{proposition}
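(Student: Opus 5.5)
By 1) of Corollary \ref{cor:regepimono}, it suffices to show that $(\mathrm{Id}_{C}\ot\varepsilon_{A})p_{2}$ is an epimorphism in $\Mm$. By Lemma \ref{lem:tildepepi}, this is equivalent to finding a monomorphism $\iota:D\to A$ in $\Hopf$ with $i=\mathsf{ker}(\mathsf{coker}(p\iota))$. The natural candidate is
\[
\iota:=\psi_{A}(\phi_{B}(i)p):A^{\mathrm{co}Q}\to A,\qquad \pi:=\phi_{B}(i):B\to Q .
\]
By Proposition \ref{lemma:phip}, $\pi p$ is an epimorphism in $\mathsf{Comon}_{\mathrm{coc}}(_{A}\Mm)$ which is a coequalizer as in \eqref{picoequalizer}. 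Proposition \ref{prop:definitionpsi} then says that $\iota$ is a monomorphism in $\Hopf$, and Theorem \ref{thm:NewmanforM} gives $\phi_{A}(\iota)=\pi p$.

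Next set $j:=\mathsf{ker}(\mathsf{coker}(p\iota))$. By Remark \ref{rmk:kerHopfideal}, $j$ is a monomorphism in $\Hopf$, and by hypothesis it is an equalizer as in \eqref{iequalizer}. We want $j=i$ as subobjects of $B$. Since both $i$ and $j$ are equalizers as in \eqref{iequalizer}, Theorem \ref{thm:NewmanforM} applied to $B$ reduces this to showing $\phi_{B}(j)=\phi_{B}(i)=\pi$. Write $e:=\mathsf{coker}(\mathsf{ker}(p\iota))$, so that $p\iota=je$ with $e$ an epimorphism in $\Mm$. Then
\[
\phi_{B}(j)=\mathsf{coker}\big(m_{B}(\mathrm{Id}_{B}\ot j)-\mathrm{Id}_{B}\ot\varepsilon\big),
\]
and precomposing with the epimorphism $p\ot e$ (an epimorphism since $\ot$ is exact) turns this into
\[
\mathsf{coker}\big(m_{B}(p\ot p\iota)-p\ot\varepsilon_{A^{\mathrm{co}Q}}\big)=\mathsf{coker}\big(p\,(m_{A}(\mathrm{Id}_{A}\ot\iota)-\mathrm{Id}_{A}\ot\varepsilon)\big),
\]
using that $p$ is a morphism of monoids and that $\varepsilon_{B}j e=\varepsilon_{A^{\mathrm{co}Q}}$.

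The main step is to identify this last cokernel with $\pi$. Put $g:=m_{A}(\mathrm{Id}_{A}\ot\iota)-\mathrm{Id}_{A}\ot\varepsilon$. Then $\mathsf{coker}(g)=\phi_{A}(\iota)=\pi p$, and we must show $\mathsf{coker}(pg)=\pi$. One direction is immediate: $\pi p g=0$, so $\pi$ factors through $\mathsf{coker}(pg)$. Conversely, let $h:=\mathsf{coker}(pg):B\to Z$. Then $hp$ kills $g$, so $hp=\xi\,\pi p$ for some $\xi:Q\to Z$. Since $p$ is an epimorphism in $\Mm$, this gives $h=\xi\pi$. The two factorizations are mutually inverse up to the epimorphisms involved, so $\mathsf{coker}(pg)=\pi$ and hence $\phi_{B}(j)=\pi=\phi_{B}(i)$. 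Theorem \ref{thm:NewmanforM} then yields $j=\psi_{B}\phi_{B}(j)=\psi_{B}\phi_{B}(i)=i$.

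Finally, Lemma \ref{lem:tildepepi} gives that $(\mathrm{Id}_{C}\ot\varepsilon_{A})p_{2}$ is an epimorphism in $\Mm$, hence a regular epimorphism in $\Hopf$ by Corollary \ref{cor:regepimono}. The delicate point is the cokernel comparison: one must be careful that precomposing with $p\ot e$ does not change cokernels and that the counit terms match. This is where exactness of $\ot$ and the identity $\varepsilon_{B}j=\varepsilon_{\mathrm{Im}}$ are used.
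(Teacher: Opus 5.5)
Your proof is correct and follows the paper's architecture. You use the same reduction through Corollary~\ref{cor:regepimono} and Lemma~\ref{lem:tildepepi} and the same candidate $\iota=\psi_{A}(\phi_{B}(i)p)$. You use Proposition~\ref{lemma:phip} and Theorem~\ref{thm:NewmanforM} in the same way, to get $\phi_{A}(\iota)=\pi p$ and to reduce to $\phi_{B}(\mathsf{ker}(\mathsf{coker}(p\iota)))=\phi_{B}(i)$. You also compute $\phi_{B}(j)=\mathsf{coker}(pg)$ by precomposing with an epimorphism, as the paper does; you use $p\ot e$ in one step where the paper takes two.

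The one genuine difference is how you identify $\mathsf{coker}(pg)=\pi$. The paper rewrites $\mathsf{coker}(pg)$ as $\mathsf{coker}(p\,\mathsf{ker}(\pi p))$ via the image factorization of $g$. It then applies the Snake Lemma to get an epimorphism $\tilde{p}:\mathsf{Ker}(\pi p)\to\mathsf{Ker}(\pi)$ with $\mathsf{ker}(\pi)\tilde{p}=p\,\mathsf{ker}(\pi p)$, and concludes $\mathsf{coker}(p\,\mathsf{ker}(\pi p))=\mathsf{coker}(\mathsf{ker}(\pi))=\pi$. You argue directly from the universal property: a morphism $k$ with domain $B$ satisfies $kpg=0$ iff $kp$ factors through $\mathsf{coker}(g)=\pi p$ iff $k$ factors through $\pi$. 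Each factorization is unique because $p$ and $\pi$ are epimorphisms, and this is exactly the statement that $\pi$ is a cokernel of $pg$.

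Your route is shorter and needs neither image factorizations nor the Snake Lemma, only that $p$ is an epimorphism. The paper's route gives as a by-product the exact sequence $0\to\mathsf{Ker}(p)\to\mathsf{Ker}(\pi p)\to\mathsf{Ker}(\pi)\to 0$, which this statement does not need. Your phrase ``mutually inverse up to the epimorphisms involved'' is loose; replacing it with the iff-chain above makes the step airtight.
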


\begin{proof}
    By 1) of Corollary \ref{cor:regepimono} we know that regular epimorphisms in $\Hopf$ are exactly the morphisms in $\Hopf$ that are epimorphisms in $\Mm$ and the morphism $(\mathrm{Id}_{C}\ot\varepsilon_{A})p_{2}$ in the pullback \eqref{pullbackmono} is in $\Hopf$. Hence, it remains to prove that $(\mathrm{Id}_{C}\ot\varepsilon_{A})p_{2}$ is an epimorphism in $\Mm$, which is equivalent, by Lemma \ref{lem:tildepepi}, to prove that there exists a monomorphism $\iota:D\to A$ in $\Hopf$ such that $i=\mathsf{ker}(\mathsf{coker}(p\iota))$.
    
Define the morphism $\pi:=\phi_{B}(i):B\to Q$ in $\mathsf{Comon}_{\mathrm{coc}}(_{B}\Mm)$ and consider the morphism $\pi p:A\to Q$ in $\mathsf{Comon}_{\mathrm{coc}}(_{A}\Mm)$. 
We also define $\iota:=\psi_{A}(\pi p):A^{\mathrm{co}Q}\to A$ in $\Hopf$. Since
\[
\begin{split}
p(m_{A}(\mathrm{Id}_{A}\ot\iota)-\mathrm{Id}_{A}\ot\varepsilon_{A^{\mathrm{co}Q}})&=m_{B}(\mathrm{Id}_{B}\ot p\iota)(p\ot\mathrm{Id}_{A^{\mathrm{co}Q}})-(\mathrm{Id}_{B}\ot\varepsilon_{A^{\mathrm{co}Q}})(p\ot\mathrm{Id}_{A^{\mathrm{co}Q}})\\&=(m_{B}(\mathrm{Id}_{B}\ot p\iota)-\mathrm{Id}_{B}\ot\varepsilon_{A^{\mathrm{co}Q}})(p\ot\mathrm{Id}_{A^{\mathrm{co}Q}})
\end{split}
\]
and $p\ot\mathrm{Id}_{A^{\mathrm{co}Q}}$ is an epimorphism in $\Mm$, we have that 
\begin{equation}\label{auxxeeqq}
\mathsf{coker}(m_{B}(\mathrm{Id}_{B}\ot p\iota)-\mathrm{Id}_{B}\ot\varepsilon_{A^{\mathrm{co}Q}})=\mathsf{coker}(p(m_{A}(\mathrm{Id}_{A}\ot\iota)-\mathrm{Id}\ot\varepsilon_{A^{\mathrm{co}Q}})). 
\end{equation}
Recall that $\mathsf{coker}(\mathsf{ker}(p\iota))$ and $\mathsf{ker}(\mathsf{coker}(p\iota))$ are in $\Hopf$, see Remark \ref{rmk:kerHopfideal}. 
Since $\mathrm{Id}_{B}\ot \mathsf{coker}(\mathsf{ker}(p\iota))$ is an epimorphism in $\Mm$, we have
\[
\begin{split}
    &\phi_{B}(\mathsf{ker}(\mathsf{coker}(p\iota)))=\mathsf{coker}(m_{B}(\mathrm{Id}_{B}\ot \mathsf{ker}(\mathsf{coker}(p\iota)))-\mathrm{Id}_{B}\ot\varepsilon_{\mathrm{Im}(f)})\\&=\mathsf{coker}(m_{B}(\mathrm{Id}_{B}\ot \mathsf{ker}(\mathsf{coker}(p\iota)))-\mathrm{Id}_{B}\ot\varepsilon_{\mathrm{Im}(f)})(\mathrm{Id}_{B}\ot \mathsf{coker}(\mathsf{ker}(p\iota))))\\&=\mathsf{coker}(m_{B}(\mathrm{Id}_{B}\ot p\iota)-\mathrm{Id}_{B}\ot\varepsilon_{A^{\mathrm{co}Q}}) \overset{\eqref{auxxeeqq}}{=}\mathsf{coker}(p(m_{A}(\mathrm{Id}_{A}\ot\iota)-\mathrm{Id}\ot\varepsilon_{A^{\mathrm{co}Q}}))\\&=\mathsf{coker}(p\mathsf{ker} (\mathsf{coker}(m_{A}(\mathrm{Id}_{A}\ot\iota)-\mathrm{Id}_{A}\ot\varepsilon_{A^{\mathrm{co}Q}}))\mathsf{coker}(\mathsf{ker}(m_{A}(\mathrm{Id}_{A}\ot\iota)-\mathrm{Id}_{A}\ot\varepsilon_{A^{\mathrm{co}Q}})))\\&=\mathsf{coker}(p\mathsf{ker} (\mathsf{coker}(m_{A}(\mathrm{Id}_{A}\ot\iota)-\mathrm{Id}_{A}\ot\varepsilon_{A^{\mathrm{co}Q}}))),
\end{split}
\]
i.e.\ the following equality holds
\begin{equation}\label{eq:aux}
    \phi_{B}(\mathsf{ker}(\mathsf{coker}(p\iota)))=\mathsf{coker}(p\mathsf{ker} (\mathsf{coker}(m_{A}(\mathrm{Id}_{A}\ot\iota)-\mathrm{Id}_{A}\ot\varepsilon_{A^{\mathrm{co}Q}}))).
\end{equation}
Since the morphism $\pi p:A\to Q$ in $\mathsf{Comon}_{\mathrm{coc}}(_{A}\Mm)$ is a coequalizer as in \eqref{picoequalizer} by Proposition \ref{lemma:phip}, using Theorem \ref{thm:NewmanforM}, we get 
\[
\pi p=\phi_{A}(\psi_{A}(\pi p))=\phi_{A}(\iota)=\mathsf{coker}(m_{A}(\mathrm{Id}_{A}\ot\iota)-\mathrm{Id}_{A}\ot\varepsilon_{A^{\mathrm{co}Q}}) 
\]
and then $\mathsf{ker}(\pi p)=\mathsf{ker}(\mathsf{coker}(m_{A}(\mathrm{Id}_{A}\ot\iota)-\mathrm{Id}_{A}\ot\varepsilon_{A^{\mathrm{co}Q}}))$. It follows that 
$$
p\mathsf{ker}(\pi p)=p\mathsf{ker}(\mathsf{coker}(m_{A}(\mathrm{Id}_{A}\ot\iota)-\mathrm{Id}_{A}\ot\varepsilon_{A^{\mathrm{co}Q}})).
$$ 
Hence,
\begin{equation}\label{eq:auxfin}
\phi_{B}(\mathsf{ker}(\mathsf{coker}(p\iota)))\overset{\eqref{eq:aux}}{=}\mathsf{coker}(p\mathsf{ker} (\mathsf{coker}(m_{A}(\mathrm{Id}_{A}\ot\iota)-\mathrm{Id}_{A}\ot\varepsilon_{A^{\mathrm{co}Q}})))=\mathsf{coker}(p\mathsf{ker}(\pi p)).
\end{equation}
By applying the Snake Lemma to the following commutative diagram in $\Mm$
\[\begin{tikzcd}
	 \mathsf{Ker}(p) & A & B  \\
	  0 & Q & Q 
	\arrow[from=1-1, to=1-2,"\mathsf{ker}(p)"]
	\arrow[from=1-1, to=2-1]
	\arrow[from=1-2, to=1-3, "p"]
	\arrow[from=1-2, to=2-2,"\pi p"]
	\arrow[from=1-3, to=2-3,"\pi"]
	\arrow[from=2-1, to=2-2]
	\arrow[from=2-2, to=2-3,"\mathrm{Id}_{Q}"']
\end{tikzcd}\]
we obtain the exact sequence in $\Mm$
\[\begin{tikzcd}
	0 & \mathsf{Ker}(p) & \mathsf{Ker}(\pi p) & \mathsf{Ker}(\pi) & 0
	\arrow[from=1-1, to=1-2]
	\arrow[from=1-2, to=1-3]
	\arrow[from=1-3, to=1-4,"\tilde{p}"]
	\arrow[from=1-4, to=1-5]
\end{tikzcd}\]
where $\tilde{p}$ is an epimorphism in $\Mm$ such that $\mathsf{ker}(\pi)\tilde{p}=p\mathsf{ker}(\pi p)$.
This implies that 
\begin{equation}\label{eq:prop1}
\phi_{B}(\mathsf{ker}(\mathsf{coker}(p\iota)))\overset{\eqref{eq:auxfin}}{=}\mathsf{coker}(p\mathsf{ker}(\pi p))=\mathsf{coker}(\mathsf{ker}(\pi)\tilde{p})=\mathsf{coker}(\mathsf{ker}(\pi))=\pi=\phi_{B}(i)
\end{equation}
since $\pi$ is an epimorphism in $\Mm$. Then, since $\mathsf{ker}(\mathsf{coker}(p\iota))$ and $i$ are equalizers as in \eqref{iequalizer}, by applying Theorem \ref{thm:NewmanforM} we obtain $i=\mathsf{ker}(\mathsf{coker}(p\iota))$.  
\end{proof}


In order to have that the technical condition in the previous result always holds, we show that any morphism in $\Hopf$ which is a monomorphism in $\Mm$ (equivalently, any monomorphism in $\Hopf$) is an equalizer as in \eqref{iequalizer}. This is done in the following proposition, which is the dual of Proposition \ref{prop:cofaitfullyflat}. 

\begin{proposition}\label{prop:stability2}
Let $i:K \to A$ be a morphism in $\mathsf{Mon}(\Mm)$ which is a monomorphism in $\Mm$ such that $(-)\ot_{K}A$ preserves and reflects monomorphisms. Then, $\id_{\mathsf{Coker}(i)}\ot_{K}i$ is a monomorphism in $\Mm$. 
As a consequence, $i$ is an equalizer as in \eqref{iequalizer}.

\begin{invisible}
In particular, this happens when $i$ is morphism in $\Hopf$ which is a monomorphism in $\Mm$ and $(-)\ot_{K}A$ preserves and reflects monomorphisms.
\end{invisible}
\end{proposition}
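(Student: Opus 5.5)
The plan is to dualize the proof of Proposition \ref{prop:cofaitfullyflat}, replacing cotensor products over $Q$ by balanced tensor products over $K$ and epimorphisms by monomorphisms.

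\textbf{Setup.} Since $i$ is a morphism of monoids, $A$ is a $K$-bimodule via $m_{A}(i\ot\id_{A})$ and $m_{A}(\id_{A}\ot i)$, and $i$ is a $K$-bimodule map. Set $c:=\mathsf{coker}(i):A\to\mathsf{Coker}(i)$. Because $\ot$ preserves cokernels, $\id_{K}\ot c$ is the cokernel of $\id_{K}\ot i$. Moreover $c\,m_{A}(\id_{A}\ot i)\ldots$ is not what we need; rather we need a left action on the cokernel. The relevant computation is $c\,m_{A}(i\ot i)=c\,i\,m_{K}=0$, so $c\,m_{A}(i\ot\id_{A})$ factors through $\id_{K}\ot c$. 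This gives $\mathsf{Coker}(i)$ a left $K$-module structure making $c$ left $K$-linear, and similarly a right one, so $c$ lies in ${}_{K}\Mm_{K}$. Since $({}_{K}\Mm_{K},\ot_{K},K)$ is monoidal, the morphisms $\id\ot_{K}i$ and $i\ot_{K}\id_{A}$ make sense.

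\textbf{Step 1: $i\ot_{K}\id_{A}$ is a split monomorphism.} The map $K\ot_{K}A\to A\ot_{K}A$ is $i\ot_{K}\id_{A}$. Precompose it with $\Upsilon'^{-1}_{A}=q_{K,A}(u_{K}\ot\id_{A})$. A retraction is given by the multiplication $\bar m:A\ot_{K}A\to A$, induced by $m_{A}$ (which balances by associativity). Indeed, $\bar m\,(i\ot_{K}\id_{A})\,\Upsilon'^{-1}_{A}=m_{A}(u_{A}\ot\id_{A})=\id_{A}$. Hence $\id_{\mathsf{Coker}(i)}\ot_{K}i\ot_{K}\id_{A}$ is a split monomorphism, so in particular a monomorphism. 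Since $(-)\ot_{K}A$ reflects monomorphisms, $\id_{\mathsf{Coker}(i)}\ot_{K}i$ is a monomorphism. This mirrors the split-epi argument in Proposition \ref{prop:cofaitfullyflat}, now using $\Upsilon'$ and $\bar m$ in place of $\Lambda'$ and $\Delta'$.

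\textbf{Step 2: the equalizer property.} First, $i$ equalizes the pair, as already computed in Proposition \ref{prop:definitionpsi}. Now let $g:Z\to A$ satisfy $q_{A,A}(u_{A}\ot\id_{A})g=q_{A,A}(\id_{A}\ot u_{A})g$. It suffices to show $c\,g=0$, since $i=\mathsf{ker}(c)$ in the abelian category $\Mm$. Apply $c\ot_{K}\id_{A}$ to the hypothesis. The term $(c\ot_{K}\id_{A})q_{A,A}(u_{A}\ot\id_{A})$ equals $q_{\mathsf{Coker}(i),A}(c\,u_{A}\ot\id_{A})$, and this vanishes because $u_{A}=i\,u_{K}$. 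The other term is $q_{\mathsf{Coker}(i),A}(c\ot u_{A})g$. We can rewrite it as $(\id\ot_{K}i)\,q_{\mathsf{Coker}(i),K}(\id\ot u_{K})\,c\,g$, which equals $(\id\ot_{K}i)\,\Upsilon^{-1}_{\mathsf{Coker}(i)}\,c\,g$. Since $\id_{\mathsf{Coker}(i)}\ot_{K}i$ is a monomorphism and $\Upsilon^{-1}$ is an isomorphism, we conclude $c\,g=0$. Therefore $g$ factors uniquely through $i$.

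\textbf{Expected obstacle.} The delicate points are bookkeeping. One must check that the module structures on $\mathsf{Coker}(i)$ are well defined, which uses that $\ot$ preserves cokernels. One must also check that $\bar m$ is well defined on $A\ot_{K}A$ and commutes with the relevant $q$'s. A further subtlety is that ``$(-)\ot_{K}A$ reflects monomorphisms'' must be applied to $\id\ot_{K}i$ viewed as a morphism of right $K$-modules, which holds because $i$ is $K$-bilinear.
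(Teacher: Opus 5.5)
Your proposal is correct and is essentially the paper's own argument, namely the dual of Proposition \ref{prop:cofaitfullyflat}: split $i\ot_{K}\id_{A}$ using $\Upsilon'^{-1}_{A}$ and the induced multiplication $A\ot_{K}A\to A$, transfer the splitting to $\id_{\mathsf{Coker}(i)}\ot_{K}i\ot_{K}\id_{A}$ and use reflection, then verify the universal property by applying $\mathsf{coker}(i)\ot_{K}\id_{A}$ and factoring $q_{\mathsf{Coker}(i),A}(\id\ot u_{A})$ through $\id\ot_{K}i$ and $\Upsilon^{-1}_{\mathsf{Coker}(i)}$. One small correction to your setup: the structure on $\mathsf{Coker}(i)$ that the argument actually uses is the right $K$-action induced by $\mathsf{coker}(i)m_{A}(\id_{A}\ot i)$, which is the one your aside dismisses; since you construct both actions anyway, this does not affect the proof.
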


\begin{proof}
The proof is the dual of the proof of Proposition \ref{prop:cofaitfullyflat}.
\begin{invisible}
COMPLETE PROOF HERE. Since $\mathsf{coker}(i)\ot\mathrm{Id}_{K}$ is the cokernel of the morphism $i\ot\mathrm{Id}_{K}$ in $\Mm$ and $\mathsf{coker}(i)m_{A}(\mathrm{Id}_{A}\ot i)(i\ot\mathrm{Id}_{K})=\mathsf{coker}(i)im_{K}=0$, there exists a unique morphism $\mu_{\mathsf{coker}(i)}:\mathsf{Coker}(i)\ot K\to\mathsf{Coker}(i)$ in $\Mm$ such that $\mu_{\mathsf{coker}(i)}(\mathsf{coker}(i)\ot\mathrm{Id}_{K})=\mathsf{coker}(i)m_{A}(\mathrm{Id}_{A}\ot i)$. One can check that $\mu_{\mathsf{Coker}(i)}$ defines a right $K$-action on $\mathsf{Coker}(i)$, so that $\mathsf{coker}(i)$ becomes a morphism in $\Mm_{K}$ by considering $A$ in $\Mm_{K}$ with $m_{A}(\id_{A}\ot i)$. Moreover, $A$ is an object in $_{K}\Mm_{K}$ with left module structure $m_{A}(i\ot\id_{A})$, so that $i$ becomes a morphism in $_{K}\Mm_{K}$. Since $(_{K}\Mm_{K},\ot_{K},K)$ is a monoidal category, the morphisms $\id_{A}\ot_{K}i$ and $i\ot_{K}\id_{A}$ are in $_{K}\Mm_{K}$.

Let $\Upsilon'_{A}:K\ot_{K} A\to A$ be the canonical isomorphism in $\Mm$ determined by $\Upsilon'_{A}q_{K,A}= \mu_{A} =m_{A} (i\ot\id_{A})$. There is a morphism $m'_{A}:A\ot_{K}A\to A$ in $\Mm$ such that $m'_{A}q_{A,A}=m_A$. Consider $(i\ot_{K} \id_{A})(\Upsilon'_{A})^{-1}:A\to A\ot_{K}A$, where $(\Upsilon'_{A})^{-1}=q_{K,A}(u_{K}\ot\id_{A})$. It is easy to check that $m'_A(i\ot_{K}\id_{A})(\Upsilon'_{A})^{-1}= \id_A$. Hence, $i\ot_{K}\id_{A}$ is a split monomorphism in $\Mm$. Therefore, $\id_{\mathsf{Coker}(i)}\ot_{K}i\ot_{K}\id_{A}$ is a split monomorphism in $\Mm$ and then, since $(-)\ot_{K}A$ reflects monomorphisms, we get that $\id_{\mathsf{Coker}(i)}\ot_{K}i$ is a monomorphism in $\Mm$.

Since
\[
\begin{split}
q_{A,A}(u_{A}\ot \mathrm{Id}_{A})i&=q_{A,A}(\mathrm{Id}_{A}\ot m_{A}(i\ot\mathrm{Id}_{A}))(u_{A}\ot\mathrm{Id}_{K}\ot u_{A})\\&=q_{A,A}(m_{A}(\mathrm{Id}_{A}\ot i)\ot\mathrm{Id}_{A})(u_{A}\ot\mathrm{Id}_{K}\ot u_{A})=q_{A,A}(\mathrm{Id}_{A}\ot u_{A})i,
\end{split}
\]
it remains to show the universal property. Suppose there is a morphism $f:A' \to A$ in $\Mm$ such that $q_{A,A}(\mathrm{Id}_{A}\ot u_{A})f = q_{A,A}(u_{A}\ot\mathrm{Id}_{A})f$.  
As recalled in the preliminaries, $\mathsf{coker}(i)\ot_K\mathrm{Id}_{A}: A \ot_K A \to \mathsf{Coker}(i) \ot_K A$ is the unique morphism in $\Mm$ such that $(\mathsf{coker}(i)\ot_K\mathrm{Id}_{A})q_{A,A} = q_{\mathsf{Coker}(i),A}(\mathsf{coker}(i)\ot \id_A)$. 
Hence, we have
\[
\begin{split}
q_{\mathsf{Coker}(i),A}(\mathrm{Id}_{\mathsf{Coker}(i)}\ot u_{A})\mathsf{coker}(i)f&=
q_{\mathsf{Coker}(i),A}(\mathsf{coker}(i)\ot \id_A)(\mathrm{Id}_{A}\ot u_{A})f \\&=(\mathsf{coker}(i)\ot_K\mathrm{Id}_{A})q_{A,A} (\mathrm{Id}_{A}\ot u_{A})f\\&=(\mathsf{coker}(i)\ot_K\mathrm{Id}_{A})q_{A,A} (u_{A}\ot\mathrm{Id}_{A})f 
\\&= q_{\mathsf{Coker}(i),A}(\mathsf{coker}(i) u_{A}\ot\mathrm{Id}_{A})f\\&=q_{\mathsf{Coker}(i),A}(\mathsf{coker}(i)i u_K\ot\mathrm{Id}_{A})f=0.
\end{split}
\]
If we prove that $q_{\mathsf{Coker}(i),A}(\id_{\mathsf{Coker}(i)} \ot u_{A})$ is a monomorphism in $\Mm$, we get that $\mathsf{coker}(i)f=0$ and then, since $i=\mathsf{ker}(\mathsf{coker}(i))$ as it is a monomorphism in $\Mm$, by the universal property of the kernel there exists a unique morphism $\varphi:A'\to K$ in $\Mm$ such that $i\varphi=f$. We know that there exists a unique morphism $\mathrm{Id}_{\mathsf{Coker}(i)} \ot_K i$ in $\Mm$ such that $(\mathrm{Id}_{\mathsf{Coker}(i)} \ot_K i) q_{\mathsf{Coker}(i),K} = q_{\mathsf{Coker}(i),A}(\id_{\mathsf{Coker}(i)} \ot i)$, so that
\[
q_{\mathsf{Coker}(i),A}(\id_{\mathsf{Coker}(i)} \ot u_{A}) = q_{\mathsf{Coker}(i),A}(\id_{\mathsf{Coker}(i)} \ot iu_{K}) = (\mathrm{Id}_{\mathsf{Coker}(i)} \ot_K i) q_{\mathsf{Coker}(i),K}(\id_{\mathsf{Coker}(i)} \ot u_{K}).
\]
As recalled in the preliminaries, we know that $q_{\mathsf{Coker}(i),K}(\id_{\mathsf{Coker}(i)} \otimes u_K)$ is an isomorphism in $\Mm$. Since $\id_{\mathsf{Coker}(i)}\ot_{K}i$ is a monomorphism in $\Mm$, we obtain that $q_{\mathsf{Coker}(i),A}(\id_{\mathsf{Coker}(i)} \ot u_{A})$ is a monomorphism in $\Mm$.
\end{invisible}
\end{proof}

\begin{corollary}\label{cor:splitmonoff}
    Split monomorphisms in $\mathsf{Mon}(\Mm)$ are equalizers as in \eqref{iequalizer}. 
    \begin{invisible}
    In particular, this happens for split monomorphisms in $\Hopf$.
     \end{invisible}
\end{corollary}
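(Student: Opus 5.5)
The plan is to deduce the statement from Proposition \ref{prop:stability2}, by showing that its hypothesis, that $(-)\ot_{K}A$ preserves and reflects monomorphisms, holds automatically when $i:K\to A$ is a split monomorphism in $\mathsf{Mon}(\Mm)$. Fix a retraction $r:A\to K$ in $\mathsf{Mon}(\Mm)$ with $ri=\id_{K}$. Through $r$, every right $K$-module $M$ gives a right $A$-module, and $K$ becomes a left $A$-module with action $m_{K}(r\ot\id_{K})$. Since $r$ is a morphism of monoids, $r$ is then a morphism of left $K$-modules (where $A$ carries the action $m_{A}(i\ot\id_{A})$) and of left $A$-modules. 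So $\id_{M}\ot_{K}r:M\ot_{K}A\to M\ot_{K}K\cong M$ is a natural retraction of $\id_{M}\ot_{K}i$, composed with $\Upsilon_{M}^{-1}$. In other words, $M\cong M\ot_{K}K$ is naturally a retract of $M\ot_{K}A$.

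For \emph{reflection}, suppose $g:M\to N$ in $\Mm_{K}$ is such that $g\ot_{K}\id_{A}$ is a monomorphism. Naturality of the retraction gives $g=\Upsilon_{N}(\id_{N}\ot_{K}r)(g\ot_{K}\id_{A})(\id_{M}\ot_{K}i)\Upsilon_{M}^{-1}$. If $gx=gy$, then $(g\ot_{K}\id_{A})(\id_{M}\ot_{K}i)\Upsilon_{M}^{-1}x$ agrees with the same expression in $y$, because $g\ot_{K}\id_{A}$ and $\id\ot_{K}i$ commute by bifunctoriality. Cancelling $g\ot_{K}\id_{A}$ and then the split mono $(\id_{M}\ot_{K}i)\Upsilon_{M}^{-1}$ gives $x=y$.

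For \emph{preservation}, I would identify $M\ot_{K}A$ explicitly. The idea is to use the $A$-module structure coming from $r$ to write $A$ as a $K$-module retract containing $K$. Then $M\ot_{K}A\cong M\oplus(M\ot_{K}\mathsf{Coker}(i))$ as a naturally split sequence: $0\to M\ot_{K}K\to M\ot_{K}A\to M\ot_{K}\mathsf{Coker}(i)\to0$ is right exact by right exactness of $M\ot_{K}(-)$ and split on the left. This alone does not yield exactness of $(-)\ot_{K}A$ on monomorphisms, and I expect this to be the main obstacle.

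Two alternatives avoid it. The first is to bypass the exactness hypothesis and prove directly that $\id_{\mathsf{Coker}(i)}\ot_{K}i$ is a monomorphism. This is the only consequence of the hypothesis actually used in the proof of Proposition \ref{prop:stability2}, and it holds because $\id\ot_{K}i$ is split by $\id\ot_{K}r$, by the retraction argument above applied with $M=\mathsf{Coker}(i)$ (a right $K$-module via the induced action). After that, the second half of the proof of Proposition \ref{prop:stability2} goes through verbatim: $q_{\mathsf{Coker}(i),A}(\id\ot u_{A})$ is a monomorphism, and any $f$ equalizing the pair in \eqref{iequalizer} satisfies $\mathsf{coker}(i)f=0$, hence factors through $i=\mathsf{ker}(\mathsf{coker}(i))$. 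The second alternative is an even more direct route: define $\theta:=\Upsilon'_{A}\circ$ (the map $A\ot_{K}A\to K\ot_{K}A$ induced by $r\ot\id_{A}$). This is well defined since $r$ is a left $K$-module map, so $r\,m_{A}(\id_{A}\ot i)=m_{K}(r\ot\id_{K})$. It satisfies $\theta q_{A,A}(\id_{A}\ot u_{A})=ir$ and $\theta q_{A,A}(u_{A}\ot\id_{A})=\id_{A}$. Hence any $f$ equalizing the two maps satisfies $f=irf$, so $f$ factors through $i$, uniquely since $i$ is monic. I would present this second route as the main proof, since it is a two-line computation, and mention the first as the conceptual reason.
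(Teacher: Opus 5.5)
Your proposal is correct. Your ``first alternative'' is exactly the paper's proof. The paper notes that the retraction (your $r$) is left $K$-linear for the action $m_A(i\ot\id_A)$, so $\id_{\mathsf{Coker}(i)}\ot_K r$ splits $\id_{\mathsf{Coker}(i)}\ot_K i$. It then uses only the second half of Proposition~\ref{prop:stability2}: $q_{\mathsf{Coker}(i),A}(\id_{\mathsf{Coker}(i)}\ot u_A)$ is monic, hence $\mathsf{coker}(i)f=0$, hence $f$ factors through $i=\mathsf{ker}(\mathsf{coker}(i))$. Your caution about the full hypothesis is well founded. For a split monomorphism of monoids, $(-)\ot_K A$ need not preserve monomorphisms; e.g.\ in $\mathsf{Vec}_{\Bbbk}$, take a square-zero extension $A=K\ltimes N$ with $N$ not flat as a left $K$-module. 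So only the ``as a consequence'' part of that proposition is available, and that is all the paper uses.

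Your main route is genuinely different. The map $\theta$ is equivalently determined by $\theta q_{A,A}=m_A(ir\ot\id_A)$. Together with $r$ it exhibits \eqref{iequalizer} as a split equalizer: $ri=\id_K$, $\theta q_{A,A}(u_A\ot\id_A)=\id_A$ and $\theta q_{A,A}(\id_A\ot u_A)=ir$.
\begin{itemize}
\item This needs nothing about $\Mm$ beyond the existence of the coequalizer $q_{A,A}$: no abelianness, no cokernels, no exactness of $\ot$, and no identification $i=\mathsf{ker}(\mathsf{coker}(i))$.
\item It also gives a stronger conclusion: the equalizer is absolute, i.e.\ preserved by every functor.
\item What the paper's route buys instead is uniformity. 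It is the same endgame, dual to Proposition~\ref{prop:cofaitfullyflat}, that handles the faithfully flat case, where no retraction exists.
\end{itemize}

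When writing it up, fix one slip and add one line. The identity $r\,m_A(\id_A\ot i)=m_K(r\ot\id_K)$ expresses \emph{right} $K$-linearity of $r$, not left; right linearity is what $r\ot_K\id_A$ requires. You should also record that $i$ itself equalizes the pair, via the balancing relation $q_{A,A}(u_A\ot i)=q_{A,A}(u_A\ot m_A(i\ot u_A))=q_{A,A}(m_A(u_A\ot i)\ot u_A)=q_{A,A}(i\ot u_A)$.
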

\begin{proof}
If $i:K\to A$ is a split monomorphism in $\mathsf{Mon}(\Mm)$, there exists a morphism $\pi:A\to K$ in $\mathsf{Mon}(\Mm)$ such that $\pi i=\id_{K}$. Given the left $K$-action $\mu_{A}:=m_{A}(i\ot\id_{A})$ on $A$, we have $\pi \mu_{A}=\pi m_{A}(i\ot\id_{A})=m_{K}(\pi\ot\pi)(i\ot\id_{A})=m_{K}(\id_{K}\ot\pi)$ so $\pi$ is in $_{K}\Mm$. Then, we can consider the morphism $\id_{\mathsf{Coker}(i)}\ot_{K} \pi$ in $\Mm$ and obtain that $(\id_{\mathsf{Coker}(i)}\ot_{K} \pi)(\id_{\mathsf{Coker}(i)}\ot_{K} i)=\id$. This means that $\id_{\mathsf{Coker}(i)}\ot_K i$ is a split monomorphism in $\Mm$. 
Using the previous result, we obtain that split monomorphisms in $\mathsf{Mon}(\Mm)$ are always equalizers as in \eqref{iequalizer}.
\end{proof}

We introduce the following definition:

\begin{definition}\label{def:faithflatcat}
    Let $(\Mm,\ot,\mathbf{1},\sigma)$ be an abelian symmetric monoidal category. We say that $(\Mm,\ot,\mathbf{1},\sigma)$ satisfies the ``faithful flatness condition'' if, for any object $A$ in $\Hopf$ and any morphism $i:K\to A$ in $\Hopf$ which is a monomorphism in $\Mm$, $A$ is \textit{faithfully flat} over $K$, i.e.\ $(-)\ot_{K} A$ preserves and reflects monomorphisms. 
\end{definition}

\begin{remark}\label{rmk:prototypefaithflat}
    The prototype of this condition is given by $(\Mm,\ot,\mathbf{1},\sigma)=(\mathsf{Vec}_{\Bbbk},\ot_{\Bbbk},\Bbbk,\tau)$. It is known that, for any $A$ in $\mathsf{Hopf}_{\Bbbk,\mathrm{coc}}$ and any Hopf subalgebra $i:K\rightarrow A$, $A$ is faithfully flat over $K$; this was proven in \cite[Theorem 3.1]{Takeuchi}. 
\end{remark}

From now on let $(\Mm,\ot,\mathbf{1},\sigma)$ be an abelian symmetric monoidal category that satisfies the ``faithful coflatness condition'' (Definition \ref{deffaithcoflat}) and the ``faithful flatness condition'' (Definition \ref{def:faithflatcat}).

\begin{proposition}\label{cor:stability}
     Regular epimorphisms in $\Hopf$ (equivalently, cokernels in $\Hopf$) are stable under pullbacks.
\end{proposition}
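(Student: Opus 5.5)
The proof reduces, via \cite[Lemma 2.1]{GSV}, to two conditions. Condition 1) (stability of regular epimorphisms under $\mathrm{Id}_{E}\times(-)$) is exactly Lemma \ref{lem:regepipreservedbybinary}. So it remains to show that regular epimorphisms in $\Hopf$ are stable under pullbacks along split monomorphisms in $\Hopf$.

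Let $p:A\to B$ be a regular epimorphism and $i:C\to B$ a split monomorphism in $\Hopf$. By Lemma \ref{lem:pullbackp(c)}, the pullback of $p$ along $i$ in $\Hopf$ is the square \eqref{pullbackmono}. The pulled-back arrow is $(\mathrm{Id}_{C}\ot\varepsilon_{A})p_{2}:p^{-1}(C)\to C$, and we must show it is a regular epimorphism. For this I will invoke Proposition \ref{prop:stabilityregepi}, which requires two hypotheses.

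First, $i$ must be an equalizer as in \eqref{iequalizer}. Since $i$ is a split monomorphism in $\Hopf$, it is in particular a split monomorphism in $\mathsf{Mon}(\Mm)$, so Corollary \ref{cor:splitmonoff} applies. Alternatively, $i$ is a monomorphism in $\Mm$ by 2) of Corollary \ref{cor:regepimono}, so the faithful flatness condition together with Proposition \ref{prop:stability2} gives the same conclusion.

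Second, $\mathsf{ker}(\mathsf{coker}(p\,\psi_{A}(\phi_{B}(i)p)))$ must be an equalizer as in \eqref{iequalizer}. Set $\iota:=\psi_{A}(\phi_{B}(i)p)$, which is a morphism in $\Hopf$ by Proposition \ref{prop:definitionpsi}. By Remark \ref{rmk:kerHopfideal}, $\mathsf{ker}(\mathsf{coker}(p\iota))$ is a morphism in $\Hopf$ and a monomorphism in $\Mm$. The faithful flatness condition (Definition \ref{def:faithflatcat}) then makes $(-)\ot_{\mathrm{Im}(p\iota)}B$ preserve and reflect monomorphisms, and Proposition \ref{prop:stability2} shows that $\mathsf{ker}(\mathsf{coker}(p\iota))$ is an equalizer as in \eqref{iequalizer}.

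With both hypotheses in place, Proposition \ref{prop:stabilityregepi} shows that $(\mathrm{Id}_{C}\ot\varepsilon_{A})p_{2}$ is a regular epimorphism, which completes the reduction.

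The main obstacle is verifying that \cite[Lemma 2.1]{GSV} applies in our setting, namely to a finitely complete category in which regular epimorphisms are cokernels. This holds because $\Hopf$ is pointed, finitely complete and protomodular, so Corollary \ref{coeqequalcoker} applies. The other steps are direct applications of earlier results, and only faithful flatness is used beyond what Proposition \ref{prop:stabilityregepi} already requires; faithful coflatness enters through Proposition \ref{lemma:phip}.
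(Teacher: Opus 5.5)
Your proposal is correct and follows the paper's proof essentially step for step. You reduce via \cite[Lemma 2.1]{GSV} and Lemma \ref{lem:regepipreservedbybinary} to pullbacks along split monomorphisms, obtain that $i$ is an equalizer as in \eqref{iequalizer} from Corollary \ref{cor:splitmonoff}, and check the remaining hypothesis of Proposition \ref{prop:stabilityregepi} using faithful flatness and Proposition \ref{prop:stability2}. The only quibble is that the reduction lemma is not really an obstacle: it needs only finite completeness, since the pullback of $p$ along $g$ is the pullback of $\mathrm{Id}_{C}\times p$ along the split monomorphism $\langle \mathrm{Id}_{C},g\rangle$, so Corollary \ref{coeqequalcoker} is not needed there.
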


\begin{proof}
By Lemma \ref{lem:regepipreservedbybinary}, it is enough to prove that regular epimorphisms in $\Hopf$ are stable under pullbacks along split monomorphisms. By Corollary \ref{cor:splitmonoff}, we know that split monomorphisms in $\Hopf$ are equalizers as in \eqref{iequalizer}. Therefore, by Proposition \ref{prop:stabilityregepi}, given a regular epimorphism $p:A\to B$ in $\Hopf$ and a split monomorphism $i:C\to B$ in $\Hopf$, $p$ is stable under pullback along $i$ if $\mathsf{ker}(\mathsf{coker}(p\iota))$ is an equalizer as in \eqref{iequalizer}, where $\iota=\psi_{A}(\phi_{B}(i) p)$. Since $\mathsf{ker}(\mathsf{coker}(p\iota)):\mathrm{Im}(p\iota)\to B$ is a morphism in $\Hopf$ (Remark \ref{rmk:kerHopfideal}) which is a monomorphism in $\Mm$, by assumption on $\Mm$ we have that $(-)\ot_{\mathrm{Im}(p\iota)} B$ preserves and reflects monomorphisms, hence $\mathsf{ker}(\mathsf{coker}(p\iota))$ is an equalizer as in \eqref{iequalizer} by  Proposition \ref{prop:stability2}.
\end{proof}

As a consequence of Proposition \ref{prop:factmorp2} and Proposition \ref{cor:stability}, we finally obtain:

\begin{theorem}\label{thm:Hopfisregular}
    Let $(\Mm,\ot,\mathbf{1},\sigma)$ be an abelian symmetric monoidal category that satisfies the ``faithful coflatness condition'' and the ``faithful flatness condition''. Then, the category $\Hopf$ is regular. 
\end{theorem}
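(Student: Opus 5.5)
The plan is to check the three clauses in the definition of a regular category directly, since each has essentially been established already under the standing hypotheses.

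\emph{Finite limits.} An abelian symmetric monoidal category has equalizers, and $X\ot(-)$ and $(-)\ot X$ are exact, so they preserve them. Proposition \ref{prop:Hopfcocfintcomplete} then shows that $\Hopf$ is finitely complete. Concretely, binary products are given by $\ot$, equalizers are described in Lemma \ref{lem:eqkernel}, and the terminal object is $\mathbf{1}$ by Lemma \ref{lem:Hopfpointed}.

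\emph{Regular epi--mono factorization.} Here we use the faithful coflatness condition. Let $f:A\to B$ be any morphism in $\Hopf$. Proposition \ref{prop:factmorp2} shows that $\mathsf{coker}(\mathsf{ker}(f))$ is a coequalizer as in \eqref{picoequalizer}; this uses Proposition \ref{prop:cofaitfullyflat} together with Remark \ref{rmk:kerHopfideal}. Proposition \ref{prop:factorizationmorphisms} then gives $f=i\pi$, where $\pi=\mathsf{hcoker}(\mathsf{hker}(f))$ is a cokernel, hence a regular epimorphism, in $\Hopf$, and $i=\mathsf{ker}(\mathsf{coker}(f))$ is a monomorphism in $\Hopf$. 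By Corollary \ref{coeqequalcoker}, regular epimorphisms and cokernels coincide, so this is the required factorization.

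\emph{Pullback stability.} Here we use both conditions. Proposition \ref{cor:stability} states exactly that regular epimorphisms in $\Hopf$ are stable under pullback. Its proof runs as follows:
\begin{itemize}
\item \cite[Lemma 2.1]{GSV} reduces the claim to two facts: products $\mathrm{Id}_E\times f$ of regular epimorphisms are regular epimorphisms, which is Lemma \ref{lem:regepipreservedbybinary}, and regular epimorphisms are stable under pullback along split monomorphisms.
\item For the second fact, Corollary \ref{cor:splitmonoff} makes a split monomorphism an equalizer as in \eqref{iequalizer}.
\item The faithful flatness condition, via Proposition \ref{prop:stability2}, makes the auxiliary monomorphism $\mathsf{ker}(\mathsf{coker}(p\iota))$ an equalizer as in \eqref{iequalizer}.
\item Proposition \ref{prop:stabilityregepi} then applies and gives the stability.
\end{itemize}

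Combining the three clauses shows that $\Hopf$ is regular. The only step with real content is the pullback stability. It is already encapsulated in Proposition \ref{cor:stability}, so the remaining obstacle is just confirming that the standing hypotheses in force there match those of the theorem. They do: $\Mm$ is abelian symmetric and satisfies both the faithful flatness and faithful coflatness conditions.
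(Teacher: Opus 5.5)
Your proposal is correct and follows the paper's own argument. Finite completeness comes from Proposition \ref{prop:Hopfcocfintcomplete}, the regular epi--mono factorization from Proposition \ref{prop:factmorp2} (using faithful coflatness), and pullback stability of regular epimorphisms from Proposition \ref{cor:stability} (using both conditions), which is exactly how the paper deduces that $\Hopf$ is regular.
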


\section{On the semi-abelianness of $\Hopf$}\label{sec:semiabelian}

From now on, we assume that $(\Mm,\ot,\mathbf{1},\sigma)$ is an abelian symmetric monoidal category that satisfies the ``faithful coflatness condition'' and the ``faithful flatness condition'', so that $\Hopf$ is regular by Theorem \ref{thm:Hopfisregular}. Since $\Hopf$ is pointed (Lemma \ref{lem:Hopfpointed}) and  protomodular (Proposition \ref{prop:protomodularity}), we obtain:

\begin{theorem}\label{thm:homological}
    The category $\Hopf$ is homological.
\end{theorem}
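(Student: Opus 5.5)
The statement that $\Hopf$ is homological follows by collecting the three defining ingredients (see the sentence after Definition \ref{def:semi-abeliancategory}): a homological category is one that is pointed, regular and protomodular. The plan is simply to verify each under the standing assumptions. These assumptions are that $(\Mm,\ot,\mathbf{1},\sigma)$ is an abelian symmetric monoidal category satisfying the faithful coflatness condition (Definition \ref{deffaithcoflat}) and the faithful flatness condition (Definition \ref{def:faithflatcat}).

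\emph{Pointedness.} I would invoke Lemma \ref{lem:Hopfpointed}, which gives $\mathbf{1}$ as a zero object of $\Hopf$ for any braided monoidal $\Mm$.

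\emph{Protomodularity.} I would cite Proposition \ref{prop:protomodularity}. It applies because an abelian monoidal category has equalizers and $X\ot(-)$, $(-)\ot X$ are exact, so they preserve equalizers. The key input is $\Hopf=\mathsf{Grp}(\mathsf{Comon}_{\mathrm{coc}}(\Mm))$ with $\mathsf{Comon}_{\mathrm{coc}}(\Mm)$ finitely complete (Corollary \ref{cor:comoncocfincomplete}).

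\emph{Regularity.} I would cite Theorem \ref{thm:Hopfisregular}, which uses both faithful (co)flatness conditions. Finite completeness comes from Proposition \ref{prop:Hopfcocfintcomplete}. The regular epi--mono factorization comes from Proposition \ref{prop:factmorp2}. Pullback-stability of regular epimorphisms comes from Proposition \ref{cor:stability}, which reduces, via \cite[Lemma 2.1]{GSV}, to Lemma \ref{lem:regepipreservedbybinary} together with Proposition \ref{prop:stabilityregepi} and Corollary \ref{cor:splitmonoff}.

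There is no real obstacle at this stage, since all the substantive work has already been done. The only point to check is that the hypotheses of each cited result are implied by the standing assumptions. In particular, the symmetry of $\sigma$ is needed for binary products and hence finite completeness, and abelianness supplies the preservation of equalizers and coequalizers by $\ot$ used in Sections \ref{sec:colimits} and \ref{sec:NewmanTheorem}. Combining the three facts then yields that $\Hopf$ is homological.
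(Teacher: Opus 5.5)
Your proposal is correct and follows the paper's own argument: the paper obtains the theorem by combining pointedness (Lemma~\ref{lem:Hopfpointed}), protomodularity (Proposition~\ref{prop:protomodularity}) and regularity (Theorem~\ref{thm:Hopfisregular}) under the standing faithful coflatness and faithful flatness assumptions. Your extra check, that the abelian symmetric hypotheses supply the inputs each cited result needs, is accurate; the paper leaves this implicit.
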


We recall the equivalent characterization of semi-abelian categories given in \cite[3.7]{JMT}: a category $\Cc$ is semi-abelian provided that:
\begin{itemize}
    \item[1)] $\Cc$ has binary products and binary coproducts and a zero object;
    \item[2)] $\Cc$ has pullbacks of (split) monomorphisms;
    \item[3)] $\Cc$ has cokernels of kernels and every morphism with zero kernel is a monomorphism;
    \item[4)] the Split Short Five Lemma holds true in $\Cc$;
    \item[5)] cokernels are stable under pullbacks;
    \item[6)] images of kernels along cokernels are kernels.
\end{itemize}

We prove that 6) holds true in order to obtain that the category $\Hopf$ is (Barr)-exact, then it is semi-abelian once it has binary coproducts.

\begin{proposition}
   The category $\Hopf$ is (Barr)-exact. 
\end{proposition}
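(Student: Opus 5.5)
Following the reduction announced just before the statement, I will verify condition 6) of the characterization in \cite[3.7]{JMT}. The other conditions are already available. Condition 1) holds up to binary coproducts, which are only needed for semi-abelianness and not for exactness; here we use Lemma \ref{lem:Hopfpointed} and Subsection \ref{subsect:binprod}. Condition 2) follows from Proposition \ref{prop:Hopfcocfintcomplete}. Condition 3) comes from Corollary \ref{cor:cokernel} and Lemma \ref{lem:monohker}, condition 4) from Proposition \ref{prop:protomodularity}, and condition 5) from Proposition \ref{cor:stability}. Alternatively, one can bypass the characterization: in a homological category (Theorem \ref{thm:homological}), Barr-exactness is equivalent to the property that the regular image of a kernel along a regular epimorphism is again a kernel (see \cite{BB}). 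Either way, the task is to show the following. Let $p:A\to B$ be a regular epimorphism and $k:K\to A$ a kernel in $\Hopf$. Then the image of $k$ along $p$ is a kernel in $\Hopf$.

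First, I describe this image concretely. By Proposition \ref{prop:factmorp2}, the regular epi--mono factorization of $pk$ in $\Hopf$ is its image factorization in $\Mm$. So the image is $t:=\mathsf{ker}(\mathsf{coker}(pk))$, and by Remark \ref{rmk:kerHopfideal} it is a monomorphism in $\Hopf$. We know that $p$ is an epimorphism in $\Mm$ by Corollary \ref{cor:regepimono}. Next, $k$ is normal by Proposition \ref{prop:kernelimpliesmono}, that is, $3)\Rightarrow1)$ of Theorem \ref{cor:normalobjects}. Hence 3) of Lemma \ref{properties:ad} applies with $g=p$ and $i=k$, and it yields that $t$ is normal.

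It then remains to pass from normal to kernel via $1)\Rightarrow3)$ of Theorem \ref{cor:normalobjects}. That implication requires $t$ to be an equalizer as in \eqref{iequalizer}. Since $t$ is a morphism in $\Hopf$ which is a monomorphism in $\Mm$, the faithful flatness condition (Definition \ref{def:faithflatcat}) ensures that $(-)\ot_{\mathrm{Im}(pk)}B$ preserves and reflects monomorphisms. Proposition \ref{prop:stability2} then shows that $t$ is an equalizer as in \eqref{iequalizer}. Therefore $t$ is a kernel in $\Hopf$, which establishes 6). Combined with regularity (Theorem \ref{thm:Hopfisregular}), this gives Barr-exactness.

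The delicate point is the citation step rather than the computation. I must make sure the version of the \cite{JMT} characterization used yields exactness without already assuming binary coproducts. If it does not, I will fall back on the homological-category criterion: a regular homological category in which every equivalence relation is a kernel pair, equivalently in which images of kernels along regular epimorphisms are kernels, is exact. I will also need to check that the ``image'' in that criterion agrees with the regular image computed above, which Proposition \ref{prop:factmorp2} guarantees.
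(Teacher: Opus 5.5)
Your proposal is correct and follows the paper's proof essentially step by step. You identify the image of the kernel along the cokernel with $\mathsf{ker}(\mathsf{coker}(pk))$; the paper instead takes an abstract regular epi--mono factorization $\iota p$ and then shows $\iota=\mathsf{ker}(\mathsf{coker}(\iota p))$, which amounts to the same thing. From there both arguments use 3) of Lemma \ref{properties:ad} for normality, faithful flatness together with Proposition \ref{prop:stability2} for the equalizer condition \eqref{iequalizer}, and Theorem \ref{cor:normalobjects} to conclude that the image is a kernel. Your caution about deducing Barr-exactness from condition 6) without assuming binary coproducts addresses a point the paper leaves implicit, and your fallback criterion is valid: in a pointed regular protomodular category, an equivalence relation is determined by its normalization, and that normalization is the image of a kernel along a split epimorphism.
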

      
\begin{proof}
To obtain that $\Hopf$ is exact it remains to prove that images of kernels along cokernels are kernels. 
More precisely, we want to show that, given the kernel of a morphism $g:X\to Z$ in $\Hopf$ and the cokernel
of a morphism $f:A\to X$ in $\Hopf$, there exist a morphism $p:\mathsf{Hker}(g)\to H$ in $\Hopf$ and a kernel $\iota:H\to \mathsf{Hcoker}(f)$ in $\Hopf$ such that the following diagram commutes:
\[\begin{tikzcd}
	& A \\
	\mathsf{Hker}(g) & X & Z \\
	H & \mathsf{Hcoker}(f)
	\arrow[from=1-2, to=2-2,"f"]
	\arrow[from=2-1, to=2-2, "\mathsf{hker}(g)"]
	\arrow[from=2-1, to=3-1, "p"']
	\arrow[from=2-2, to=2-3,"g"]
	\arrow[from=2-2, to=3-2,"\mathsf{hcoker}(f)"]
	\arrow[from=3-1, to=3-2, "\iota"']
\end{tikzcd}\]
Since the category $\Hopf$ is regular (Theorem \ref{thm:Hopfisregular}), the morphism $\mathsf{hcoker}(f)\mathsf{hker}(g):\mathsf{Hker}(g) \to\mathsf{Hcoker}(f)$ has a regular epi-mono factorization in $\Hopf$. Hence, there exist a regular epimorphism $p:\mathsf{Hker}(g)\to H$ in $\Hopf$ and a monomorphism $\iota:H\to\mathsf{Hcoker}(f)$ in $\Hopf$ such that $\mathsf{hcoker}(f)\mathsf{hker}(g)=\iota p$. By 2) of Corollary \ref{cor:regepimono}, $\iota$ is a monomorphism in $\Mm$. By assumption on $\Mm$, we have that $(-)\ot_{H}\mathsf{Hcoker}(f)$ preserves and reflects monomorphisms and then, by Proposition \ref{prop:stability2}, we have that $\iota$ is an equalizer as in \eqref{iequalizer}. Therefore, by Theorem \ref{cor:normalobjects}, $\iota$ is a kernel in $\Hopf$ if and only if it is normal. Since $\mathsf{hker}(g)$ is normal by Theorem  \ref{cor:normalobjects} and $\mathsf{hcoker}(f)$ is a morphism in $\Hopf$ which is an epimorphism in $\Mm$ by 1) of Corollary \ref{cor:regepimono}, we have that $\mathsf{ker}(\mathsf{coker}(\mathsf{hcoker}(f)\mathsf{hker}(g)))$
is normal by 3) of Lemma \ref{properties:ad}. Since $p$ is an epimorphism in $\Mm$ and $\iota$ is a monomorphism in $\Mm$, we have $\mathsf{ker}(\mathsf{coker}(\mathsf{hcoker}(f)\mathsf{hker}(g)))=\mathsf{ker}(\mathsf{coker}(\iota p))=\mathsf{ker}(\mathsf{coker}(\iota))=\iota$. Hence, $\iota$ is normal, so a kernel in $\Hopf$ by Theorem \ref{cor:normalobjects}.
\end{proof}

Finally, we obtain the main result of this paper. 

\begin{theorem}\label{thm:semiabelian}
Let $(\Mm,\ot,\mathbf{1},\sigma)$ be an abelian symmetric monoidal category that satisfies the ``faithful coflatness condition'' and the ``faithful flatness condition''. Then, the category $\Hopf$ is (Barr)-exact and homological. As a consequence, if $\Hopf$ has binary coproducts, then it is a semi-abelian category.
\end{theorem}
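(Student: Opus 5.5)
The theorem mostly assembles results already established, so I will argue by collecting them. By Theorem \ref{thm:Hopfisregular}, $\Hopf$ is regular under the two faithfulness conditions. By Lemma \ref{lem:Hopfpointed} it is pointed, and by Proposition \ref{prop:protomodularity} it is protomodular. Hence it is homological; this is Theorem \ref{thm:homological}.

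For Barr-exactness I will invoke the proposition just proved, that images of kernels along cokernels are kernels. Together with regularity and protomodularity, this gives exactness. The cleanest route is the characterization of \cite[3.7]{JMT} recalled above, so I will check its six items in turn.

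1) $\Hopf$ has binary products, namely \eqref{def:binaryproduct}, and a zero object $\mathbf{1}$.

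2) Pullbacks exist by Proposition \ref{prop:Hopfcocfintcomplete}.

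3) Cokernels exist by Corollary \ref{cor:cokernel}. A morphism with zero kernel is a monomorphism by Lemma \ref{lem:monohker}.

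4) The Split Short Five Lemma holds by protomodularity.

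5) Cokernels coincide with regular epimorphisms (Corollary \ref{coeqequalcoker}), and these are pullback-stable by Proposition \ref{cor:stability}.

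6) This is exactly the proposition above.

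Items 1)–6) involve binary coproducts only in item 1). So once binary coproducts are assumed, \cite[3.7]{JMT} gives semi-abelianness.

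For the exactness claim alone, which must not depend on coproducts, I would argue directly. In a regular protomodular pointed category, an equivalence relation is a kernel pair as soon as its normalization, i.e.\ the kernel of one projection composed with the other projection, is a kernel. The normal subobject associated to an equivalence relation $R \rightrightarrows X$ is normal in Bourn's sense, so it suffices to show that normal subobjects in $\Hopf$ are kernels. Concretely, I would show that the relevant monomorphism $K\to X$ is normal in the sense of Definition \ref{def:normal}. For this I use the conjugation $\mathrm{ad}$ on $R$, transported via Lemma \ref{properties:ad} 2), together with the factoring of $\mathrm{ad}_X(\mathrm{Id}\ot k)$ through $K$, which comes from the relation being reflexive and symmetric. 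Then the faithful flatness condition and Proposition \ref{prop:stability2} make $K\to X$ an equalizer as in \eqref{iequalizer}, and Theorem \ref{cor:normalobjects} turns it into a kernel.

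This direct step, showing the normalization of an equivalence relation is normal in the $\mathrm{ad}$ sense, is the main obstacle, since it needs an explicit computation with the pullback description of $R$ from Subsection \ref{subsec:pullbacksHopf}. If it becomes heavy, I would instead fall back on the JMT route and note that the condition ``images of kernels along cokernels are kernels'' is also the exactness criterion for homological categories (see \cite{BB}), which avoids coproducts entirely.
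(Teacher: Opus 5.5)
Your proposal is correct and follows the paper's own assembly. Homologicity comes from Lemma \ref{lem:Hopfpointed}, Proposition \ref{prop:protomodularity} and Theorem \ref{thm:Hopfisregular}; exactness comes from the proposition showing that images of kernels along cokernels are kernels; semi-abelianness comes from \cite[3.7]{JMT} once binary coproducts exist.

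Your optional direct route is also sound, and lighter than you expect. Take $j:=\mathsf{hker}(d_{1})$, let $s$ be the reflexivity morphism of $R$, and let $\psi_{R}$ witness the normality of $j$ given by Proposition \ref{prop:kernelimpliesmono}. Then 2) of Lemma \ref{properties:ad} gives $\mathrm{ad}_{X}(\mathrm{Id}_{X}\ot d_{2}j)=d_{2}\mathrm{ad}_{R}(s\ot j)=d_{2}j\psi_{R}(s\ot\mathrm{Id}_{K})$. So you need no pullback computation and no symmetry of $R$.
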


\begin{remark}
    If $(\Mm,\ot,\mathbf{1},\sigma)=(\mathsf{Vec}_{\Bbbk},\ot_{\Bbbk},\Bbbk,\tau)$ we recover the semi-abelianness of the category $\mathsf{Hopf}_{\Bbbk,\mathrm{coc}}$ of cocommutative Hopf algebras over an arbitrary field $\Bbbk$, achieved in \cite[Theorem 2.10]{GSV}. As observed in Remark \ref{rmk:prototypefaithcoflat} and Remark \ref{rmk:prototypefaithflat}, the abelian symmetric monoidal category $(\mathsf{Vec}_{\Bbbk},\ot_{\Bbbk},\Bbbk,\tau)$ satisfies the ``faithful coflatness condition'' and the ``faithful flatness condition''. The same is obtained for $(\mathsf{Vec}_{\mathbb{Z}_{2}},\ot_{\Bbbk},\Bbbk,\sigma)$ with $\mathrm{char}(\Bbbk)\not=2$, the category of super vector spaces, in \cite[Theorem 3.10 (1) and (2)]{Masuoka2}; for the latter we recall that a comodule over a coalgebra is an injective (cogenerator) if and only if it is (faithfully) coflat, see \cite[Proposition A.2.1]{Takeuchi2} and, dually, a module over an algebra is a projective (generator) if and only if it is (faithfully) flat. 
\end{remark}  

Moreover, the Newman bijection \eqref{bijectionNewman}, which is given for any object $A$ in $\mathsf{Hopf}_{\mathrm{coc}}(\mathsf{Vec}_{\Bbbk})$ and goes back to \cite{Newman}, is generalized to any object $A$ in $\mathsf{Hopf}_{\mathrm{coc}}(\mathsf{Vec}_{\mathbb{Z}_{2}})$ in \cite[Theorem 3.10 (3)]{Masuoka2}. This result was then extended in \cite[Theorem 5.20]{AS} for any object $A$ in $\mathsf{Hopf}_{\mathrm{coc}}(\mathsf{Vec}_{G})$, where $G$ is a finitely generated abelian group and $\mathrm{char}(\Bbbk)\not=2$. It is known that $\mathsf{Vec}_{G}\cong\mm^{\Bbbk G}$, hence it becomes natural to consider the category $\mm^{H}$ of comodules over a bialgebra (or Hopf algebra).

\begin{remark}
Let $H$ be a bialgebra (or Hopf algebra). It is known that $(\mm^{H},\otimes,\Bbbk)$ is an abelian monoidal category. In fact, as said in e.g. \cite[Example 4.25]{A08}, this category is abelian, monoidal and $X\ot(-)$ and $(-)\ot X$ are left exact, for any $X$ in $\mm^{H}$. Moreover, $X\ot(-)$ and $(-)\ot X$ are also right exact for any $X$ in $\mm^{H}$ as they admit a right adjoint, i.e.\ the category $(\mm^{H},\otimes,\Bbbk)$ is \textit{closed monoidal}; this fact can be derived from \cite[Corollary V.8]{MacLane-book} and an explicitly description of the adjoint can be found 
in \cite[Proposition 2.3]{AsWe}. In fact, the antipode is unnecessary as one can see in e.g.\ \cite[Theorem 1.3.1]{Hovey}. Moreover, the category $(\mm^{H},\otimes,\Bbbk)$ is symmetric if and only if $H$ is \textit{cotriangular}, i.e.\ there exists a convolution invertible morphism $\Rr:H\ot H\to\Bbbk$ satisfying some axioms, see e.g.\ \cite[Definition 2.2.1]{Majid-book} and \cite[Exercise 9.2.9]{Majid-book}. Thus, for a cotriangular bialgebra (or Hopf algebra) $(H,\Rr)$, the category $(\mm^{H},\ot,\Bbbk,\sigma^{\Rr})$ is an abelian symmetric monoidal category. To apply Theorem \ref{thm:semiabelian}, we need that it satisfies the ``faithful coflatness condition'' and the ``faithful flatness condition''. 

It is known that epimorphisms in $\mm^{H}$ coincide with surjective right $H$-colinear maps since $\mm^{H}$ is a  Grothendieck category. Then, the ``faithful coflatness condition'' can be written as: for any $A$ in $\mathsf{Hopf}_{\mathrm{coc}}(\mm^{H})$ and any $\pi:A\to Q$ in $\mathsf{Comon}_{\mathrm{coc}}(_{A}\mm^{H})$ which is surjective, the functor $(-)\square_{Q}A:(\mm^{H})^{Q}\to\mm^{H}$ preserves and reflects epimorphisms. In \cite[Proposition 1.3]{CDR}, it was proven that the category $(\mm^{H})^{Q}$ is isomorphic to $\mm^{H\ltimes Q}$, where $H\ltimes Q$ is the smash coproduct coalgebra of $H$ and $Q$. Given the smash coproduct Hopf algebra $H\ltimes A$, the surjective morphism $\id_{H}\ot\pi:H\ltimes A\to H\ltimes Q$ is in $\mathsf{Comon}(_{H\ltimes A}\mm)$. As observed in Remark \ref{rmk:prototypefaithcoflat}, the faithful coflatness of $H\ltimes A$ over $H\ltimes Q$ can be obtained by applying \cite[Theorem 1.3 (2)]{Masuoka} for the Hopf algebra $H\ltimes A$. To do this, we need that the coradical of $H\ltimes A$ is cocommutative. As pointed out in the proof of \cite[Theorem 3.10 (2)]{Masuoka2}, this happens for $H=\Bbbk\mathbb{Z}_{2}$ 
because an object $A$ in $\mathsf{Hopf}_{\mathrm{coc}}(\mm^{\Bbbk\mathbb{Z}_{2}})$, with $\Bbbk$ algebraically closed, is a \textit{pointed coalgebra}, i.e.\ all its simple subcoalgebras are 1-dimensional.
\end{remark}

It is of significant interest to determine the minimal conditions on $H$ such that the ``faithful coflatness condition'' and the ``faithful flatness condition'' are satisfied, but this would require deep and specific Hopf algebraic tools and would go beyond the scope of this paper. We will investigate this in the future. We leave this question open:\medskip

\noindent\textit{Question}: Is the category $\mathsf{Hopf}_{\mathrm{coc}}(\mm^{H})$ semi-abelian for any cosemisimple Hopf algebra $H$? \medskip

Theorem \ref{thm:semiabelian} opens other interesting directions. In fact, we recall that the category $\mathsf{HBr}_{\mathrm{coc}}$ of cocommutative Hopf braces, introduced in \cite{AnGaVe}, was proven to be semi-abelian in \cite{GranSciandra}, using the semi-abelianness of $\mathsf{Hopf}_{\Bbbk,\mathrm{coc}}$. Hence, by employing the semi-abelianness of $\Hopf$, it would be reasonable to try to extend the semi-abelianness of $\mathsf{HBr}_{\mathrm{coc}}$ to the category $\mathsf{HBr}_{\mathrm{coc}}(\Mm)$ of cocommutative Hopf braces in a braided monoidal category, see e.g.\ \cite[Definition 8]{FGRR} for the definition, under the assumption that $(\Mm,\ot,\mathbf{1},\sigma)$ is an abelian symmetric monoidal category that satisfies the ``faithful coflatness condition'' and the ``faithful flatness condition''. This would give an answer to the question opened in \cite[Remark 5.8]{GranSciandra}. \medskip

In the next subsection, we characterize abelian objects in the semi-abelian category $\Hopf$. Therefore, from now on, we assume that $(\Mm,\ot,\mathbf{1},\sigma)$ is such that $\Hopf$ has binary coproducts. 

\subsection{Abelian objects}

Since the category $\mathsf{Ab}(\Cc)$ of abelian objects, i.e.\ internal abelian groups in $\Cc$, in a semi-abelian category $\Cc$ is abelian \cite[Theorem 3.2]{Barr}, the category $\mathsf{Ab}(\Hopf)$ is abelian. We now provide an explicit description of the latter category. Recall that an object $C$ in a semi-abelian category $\Cc$ is abelian if and only if the morphism $\langle\mathrm{Id}_{C},\mathrm{Id}_{C}\rangle:C\to C\times C$ is a kernel in $\Cc$, see \cite[Proposition 9]{Bourn-normal}. 

\begin{remark}\label{rmk:abelianobjects}
Given $H$ in $\Hopf$, we know that $H\times H=H\ot H$ and $\langle \mathrm{Id}_{H},\mathrm{Id}_{H}\rangle=\Delta_{H}$. Hence, $\mathsf{Ab}(\Hopf)$ is the full subcategory of $\Hopf$ whose objects $H$ are such that $\Delta_{H}$ is a kernel in $\Hopf$ or, equivalently by Theorem \ref{cor:normalobjects}, such that $\Delta_{H}$ is normal.
\end{remark}

In order to characterize abelian objects in $\Hopf$, we first prove the following results.

\begin{proposition}\label{lem:adjcommutative}
Let $A$ be an object in $\Hopf$. Then, $A$ is commutative, i.e.\ $m_{A}=m_{A}\sigma_{A,A}$, if and only if $\mathrm{ad}_{A}=\varepsilon_{A}\ot\mathrm{Id}_{A}$. As a consequence, if $A$ is commutative, any monomorphism $i:K\to A$ in $\Hopf$ is normal.
\end{proposition}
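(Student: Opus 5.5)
For the direction ``commutative $\Rightarrow$ $\mathrm{ad}_{A}=\varepsilon_{A}\ot\mathrm{Id}_{A}$'' I will rewrite $\mathrm{ad}_{A}$ using commutativity. We have
\[
\mathrm{ad}_{A}=m_{A}(m_{A}\ot\mathrm{Id}_{A})(\mathrm{Id}_{A}\ot\sigma_{A,A})((\mathrm{Id}_{A}\ot S_{A})\Delta_{A}\ot\mathrm{Id}_{A}).
\]
If $m_{A}=m_{A}\sigma_{A,A}$, then associativity together with commutativity of $m_{A}$ lets me move the middle factor next to the first one. This gives
\[
m_{A}(m_{A}\ot\mathrm{Id}_{A})(\mathrm{Id}_{A}\ot\sigma_{A,A})=m_{A}(m_{A}\ot\mathrm{Id}_{A}).
\]
The idea is that in $x_{1}\,y\,S(x_{2})$ the factor $y$ commutes past $S(x_{2})$. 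Then
\[
\mathrm{ad}_{A}=m_{A}(m_{A}(\mathrm{Id}_{A}\ot S_{A})\Delta_{A}\ot\mathrm{Id}_{A})=m_{A}(u_{A}\varepsilon_{A}\ot\mathrm{Id}_{A})=\varepsilon_{A}\ot\mathrm{Id}_{A}.
\]
This is a short naturality computation with $\sigma$ and the antipode axiom.

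For the converse I will reconstruct the multiplication from $\mathrm{ad}_{A}$. In Sweedler-like terms the identity is $x\,y=(x_{1}\,y\,S(x_{2}))\,x_{3}$. Categorically, I will use the formula
\[
m_{A}=m_{A}(\mathrm{ad}_{A}\ot\mathrm{Id}_{A})(\mathrm{Id}_{A}\ot\sigma_{A,A})(\Delta_{A}\ot\mathrm{Id}_{A}).
\]
This is verified by expanding $\mathrm{ad}_{A}$, using coassociativity, and applying $m_{A}(S_{A}\ot\mathrm{Id}_{A})\Delta_{A}=u_{A}\varepsilon_{A}$. It is the same manipulation as equation \eqref{auxeqq2} in Proposition \ref{prop:normalimpliesHopfmap}, but with left and right exchanged, so I can essentially mirror that computation.

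Substituting $\mathrm{ad}_{A}=\varepsilon_{A}\ot\mathrm{Id}_{A}$ into this formula gives
\[
m_{A}=m_{A}(\varepsilon_{A}\ot\mathrm{Id}_{A}\ot\mathrm{Id}_{A})(\mathrm{Id}_{A}\ot\sigma_{A,A})(\Delta_{A}\ot\mathrm{Id}_{A}).
\]
By naturality of $\sigma$ this equals $m_{A}\sigma_{A,A}$ after applying the counit. Hence $A$ is commutative. Cocommutativity is not even needed here, and symmetry is not needed beyond naturality. The main bookkeeping obstacle is getting the braidings in the reconstruction formula right. I would first check that formula in $\mathsf{Vec}_{\Bbbk}$, and then transcribe it with $\sigma$, using that $\sigma$ is a symmetry to avoid worrying about $\sigma$ versus $\sigma^{-1}$.

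For the consequence, let $A$ be commutative and let $i:K\to A$ be a monomorphism in $\Hopf$. Then
\[
\mathrm{ad}_{A}(\mathrm{Id}_{A}\ot i)=\varepsilon_{A}\ot i=i(\varepsilon_{A}\ot\mathrm{Id}_{K}).
\]
So $\psi:=\varepsilon_{A}\ot\mathrm{Id}_{K}$ makes diagram \eqref{diagram:normal} commute, and $i$ is normal by Definition \ref{def:normal}.
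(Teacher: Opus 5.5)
Your proposal is correct and follows essentially the paper's proof: the forward direction removes the braiding using $m_A=m_A\sigma_{A,A}$ and associativity before applying the antipode axiom, the converse rests on the same reconstruction identity $m_A=m_A(\mathrm{ad}_A\ot\id_A)(\id_A\ot\sigma_{A,A})(\Delta_A\ot\id_A)$, and the normality consequence uses $\psi=\varepsilon_A\ot\id_K$ exactly as the paper does. The only minor difference is in the last step of the converse: you substitute $\mathrm{ad}_A=\varepsilon_A\ot\id_A$ directly into that identity and obtain $m_A\sigma_{A,A}$ at once, whereas the paper first rewrites $\varepsilon_A\ot\id_A$ as $m_A(\id_A\ot m_A(\id_A\ot S_A)\Delta_A)\sigma_{A,A}$ and then simplifies, so your version is slightly shorter.
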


\begin{proof}
    Suppose $A$ is commutative, i.e.\ $m_{A}=m_{A}\sigma_{A,A}$. Then, we obtain
\[
\begin{split}
\mathrm{ad}_{A}&=m_{A}(\mathrm{Id}_{A}\ot m_{A})(\mathrm{Id}_{A}\ot\sigma_{A,A})((\mathrm{Id}_{A}\ot S_{A})\Delta_{A}\ot\mathrm{Id}_{A})=m_{A}(\mathrm{Id}_{A}\ot m_{A})((\mathrm{Id}_{A}\ot S_{A})\Delta_{A}\ot\mathrm{Id}_{A})\\&=m_{A}(m_{A}(\mathrm{Id}_{A}\ot S_{A})\Delta_{A}\ot\mathrm{Id}_{A})=m_{A}(u_{A}\varepsilon_{A}\ot\mathrm{Id}_{A})=\varepsilon_{A}\ot\mathrm{Id}_{A}.
\end{split}
\]
Now, suppose $\mathrm{ad}_{A}=\varepsilon_{A}\ot\mathrm{Id}_{A}$. Since
\[
\begin{split}
&m_{A}(\mathrm{ad}_{A}\ot\mathrm{Id}_{A})(\mathrm{Id}_{A}\ot\sigma_{A,A})(\Delta_{A}\ot\mathrm{Id}_{A})=\\&m_{A}(m_{A}\ot\mathrm{Id}_{A})(\mathrm{Id}_{A}\ot m_{A}\ot\mathrm{Id}_{A})(\mathrm{Id}_{A}\ot\sigma_{A,A}\ot\mathrm{Id}_{A})((\mathrm{Id}_{A}\ot S_{A})\Delta_{A}\ot\mathrm{Id}_{A\ot A})(\mathrm{Id}_{A}\ot\sigma_{A,A})(\Delta_{A}\ot\mathrm{Id}_{A})=\\&m_{A}(\mathrm{Id}_{A}\ot m_{A})(\mathrm{Id}_{A\ot A}\ot m_{A})(\mathrm{Id}_{A}\ot\sigma_{A,A}\ot\mathrm{Id}_{A})(\mathrm{Id}_{A\ot A}\ot\sigma_{A,A})((\mathrm{Id}_{A}\ot S_{A})\Delta_{A}\ot\mathrm{Id}_{A\ot A})(\Delta_{A}\ot\mathrm{Id}_{A})=\\&m_{A}(\mathrm{Id}_{A}\ot m_{A})(\mathrm{Id}_{A\ot A}\ot m_{A})(\mathrm{Id}_{A}\ot\sigma_{A\ot A,A})(\mathrm{Id}_{A}\ot( S_{A}\ot\mathrm{Id}_{A})\Delta_{A}\ot\mathrm{Id}_{A})(\Delta_{A}\ot\mathrm{Id}_{A})=\\&m_{A}(\mathrm{Id}_{A}\ot m_{A})(\mathrm{Id}_{A\ot A}\ot m_{A})(\mathrm{Id}_{A\ot A}\ot( S_{A}\ot\mathrm{Id}_{A})\Delta_{A})(\mathrm{Id}_{A}\ot\sigma_{A,A})(\Delta_{A}\ot\mathrm{Id}_{A})=\\&m_{A}(\mathrm{Id}_{A}\ot m_{A})(\mathrm{Id}_{A\ot A}\ot u_{A}\varepsilon_{A})(\mathrm{Id}_{A}\ot\sigma_{A,A})(\Delta_{A}\ot\mathrm{Id}_{A})=\\&m_{A}(\mathrm{Id}_{A}\ot\varepsilon_{A}\ot\mathrm{Id}_{A})(\Delta_{A}\ot\mathrm{Id}_{A})=m_{A}
\end{split}
\]
and
\[
\begin{split}
\mathrm{ad}_{A}&=\varepsilon_{A}\ot\mathrm{Id}_{A}=m_{A}(\varepsilon_{A}\ot\mathrm{Id}_{A}\ot u_{A})=m_{A}(\mathrm{Id}_{A}\ot\varepsilon_{A}\ot\mathrm{Id}_{A})(\sigma_{A,A}\ot\mathrm{Id}_{A})(\mathrm{Id}_{A}\ot\mathrm{Id}_{A}\ot u_{A})\\&=m_{A}(\mathrm{Id}_{A}\ot\varepsilon_{A}\ot\mathrm{Id}_{A})(\mathrm{Id}_{A}\ot\mathrm{Id}_{A}\ot u_{A})\sigma_{A,A}=m_{A}(\mathrm{Id}_{A}\ot u_{A}\varepsilon_{A})\sigma_{A,A}\\&=m_{A}(\mathrm{Id}_{A}\ot m_{A}(\mathrm{Id}_{A}\ot S_{A})\Delta_{A})\sigma_{A,A}
\end{split}
\]
we get
\[
\begin{split}
m_{A}&=m_{A}(\mathrm{ad}_{A}\ot\mathrm{Id}_{A})(\mathrm{Id}_{A}\ot\sigma_{A,A})(\Delta_{A}\ot\mathrm{Id}_{A})\\&=m_{A}(m_{A}\ot\mathrm{Id}_{A})(\mathrm{Id}_{A}\ot m_{A}(\mathrm{Id}\ot S_{A})\Delta_{A}\ot\mathrm{Id}_{A})(\sigma_{A,A}\ot\mathrm{Id}_{A})(\mathrm{Id}_{A}\ot\sigma_{A,A})(\Delta_{A}\ot\mathrm{Id}_{A})\\&=m_{A}(m_{A}\ot\mathrm{Id}_{A})(\mathrm{Id}_{A}\ot m_{A}(\mathrm{Id}\ot S_{A})\Delta_{A}\ot\mathrm{Id}_{A})\sigma_{A\ot A,A}(\Delta_{A}\ot\mathrm{Id}_{A})\\&=m_{A}(\mathrm{Id}_{A}\ot m_{A})(\mathrm{Id}_{A}\ot m_{A}(\mathrm{Id}\ot S_{A})\Delta_{A}\ot\mathrm{Id}_{A})(\mathrm{Id}_{A}\ot\Delta_{A})\sigma_{A,A}\\&=m_{A}(\mathrm{Id}_{A}\ot m_{A})(\mathrm{Id}_{A\ot A}\ot m_{A}(S_{A}\ot\mathrm{Id}_{A})\Delta_{A})(\mathrm{Id}_{A}\ot\Delta_{A})\sigma_{A,A}\\&=m_{A}(\mathrm{Id}_{A}\ot m_{A})(\mathrm{Id}_{A\ot A}\ot u_{A}\varepsilon_{A})(\mathrm{Id}_{A}\ot\Delta_{A})\sigma_{A,A}\\&=m_{A}\sigma_{A,A}.
\end{split}
\]
Hence $m_{A}=m_{A}\sigma_{A,A}$, i.e.\ $A$ is commutative.

Suppose that $A$ is commutative, so $\mathrm{ad}_{A}=\varepsilon_{A}\ot\mathrm{Id}_{A}$. Given a monomorphism $i:K\to A$ in $\Hopf$, we have $\mathrm{ad}_{A}(\mathrm{Id}_{A}\ot i)=(\varepsilon_{A}\ot\mathrm{Id}_{A})(\mathrm{Id}_{A}\ot i)=i(\varepsilon_{A}\ot\mathrm{Id}_{K})$. Thus, by defining $\psi:=\varepsilon_{A}\ot\mathrm{Id}_{K}$, the diagram \eqref{diagram:normal} commutes. Then $i$ is normal.
\end{proof}

The next technical lemma will allow us to characterize abelian objects in Proposition \ref{prop:abelianobjects}.

\begin{lemma}
    Let $H$ be in $\Hopf$. The following equality holds:
\begin{equation}\label{eq:adjointtensorproduct}
    \mathrm{ad}_{H\ot H}(\mathrm{Id}_{H}\ot u_{H}\ot\mathrm{Id}_{H\ot H})=\mathrm{ad}_{H}\ot\mathrm{Id}_{H},
\end{equation}
where $H\ot H$ is in $\Hopf$ since $(\Mm,\ot,\mathbf{1},\sigma)$ is a symmetric monoidal category.
\end{lemma}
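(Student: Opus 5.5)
The plan is to expand $\mathrm{ad}_{H\ot H}$ into the structure morphisms of $H$ and then use $u_H$ to collapse every factor that passes through the second tensor slot of the acting copy. Recall that in $H\ot H$ we have
\[
\Delta_{H\ot H}=(\mathrm{Id}_{H}\ot\sigma_{H,H}\ot\mathrm{Id}_{H})(\Delta_{H}\ot\Delta_{H}),\qquad m_{H\ot H}=(m_{H}\ot m_{H})(\mathrm{Id}_{H}\ot\sigma_{H,H}\ot\mathrm{Id}_{H}),\qquad S_{H\ot H}=S_{H}\ot S_{H}.
\]
We will use the second form of the adjoint morphism,
\[
\mathrm{ad}_{H\ot H}=m_{H\ot H}(m_{H\ot H}\ot S_{H\ot H})(\mathrm{Id}_{H\ot H}\ot\sigma_{H\ot H,H\ot H})(\Delta_{H\ot H}\ot\mathrm{Id}_{H\ot H}).
\]
We precompose it with $\mathrm{Id}_{H}\ot u_{H}\ot\mathrm{Id}_{H\ot H}$, so that the acting element is $x\ot 1$ and the acted-on element is $y\ot z$.

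First, $\Delta_{H\ot H}(\mathrm{Id}_{H}\ot u_{H})=(\mathrm{Id}_{H}\ot u_{H}\ot\mathrm{Id}_{H}\ot u_{H})\Delta_{H}$. This follows because $\Delta_{H}u_{H}=u_{H}\ot u_{H}$ and because naturality of $\sigma$ lets us slide $u_H$ through $\sigma_{H,H}$ (i.e.\ $\sigma_{H,\mathbf{1}}$ is trivial). Next, naturality of $\sigma$ moves the units through $\sigma_{H\ot H,H\ot H}$, and $S_{H}u_{H}=u_{H}$. This produces the expression
\[
(m_{H}\ot m_{H})(\mathrm{Id}_{H}\ot\sigma_{H,H}\ot\mathrm{Id}_{H})\big((m_{H}\ot m_{H})(\mathrm{Id}_{H}\ot\sigma_{H,H}\ot\mathrm{Id}_{H})\ot S_{H}\ot u_{H}\big)\cdots .
\]
In this expression each unit multiplies trivially, by the unit axiom $m_{H}(\mathrm{Id}_{H}\ot u_{H})=\mathrm{Id}_{H}$.

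After these simplifications, the first tensor factor of the output is $m_{H}(m_{H}\ot S_{H})(\mathrm{Id}_{H}\ot\sigma_{H,H})(\Delta_{H}\ot\mathrm{Id}_{H})$ applied to $x\ot y$, which is $\mathrm{ad}_{H}$. The second factor is just $z$. What remains is a composite of symmetries, arising from $\mathrm{Id}\ot\sigma\ot\mathrm{Id}$ and $\sigma_{H\ot H,H\ot H}$ with the unit slots removed. We must check that this composite routes $z$ past the $H$-factors and then back, so that it reduces to the identity on the last factor. This uses the hexagon axioms, $\sigma_{X,\mathbf{1}}=\mathrm{Id}$, and $\sigma^{2}=\mathrm{Id}$ where two crossings of $z$ over the same strand occur. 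Since $\sigma$ is a symmetry, coherence guarantees that any two such composites with the same underlying permutation coincide, so only the permutation needs to be checked.

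The main obstacle is this bookkeeping of braidings. The cleanest route is to invoke symmetric coherence after reducing both sides to the form (structure maps) $\circ$ (permutation), and then verify that the permutations agree. On the right-hand side, $\mathrm{ad}_{H}\ot\mathrm{Id}_{H}$ sends the inputs $(x_1,x_2,y,z)$ to $(x_1\,y\,S(x_2),\,z)$. On the left-hand side, the copy of $z$ never meets $x_2$ except through a unit, so it arrives untouched in the second slot. Hence the underlying permutations coincide, and \eqref{eq:adjointtensorproduct} follows.
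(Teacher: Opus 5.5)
Your proposal is correct and follows essentially the same route as the paper. Both arguments expand $\mathrm{ad}_{H\otimes H}$ through the tensor-product Hopf structure and slide the units through the braidings by naturality, using $\Delta_Hu_H=u_H\otimes u_H$, $S_Hu_H=u_H$, the unit axioms, the hexagon decomposition of $\sigma_{H\otimes H,H\otimes H}$, and $\sigma^2=\mathrm{Id}$. The only difference is presentational: the paper keeps the units on the left-hand side, inserts them on the right-hand side, and meets in the middle at an explicit normal form, applying $\sigma_{H,H}^2=\mathrm{Id}$ by hand; you instead collapse the units and use naturality to isolate the permutation of $(x_1,x_2,y,z)$, then let symmetric coherence handle the two crossings of $z$ with the $x_2$ strand.
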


\begin{proof}
    We compute
\[
\begin{split}
&\mathrm{ad}_{H\ot H}(\mathrm{Id}_{H}\ot u_{H}\ot\mathrm{Id}_{H\ot H})\\&=m_{H\ot H}(m_{H\ot H}\ot S_{H\ot H})(\mathrm{Id}_{H\ot H}\ot\sigma_{H\ot H,H\ot H})(\Delta_{H\ot H}\ot\mathrm{Id}_{H\ot H})(\mathrm{Id}_{H}\ot u_{H}\ot\mathrm{Id}_{H\ot H})\\&=m_{H\ot H}(m_{H\ot H}\ot S_{H\ot H})(\mathrm{Id}_{H\ot H}\ot\sigma_{H\ot H,H\ot H})(\mathrm{Id}_{H}\ot\sigma_{H,H}\ot\mathrm{Id}_{H\ot H\ot H})\\&\hspace{0.5cm}(\Delta_{H}\ot\Delta_{H}\ot\mathrm{Id}_{H\ot H})(\mathrm{Id}_{H}\ot u_{H}\ot\mathrm{Id}_{H\ot H})\\&=m_{H\ot H}(m_{H\ot H}\ot S_{H\ot H})(\mathrm{Id}_{H\ot H}\ot\sigma_{H\ot H,H\ot H})(\mathrm{Id}_{H}\ot\sigma_{H,H}\ot\mathrm{Id}_{H\ot H\ot H})(\Delta_{H}\ot u_{H}\ot u_{H}\ot\mathrm{Id}_{H\ot H})\\&=m_{H\ot H}(m_{H\ot H}\ot S_{H\ot H})(\mathrm{Id}_{H\ot H}\ot(\sigma_{H,H\ot H}\ot\mathrm{Id}_{H})(\mathrm{Id}_{H}\ot\sigma_{H,H\ot H}))\\&\hspace{0.5cm}(\mathrm{Id}_{H}\ot u_{H}\ot\mathrm{Id}_{H}\ot u_{H}\ot\mathrm{Id}_{H\ot H})(\Delta_{H}\ot \mathrm{Id}_{H\ot H})\\&=m_{H\ot H}(m_{H\ot H}\ot S_{H\ot H})(\mathrm{Id}_{H\ot H}\ot\sigma_{H,H\ot H}\ot\mathrm{Id}_{H})(\mathrm{Id}_{H}\ot u_{H}\ot\mathrm{Id}_{H\ot H\ot H}\ot u_{H})(\Delta_{H}\ot \mathrm{Id}_{H\ot H}).
\end{split}
\]
On the other hand, recalling that $\sigma$ is a symmetry, we have
\[
\begin{split}
&\mathrm{ad}_{H}\ot\mathrm{Id}_{H}\\&=(m_{H}\ot\mathrm{Id}_{H})(m_{H}\ot\mathrm{Id}_{H\ot H})(\mathrm{Id}_{H}\ot\sigma_{H,H}\ot\mathrm{Id}_{H})((\mathrm{Id}_{H}\ot S_{H})\Delta_{H}\ot\mathrm{Id}_{H\ot H})\\&=(m_{H}\ot\mathrm{Id}_{H})(m_{H}\ot\mathrm{Id}_{H\ot H})(\mathrm{Id}_{H\ot H}\ot S_{H}\ot\mathrm{Id}_{H})(\mathrm{Id}_{H}\ot\sigma_{H,H}\ot\mathrm{Id}_{H})(\Delta_{H}\ot\mathrm{Id}_{H\ot H})\\&=(m_{H}\ot m_{H})(m_{H}\ot S_{H}\ot\mathrm{Id}_{H}\ot u_{H})(\mathrm{Id}_{H\ot H}\ot\sigma_{H,H})(\mathrm{Id}_{H\ot H}\ot\sigma_{H,H})\\&\hspace{0.5cm}(\mathrm{Id}_{H}\ot\sigma_{H,H}\ot\mathrm{Id}_{H})(\Delta_{H}\ot\mathrm{Id}_{H\ot H})\\&=(m_{H}\ot m_{H})(m_{H}\ot S_{H}\ot\mathrm{Id}_{H}\ot u_{H})(\mathrm{Id}_{H\ot H}\ot\sigma_{H,H})(\mathrm{Id}_{H}\ot\sigma_{H,H\ot H})(\Delta_{H}\ot\mathrm{Id}_{H\ot H})\\&=(m_{H}\ot m_{H})(\mathrm{Id}_{H}\ot\sigma_{H,H}\ot\mathrm{Id}_{H})(m_{H}\ot\mathrm{Id}_{H}\ot S_{H}\ot u_{H})(\mathrm{Id}_{H}\ot\sigma_{H,H\ot H})(\Delta_{H}\ot\mathrm{Id}_{H\ot H})\\&=m_{H\ot H}(m_{H}\ot\mathrm{Id}_{H}\ot S_{H}\ot S_{H}u_{H})(\mathrm{Id}_{H}\ot\sigma_{H,H\ot H})(\Delta_{H}\ot\mathrm{Id}_{H\ot H})\\&=m_{H\ot H}(m_{H}\ot m_{H}(u_{H}\ot\mathrm{Id}_{H})\ot S_{H}\ot S_{H}u_{H})(\mathrm{Id}_{H}\ot\sigma_{H,H\ot H})(\Delta_{H}\ot\mathrm{Id}_{H\ot H})\\&=m_{H\ot H}(m_{H}\ot m_{H}\ot S_{H}\ot S_{H})(\mathrm{Id}_{H}\ot\sigma_{H,H}\ot\mathrm{Id}_{H\ot H\ot H})(\mathrm{Id}_{H}\ot u_{H}\ot\mathrm{Id}_{H\ot H\ot H}\ot u_{H})\\&\hspace{0.5cm}(\mathrm{Id}_{H}\ot\sigma_{H,H\ot H})(\Delta_{H}\ot\mathrm{Id}_{H\ot H})\\&=m_{H\ot H}(m_{H\ot H}\ot S_{H\ot H})(\mathrm{Id}_{H\ot H}\ot\sigma_{H,H\ot H}\ot\mathrm{Id}_{H})(\mathrm{Id}_{H}\ot u_{H}\ot\mathrm{Id}_{H\ot H\ot H}\ot u_{H})(\Delta_{H}\ot\mathrm{Id}_{H\ot H}),
\end{split}
\]
so we get the thesis.
\end{proof}

We denote by $\mathsf{Hopf}_{\mathrm{coc,com}}(\Mm)$ the category of commutative and cocommutative Hopf monoids in $\Mm$. We finally obtain the following result.

\begin{proposition}\label{prop:abelianobjects}
    The category $\mathsf{Ab}(\Hopf)$ is exactly $\mathsf{Hopf}_{\mathrm{coc,com}}(\Mm)$.
\end{proposition}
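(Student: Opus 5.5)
By Remark \ref{rmk:abelianobjects}, an object $H$ of $\Hopf$ is abelian if and only if $\Delta_{H}:H\to H\ot H$ is normal in the sense of Definition \ref{def:normal}. Here $\Delta_H$ is a monomorphism in $\Hopf$ because it is split by $\mathrm{Id}_{H}\ot\varepsilon_{H}$. So the whole task is to show that $\Delta_{H}$ is normal exactly when $H$ is commutative. Both directions go through the adjoint morphism, and we use Proposition \ref{lem:adjcommutative} together with identity \eqref{eq:adjointtensorproduct}.

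\emph{Commutative implies abelian.} If $H$ is commutative and cocommutative, then $H\ot H$ is also commutative, since $\sigma$ is a symmetry and the multiplication on $H\ot H$ is $(m_H\ot m_H)(\mathrm{Id}_{H}\ot\sigma_{H,H}\ot\mathrm{Id}_{H})$. By Proposition \ref{lem:adjcommutative}, every monomorphism in $\Hopf$ into $H\ot H$ is then normal, and in particular $\Delta_{H}$ is. So $H$ is an abelian object. Since both $\mathsf{Ab}(\Hopf)$ and $\mathsf{Hopf}_{\mathrm{coc,com}}(\Mm)$ are full subcategories of $\Hopf$, this gives one inclusion.

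\emph{Abelian implies commutative.} Suppose $\Delta_{H}$ is normal, so there is $\psi:(H\ot H)\ot H\to H$ in $\Mm$ with $\Delta_{H}\psi=\mathrm{ad}_{H\ot H}(\mathrm{Id}_{H\ot H}\ot\Delta_{H})$. Precompose with $\mathrm{Id}_{H}\ot u_{H}\ot\mathrm{Id}_{H}$ and use \eqref{eq:adjointtensorproduct}. This gives
\[
\Delta_{H}\psi(\mathrm{Id}_{H}\ot u_{H}\ot\mathrm{Id}_{H})=(\mathrm{ad}_{H}\ot\mathrm{Id}_{H})(\mathrm{Id}_{H}\ot\Delta_{H}).
\]
Apply $\varepsilon_{H}\ot\mathrm{Id}_{H}$ and, separately, $\mathrm{Id}_{H}\ot\varepsilon_{H}$ to both sides. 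Because $\mathrm{ad}_{H}$ is counital (Remark \ref{rmk:psipsi'counitary}), the first gives $\psi(\mathrm{Id}_{H}\ot u_{H}\ot\mathrm{Id}_{H})=\varepsilon_{H}\ot\mathrm{Id}_{H}$, and the second gives $\psi(\mathrm{Id}_{H}\ot u_{H}\ot\mathrm{Id}_{H})=\mathrm{ad}_{H}$. Hence $\mathrm{ad}_{H}=\varepsilon_{H}\ot\mathrm{Id}_{H}$, and Proposition \ref{lem:adjcommutative} shows that $H$ is commutative.

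The main step is the choice of this restriction: inserting $u_H$ in the middle slot is what makes \eqref{eq:adjointtensorproduct} applicable. After that, the only point needing care is the counit bookkeeping in the two projections, checking that $\varepsilon_H\ot\mathrm{Id}_H$ applied to $(\mathrm{ad}_{H}\ot\mathrm{Id}_{H})(\mathrm{Id}_{H}\ot\Delta_{H})$ really yields $\varepsilon_{H}\ot\mathrm{Id}_{H}$, and $\mathrm{Id}_H\ot\varepsilon_H$ yields $\mathrm{ad}_H$. Both are short checks using counitality of $\Delta_H$ and $\varepsilon_H\mathrm{ad}_H=\varepsilon_H\ot\varepsilon_H$.
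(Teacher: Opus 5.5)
Your proposal is correct and follows the paper's proof essentially step for step. The commutative direction uses the commutativity of $H\ot H$ and Proposition~\ref{lem:adjcommutative}. The converse restricts the normality witness $\psi$ along $\mathrm{Id}_{H}\ot u_{H}\ot\mathrm{Id}_{H}$, invokes \eqref{eq:adjointtensorproduct}, and applies $\varepsilon_{H}\ot\mathrm{Id}_{H}$ and $\mathrm{Id}_{H}\ot\varepsilon_{H}$ to conclude $\mathrm{ad}_{H}=\varepsilon_{H}\ot\mathrm{Id}_{H}$, exactly as the paper does.
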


\begin{proof}
By Remark \ref{rmk:abelianobjects}, we know that $\mathsf{Ab}(\Hopf)$ is the full subcategory of $\Hopf$ whose objects $H$ are such that $\Delta_{H}$ is a normal monomorphism.

If $H$ is in $\mathsf{Hopf}_{\mathrm{coc,com}}(\Mm)$, i.e.\ it is commutative, the object $H\ot H$ in $\Hopf$ is also commutative since commutative monoids in $\Mm$ form a monoidal category (since $(\Mm,\ot,\mathbf{1},\sigma)$ is a symmetric monoidal category). Therefore, by Proposition \ref{lem:adjcommutative}, we get that $\Delta_{H}:H\to H\ot H$ is normal. Then, $H$ is an object in $\mathsf{Ab}(\Hopf)$.

If $H$ is in $\mathsf{Ab}(\Hopf)$, i.e.\ $\Delta_{H}:H\to H\ot H$ is normal, there exists a morphism $\psi:H\ot H\ot H\to H$ in $\Mm$ such that $\Delta_{H}\psi=\mathrm{ad}_{H\ot H}(\mathrm{Id}_{H\ot H}\ot\Delta_{H})$. We have
\begin{equation}\label{eq:adDelta}
(\mathrm{ad}_{H}\ot\mathrm{Id}_{H})(\mathrm{Id}_{H}\ot\Delta_{H})\overset{\eqref{eq:adjointtensorproduct}}{=}\mathrm{ad}_{H\ot H}(\mathrm{Id}_{H\ot H}\ot\Delta_{H})(\mathrm{Id}_{H}\ot u_{H}\ot\mathrm{Id}_{H})=\Delta_{H}\psi(\mathrm{Id}_{H}\ot u_{H}\ot\mathrm{Id}_{H})
\end{equation}
and then, since $\mathrm{ad}_{H}$ is counitary, we get
\[
\begin{split}
\psi(\mathrm{Id}_{H}\ot u_{H}\ot\mathrm{Id}_{H})&=(\varepsilon_{H}\ot\mathrm{Id}_{H})\Delta_{H}\psi(\mathrm{Id}_{H}\ot u_{H}\ot\mathrm{Id}_{H})\overset{\eqref{eq:adDelta}}{=}(\varepsilon_{H}\ot\mathrm{Id}_{H})(\mathrm{ad}_{H}\ot\mathrm{Id}_{H})(\mathrm{Id}_{H}\ot\Delta_{H})\\&=(\varepsilon_{H}\ot\varepsilon_{H}\ot\mathrm{Id}_{H})(\mathrm{Id}_{H}\ot\Delta_{H})=\varepsilon_{H}\ot\mathrm{Id}_{H}.
\end{split}
\]
But we also have
\[
    \mathrm{ad}_{H}=(\mathrm{Id}_{H}\ot\varepsilon_{H})(\mathrm{ad}_{H}\ot\mathrm{Id}_{H})(\mathrm{Id}_{H}\ot\Delta_{H})\overset{\eqref{eq:adDelta}}{=}(\mathrm{Id}_{H}\ot\varepsilon_{H})\Delta_{H}\psi(\mathrm{Id}_{H}\ot u_{H}\ot\mathrm{Id}_{H})=\psi(\mathrm{Id}_{H}\ot u_{H}\ot\mathrm{Id}_{H}).
\]
Therefore, we obtain $\mathrm{ad}_{H}=\varepsilon_{H}\ot\mathrm{Id}_{H}$. Hence, by Proposition \ref{lem:adjcommutative}, $H$ is commutative. Consequently, it is an object in $\mathsf{Hopf}_{\mathrm{coc,com}}(\Mm)$.
\end{proof}

In the last subsection, we discuss an interesting application of the semi-abelianness of the category $\Hopf$.

\subsection{Action representability}
Using that $\Hopf$ is a semi-abelian category, we obtain an important feature of it, namely that it is action representable (in the sense of \cite{BJK2}). Let us first recall what this property means. \medskip

A semi-abelian category $\Cc$ is \textit{action representable} \cite{BJK2} if, for any object $X$ in $\Cc$, there exists a split extension with kernel $X$
\[\begin{tikzcd}
	X & \overline{X} & {[X]} 
	\arrow[from=1-1, to=1-2,"\mathsf{ker}(\alpha)"]
	\arrow[shift left, from=1-2, to=1-3,"\alpha"]
	\arrow[shift left, from=1-3, to=1-2,"\beta"]
\end{tikzcd}\]
such that, given any other split extension in $\Cc$ with kernel $X$
\[\begin{tikzcd}
	X & A & B
	\arrow[from=1-1, to=1-2,"\mathsf{ker}(p)"]
	\arrow[shift left, from=1-2, to=1-3,"p"]
	\arrow[shift left, from=1-3, to=1-2,"i"]
\end{tikzcd}\]
there is a unique (up to isomorphism) morphism $f:B\to[X]$ in $\Cc$ (and then a unique morphism $g:A\to\overline{X}$ in $\Cc$) such that the following diagram commutes.
\[\begin{tikzcd}
	X & A & B \\
	X & \overline{X} & {[X]}
	\arrow[from=1-1, to=1-2,"\mathsf{ker}(p)"]
	\arrow[from=1-1, to=2-1,"\mathrm{Id}_{X}"']
	\arrow[shift left, from=1-2, to=1-3,"p"]
	\arrow[shift left, from=1-3, to=1-2, "i"]
	\arrow[from=1-2, to=2-2,"g"]
	\arrow[from=1-3, to=2-3,"f"]
	\arrow[from=2-1, to=2-2,"\mathsf{ker}(\alpha)"']
	\arrow[shift left, from=2-2, to=2-3,"\alpha"]
	\arrow[shift left, from=2-3, to=2-2,"\beta"]
\end{tikzcd}\]

In order to obtain the action representability of $\Hopf$ we consider categories $(\Mm,\ot,\mathbf{1},\sigma)$ that are closed monoidal. We recall that these categories are always \textit{admissible} in the sense of \cite{Porst3}, see \cite[Remark 1, page 2]{Porst3}. We obtain the following result:

\begin{proposition}\label{prop:actionreprestable}
    The category $\Hopf$ is action representable, if $(\Mm,\ot,\mathbf{1},\sigma)$ is closed monoidal.
\end{proposition}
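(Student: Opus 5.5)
We would use the known criterion for action representability of semi-abelian categories of internal groups: by a result of Borceux, Janelidze and Kelly (see \cite{BJK2}), if $\Cc$ is a cartesian closed category with finite limits, then $\mathsf{Grp}(\Cc)$ is action representable, the representing object of actions on $X$ being the internal group $\mathrm{Aut}(X)$ of internal group automorphisms of $X$, constructed as a subobject of the exponential $X^{X}$. Since $\Hopf=\mathsf{Grp}(\mathsf{Comon}_{\mathrm{coc}}(\Mm))$ and, by Theorem \ref{thm:semiabelian} together with our standing assumption on binary coproducts, $\Hopf$ is semi-abelian, it suffices to show that $\mathsf{Comon}_{\mathrm{coc}}(\Mm)$ is cartesian closed and finitely complete when $(\Mm,\ot,\mathbf{1},\sigma)$ is closed monoidal.

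Finite completeness is Corollary \ref{cor:comoncocfincomplete}, and by Remark \ref{cartesian of Hopf} the product in $\mathsf{Comon}_{\mathrm{coc}}(\Mm)$ is $\ot$. So cartesian closedness amounts to showing that, for each $C$ in $\mathsf{Comon}_{\mathrm{coc}}(\Mm)$, the functor $C\ot(-)$ on $\mathsf{Comon}_{\mathrm{coc}}(\Mm)$ has a right adjoint. Our plan is to use the fact that closed symmetric monoidal categories are admissible in the sense of \cite{Porst3}, so that $\mathsf{Comon}(\Mm)$ and $\mathsf{Comon}_{\mathrm{coc}}(\Mm)=\mathsf{Comon}(\mathsf{Comon}(\Mm))$ are locally presentable (as in Remark \ref{rmk:comonlocpres}), and the forgetful functors to $\Mm$ create colimits. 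Since $C\ot(-)$ preserves colimits in $\Mm$ (being a left adjoint, because $\Mm$ is closed), and colimits in $\mathsf{Comon}_{\mathrm{coc}}(\Mm)$ are computed in $\Mm$, the functor $C\ot(-)\colon\mathsf{Comon}_{\mathrm{coc}}(\Mm)\to\mathsf{Comon}_{\mathrm{coc}}(\Mm)$ preserves all small colimits. The special adjoint functor theorem for locally presentable categories then gives a right adjoint $(-)^{C}$, so $\mathsf{Comon}_{\mathrm{coc}}(\Mm)$ is cartesian closed. Alternatively, one could cite directly the cartesian closedness of cocommutative comonoids in admissible symmetric monoidal categories from \cite{Porst3}.

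The main obstacle is the passage from the abstract criterion to our setting. We would need to check that the hypotheses of the Borceux--Janelidze--Kelly result apply to $\mathsf{Grp}(\Cc)$ with $\Cc$ cartesian closed and finitely complete, and not merely locally presentable. The equalizers needed to carve $\mathrm{Aut}(X)$ out of $X^{X}\times X^{X}$ are provided by Corollary \ref{cor:comoncocfincomplete}. The remaining task is to verify that the resulting universal split extension $X\to X\rtimes\mathrm{Aut}(X)\rightleftarrows\mathrm{Aut}(X)$ lives in $\Hopf$, which is automatic once everything is internal to $\mathsf{Comon}_{\mathrm{coc}}(\Mm)$.

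Once cartesian closedness is established, action representability of $\Hopf$ follows immediately.
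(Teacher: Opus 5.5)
Your proposal is correct and follows essentially the paper's argument. The paper obtains cartesian closedness of $\mathsf{Comon}_{\mathrm{coc}}(\Mm)$ by citing \cite[\S3.2 Proposition]{Porst3} directly, which is your ``alternative'' (your adjoint-functor sketch just reproves it from the same admissibility, i.e.\ local presentability, input), and then applies \cite[Theorem 4.4]{BJK2} to the semi-abelian category $\Hopf=\mathsf{Grp}(\mathsf{Comon}_{\mathrm{coc}}(\Mm))$; the ``main obstacle'' you flag is not one, since that result of Borceux--Janelidze--Kelly is stated precisely for groups in a finitely complete cartesian closed category whose category of groups is semi-abelian.
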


\begin{proof}
    By \cite[\S3.2 Proposition]{Porst3}, the category $\mathsf{Comon}_{\mathrm{coc}}(\Mm)$ is cartesian closed since $\Mm$ is monoidally closed. By \cite[Theorem 4.4]{BJK2} one knows that the category of internal groups in a cartesian closed category is always action representable, provided it is semi-abelian. Then, since $\Hopf=\mathsf{Grp}(\mathsf{Comon}_{\mathrm{coc}}(\Mm))$ is semi-abelian by Theorem \ref{thm:semiabelian}, we get the thesis.
\end{proof}

\begin{remark}
Since $\Hopf=\mathsf{Grp}(\mathsf{Comon}_{\mathrm{coc}}(\Mm))$ with $\mathsf{Comon}_{\mathrm{coc}}(\Mm)$ cartesian closed, we also obtain that $\Hopf$ is locally algebraically cartesian closed in the sense of \cite{Gray}, by using \cite[Proposition 5.3]{Gray}, and then also algebraically coherent in the sense of \cite{CGV}, by \cite[Theorem 4.5]{CGV}.
\end{remark}

Semi-abelian categories provide a good categorical framework to develop an approach to commutator theory and they present natural notions of semi-direct product \cite{BournJanelidze}, internal action \cite{BJK2} and crossed module \cite{Janelidze}. The study of these features for $\Hopf$ deserves to be undertaken in the future, generalizing the corresponding results achieved in \cite{GSV} and \cite{Sciandra2} for $\mathsf{Hopf}_{\mathrm{coc}}(\mathsf{Vec}_{\Bbbk})$ and $\mathsf{Hopf}_{\mathrm{coc}}(\mathsf{Vec}_{G})$, respectively.

\bigskip

\noindent\textbf{Acknowledgements.} 
The authors thank A. Ardizzoni for a nice discussion on the topic. This paper was written while the authors were members of the ``National Group for Algebraic and Geometric Structures and their Applications'' (GNSAGA-INdAM). A. Sciandra was supported by a postdoctoral fellowship at the Université libre de Bruxelles within the framework of the PDR project “Reconstruction of modules and algebraic objects from closed and monoidal structures on their representation categories" funded by the FNRS under the grant number T.0318.25F (PI Joost Vercruysse). Z. Zuo sincerely acknowledges the support provided by CSC (China Scholarship Council) through a PhD
student fellowship (No. 202406190047). This work was partially supported by the project funded by the European Union - NextGenerationEU under NRRP, Mission 4 Component 2 CUP D53D23005960006 - Call PRIN 2022 No.\ 104 of February 2, 2022 of Italian Ministry of University and Research; Project 2022S97PMY ``Structures for Quivers, Algebras and Representations'' (SQUARE).


\begin{thebibliography}{}

\bibitem{AsWe}
P. Aschieri, T. Weber, 
\emph{Metric compatibility and Levi-Civita Connections on Quantum Groups}. J. Algebra. 661 (2025) 479-544.

\bibitem{A97} M. Aguiar, \emph{Internal categories and quantum groups}. 
Thesis (Ph.D.)-Cornell University, ProQuest LLC, Ann Arbor, MI, 1997.

\bibitem{A08} 
A. Ardizzoni, \emph{Wedge products and cotensor coalgebras in monoidal categories}. Algebr. Represent. Theory 11 (2008), no. 5, 461--496.

\bibitem{AMS}
A. Ardizzoni, C. Menini, D. Ştefan, 
\emph{Hochschild cohomology and ``smoothness'' in monoidal categories}. J. Pure Appl. Algebra 208 (2007), no. 1, 297–330.



\bibitem{AGV}
A.L. Agore, A.S. Gordienko, J. Vercruysse, 
\emph{Lifting of locally initial objects and universal (co)acting Hopf algebras}. Adv. Math. 479 (2025), part B, Paper No. 110442, 68 pp.

\bibitem{AR}
J. Adámek, J. Rosický, 
\emph{Locally Presentable and Accessible Categories}, London Mathematical Society Lecture Note Series, vol. 189, Cambridge University Press, Cambridge, 1994.

\bibitem{AM}
M. Aguiar, S. Mahajan, 
\emph{Monoidal Functors, Species and Hopf Algebras}. With Forewords by Kenneth Brown and Stephen Chase and André Joyal, CRM Monograph Series, vol. 29, American Mathematical Society, Providence, RI, 2010.



\bibitem{AnGaVe}
I. Angiono, C. Galindo, L. Vendramin, 
\emph{Hopf braces and Yang–Baxter operators}, Proc. Am. Math.
Soc. 145 (5) (2017) 1981--1995.


\bibitem{Barr}
M. Barr, 
\emph{Exact Categories}, Springer Lecture Notes in Math., vol. 236, 1971, pp. 1--120.



\bibitem{Bespalov}
Yu. N. Bespalov, 
\emph{Crossed modules and quantum groups in braided categories}, Appl. Categ. Structures 5 (1997), no. 2, 155–204.

\bibitem{BeDr}
Yu. N. Bespalov, B. Drabant, 
\emph{Hopf (bi-)modules and crossed modules in braided monoidal categories}, J. Pure Appl. Algebra 123 (1998), no. 1-3, 105–129,

\bibitem{Borceux}
F. Borceux, 
\emph{Handbook of Categorical Algebra. 1. Basic Category Theory}, Encyclopedia of Mathematics and Its Applications, vol. 50, Cambridge University Press, Cambridge, 1994.

\bibitem{BB}
F. Borceux, D. Bourn, 
\emph{Mal’cev, Protomodular, Homological and Semi-Abelian Categories}.
Mathematics and Its Applications, vol. 566, Kluwer Academic Publishers, Dordrecht, 2004.

\bibitem{Bournproto}
D. Bourn, 
\emph{Normalization equivalence, kernel equivalence and affine categories}. In Category theory (Como, 1990), vol. 1488 of Lecture Notes in Math. Springer, Berlin, 1991, pp. 43–62.

\bibitem{Bourn-normal}
D. Bourn, 
\emph{Normal subobjects and abelian objects in protomodular categories}, J. Algebra 228 (2000)
143--164.

\bibitem{Bourn-book}
D. Bourn, 
\emph{From groups to categorial algebra}. Compact Textb. Math.
Birkhäuser/Springer, Cham, 2017, xii+106 pp.

\bibitem{BG}
D. Bourn, M. Gran,
\emph{Regular, protomodular, and abelian categories}. Encyclopedia Math. Appl., 97
Cambridge University Press, Cambridge, 2004, 165–211.

\bibitem{BournJanelidze}
D. Bourn, G. Janelidze, \emph{Protomodularity, descent and semi-direct product}. Theory Appl. Categories 4 (1998)
37–46.

\bibitem{BJK2}
F. Borceux, G. Janelidze, G. M. Kelly, 
\emph{Internal object actions}. Comment. Math. Univ. Carol. 46 (2) (2005) 235–255.




\bibitem{CDR}
S. Caenepeel, S. Dascalescu, S. Raianu, 
\emph{Cosemisimple Hopf algebras coacting on coalgebras}, Comm. Algebra 24 (1996), 1649–1677.

\bibitem{CGV}
A.S. Cigoli, J.R.A. Gray, T. Van der Linden, 
\emph{Algebraically coherent categories}, Theory Appl. Categ. 30 (54) (2015) 1864–1905.


\bibitem{GKV}
M. Gran, G. Kadjo, J. Vercruysse, \emph{A torsion theory in the category of cocommutative Hopf algebras}. Appl. Categ. Struct.
24 (2016) 269–282.

\bibitem{EGSV}
L. El Kaoutit, A. Ghobadi, P. Saracco, J. Vercruysse,
\emph{Correspondence theorems for Hopf algebroids with applications to affine groupoids}. Canad. J. Math. 76 (2024), no. 3, 830–880.

\bibitem{EGSV1}
L. El Kaoutit, A. Ghobadi, P. Saracco, J. Vercruysse,
\emph{Addenda to ``Correspondence theorems for Hopf algebroids with applications to affine groupoids''}. Canad. J. Math. 77 (2025), no. 1, 347–350.

\bibitem{FGRR}
J.M. Fernández Vilaboa, R. González Rodríguez, B. Ramos Pérez, A.B. Rodríguez Raposo, 
\emph{Modules
over invertible 1-cocycles}, Turk. J. Math. 48 (2) (2024) 248--266.

\bibitem{GranSciandra}
M. Gran, A. Sciandra, 
\emph{Hopf braces and semi-abelian categories}. J. Algebra 690 (2026), 266-303.  

\bibitem{GSV}
M. Gran, F. Sterck, J. Vercruysse, 
\emph{A semi-abelian extension of a theorem by Takeuchi}, J. Pure Appl. Algebra 223 (10) (2019) 4171–4190.

\bibitem{Gray}
J.R.A. Gray, 
\emph{Algebraic exponentiation in general categories}, Appl. Categ. Struct. 20 (2012) 543–567.

\bibitem{HS}
I. Heckenberger, H.-J. Schneider, 
\emph{Hopf algebras and root systems}. Math. Surveys Monogr., 247
American Mathematical Society, Providence, RI, 2020, xix+582 pp.

\bibitem{HLFV}
M. Hyland, I. López Franco, C. Vasilakopoulou, 
\emph{Hopf measuring comonoids and enrichment}. Proc. Lond. Math. Soc. (3) 115 (2017), no. 5, 1118–1148.

\bibitem{Hovey}
M. Hovey, 
\emph{Homotopy theory of comodules over a Hopf algebroid, in Homotopy theory: relations with algebraic geometry, group cohomology, and algebraic K-theory} (Evanston, IL, 2002), pp. 261-304, Contemp. Math., Vol. 346, Amer. Math. Soc., Providence, RI, 2004.

\bibitem{Janelidze}
G. Janelidze, 
\emph{Internal crossed modules}. Georgian Math. J. 10 (1) (2003) 99–114.

\bibitem{JMT}
G. Janelidze, L. Márki, W. Tholen, \emph{Semi-abelian categories}, in: Category Theory 1999 (Coimbra), J. Pure Appl. Algebra 168 (2–3) (2002) 367–386.

\bibitem{KM}
U. Krähmer, M. Mahaman,  
\emph{Clones from comonoids}. Revista de la Unión Matemática Argentina (2024).

\bibitem{MacLane-book}
S. Mac Lane, 
\emph{Categories for the working mathematician}.
Second edition
Grad. Texts in Math., 5
Springer-Verlag, New York, 1998. xii+314 pp.

\bibitem{Majid-book}
S. Majid,
\emph{Foundations of Quantum Group Theory}, Cambridge University Press, Cambridge, 1995.

\bibitem{Masuoka}
A. Masuoka, 
\emph{On Hopf algebras with cocommutative coradicals}. J. Algebra 144 (1991), no. 2, 451–466.

\bibitem{Masuoka2}
A. Masuoka, 
\emph{The fundamental correspondences in super aﬃne groups and super formal groups}, J. Pure Appl. Algebra
202 (1–3) (2005) 284–312.

\bibitem{Newman}
K. Newman, 
\emph{A correspondence between bi-ideals and sub-Hopf algebras in cocommutative Hopf algebras}, J. Algebra 36 (1) (1975) 1–15.

\bibitem{Porst2}
H-E. Porst, 
\emph{The formal theory of Hopf algebras part I: Hopf monoids in a monoidal category}, Quaest. Math. 38 (5) (2015) 631–682.

\bibitem{Porst}
H-E. Porst, 
\emph{Universal constructions of Hopf algebras}, J. Pure Appl. Algebra 212 (11) (2008) 2547–2554.

\bibitem{Porst3}
H.-E. Porst, 
\emph{On categories of monoids, comonoids, and bimonoids}, Quaest. Math. 31 (2008) 127–139.

\bibitem{Porstequalizer}
H.-E. Porst,  
\emph{Colimits of monoids}. Theory Appl. Categ. 34 (2019), 456–467.


\bibitem{Saracco}
P. Saracco,
\emph{A remark on the Galois-type correspondence between ideal coideals and comodule subrings of a Hopf algebroid}. Bull. Belg. Math. Soc. Simon Stevin 30 (2023), no. 5, 668–682.

\bibitem{AS}
A. Sciandra,  
\emph{Semi-abelian condition for color Hopf algebras}. J. Pure Appl. Algebra 228 (2024), no. 9, Paper No. 107677, 34 pp.

\bibitem{Sciandra2}
A. Sciandra, 
\emph{Commutators and crossed modules of color Hopf algebras} (2023), arXiv:2312.00156.

\bibitem{Sterck}
F. Sterck, 
\emph{S-protomodularity of the category of cocommutative bialgebras}. J. Algebra Appl. 22 (2023), no. 12, Paper No. 2350252, 27 pp.


\bibitem{Takeuchi}
M. Takeuchi, 
\emph{A correspondence between Hopf ideals and sub-Hopf algebras}, Manuscr. Math. 7 (1972) 252–270.

\bibitem{Takeuchi2}
M. Takeuchi, 
\emph{Formal schemes over fields}, Comm. Algebra 5 (1977) 1483–1528.

\bibitem{Vercruysse}
J. Vercruysse, 
\emph{Hopf Algebras—Variant Notions and Reconstruction Theorems}. In Quantum Physics and
Linguistics: A Compositional, Diagrammatic Discourse. Oxford University Press, 02 2013.

\bibitem{Yana1}
H. Yanagihara, 
\emph{On isomorphism theorems of formal groups}. J. Algebra 55 (1978) 341–347.

\bibitem{Yana2}
H. Yanagihara, 
\emph{On group theoretic properties of cocommutative Hopf algebras}. Hiroshima Math. J. 9 (1979) 179–200.

    
\end{thebibliography}
\end{document}